\theoremstyle{plain}
\newtheorem{thm}{Theorem}[section]
\newtheorem{prop}[thm]{Proposition}
\newtheorem{lem}[thm]{Lemma}
\newtheorem{corl}[thm]{Corollary}
\theoremstyle{definition}
\newtheorem{defn}[thm]{Definition}
\newtheorem{nota}[thm]{Notation}
\theoremstyle{plain}
\newtheorem{thms}{Theorem}[subsection]
\newtheorem{props}[thms]{Proposition}
\newtheorem{lems}[thms]{Lemma}
\theoremstyle{definition}
\begin{document} 
\title[The admissible monomial bases for the polynomial algebra]
{The admissible monomial bases for the polynomial algebra of five variables in some types of generic degrees}

\author{Nguy\~\ecircumflex n Sum}
\address{Department of Mathematics and Applications, S\`ai G\`on University, 273 An D\uhorn \ohorn ng V\uhorn \ohorn ng, District 5, H\`\ocircumflex\ Ch\'i Minh city, Viet Nam}
 
\email{nguyensum@sgu.edu.vn}

\footnotetext[1]{2000 {\it Mathematics Subject Classification}. Primary 55S10; 55S05.}
\footnotetext[2]{{\it Keywords and phrases:} Steenrod squares, Peterson hit problem, polynomial algebra, modular representation.}

\bigskip
\begin{abstract}
Let $P_k$ be the graded polynomial algebra $\mathbb F_2[x_1,x_2,\ldots ,x_k]$ over the prime field of two elements, $\mathbb F_2$, with the degree of each $x_i$ being 1. We study the {\it hit problem}, set up by Frank Peterson, of finding a minimal set of generators for $P_k$ as a module over the mod-$2$ Steenrod algebra, $\mathcal{A}.$ In this paper, we explicitly determine a minimal set of $\mathcal{A}$-generators for $P_5$ in the case of the generic degrees $n = 2^{d+1} - 1$ and $n = 2^{d+1} - 2$ for all $d \geqslant 6$.
\end{abstract}
\maketitle

\section{Introduction and statement of main results}\label{s1} 

Let $E^k$ be an elementary abelian 2-group of rank $k$ and let $BE^k$ be the classifying space of $E^k$.  Then, 
$$P_k:= H^*(BE^k) \cong \mathbb F_2[x_1,x_2,\ldots ,x_k],$$ a polynomial algebra in  $k$ generators $x_1, x_2, \ldots , x_k$, each of degree 1. Here the cohomology is taken with coefficients in the prime field $\mathbb F_2$ of two elements. The algebra $P_k$ is a module over the mod-2 Steenrod algebra, $\mathcal A$.  The action of $\mathcal A$ on $P_k$ is determined by the elementary properties of the Steenrod squares $Sq^i$ and subject to the Cartan formula (see Steenrod and Epstein~\cite{st}).

An element $g$ in $P_k$ is called {\it hit} if it belongs to  $\mathcal{A}^+P_k$, where $\mathcal{A}^+$ is the augmentation ideal of $\mathcal A$. That means $g$ can be written as a finite sum $g = \sum_{u\geqslant 0}Sq^{2^u}(g_u)$ for suitable polynomials $g_u \in P_k$.  

We study the {\it Peterson hit problem} of determining a minimal set of generators for the polynomial algebra $P_k$ as a module over the Steenrod algebra. In other words, we want to determine a basis of the $\mathbb F_2$-vector space 
$$QP_k := P_k/\mathcal A^+P_k = \mathbb F_2 \otimes_{\mathcal A} P_k.$$ 

This problem was first studied by Peterson~\cite{pe}, Wood~\cite{wo}, Singer~\cite {si1}, and Priddy~\cite{pr}, who showed its relation to several classical problems in Algebraic Topology. Then, this problem  was investigated by Carlisle-Wood~\cite{cw}, Crabb-Hubbuck~\cite{ch}, Janfada-Wood~\cite{jw1}, Kameko~\cite{ka}, Mothebe \cite{mo}, Nam~\cite{na}, Repka-Selick~\cite{res}, Silverman~\cite{sl,sl2}, Silverman-Singer~\cite{ss}, Singer~\cite{si2}, Sum-T\'in \cite{su5}, Walker and Wood~\cite{wa3}, Wood~\cite{wo2}, the present author \cite{su1,su2} and others. 

The vector space $QP_k$ was explicitly computed by 
Peterson~\cite{pe} for $k=1, 2,$ by Kameko~\cite{ka} for $k=3$ and  by us \cite{su2} for $k = 4$. For $k > 4$, this problem is still open. 
Recently, the hit problem and its applications to representations of general linear groups have been presented in the monographs of Walker and Wood \cite{wa1,wa2}.

For any nonnegative integer $n$, set $\mu(n) = \min\{m \in \mathbb Z : \alpha (n+m)\leqslant m\}$ where $\alpha (a)$ denotes the number of one in dyadic expansion of a positive integer $a$. 
Denote by $(P_k)_n$ and  $(QP_k)_n$ the subspaces of degree $n$ homogeneous polynomials in the spaces $P_k$ and $QP_k$ respectively. The following is Peterson's conjecture, which was established by Wood.

\begin{thm}[See Wood~\cite{wo}]\label{dlmd1} 
If $\mu(n) > k$, then $(QP_k)_n = 0$.
\end{thm}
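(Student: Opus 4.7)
The plan is to prove the contrapositive: if some monomial $x^a = x_1^{a_1}\cdots x_k^{a_k}$ of degree $n$ in $P_k$ is not in $\mathcal{A}^+ P_k$, then $\mu(n) \leq k$. The approach rests on two ingredients: (i) a combinatorial reformulation of $\mu(n)$, and (ii) a reduction lemma stating that, modulo $\mathcal{A}^+ P_k$, every monomial is congruent to a linear combination of \emph{spikes} (monomials whose every exponent has the form $2^s - 1$).

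For (i), I would note that $\mu(n) \leq m$ is equivalent to $n$ admitting a decomposition $n = \sum_{i=1}^m (2^{s_i}-1)$ with $s_i \geq 0$, since writing $n+m = 2^{s_1} + \cdots + 2^{s_m}$ (allowing repetitions) is the same condition as $\alpha(n+m) \leq m$. In particular, a spike monomial of degree $n$ exists in $P_k$ precisely when $\mu(n) \leq k$, so under the hypothesis $\mu(n) > k$ there are no spikes in $(P_k)_n$.

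For (ii), I would argue by induction on the degree $n$, together with a secondary induction on a well-ordering of the exponent tuples (for instance, reverse-lexicographic on the binary expansions, or a Kameko/Mothebe-style ``excess'' functional). Given an exponent $a_j$ whose binary expansion has two $1$-bits in positions $p > q$, the Cartan formula applied to $Sq^{2^q}$ of a carefully chosen auxiliary monomial yields a relation expressing $x^a$, modulo $\mathcal{A}^+ P_k$, as a sum of monomials whose exponent tuples are strictly smaller in the chosen ordering. Iterating collapses adjacent bits in each binary expansion of $a_j$ until every exponent has the form $2^{s_j}-1$.

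Combining (i) and (ii) finishes the argument: every class of $(QP_k)_n$ is represented by a sum of spike classes, but $(P_k)_n$ contains no spikes when $\mu(n) > k$, hence $(QP_k)_n = 0$. The principal difficulty is (ii): the Cartan formula produces many cross terms, and one has to choose both the ordering on exponent tuples and the particular Steenrod squares invoked with enough care that the induction actually strictly decreases. A cleaner fallback, should the direct induction prove unwieldy, is to use the $\chi$-trick, replacing Steenrod squares by their conjugates under the antiautomorphism of $\mathcal{A}$; this typically condenses the iterated reduction into a single rewrite and is how Wood's original argument is most efficiently presented.
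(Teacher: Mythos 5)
Your step (ii) is false, and it is the load-bearing step: it is not true that every monomial is congruent modulo $\mathcal A^+P_k$ to a linear combination of spikes. First note that a spike can never occur as a term of an element of $\mathcal A^+P_k$: by the Cartan formula each term of $Sq^i(x_1^{a_1}\cdots x_k^{a_k})$ with $i\geqslant 1$ carries a factor $\binom{a_j}{i_j}$ with some $i_j\geqslant 1$, and if the target exponent is $a_j+i_j=2^{s}-1$ then $a_j=2^{s}-1-i_j$ has binary digits complementary to those of $i_j$, so $\binom{a_j}{i_j}\equiv 0 \pmod 2$ by Lucas. Hence a non-spike monomial congruent to a sum of spikes would force that sum to be empty, i.e.\ the monomial would be hit. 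Already $(QP_2)_3$ refutes this: the only spikes of degree $3$ in $P_2$ are $x_1^3$ and $x_2^3$, yet $\dim(QP_2)_3=3$, the third basis class being $[x_1x_2^2]$; so $x_1x_2^2$ is neither hit nor congruent to any combination of spikes. (Indeed the entire subject of this paper --- admissible monomials such as $x_1x_2^2x_3^4\cdots$, which are generally not spikes --- exists precisely because spike classes do not span $QP_k$.) So no choice of ordering or auxiliary monomials can rescue the induction in (ii); the cross terms are not the obstacle, the target statement is.

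Step (i) is correct, and your closing remark points toward the actual argument. The paper gives no proof of Theorem \ref{dlmd1} (it is quoted from Wood), but Wood's proof does not reduce monomials to spikes: using the $\chi$-trick $u\,Sq^i(v)\equiv \chi(Sq^i)(u)\,v$, one shows \emph{directly} that every monomial of degree $n$ with $\mu(n)>k$ is hit, by factoring it as $fg^{2}$ with $f$ the square-free part (so $\deg f\leqslant k$) and inducting on degree; the elementary identity $\mu(e+2m)\leqslant e$ whenever $e\geqslant\mu(m)$ converts the hypothesis $\mu(n)>k\geqslant\deg f$ into $\deg f<\mu(\deg g)$. Silverman's Theorem \ref{dlsil} quoted in Section \ref{s2} is exactly the quantitative form of this step ($fg^{2^m}$ is hit whenever $\deg f<(2^m-1)\mu(\deg g)$), and Theorem \ref{dlmd1} is an immediate consequence. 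If you want a self-contained proof, prove that lemma rather than spike-spanning.
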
 

One of the important tools in the study of the hit problem is Kameko's homomorphism 
$\widetilde{Sq}^0_*: QP_k \to QP_k$. 
This homomorphism is induced by the $\mathbb F_2$-linear map $\phi :P_k\to P_k$ given by
$$\phi (x) = 
\begin{cases}y, &\text{if }x=x_1x_2\ldots x_ky^2,\\  
0, & \text{otherwise,} \end{cases}$$
for any monomial $x \in P_k$. Note that $\phi $ is not an $\mathcal A$-homomorphism. However, 
$\phi Sq^{2t} = Sq^{t}\phi,$ and $\phi Sq^{2t+1} = 0$ for any non-negative integer $t$. 

\begin{thm}[See Kameko~\cite{ka}]\label{dlmd2} 
Let $m$ be a positive integer. If $\mu(2m+k)=k$, then 
$$(\widetilde{Sq}^0_*)_{(k,m)}:= \widetilde{Sq}^0_*: (QP_k)_{2m+k}\longrightarrow (QP_k)_m$$
is an isomorphism of the $\mathbb F_2$-vector spaces. 
\end{thm}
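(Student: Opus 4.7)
The natural plan is to construct a chain-level right inverse of $\phi$ and, using the hypothesis $\mu(2m+k)=k$, to show it descends to a two-sided inverse of $\widetilde{Sq}^0_*$. Define $\psi\colon(P_k)_m\to(P_k)_{2m+k}$ on monomials by $\psi(g)=x_1x_2\cdots x_k\,g^2$, so that $\phi\circ\psi=\mathrm{id}$ at the chain level. Well-definedness of $\widetilde{Sq}^0_*$ is then immediate from the two relations $\phi Sq^{2t}=Sq^t\phi$ and $\phi Sq^{2t+1}=0$ recorded in the text, which give $\phi(\mathcal A^+P_k)\subseteq\mathcal A^+P_k$; and surjectivity of $\widetilde{Sq}^0_*$ follows at once from $\phi\psi=\mathrm{id}$. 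Neither of these facts requires any hypothesis on the degree.

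The entire content of the theorem therefore lies in injectivity, which I would reduce to two subclaims: \textbf{(A)} if $g\in\mathcal A^+P_k$ in degree $m$, then $\psi(g)=x_1\cdots x_k\,g^2\in\mathcal A^+P_k$; and \textbf{(B)} if $\mu(2m+k)=k$, then every monomial $x_1^{a_1}\cdots x_k^{a_k}$ of degree $2m+k$ having some exponent $a_i$ even (possibly zero) lies in $\mathcal A^+P_k$. Granting (A) and (B), injectivity is quick: given $f\in(P_k)_{2m+k}$ with $\phi(f)\in\mathcal A^+P_k$, decompose $f=x_1\cdots x_k\,(\phi(f))^2+r$, where $r$ collects the monomials of $f$ not of the form $x_1\cdots x_k y^2$; by (B) every term of $r$ is hit, and by (A) the first summand $\psi(\phi(f))$ is hit, so $f$ is hit.

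For (A), the plan is: write $g=\sum_s Sq^{2^s}(g_s)$, use $(Sq^n h)^2=Sq^{2n}(h^2)$ to get $g^2=\sum_s Sq^{2^{s+1}}(g_s^2)$, and then push $x_1\cdots x_k$ past each $Sq^{2^{s+1}}$ via the Cartan formula; odd correction terms vanish since $Sq^{\text{odd}}$ of a square is $0$, and the remaining even corrections are handled by a secondary induction together with a further Cartan expansion. The hard part, and the only step that genuinely uses $\mu(2m+k)=k$, is (B). The plan there is an iterated dyadic reduction: for a monomial $x^a$ with some $a_i$ even, apply a suitable $Sq^{2^j}$ to a chosen parent monomial and use Cartan to exhibit $x^a$ modulo hits as a combination of monomials of simpler dyadic shape; iterating drives $x^a$, modulo hits, into a sum of spikes $\prod_j x_j^{2^{d_j}-1}$ of total degree $2m+k$. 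Under $\mu(2m+k)=k$, any such spike must use exactly $k$ positive summands $2^{d_j}-1$, forcing $d_j\geqslant 1$ and hence all exponents odd; so a monomial with an even exponent has no nonzero spike in its class and must be hit. This spike-reduction, carried out in the spirit of Wood's proof of Theorem~\ref{dlmd1}, is the main obstacle.
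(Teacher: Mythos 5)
The paper does not prove Theorem \ref{dlmd2}; it is quoted verbatim from Kameko's thesis \cite{ka}, so there is no internal proof to compare against. Your architecture is the standard one and is essentially sound: well-definedness and surjectivity of $\widetilde{Sq}^0_*$ come for free from $\phi Sq^{2t}=Sq^t\phi$, $\phi Sq^{2t+1}=0$ and $\phi\psi=\mathrm{id}$, and injectivity reduces to your claims (A) and (B) exactly as you say, via $f=\psi(\phi(f))+r$.

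The one step I would push back on is your proposed proof of (B). The assertion that iterated reduction drives an arbitrary monomial, modulo hits, into a sum of spikes is not true in general --- in the degrees treated in this paper almost none of the admissible monomials are spikes --- so that reduction, taken literally, would fail. What you actually need is weaker and is already available in the paper: if $\mu(2m+k)=k$, the minimal spike $z$ of degree $2m+k$ has all exponents of the form $2^{d_j}-1$ with $d_j\geqslant 1$, hence $\omega_1(z)=k$, while a monomial with some even exponent has $\omega_1\leqslant k-1$, so $\omega(x)<\omega(z)$ and $x$ is hit by Theorem \ref{dlsig} (or \ref{dlww}). With (B) in this form, your claim (A) also closes without any ``secondary induction'': expanding $Sq^{2^{s+1}}\bigl(x_1\cdots x_k\,g_s^2\bigr)$ by Cartan, the odd terms vanish on the square, the $j=0$ term is $x_1\cdots x_k(Sq^{2^s}g_s)^2$, and every remaining term lies in a monomial ideal where at least two variables carry even exponents, so each such term is hit by (B). Summing over $s$ gives $\psi(g)\in\mathcal A^+P_k$, and the proof is complete.
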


From Theorems \ref{dlmd1} and \ref{dlmd2}, the hit problem is reduced to the case of degree $n$ such that $\mu(n) < k$. For $\mu(n)=k-1$, the problem was partially studied by Crabb-Hubbuck~\cite{ch}, Nam~\cite{na}, Repka-Selick~\cite{res}, Walker-Wood \cite{wa3} and the present author ~\cite{su1,su2}. Recently, many authors study this problem for the case $k = 5$.

To classify the situations of degrees, one can use the function $\beta(n) = \alpha(n+\mu(n))$. For $k = 5$ and $\beta(n) = 1$, one gets $n = 2^{d+1} - s$ with $s = \mu(n) < 5$, and the problem has been respectively determined by Ph\'uc~\cite{ph} and Ph\'uc-Sum~\cite{sp2} for $s = 3,\, 4$. 

To complete the solution of hit problem for $k = 5$ and the degrees $n$ with $\beta(n) = 1$, in the present paper, we explicitly determine all the admissible monomials (see Section ~\ref{s2}) of $P_5$ in the generic degrees $n = 2^{d+1}-s$ with $s = 1,\, 2$ and $d \geqslant 6$. 

\begin{thm}\label{thm1}
For any integer $d \geqslant 6$, there exist exactly $1085$	
admissible monomials of degree $2^{d+1} -2$ in $P_5$. Consequently $\dim (QP_5)_{(2^{d+1} -2)} = 1085.$
\end{thm}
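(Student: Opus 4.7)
The plan is to partition the admissible monomials of degree $n = 2^{d+1}-2$ in $P_5$ according to their support. An admissible monomial in $P_5$ whose support is a subset $S\subset\{x_1,\dots,x_5\}$ is the same as an admissible monomial in the polynomial subalgebra generated by $S$, since the projection $P_5\to P_S$ killing the other variables is an $\mathcal{A}$-algebra map. By symmetry in the variables, the count of such monomials depends only on $|S|$, so
$$\dim (QP_5)_n \;=\; \sum_{j=1}^{5}\binom{5}{j}\,B_j(n),$$
where $B_j(n)$ counts the admissible monomials of degree $n$ in $P_j$ whose support is all of $\{x_1,\dots,x_j\}$. The values $B_1(n),\ldots,B_4(n)$ at $n=2^{d+1}-2$ are available from Peterson~\cite{pe} for $j\le 2$, Kameko~\cite{ka} for $j=3$, and the author's $QP_4$ computation in~\cite{su2} for $j=4$. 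Hence Theorem~\ref{thm1} reduces to determining $B_5(2^{d+1}-2)$.

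To compute $B_5(n)$, I classify admissibles by their weight vector $\omega(x)=(\omega_1(x),\omega_2(x),\ldots)$, where $\omega_i(x_1^{a_1}\cdots x_5^{a_5})$ is the sum of the $(i-1)$-st binary digits of $a_1,\ldots,a_5$. The constraints $\sum_{i\ge 1}2^{i-1}\omega_i = n$ and $1\le\omega_i\le 5$, combined with $\mu(n)=2$, force $\omega$ into a short list of types that stabilizes once $d\ge 6$: the middle portion of $\omega$ consists of repeated $2$'s or $3$'s whose count grows linearly in $d$, while the head and tail are $d$-independent. The spike type $(2,2,\ldots,2)$ with $d$ twos produces the ten minimal spikes $x_i^{2^d-1}x_j^{2^d-1}$. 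Each remaining type is treated individually: I enumerate monomials with the prescribed weight and full support, and filter out the inadmissible ones by exhibiting an inadmissible initial factor, via Kameko's criterion applied to the consecutive pairs $\omega_i,\omega_{i+1}$ and the refined substitution rules developed in~\cite{su1,sp2}.

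The main obstacle is proving that the surviving candidates are linearly independent modulo $\mathcal{A}^+P_5$. Following the strategy of the author's prior papers, one constructs for each candidate $x$ a linear functional on $(P_5)_n$ which annihilates $\mathcal{A}^+P_5\cap(P_5)_n$ and detects $x$; these functionals are built from $GL_5(\mathbb F_2)$-equivariant pairings and modular invariants in the dual polynomial algebra. Verification proceeds by downward induction on the weight vector, each step reducing to a case of smaller $k$, of smaller degree, or to one of the previously settled degrees $n = 2^{d+1}-s$ with $s\in\{3,4\}$ treated by Ph\'uc~\cite{ph} and Ph\'uc--Sum~\cite{sp2}. Once linear independence is established, summing over the stable list of weight vectors yields the desired value of $B_5(2^{d+1}-2)$, and combining with the known $B_j$ for $j\le 4$ gives $\dim(QP_5)_n = 1085$ uniformly for every $d\ge 6$.
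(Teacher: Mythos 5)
Your overall skeleton (split by support via $QP_5=QP_5^0\oplus QP_5^+$, classify by weight vector, eliminate inadmissibles, then prove linear independence of the survivors) matches the paper's, but two steps are asserted where the real work lies, and one of them is a genuine gap. First, the reduction of the weight vectors: the numerical constraints $\sum_i 2^{i-1}\omega_i=n$, $\omega_i\leqslant 5$ do \emph{not} by themselves force a short stable list. The paper needs Singer's spike criterion (Theorem \ref{dlsig}) to pin down $\omega_1\in\{2,4\}$, and then an induction on $d$ supported by explicit lists of strictly inadmissible monomials (Lemma \ref{bda51}, each verified by a hand computation of the form $x=\sum y_j+\sum Sq^u(h_u)$ modulo $P_5^-(\omega)$) to rule out all weight vectors other than $(2)|^d$ and $(4)|(3)|^{d-2}|(1)$. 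Your phrase ``filter out the inadmissible ones by exhibiting an inadmissible initial factor'' points in the right direction, but without producing those factors the surviving list --- and hence the count $1085=155+930$ --- is not established.

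The more serious gap is the linear independence argument. You propose to build, for each candidate $x$, a linear functional on $(P_5)_n$ annihilating $\mathcal A^+P_5\cap (P_5)_n$ and detecting $x$, ``from $GL_5(\mathbb F_2)$-equivariant pairings and modular invariants in the dual polynomial algebra.'' No such construction is carried out or known in general; having a detecting functional for every admissible monomial is essentially equivalent to already knowing a basis of $(QP_5)_n$, so this step assumes what is to be proved. The paper's actual mechanism is different and concrete: the substitution homomorphisms $p_{(i;I)}:P_5\to P_4$ of \eqref{ct23} are $\mathcal A$-linear and weight-preserving (Lemma \ref{bdm}), hence induce maps $QP_5(\omega)\to QP_4(\omega)$; applying all of them to a putative relation $\sum\gamma_t a_{d,t}\equiv 0$ and expressing the images in the \emph{known} admissible basis of $QP_4(\omega)$ yields a system of linear equations forcing $\gamma_t=0$ (see the proofs of Propositions \ref{mdd51}, \ref{mdd52}, \ref{mdd53}). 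Your ``downward induction reducing to smaller $k$'' gestures at this, but unless you specify the reduction maps and verify that they separate the candidates, the independence claim --- and with it the lower bound $\dim(QP_5)_{2^{d+1}-2}\geqslant 1085$ --- is unsupported.
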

For the cases $d = 1,\, 2$, the computation is easy and we have $\dim (QP_5)_{2} = 10$, $\dim (QP_5)_{6} = 74.$ The cases $d = 3,\, 4$ are respectively determined by Ly-T\'in~\cite{Tin20} and Sum-Mong~\cite{smo}, they showed that $\dim (QP_5)_{14} = 320$ and $\dim (QP_5)_{30} = 840.$ For the case $d = 5$, the computation is extremely complicated. However, we also show in Subsection \ref{s54} that $1105 \leqslant \dim (QP_5)_{62} \leqslant 1175.$ 

For $n = 2^{d+1} -1$, the space $(QP_5)_n$ has been determined with $d \leqslant 4$ by T\'in~\cite{tin14}, Ph\'uc~\cite{ph21} and the present author \cite{su14}. For $d \geqslant 5$, we consider Kameko's homomorphism
$$(\widetilde{Sq}^0_*)_{(5,2^{d}-3)}: (QP_5)_{(2^{d+1}-1)}\longrightarrow (QP_5)_{(2^{d}-3)}.$$
This is an epimorphism and the space $(QP_k)_{(2^{d}-3)}$ has been computed in Ph\'uc~\cite{ph}. So, we need only to determine $\mbox{\rm Ker}\big((\widetilde{Sq}^0_*)_{(5,2^{d}-3)}\big)$.

\begin{thm}\label{thm2}
For any integer $d \geqslant 6$, there exist exactly $496$ classes of degree $2^{d+1} -1$ in $\mbox{\rm Ker}\big((\widetilde{Sq}^0_*)_{(5,2^{d}-3)}\big)$ represented by the admissible monomials in $P_5$. Consequently 
$\dim \mbox{\rm Ker}\big((\widetilde{Sq}^0_*)_{(5,2^{d}-3)}\big) = 496.$
\end{thm}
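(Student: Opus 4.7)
The plan is to determine $\ker\bigl((\widetilde{Sq}^0_*)_{(5,2^d-3)}\bigr)$ by identifying an explicit set of admissible monomials of degree $n=2^{d+1}-1$ that are annihilated by $\widetilde{Sq}^0_*$, and then showing this set is both spanning and linearly independent modulo $\mathcal{A}^+P_5$. Because $\widetilde{Sq}^0_*$ is induced by $\phi$, which vanishes on every monomial not of the form $x_1x_2x_3x_4x_5\cdot y^2$, it sends an admissible monomial $x_1^{a_1}\cdots x_5^{a_5}$ to zero whenever some $a_i$ is even. Since $n=2^{d+1}-1$ is odd, such admissibles are exactly those whose first weight $\omega_1$ (the number of odd exponents among the $a_i$'s) belongs to $\{1,3\}$. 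So the first step is to split the problem into these two parities.

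Next, I would stratify the candidate monomials by their full weight vector $\omega=(\omega_1,\omega_2,\ldots)$, where $\sum_{j\geqslant 1}2^{j-1}\omega_j=n$ and $0\leqslant\omega_j\leqslant 5$. Listing the weight vectors of $2^{d+1}-1$ with $\omega_1\in\{1,3\}$ yields a finite family (independent of $d$ once $d\geqslant 6$, because all variation in $d$ is absorbed by inserting $\omega_j=5$ entries in the middle of $\omega$, which does not alter the set of admissible exponent patterns up to the Kameko squaring-up construction). For each $\omega$ in this list, I would enumerate the monomials of weight $\omega$ via the $\Sigma_5$-symmetry, then discard those that are strictly inadmissible by exhibiting a Steenrod-square relation realizing them modulo admissibles of strictly smaller order. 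The candidates that survive this sieve form a proposed basis of the kernel.

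Independence is the main obstacle. The standard tactic I would use is: suppose a linear combination $\sum_i \gamma_i [z_i]=0$ in $QP_5$ among the surviving admissibles; apply a sequence of homomorphisms $p_{(I;\mathbf{u})}^*\colon P_5\to P_{t}$ obtained by substituting certain linear forms $\sum_{j\in I}x_j$ for the variables indexed by $\mathbf{u}$ (as in Kameko–Singer–Sum), using the already-known bases of $QP_4$ and $QP_3$ from~\cite{su2,ka} to read off each coefficient $\gamma_i$. This routinely propagates over all weight vectors but is technically heavy; the weight vectors with $\omega_1=3$ and long middle plateaus of $\omega_j=5$ are where the most delicate cancellations arise, and they require the full apparatus of strictly inadmissible relations in $P_5$ catalogued in~\cite{su1,su2}.

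Finally, having verified spanning and independence weight-vector by weight-vector, I would assemble the count. The $\omega_1=1$ stratum contributes via an $\Sigma_5$-orbit of size $5$ over a smaller pattern whose size is dictated by an admissible count in $P_5$ of lower degree (reducible to the $QP_4$ computation of~\cite{su2}), and the $\omega_1=3$ stratum contributes via $\binom{5}{3}=10$-fold orbits over patterns whose count is controlled by the $QP_3$ computation of Kameko~\cite{ka}. Summing the contributions and cross-checking with the case $d=5$ (via Kameko's surjectivity and the bounds in Subsection~\ref{s54}) would then give the total $496$, proving the theorem.
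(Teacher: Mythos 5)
Your overall strategy --- restrict to admissible monomials with $\omega_1\in\{1,3\}$, stratify by weight vector, sieve with strictly inadmissible relations, and prove independence with the homomorphisms $p_{(i;I)}$ --- is the same as the paper's, but two of your concrete claims are wrong in ways that would derail the execution. First, the relevant weight vectors are not obtained by ``inserting $\omega_j=5$ entries in the middle'': by Proposition \ref{mdcb3}, once some $\omega_{i_0}(x)<5$ every later entry is also $<5$, so an admissible monomial with $\omega_1\in\{1,3\}$ can never have a $5$ anywhere in its weight vector. Lemma \ref{bdd62} shows the only weight vectors that survive are $(1)|^{d+1}$ and $(3)|(2)|^{d-1}$; the nontrivial point there is eliminating the candidate $\omega(x)=(3)|(4)|(3)|^{d-3}|(1)$ arising from $x=X_{i,j}y_2^2$ with $\omega(y_2)=(4)|(3)|^{d-3}|(1)$ (the second branch of Lemma \ref{bdd5}), and this is exactly what the strictly inadmissible monomials of Lemma \ref{bda71} are needed for. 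Your sieve would have to discover and verify those relations; nothing in your outline supplies them.

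Second, your final count does not produce $496$ from the inputs you describe. The $(1)|^{d+1}$ stratum contributes $31=\sum_{m=1}^{5}\binom{5}{m}$ spike-type classes (Proposition \ref{mdd70}), not a single $\Sigma_5$-orbit of size $5$. More seriously, the $(3)|(2)|^{d-1}$ stratum is not ``controlled by the $QP_3$ computation'': only the $280=7\binom{5}{3}+42\binom{5}{4}$ classes in $QP_5^0$ come from fewer variables, while the remaining $185$ classes lie in $QP_5^+$ and require a genuinely new five-variable computation (the sets $\widetilde A(d)$ and $\widetilde C(d)$ of Subsection \ref{s53}, whose linear independence occupies most of Section \ref{s4}). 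Without that computation the total $31+280+185=496$ cannot be reached, so the assembly step is a genuine gap rather than a routine bookkeeping exercise.
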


Combining Theorem \ref{thm2} and the results in Ph\'uc \cite{ph}, we obtain the following. 

\begin{corl}\label{hqdl2} For any integer $d \geqslant 6$, we have
$$\dim(QP_5)_{(2^{d+1}-1)} = \begin{cases}1441, \mbox{ if } d = 6,\\ 1611, \mbox{ if } d = 7,\\ 1612, \mbox{ if } d \geqslant 8. \end{cases} $$
\end{corl}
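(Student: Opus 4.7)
The plan is to exploit the short exact sequence produced by Kameko's homomorphism at the target degree. As noted immediately before Theorem \ref{thm2}, the map
$$(\widetilde{Sq}^0_*)_{(5,2^{d}-3)}: (QP_5)_{(2^{d+1}-1)}\longrightarrow (QP_5)_{(2^{d}-3)}$$
is surjective (this is the standard surjectivity step: every admissible monomial in the target degree is hit by $x_1x_2x_3x_4x_5$ times its square), so
$$\dim (QP_5)_{(2^{d+1}-1)} = \dim \mathrm{Ker}\bigl((\widetilde{Sq}^0_*)_{(5,2^{d}-3)}\bigr) + \dim (QP_5)_{(2^{d}-3)}.$$
The first summand is given by Theorem \ref{thm2} as $496$ for every $d\geqslant 6$, so the problem reduces to reading off the value of $\dim (QP_5)_{(2^{d}-3)}$ from Ph\'uc \cite{ph}.

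I would next split into the three cases matching Ph\'uc's dimension formula for $(QP_5)_{(2^{d}-3)}$. For $d=6$ the target degree is $2^{6}-3=61$, which is small enough that Ph\'uc's count lies in the transient regime and gives $945$; for $d=7$ the target degree is $125$ and the count is $1115$; and for $d\geqslant 8$ the dimension stabilises at $1116$. Adding $496$ to each of these three numbers yields $1441$, $1611$ and $1612$ respectively, which are exactly the three branches of the stated formula.

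The only substantive content beyond Theorem \ref{thm2} is therefore the correct identification of the three values of $\dim(QP_5)_{(2^{d}-3)}$ in \cite{ph}, and the verification that Ph\'uc's stabilisation threshold is $d\geqslant 8$ and not $d\geqslant 7$ (the difference between $1611$ and $1612$ comes from a single additional admissible class that first appears in the generic range). I expect this bookkeeping to be the only place one can slip: one must check that the hypotheses under which Ph\'uc's formula is stated actually cover $d=6$ and $d=7$ separately, since those are the unstable cases; once that is confirmed, the corollary follows immediately from the dimension identity above and no further hit-problem analysis is required.
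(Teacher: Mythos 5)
Your proposal is correct and follows exactly the route the paper takes: the corollary is obtained by combining the epimorphism property of $(\widetilde{Sq}^0_*)_{(5,2^{d}-3)}$ (stated just before Theorem \ref{thm2}) with $\dim \mbox{\rm Ker}\big((\widetilde{Sq}^0_*)_{(5,2^{d}-3)}\big)=496$ from Theorem \ref{thm2} and the values of $\dim (QP_5)_{(2^{d}-3)}$ computed in Ph\'uc \cite{ph}. The arithmetic $496+945=1441$, $496+1115=1611$, $496+1116=1612$ is the whole content, as you correctly identify.
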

The case $d = 1$ is easy and $\dim (QP_5)_{3} = 25$. The cases $d = 2,\, 3,\, 4$ were respectively determined in T\'in~\cite{tin14}, Sum \cite{su14}, Ph\'uc \cite{ph21}, they showed that $\dim (QP_5)_{7} = 110$, $\dim (QP_5)_{15} = 432$ and $\dim (QP_5)_{31} = 866.$ For $d = 5$, the computation of $(QP_5)_{63}$ is complicated. However, we also show in Subsection \ref{s55} that $1141\leqslant \dim (QP_5)_{63} \leqslant 1155.$

The proofs of Theorems \ref{thm1} and \ref{thm2} are very complicated. We prove by combining hand computation with the aids of some simple computer programmes.

\medskip
The paper is organized as follows. In Section \ref{s2}, we recall some needed information on the admissible monomials in $P_k$, the criterions of Singer and Silverman on the hit monomials. The detailed proofs of Theorems \ref{thm1} and \ref{thm2} are respectively presented in Sections ~ \ref{s3} and \ref{s4}. Finally, in Section \ref{sect5} we list the needed admissible monomials of degrees $2^{d+1}-1$ and $2^{d+1}-2$ in $P_5$.
 
\section{Preliminaries}\label{s2}
\setcounter{equation}{0}

In this section, we recall some needed information from Kameko~\cite{ka}, Singer \cite{si2} and the first named author \cite{su2} which will be used in the next section.

\medskip
\begin{nota} We denote $\mathbb N_k = \{1,2, \ldots , k\}$ and
\begin{align*}
X_{\mathbb J} = X_{\{j_1,j_2,\ldots , j_s\}} = \prod_{j\in \mathbb N_k\setminus \mathbb J}x_j , \ \ \mathbb J = \{j_1,j_2,\ldots , j_s\}\subset \mathbb N_k,
\end{align*}
In particular, $X_{\mathbb N_k} =1,\
X_\emptyset = x_1x_2\ldots x_k,$ 
$X_j = x_1\ldots \hat x_j \ldots x_k, \ 1 \leqslant j \leqslant k,$ and $X:=X_k \in P_{k-1}.$
	
Let $\alpha_i(a)$ denote the $i$-th coefficient  in dyadic expansion of a non-negative integer $a$. That means
$a= \alpha_0(a)2^0+\alpha_1(a)2^1+\alpha_2(a)2^2+ \ldots ,$ for $ \alpha_i(a) =0$ or 1 with $i\geqslant 0$. 
	
Let $x=x_1^{a_1}x_2^{a_2}\ldots x_k^{a_k} \in P_k$. Denote $\nu_j(x) = a_j, 1 \leqslant j \leqslant k$ and $\nu(x) = \max\{\nu_j(x): 1 \leqslant j \leqslant k\}$.  
Set 
$$\mathbb J_t(x) = \{j \in \mathbb N_k :\alpha_t(\nu_j(x)) =0\},$$
for $t\geqslant 0$. Then, we have
$x = \prod_{t\geqslant 0}X_{\mathbb J_t(x)}^{2^t}.$ 
\end{nota}

\begin{defn} A weight vector $\omega$ is a sequence of non-negative integers $(\omega_1,\omega_2,\ldots $, $\omega_i, \ldots)$ such that $\omega_i = 0$ for $i \gg 0$.
	
For a monomial  $x$ in $P_k$,  define two sequences associated with $x$ by
\begin{align*} 
\omega(x)&=(\omega_1(x),\omega_2(x),\ldots , \omega_i(x), \ldots),\\
\sigma(x) &= (\nu_1(x),\nu_2(x),\ldots ,\nu_k(x)),
\end{align*}
where
$\omega_i(x) = \sum_{1\leqslant j \leqslant k} \alpha_{i-1}(\nu_j(x))= \deg X_{\mathbb J_{i-1}(x)},\ i \geqslant 1.$
The sequences $\omega(x)$ and $\sigma(x)$ are respectively called the weight vector and the exponent vector of $x$. 
\end{defn}

The sets of all the weight vectors and the exponent vectors are given the left lexicographical order. 

For any weight vector $\omega = (\omega_1,\omega_2,\ldots)$, we define $\deg \omega = \sum_{i > 0}2^{i-1}\omega_i$ and the length $\ell(\omega) = \max\{i : \omega_i >0\}$. We write $\omega = (\omega_1,\omega_2,\ldots, \omega_r)$ if $\ell(\omega) = r$.  For a weight vector $\eta = (\eta_1,\eta_2, \ldots)$, we define the concatenation of weight vectors 
\[\omega|\eta = (\omega_1,\ldots,\omega_r,\eta_1,\eta_2,\ldots)\] 
if $\ell(\omega) = r$ and $(a)|^b = (a)|(a)|\ldots|(a)$, ($b$ times of $(a)$'s), where $a,\, b$ are positive integers.  
Denote by   $P_k(\omega)$ the subspace of $P_k$ spanned by monomials $y$ such that
$\deg y = \deg \omega$ and $\omega(y) \leqslant \omega$, and by $P_k^-(\omega)$ the subspace of $P_k(\omega)$ spanned by monomials $y$  such that $\omega(y) < \omega$. 

\begin{defn}\label{dfn2} Let $\omega$ be a weight vector and $f, g$ two polynomials  of the same degree in $P_k$. 
	
i) $f \equiv g$ if and only if $f - g \in \mathcal A^+P_k$. If $f \equiv 0$ then $f$ is called {\it hit}.
	
ii) $f \equiv_{\omega} g$ if and only if $f - g \in \mathcal A^+P_k+P_k^-(\omega)$. 
\end{defn}

Obviously, the relations $\equiv$ and $\equiv_{\omega}$ are equivalence ones. Denote by $QP_k(\omega)$ the quotient of $P_k(\omega)$ by the equivalence relation $\equiv_\omega$. Then, we have 
$$QP_k(\omega)= P_k(\omega)/ ((\mathcal A^+P_k\cap P_k(\omega))+P_k^-(\omega)).$$  

For a  polynomial $f \in  P_k$, we denote by $[f]$ the class in $QP_k$ represented by $f$. If  $\omega$ is a weight vector and $f \in  P_k(\omega)$, then denote by $[f]_\omega$ the class in $QP_k(\omega)$ represented by $f$. Denote by $|S|$ the cardinal of a set $S$.

From \cite{su3}, we have
\begin{equation}\label{ct2.1}
(QP_k)_n \cong \bigoplus_{\deg \omega = n}QP_k(\omega).
\end{equation}

\begin{defn}\label{defn3} 
Let $x, y$ be monomials of the same degree in $P_k$. We say that $x <y$ if and only if one of the following holds:  
	
i) $\omega (x) < \omega(y)$;
	
ii) $\omega (x) = \omega(y)$ and $\sigma(x) < \sigma(y).$
\end{defn}

\begin{defn}
A monomial $x$ in $P_k$ is said to be inadmissible if there exist monomials $y_1,y_2,\ldots, y_t$ such that $y_j<x$ for $j=1,2,\ldots , t$ and $x - \sum_{j=1}^ty_j \in \mathcal A^+P_k.$ 
A monomial $x$ is said to be admissible if it is not inadmissible.
\end{defn} 

Obviously, the set of all the admissible monomials of degree $n$ in $P_k$ is a minimal set of $\mathcal{A}$-generators for $P_k$ in degree $n$. 
\begin{defn} 
A monomial $x$ in $P_k$ is said to be strictly inadmissible if and only if there exist monomials $y_1,y_2,\ldots, y_t$ such that $y_j<x,$ for $j=1,2,\ldots , t$ and $x = \sum_{j=1}^t y_j + \sum_{u=1}^{2^s-1}Sq^u(h_u)$ with $s = \max\{i : \omega_i(x) > 0\}$ and suitable polynomials $h_u \in P_k$.
\end{defn}

It is easy to see that if $x$ is strictly inadmissible, then it is inadmissible.

\begin{thm}[See Kameko \cite{ka}, Sum \cite{su1}]\label{dlcb1}  
Let $x, y, w$ be monomials in $P_k$ such that $\omega_i(x) = 0$ for $i > r>0$, $\omega_s(w) \ne 0$ and   $\omega_i(w) = 0$ for $i > s>0$.
	
{\rm i)}  If  $w$ is inadmissible, then  $xw^{2^r}$ is also inadmissible.
	
{\rm ii)}  If $w$ is strictly inadmissible, then $wy^{2^{s}}$ is also strictly inadmissible.
\end{thm}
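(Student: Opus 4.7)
In both parts the strategy is parallel: take a witness for (strict) inadmissibility of $w$ and multiply through by the prescribed factor, using the Frobenius in characteristic two and the Cartan formula to extract a witness for the larger monomial. I would dispatch part (ii) first, since the Steenrod-degree bound in the strictly inadmissible condition makes the multiplicative algebra essentially trivial; the harder part (i) requires an auxiliary hit-preservation lemma.

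\textbf{Part (ii).} Write $w = \sum_j y_j + \sum_{u=1}^{2^s-1} Sq^u(h_u)$ with $y_j<w$ and multiply by $y^{2^s}$. The pivotal calculation is the Cartan expansion
\[
Sq^u(h_u\,y^{2^s}) = \sum_{a+b=u} Sq^a(h_u)\,Sq^b(y^{2^s}),
\]
combined with $Sq^b(y^{2^s})=0$ unless $2^s\mid b$. Since $0\leq b\leq u\leq 2^s-1$, only $b=0$ contributes, giving $Sq^u(h_u)\,y^{2^s} = Sq^u(h_u\,y^{2^s})$, and hence
\[
wy^{2^s} = \sum_j y_j y^{2^s} + \sum_{u=1}^{2^s-1} Sq^u(h_u\,y^{2^s}),
\]
which has the exact strictly-inadmissible shape for $wy^{2^s}$ (whose weight length is at least $s$). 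The ordering $y_jy^{2^s}<wy^{2^s}$ follows from the computation $\omega_i(wy^{2^s})=\omega_i(w)$ for $i\leq s$ and $\omega_{i+s}(wy^{2^s})=\omega_i(y)$ (valid because $\nu_l(w)<2^s$), together with a short case split on whether $\omega(y_j)$ equals $\omega(w)$.

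\textbf{Part (i).} From $w = \sum_j y_j + g$ with $g\in\mathcal{A}^+P_k$, Frobenius gives $w^{2^r}=\sum_j y_j^{2^r}+g^{2^r}$, and hence
\[
xw^{2^r} = \sum_j xy_j^{2^r} + xg^{2^r}.
\]
Since $\omega_i(x)=0$ for $i>r$ forces $\nu_l(x)<2^r$, the exponents of $xy_j^{2^r}$ decompose as $\nu_l(x)+2^r\nu_l(y_j)$, giving $\omega_i(xy_j^{2^r})=\omega_i(x)$ for $i\leq r$ and $\omega_i(xy_j^{2^r})=\omega_{i-r}(y_j)$ for $i>r$; the same identity holds for $w$ and converts $y_j<w$ into $xy_j^{2^r}<xw^{2^r}$. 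The remaining step, which is the heart of the proof, is the sublemma $xg^{2^r}\in\mathcal{A}^+P_k$.

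\textbf{Main obstacle.} Writing $g=\sum_i Sq^i(h_i)$ and iterating $(Sq^a f)^2=Sq^{2a}(f^2)$ gives $g^{2^r}=\sum_i Sq^{2^r i}(h_i^{2^r})$. Cartan together with $Sq^m(h_i^{2^r})=0$ for $2^r\nmid m$ then produces
\[
x\,Sq^{2^r i}(h_i^{2^r}) = Sq^{2^r i}(xh_i^{2^r}) + \sum_{j'\geq 1} Sq^{2^r j'}(x)\,(Sq^{i-j'}h_i)^{2^r}.
\]
The leading term is manifestly hit, and for $1\leq j'<i$ the factor $(Sq^{i-j'}h_i)^{2^r}$ is the $2^r$-th power of a hit element, hence hit. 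The snag is that multiplication by the polynomial $Sq^{2^r j'}(x)$ does not obviously preserve hitness, so one cannot immediately declare each error term hit; my plan is to induct on $\deg g$ (equivalently on $i$), using $Sq^m(x)=0$ for $m>\deg x$ to terminate the recursion, while carrying a slightly strengthened inductive statement to accommodate the fact that $Sq^{2^r j'}(x)$ may have exponents $\geq 2^r$. Part (ii) avoided this entirely because the strict bound $u<2^s$ killed the multiplicative cross-terms from the start.
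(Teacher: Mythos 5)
The paper does not actually prove this theorem --- it is quoted from Kameko's thesis and \cite{su1} --- so I am measuring your proposal against the standard argument in those sources. Your part (ii) is correct and is essentially that argument: the bound $u\leqslant 2^s-1$ kills all Cartan cross-terms against $y^{2^s}$, and the ordering $y_jy^{2^s}<wy^{2^s}$ is verified exactly as you indicate (the only care needed is that when $\omega(y_j)<\omega(w)$ the first discrepancy must occur at a position $\leqslant s$ because $\omega_i(w)=0$ for $i>s$, so it survives into the first $s$ entries of $\omega(y_jy^{2^s})$, which always equal $\omega_i(y_j)$ for $i\leqslant s$ regardless of whether $y_j$ has exponents $\geqslant 2^s$).

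Part (i), however, has a genuine gap, and it sits exactly where you flagged it: you have no proof of the sublemma that $xg^{2^r}$ is hit when $g$ is hit, and the ``slightly strengthened inductive statement'' is never formulated. The deeper problem is that this sublemma is the wrong target. Inadmissibility of $xw^{2^r}$ does not require $xg^{2^r}$ to be hit; it only requires $xg^{2^r}$ to be congruent, modulo $\mathcal A^+P_k$, to a sum of monomials strictly smaller than $xw^{2^r}$. Your own Cartan identity already delivers this. In
\[
x\,Sq^{2^r i}(h_i^{2^r}) = Sq^{2^r i}(xh_i^{2^r}) + \sum_{j'= 1}^{i} Sq^{2^r j'}(x)\,\bigl(Sq^{i-j'}h_i\bigr)^{2^r},
\]
expand each error term as a sum of monomials $m M^{2^r}$, where $m$ runs over the monomials of $Sq^{2^r j'}(x)$ and $M$ over those of $Sq^{i-j'}h_i$. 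By Lucas' theorem a monomial $m=\prod_l x_l^{e_l+a_l}$ occurs in $Sq^{j}(x)$ only if each $a_l\subseteq e_l$ bitwise; since $e_l<2^r$, every nonzero $a_l$ has $2$-adic valuation $t_l\leqslant r-1$, and at position $t=\min t_l$ a digit $1$ of some $e_l$ is replaced by $0$ while all lower digits are unchanged. Hence $(\omega_1(m),\ldots,\omega_r(m))<(\omega_1(x),\ldots,\omega_r(x))$ in left-lex order for every $j'\geqslant 1$, and since the first $r$ weight entries of $mM^{2^r}$ and of $xw^{2^r}$ are exactly those of $m$ and of $x$, we get $\omega(mM^{2^r})<\omega(xw^{2^r})$, so $mM^{2^r}<xw^{2^r}$ by Definition \ref{defn3}(i). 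Together with your (correct) treatment of the monomials $xy_j^{2^r}$ and the manifestly hit leading terms $Sq^{2^ri}(xh_i^{2^r})$, this exhibits $xw^{2^r}$ as a sum of strictly smaller monomials plus a hit polynomial, which is the definition of inadmissibility. Note also that even within your intended strategy the term $j'=i$ is not covered by your sentence about ``$2^r$-th powers of hit elements'': its cofactor is $h_i^{2^r}$, and while this too is hit (any $2^r$-th power of positive degree equals $Sq^{2^{r-1}\deg h_i}(h_i^{2^{r-1}})$), the obstruction you name --- that multiplication by $Sq^{2^rj'}(x)$ need not preserve hitness --- applies to it just as much; the weight-vector argument above is what replaces all of this.
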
 

\begin{prop}[See \cite{su2}]\label{mdcb3} Let $x$ be an admissible monomial in $P_k$. Then we have

\smallskip
{\rm i)} If there is an index $i_0$ such that $\omega_{i_0}(x)=0$, then $\omega_{i}(x)=0$ for all $i > i_0$.
	
{\rm ii)} If there is an index $i_0$ such that $\omega_{i_0}(x)<k$, then $\omega_{i}(x)<k$ for all $i > i_0$.
\end{prop}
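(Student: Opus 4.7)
I would prove both parts by contrapositive, using the binary-layer decomposition $x = \prod_{t\geq 0} X_{\mathbb J_t(x)}^{2^t}$ to localize the obstruction, and then lifting a hit or strictly-inadmissible pattern up to $x$ via Theorem~\ref{dlcb1}.

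For part (i), the hypothesis $\omega_{i_0}(x) = 0$ says $X_{\mathbb J_{i_0-1}(x)} = 1$, so $x$ splits as $x = y\cdot z^{2^{i_0}}$ with $y$ concentrated in bits strictly below $i_0 - 1$ and $z$ in bits from $i_0$ upward. If there exists $i > i_0$ with $\omega_i(x) > 0$, then $z$ is nonconstant; since $i_0 \geq 1$, the factor $z^{2^{i_0}} = (z^{2^{i_0-1}})^2 = Sq^{\deg(z^{2^{i_0-1}})}(z^{2^{i_0-1}})$ is hit by the unstable axiom, hence inadmissible. If $y = 1$ this already makes $x$ hit; otherwise, taking $r = \max\{i : \omega_i(y) > 0\} \in \{1,\dots,i_0-1\}$ and $w = z^{2^{i_0-r}}$ (a nontrivial square, hence inadmissible), Theorem~\ref{dlcb1}(i) applied to $x = y\cdot w^{2^r}$ forces $x$ inadmissible, contradicting admissibility.

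For part (ii), assume $x$ is admissible with $\omega_{i_0}(x) < k$ and $\omega_{i_1}(x) = k$ for some $i_1 > i_0$; take $i_1$ minimal. Part (i), already proved, forces $\omega_j(x) > 0$ for all $j \leq i_1$, and minimality gives $0 < \omega_{i_1-1}(x) < k$. Hence $\mathbb J := \mathbb J_{i_1-2}(x)$ is a proper nonempty subset of $\mathbb N_k$, and $x$ contains the two-bit factor $w_0 := X_\mathbb J\cdot X_\emptyset^{\,2}$, giving the overall factorization $x = y_{\text{low}}\cdot w_0^{2^{i_1-2}}\cdot z^{2^{i_1}}$. The core computation is strict inadmissibility of $w_0$: fixing any $l \in \mathbb J$ and letting $u$ be obtained from $w_0$ by reducing $\nu_l$ from $2$ to $1$, the Cartan formula gives
\[
Sq^1(u) \;=\; w_0 \;+\; \sum_{l' \in \mathbb N_k\setminus\mathbb J} u_{l'},
\]
with $u_{l'} = u\cdot x_{l'}$ having $\omega(u_{l'}) = (k-|\mathbb J|,\, k-2,\, 1,\, 0, \dots)$, lexicographically strictly below $\omega(w_0) = (k-|\mathbb J|,\, k)$. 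With strict inadmissibility of $w_0$ (top index $s = 2$) in hand, Theorem~\ref{dlcb1}(ii) yields strict inadmissibility of $W := w_0\cdot z^{4}$, and Theorem~\ref{dlcb1}(i) applied to $x = y_{\text{low}}\cdot W^{2^{i_1-2}}$ (regrouped as $y_{\text{low}}\cdot (W^{2^{i_1-2-r}})^{2^r}$ for $r = \max\{i : \omega_i(y_{\text{low}}) > 0\}$, using that any nontrivial power $\geq 2$ of $W$ is a hit square) delivers $x$ inadmissible, the required contradiction.

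The main obstacle is the base-case strict inadmissibility of $w_0 = X_\mathbb J\cdot X_\emptyset^{\,2}$: one must carry out the Cartan computation for $Sq^1(u)$ explicitly and check that every $u_{l'}$ produced has weight vector lexicographically below $\omega(w_0)$. The hypotheses $\mathbb J \neq \emptyset$ (so some $\nu_l$ is reducible) and $\mathbb J \neq \mathbb N_k$ (so there is an $l' \in \mathbb N_k \setminus \mathbb J$ contributing the lowering term) are both essential at this step.
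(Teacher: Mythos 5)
Your proof is correct: in part (i) you rightly exhibit the tail of $x$ as a nontrivial square, hence hit, and lift inadmissibility through Theorem~\ref{dlcb1}(i), and in part (ii) the Cartan computation showing $X_{\mathbb J}X_{\emptyset}^{2}$ is strictly inadmissible is accurate (the weight comparison $(k-|\mathbb J|,\,k-2,\,1)<(k-|\mathbb J|,\,k)$ in the left lexicographic order is exactly what is needed, and the constraints $\emptyset\neq\mathbb J\neq\mathbb N_k$ are guaranteed by part (i) together with the minimality of $i_1$). The paper does not reprove Proposition~\ref{mdcb3} but imports it from \cite{su2}, and your argument follows the same strategy as the original proof there, so no further comparison is needed.
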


Now, we recall a result of Singer \cite{si2} on the hit monomials in $P_k$. 

\begin{defn}\label{spi}  A monomial $z$ in $P_k$   is called a spike if $\nu_j(z)=2^{d_j}-1$ for $d_j$ a non-negative integer and $j=1,2, \ldots , k$. If $z$ is a spike with $d_1>d_2>\ldots >d_{r-1}\geqslant d_r>0$ and $d_j=0$ for $j>r,$ then it is called the minimal spike.
\end{defn}

In \cite{si2}, Singer showed that if $\mu(n) \leqslant k$, then there exists uniquely a minimal spike of degree $n$ in $P_k$. 
The following is a criterion for the hit monomials in $P_k$.

\begin{thm}[See Singer~\cite{si2}]\label{dlsig} Suppose $x \in P_k$ is a monomial of degree $n$, where $\mu(n) \leqslant k$. Let $z$ be the minimal spike of degree $n$. If $\omega(x) < \omega(z)$, then $x$ is hit.
\end{thm}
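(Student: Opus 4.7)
The plan is to prove the statement by descending induction on the weight vector $\omega(x)$ in the left-lexicographic order, the target being: every monomial $x$ of degree $n$ with $\omega(x) < \omega(z)$ is hit. Since only finitely many weight vectors occur in a fixed degree, the induction is well-founded; it suffices to produce, for each such $x$, a relation $x \equiv \sum_j y_j \pmod{\mathcal{A}^+ P_k}$ with $\omega(y_j) < \omega(x)$ for every $j$, after which the inductive hypothesis finishes the job.

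I would first pin down the structure of the minimal spike. Writing $n = \sum_{j=1}^{s}(2^{d_j}-1)$ with $d_1 > d_2 > \cdots > d_s > 0$ and $s = \mu(n) \leqslant k$, the minimal spike is $z = \prod_{j=1}^{s} x_j^{2^{d_j}-1}$, and $\omega_i(z) = \#\{j : d_j \geqslant i\}$ is a non-increasing sequence bounded above by $s$. Given $x$ with $\omega(x) < \omega(z)$, let $i_0$ be the smallest index at which $\omega_{i_0}(x) < \omega_{i_0}(z)$. Since the weighted sums agree, $\sum_i 2^{i-1}\omega_i(x) = n = \sum_i 2^{i-1}\omega_i(z)$, there must exist some $i_1 > i_0$ with $\omega_{i_1}(x) > 0$: intuitively, the weight missing from position $i_0$ has been pushed up to some higher position.

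I would then exploit this ``dip-and-bulge'' shape to extract a Steenrod relation. Using the canonical decomposition $x = \prod_{t \geqslant 0} X_{\mathbb J_t(x)}^{2^t}$, one chooses an index $j_1 \in \mathbb N_k \setminus \mathbb J_{i_1-1}(x)$ witnessing the bulge and constructs a polynomial $P$ from $x$ by transferring one unit of weight from position $i_1$ at the variable $x_{j_1}$ down to position $i_0$. Applying $Sq^{2^{i_0-1}}$ to $P$ and expanding via the Cartan formula together with the instability relations $Sq^{j}(u^{2^t}) = 0$ unless $2^t \mid j$, one obtains
\[
Sq^{2^{i_0-1}}(P) \;=\; x \,+\, (\text{residuals}),
\]
where by construction every residual monomial lies in $P_k^{-}(\omega(x))$, so the descending induction disposes of the residuals and forces $x \equiv 0$.

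The main obstacle will be the Cartan-expansion bookkeeping in this final step: one must verify that every way of distributing $Sq^{2^{i_0-1}}$ across the factors of $P$ either vanishes by unstability or yields a monomial with weight vector strictly less than $\omega(x)$, with no accidental repetition of the ``leading'' monomial $x$. This is precisely where the minimality of $z$ is indispensable — the non-increasing shape of $\omega(z)$, together with the hypothesis $\mu(n) \leqslant k$, guarantees that the lower strata have enough free slots to absorb the residuals and rules out a competing monomial at weight $\omega(x)$.
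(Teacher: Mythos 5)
The paper offers no proof of this statement --- it is quoted directly from Singer \cite{si2} --- so your argument has to stand on its own, and as written it does not. The induction scheme (descending induction on $\omega(x)$, reducing each $x$ to terms of strictly smaller weight) is sound in principle, but the key step is both incorrectly set up and unproven. First, the move does not preserve degree: if you replace $a_{j_1}$ by $a_{j_1}-2^{i_1-1}+2^{i_0-1}$ and then apply $Sq^{2^{i_0-1}}$, the result lies in degree $n-2^{i_1-1}+2^{i_0}$, which equals $n$ only when $i_1=i_0+1$, and nothing forces a bulge at position $i_0+1$. For example $x=x_1x_2^9\in P_2$ has degree $10$, minimal spike $z=x_1^7x_2^3$, $\omega(x)=(2,0,0,1)<(2,2,1)=\omega(z)$, so $i_0=2$ while the only bulge is at $i_1=4$; your $P=x_1x_2^3$ has degree $4$ and $Sq^2(P)$ cannot contain $x$. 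Second, the assertion that ``by construction every residual monomial lies in $P_k^-(\omega(x))$'' is precisely what needs proof and is false for the naive construction: by Lucas's theorem the concentrated Cartan term $e_j=2^{i_0-1}$ survives for every variable with $\alpha_{i_0-1}(b_j)=1$, and whenever $\omega_{i_0}(x)>0$ there are such $j\ne j_1$, producing residuals whose weight vectors need not drop. The identity $Sq^1(x_1x_2x_3)=x_1^2x_2x_3+x_1x_2^2x_3+x_1x_2x_3^2$, in which all three terms share the same weight vector, shows that a single square applied to a single monomial routinely yields several terms of equal maximal weight, so an induction on the weight vector alone cannot isolate $x$. Third, the intended leading coefficient $\binom{b_{j_1}}{2^{i_0-1}}$ can itself vanish mod $2$ depending on the lower bits of $a_{j_1}$. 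You correctly identify the Cartan bookkeeping as ``the main obstacle,'' but then assert its resolution by appeal to the minimality of $z$ without ever using that minimality.

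What you have described is the standard local technique for showing that \emph{specific} monomials are (strictly) inadmissible --- exactly what this paper does in Lemmas \ref{bda51}, \ref{bda61}, etc., one monomial at a time --- and it does not scale to a uniform proof of Singer's theorem. The actual proof is global: with $h$ the least index at which $\omega_h(x)<\omega_h(z)$, write $x=fg^{2^h}$ with $f=\prod_{t<h}X_{\mathbb J_t(x)}^{2^t}$, so that
\[
\deg f=\sum_{i=1}^{h}2^{i-1}\omega_i(x)<\sum_{i=1}^{h}2^{i-1}\omega_i(z)=\sum_{j}\bigl(2^{\min(h,d_j)}-1\bigr)\leqslant (2^h-1)\,\mu(n),
\]
and then verify the hypothesis of Silverman's criterion (Theorem \ref{dlsil}); the $\chi$-trick underlying that criterion kills the entire product at once, with no induction on weight and no term-by-term control of residuals. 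This is exactly how the stronger Theorem \ref{dlww} is obtained in Walker--Wood. To salvage your route you would need to iterate one-position transfers using $Sq^{2^{i_1-2}}$ rather than $Sq^{2^{i_0-1}}$ and to refine the induction from the weight vector to the full monomial order of Definition \ref{defn3}; at that point you are attempting to prove that every such monomial is inadmissible, which is a genuinely harder combinatorial statement than the theorem itself.
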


We need Silverman's criterion for the hit polynomials in $P_k$.
\begin{thm}[{See Silverman~\cite[Theorem 1.2]{sl2}}]\label{dlsil} Let $p$ be a polynomial of the form $fg^{2^m}$ for some homogeneous  polynomials $f$ and $g$. If $\deg f < (2^m - 1)\mu(\deg g)$, then $p$ is hit.
\end{thm}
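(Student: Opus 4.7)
My plan is to proceed by induction on $m$. The base case $m=0$ is vacuous since the hypothesis $\deg f < 0$ forces $f=0$, so $p=0$ is trivially hit. For the inductive step I first reduce to monomials: working in characteristic $2$, the Frobenius identity $g^{2^m} = \sum_j g_j^{2^m}$ (for a decomposition $g = \sum_j g_j$ into monomials) together with linearity in $f$ allows us to assume both $f$ and $g$ are monomials. Using the characterization $\mu(d) = s$ iff $d = \sum_{i=1}^{s}(2^{a_i}-1)$ for positive integers $a_i$, a further reduction via Theorem \ref{dlcb1} allows us to assume that $g$ is a spike of length exactly $s = \mu(\deg g)$ supported on $s$ of the variables.

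The central engine is the Cartan-formula identity
\[
Sq^j(f g^{2^m}) \;=\; Sq^j(f)\,g^{2^m} \qquad \text{for } 0<j<2^m,
\]
obtained from the fact that $Sq^j(g^{2^m})=0$ whenever $2^m \nmid j$. This makes every term of the form $Sq^j(f)\,g^{2^m}$ with $0<j<2^m$ automatically hit. The strategy is to produce, for the given $f$, a rewriting $f \equiv f_1 + \cdots \pmod{\mathcal{A}^+P_k}$ in which each remainder term either (a) is of the form $f'\,g^{2^{m-1}}$ with $\deg f' < (2^{m-1}-1)\mu(\deg g)$, putting it in the scope of the inductive hypothesis (applied with exponent $2^{m-1}$ and then squared), or (b) has weight vector strictly below that of the minimal spike of degree $\deg(f g^{2^m})$, so that Singer's criterion (Theorem \ref{dlsig}) directly declares it hit. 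The threshold $(2^m-1)\mu(\deg g)$ is exactly the right one for induction because $2^m-1 = 2(2^{m-1}-1)+1$: each halving of $m$ precisely halves the permitted slack in $\deg f$.

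The main obstacle is the combinatorial bookkeeping across the induction: one must check that each Cartan-formula manipulation produces corrections whose weight vectors either satisfy the stricter inductive bound or fall below the minimal-spike weight, and that the admissibility-style comparisons of Section \ref{s2} (notably Proposition \ref{mdcb3}) carry the argument through without producing uncontrolled tails. A secondary subtlety lies in the initial reduction from general monomials $g$ to spikes: one needs the $\omega$-ordering to be well-behaved under the Frobenius twist $v \mapsto v^{2^m}$, which is exactly the content of the shift $\omega_i(v^{2^m}) = \omega_{i-m}(v)$ for $i>m$ and $0$ otherwise, and is precisely what Theorem \ref{dlcb1} is designed to exploit.
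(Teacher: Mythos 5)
The paper offers no proof of this statement: it is imported verbatim from Silverman~\cite[Theorem 1.2]{sl2}, where the argument runs through the $\chi$-trick (the fact that $u\,Sq^n(v)+\chi(Sq^n)(u)\,v$ is hit) and excess estimates for the conjugates $\chi(Sq^n)$ in the dual Steenrod algebra, not through the weight-vector machinery of Section~\ref{s2}. Your opening observation is correct and is indeed the standard first step: by the Cartan formula $Sq^j(fg^{2^m})=Sq^j(f)\,g^{2^m}$ for $0<j<2^m$, so the class of $fg^{2^m}$ depends only on $f$ modulo $\sum_{0<j<2^m}Sq^j(P_k)$. The rest of the plan, however, contains two genuine breaks.

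The decisive one is that your induction does not close. Writing $fg^{2^m}=f\,(g^2)^{2^{m-1}}$, the inductive hypothesis applies only when $\deg f<(2^{m-1}-1)\mu(2\deg g)$. But if $\deg g=\sum_{i=1}^{s}(2^{a_i}-1)$ with $s=\mu(\deg g)$, then doubling gives $2\deg g$ as a sum of $2s$ such terms, so $\mu(2\deg g)\leqslant 2\mu(\deg g)$ and hence $(2^{m-1}-1)\mu(2\deg g)\leqslant(2^{m}-2)\mu(\deg g)<(2^{m}-1)\mu(\deg g)$. The range of $\deg f$ you are entitled to assume at level $m-1$ therefore falls short of the range you must prove at level $m$ by at least $\mu(\deg g)$ at every step; your remark that $2^m-1=2(2^{m-1}-1)+1$ makes the threshold ``exactly right'' points in the wrong direction, since the surplus $+1$ sits on the side of the goal, not of the hypothesis. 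Closing exactly this deficit is what separates Silverman's theorem from the weaker bounds reachable by such halving arguments (essentially the earlier Silverman--Singer result~\cite{ss}), and nothing in options (a)/(b) supplies the missing gain. Secondly, the reduction ``assume $g$ is a spike'' is not available: Theorem~\ref{dlcb1} is a statement about inadmissibility of monomials $xw^{2^r}$ as generators of $QP_k$, not a device for replacing $g$ inside the product $fg^{2^m}$. If $g$ is inadmissible one only has $g=\sum_j y_j+\sum_u Sq^u(h_u)$, whence $fg^{2^m}$ acquires the terms $f\cdot Sq^{2^mu}(h_u^{2^m})$; these are not covered by your Cartan-formula identity (which handles only $Sq^j$ with $j<2^m$ applied to $f$), and eliminating them via the $\chi$-trick produces $\chi(Sq^{2^mu})(f)\cdot h_u^{2^m}$, where the $f$-part has grown in degree while $\mu(\deg h_u)$ is uncontrolled, so the hypothesis of the theorem is not preserved. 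As written, the proposal is a plan whose two load-bearing reductions fail; carried out optimistically it would prove at best a statement with $(2^m-2)\mu(\deg g)$ in place of $(2^m-1)\mu(\deg g)$.
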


This result leads to a criterion in terms of minimal spike which is a stronger version of Theorem \ref{dlsig}.

\begin{thm}[{See Walker-Wood~\cite[Theorem 14.1.3]{wa1}}]\label{dlww} Let $x \in P_k$ be a monomial of degree $n$, where $\mu(n)\leqslant k$ and let $z$ be the minimal spike of degree $n$. If there is an index $h$ such that $\sum_{i=1}^{h}2^{i-1}\omega_i(x) < \sum_{i=1}^{h}2^{i-1}\omega_i(z),$ then $x$ is hit.
\end{thm}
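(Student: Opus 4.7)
The plan is to reduce the statement to Silverman's criterion (Theorem \ref{dlsil}) via a factorization of $x$ coming from its weight-vector decomposition. Using the canonical expression $x = \prod_{t \geq 0} X_{\mathbb J_t(x)}^{2^t}$, I would set
\[u = \prod_{0 \leq t < h} X_{\mathbb J_t(x)}^{2^t}, \qquad v = \prod_{t \geq h} X_{\mathbb J_t(x)}^{2^{t-h}},\]
so that $x = u \cdot v^{2^h}$. Because $\omega_i(x) = \deg X_{\mathbb J_{i-1}(x)}$, one has $\deg u = \sum_{i=1}^h 2^{i-1}\omega_i(x)$ and $\deg v = 2^{-h}(n - \deg u)$. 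Apply the same decomposition to $z$, writing $z = u_z v_z^{2^h}$. Since $z$ has exponents $2^{d_j} - 1$ with $d_1 > d_2 > \ldots > d_{r-1} \geq d_r > 0$, the upper factor $v_z = \prod_{d_j > h} x_j^{2^{d_j - h} - 1}$ is itself a minimal spike, hence $\mu(\deg v_z) = |\{j : d_j > h\}| = \omega_{h+1}(z)$, and a direct computation yields
\[\deg u_z = (2^h - 1)\,\mu(\deg v_z) + \sum_{d_j \leq h}(2^{d_j} - 1),\]
which shows that $z$ itself sits exactly at the boundary of Silverman's criterion.

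The hypothesis $\sum_{i=1}^h 2^{i-1}\omega_i(x) < \sum_{i=1}^h 2^{i-1}\omega_i(z)$ is precisely $\deg u < \deg u_z$, equivalently $\deg v > \deg v_z$; writing $\deg v = \deg v_z + t$ with $t \geq 1$ (the identity $2^h(\deg v - \deg v_z) = \deg u_z - \deg u$ forces $t \in \mathbb{Z}_{>0}$), the entire theorem reduces to proving the Silverman-type inequality
\[\deg u < (2^h - 1)\,\mu(\deg v). \qquad (\ast)\]
Indeed, once $(\ast)$ is established, applying Theorem \ref{dlsil} with $p = x$, $f = u$, $g = v$, $m = h$ gives $x \in \mathcal{A}^+P_k$, as desired.

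To establish $(\ast)$, I would substitute $\deg u = \deg u_z - 2^h t$ together with the displayed formula for $\deg u_z$ to convert $(\ast)$ into the equivalent inequality
\[(2^h - 1)\bigl(\mu(\deg v_z) + 1 \bigr) + \sum_{d_j \leq h}(2^{d_j}-1) \leq (2^h - 1)\,\mu(\deg v_z + t) + 2^h t,\]
and then analyze the binary expansion of $\deg v_z = \sum_{d_j > h}(2^{d_j - h} - 1)$. The key observation is that this spike-degree has a very rigid binary pattern (runs of $1$'s determined by the gaps $d_j - d_{j+1}$), so that any positive increment $t$ either raises $\alpha(\deg v_z + t)$ by a controlled amount or else causes a carry that compensates directly via the $2^h t$ term; a short induction on $t$ using the defining property $\alpha(n + \mu(n)) \leq \mu(n)$ then closes the estimate.

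The hard part is precisely this final combinatorial step, because $\mu(\cdot)$ is highly non-monotonic and a naive bound of the form $\mu(\deg v) \geq \mu(\deg v_z)$ is false in general. The argument must genuinely exploit both the special form of $\deg v_z$ as the degree of a minimal spike (controlling its binary expansion) and the fact that the deficit $\deg u_z - \deg u$ is a multiple of $2^h$, so that the weight lost from $u$ resurfaces as a large enough jump of $\mu(\deg v)$.
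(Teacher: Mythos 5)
The paper does not prove this statement; it is quoted from Walker--Wood with the remark that it follows from Silverman's criterion (Theorem \ref{dlsil}), so your choice of route is the expected one. Your set-up is correct: writing $x=u\,v^{2^h}$ with $\deg u=\sum_{i=1}^{h}2^{i-1}\omega_i(x)$, the theorem does reduce to the inequality $(\ast)\colon\ \deg u<(2^h-1)\mu(\deg v)$, and your formula $\deg u_z=(2^h-1)\mu(\deg v_z)+\sum_{d_j\le h}(2^{d_j}-1)$ together with $\mu(\deg v_z)=\omega_{h+1}(z)$ is right. But there are two problems. First, the displayed ``equivalent inequality'' is not equivalent to $(\ast)$: since all quantities are integers, $(\ast)$ amounts to $\deg u_z+1\le(2^h-1)\mu(\deg v_z+t)+2^h t$, whereas your version demands $\deg u_z+(2^h-1)\le{}$ the same right-hand side, which is strictly stronger for $h\ge 2$ and is in fact false. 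Take $k=5$, $n=27$, $z=x_1^{15}x_2^{7}x_3^{3}x_4x_5$, $h=3$: then $\mu(\deg v_z)=\mu(1)=1$, $\sum_{d_j\le 3}(2^{d_j}-1)=12$, and for $t=1$ your left-hand side is $7\cdot 2+12=26$ while the right-hand side is $7\cdot\mu(2)+8=22$; yet $(\ast)$ itself holds there, since $\deg u=11<14=(2^3-1)\mu(2)$ (this $t$ is realized, e.g., by $x=x_1^{15}x_2^{9}x_3^{3}$). So the plan as written aims at a false target.

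Second, and more fundamentally, $(\ast)$ is never actually proved. Your last paragraph describes a strategy (``analyze the binary expansion\dots a short induction on $t$\dots closes the estimate'') and you yourself flag that $\mu$ is non-monotonic and that naive bounds fail. That combinatorial step is the entire content of the theorem beyond Silverman's criterion: one must show that for every $c\equiv n\pmod{2^h}$ with $0\le c<\sum_{i=1}^{h}2^{i-1}\omega_i(z)$ one has $c<(2^h-1)\mu\big((n-c)/2^h\big)$, i.e.\ that the minimal spike realizes the least admissible value of the truncated weight sum. The delicate cases are exactly those where $\mu(\deg v_z+t)$ drops below $\mu(\deg v_z)$ (e.g.\ $\deg v_z=4$, $t=3$, $\mu$ falling from $2$ to $1$), and handling them needs a genuine argument about spike degrees, not just the defining property $\alpha(n+\mu(n))\le\mu(n)$. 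As it stands, the proposal is a correct reduction plus an unproved --- and, in the form you restated it, incorrect --- key lemma.
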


We set 
\begin{align*} 
P_k^0 &=\langle\{x=x_1^{a_1}x_2^{a_2}\ldots x_k^{a_k} \ : \ a_1a_2\ldots a_k=0\}\rangle,\\ 
P_k^+ &= \langle\{x=x_1^{a_1}x_2^{a_2}\ldots x_k^{a_k} \ : \ a_1a_2\ldots a_k>0\}\rangle. 
\end{align*}

It is easy to see that $P_k^0$ and $P_k^+$ are the $\mathcal{A}$-submodules of $P_k$. Furthermore, we have the following.

\begin{prop}\label{2.7} We have a direct summand decomposition of the $\mathbb F_2$-vector spaces
$QP_k =QP_k^0 \oplus  QP_k^+.$
Here $QP_k^0 = P_k^0/\mathcal A^+P_k^0$ and  $QP_k^+ = P_k^+/\mathcal A^+P_k^+$.
\end{prop}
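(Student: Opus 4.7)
The plan is to reduce the proposition to showing that $P_k^0$ and $P_k^+$ are each stable under the $\mathcal{A}$-action on $P_k$. Once we know this, the $\mathbb F_2$-vector space decomposition $P_k = P_k^0\oplus P_k^+$ obtained by splitting monomials according to whether at least one exponent is zero or all exponents are positive becomes an $\mathcal{A}$-module decomposition, and then $\mathcal{A}^+P_k=\mathcal{A}^+P_k^0\oplus \mathcal{A}^+P_k^+$, which immediately gives
\[QP_k = P_k/\mathcal{A}^+P_k\cong P_k^0/\mathcal{A}^+P_k^0\,\oplus\, P_k^+/\mathcal{A}^+P_k^+ = QP_k^0\oplus QP_k^+.\]

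The first step, therefore, is to check $\mathcal{A}$-stability. Since $\mathcal{A}$ is generated by the Steenrod squares $Sq^i$, it suffices to show $Sq^i(P_k^\varepsilon)\subset P_k^\varepsilon$ for $\varepsilon\in\{0,+\}$. I would apply the Cartan formula to a monomial $x=x_1^{a_1}\ldots x_k^{a_k}$, giving
\[Sq^i(x) = \sum_{i_1+\ldots+i_k = i}\prod_{j=1}^k Sq^{i_j}(x_j^{a_j}) = \sum_{i_1+\ldots+i_k = i}\prod_{j=1}^k\binom{a_j}{i_j}x_j^{a_j+i_j}.\]
If $x\in P_k^+$ then every $a_j>0$, so every surviving summand has all exponents $a_j+i_j>0$, hence lies in $P_k^+$. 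If $x\in P_k^0$ then some $a_{j_0}=0$, which forces $i_{j_0}=0$ in any nonzero term (because $\binom{0}{i_{j_0}}=0$ for $i_{j_0}>0$), so $x_{j_0}$ remains absent in every summand and $Sq^i(x)\in P_k^0$.

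Having shown $\mathcal{A}$-stability, the second step is the purely formal vector-space computation above. The decomposition $P_k=P_k^0\oplus P_k^+$ is clear from the definitions since each monomial belongs to exactly one of the two spanning sets, and $\mathcal{A}^+P_k = \mathcal{A}^+(P_k^0\oplus P_k^+) = \mathcal{A}^+P_k^0+\mathcal{A}^+P_k^+$, the sum being direct because $\mathcal{A}^+P_k^\varepsilon\subset P_k^\varepsilon$. Passing to the quotient then yields the desired decomposition of $QP_k$.

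There is no real obstacle here; the only thing to be careful about is the Cartan-formula argument for $P_k^0$, where one must note that a missing variable in $x$ cannot be reintroduced by $Sq^i$ precisely because $Sq^{i_j}(1)=0$ for $i_j>0$. Once that observation is in place, the rest of the proof is automatic.
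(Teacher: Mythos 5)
Your proof is correct, and it follows the same route the paper implicitly takes: the paper simply remarks that $P_k^0$ and $P_k^+$ are $\mathcal A$-submodules of $P_k$ and states the decomposition without further detail, which is exactly the Cartan-formula stability check plus the formal quotient argument you supply. The one point worth making explicit — that a missing variable cannot be reintroduced because $Sq^{i_j}(1)=0$ for $i_j>0$ — is handled correctly in your write-up.
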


For $1 \leqslant i \leqslant k$, define the homomorphism $f_i: P_{k-1} \to P_k$ of algebras by substituting
\begin{equation}\label{ct22}
f_i(x_j) = \begin{cases} x_j, &\text{ if } 1 \leqslant j <i,\\
x_{j+1}, &\text{ if } i \leqslant j <k.
\end{cases}
\end{equation}

\begin{prop}[See Mothebe and Uys \cite{mo}]\label{mdmo} Let $i, d$ be positive integers such that $1 \leqslant i \leqslant k$. If $x$ is an admissible monomial in $P_{k-1}$ then $x_i^{2^d-1}f_i(x)$ is also an admissible monomial in $P_{k}$.
\end{prop}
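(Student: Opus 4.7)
The plan is to argue by contraposition: assume $y:=x_i^{2^d-1}f_i(x)$ is inadmissible in $P_k$, and deduce that $x$ itself is inadmissible in $P_{k-1}$. By definition of inadmissibility, write $y=\sum_{j=1}^t z_j + h$ with monomials $z_j<y$ and a hit polynomial $h\in\mathcal A^+P_k$. Since $2^d-1=1+2+\cdots+2^{d-1}$ and $f_i$ is a variable relabeling, the weight vector satisfies $\omega_s(y)=\omega_s(x)+1$ for $1\leqslant s\leqslant d$ and $\omega_s(y)=\omega_s(x)$ for $s>d$, while $\nu_i(y)=2^d-1$ and $\nu_i(f_i(x))=0$. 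The whole argument hinges on exploiting this very rigid structure of $y$.

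The decisive step is a \emph{purification} of the given relation, in the quotient $P_k(\omega(y))/P_k^-(\omega(y))$: I claim each $z_j$ may be replaced by a monomial of the form $x_i^{2^d-1}f_i(w_j)$ with $w_j<x$ in $P_{k-1}$, and $h$ replaced by $x_i^{2^d-1}f_i(h')$ with $h'\in\mathcal A^+P_{k-1}$. The justification has two ingredients. First, any monomial $m$ with $\omega(m)=\omega(y)$ and $\nu_i(m)\ne 2^d-1$ necessarily distributes $x_i$ across several of the first $d$ layers, so it either drops to a strictly smaller weight vector (and enters $P_k^-(\omega(y))$) or has a smaller exponent vector but still factors through $x_i^{2^d-1}$. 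Second, any stray summands produced by Cartan-formula expansions of $Sq^a$ contain a factor of $x_i$ raised to a power congruent to $2^d-1\pmod{2^d}$ together with a $2^d$-th power companion, so Silverman's criterion (Theorem \ref{dlsil}) absorbs the unwanted terms into $\mathcal A^+P_k$.

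Granting this purification, one divides throughout by $x_i^{2^d-1}$ and applies $f_i^{-1}$, which is well-defined on the subalgebra of $P_k$ not involving $x_i$. This yields
\[
x = \sum_{j=1}^t w_j + h', \qquad w_j<x,\ h'\in\mathcal A^+P_{k-1},
\]
contradicting the admissibility of $x$ in $P_{k-1}$. The fact that the ordering $<$ is preserved under $f_i$ and under multiplication by the one-variable spike $x_i^{2^d-1}$ is straightforward from the definitions of $\omega(\cdot)$ and $\sigma(\cdot)$, because prepending a coordinate of value $2^d-1$ shifts both sequences coherently.

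The main obstacle is the purification step. A priori it is conceivable that Steenrod operations occurring in $h$ mix the $x_i^{2^d-1}$ factor with other variables in a way that cannot be undone. The resolution relies on the uniqueness of $2^d-1$ as the spike exponent of its degree together with Theorem \ref{dlww}: any putative mixing term forces a strict inequality $\sum_{i=1}^h 2^{i-1}\omega_i(\cdot)<\sum_{i=1}^h 2^{i-1}\omega_i(z)$ at some level $h\leqslant d$, where $z$ is the relevant minimal spike, and hence is itself hit and may be discarded. Everything else in the plan is a bookkeeping exercise on weight vectors.
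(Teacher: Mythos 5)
The paper does not prove Proposition \ref{mdmo} at all --- it is quoted from Mothebe and Uys \cite{mo} --- so your argument has to stand entirely on its own, and it does not: the \emph{purification} step, which you correctly identify as the crux, is asserted rather than established, and both ingredients you offer for it fail. Take $k=3$, $i=1$, $d=2$, $x=x_1x_2^2$ (admissible in $P_2$), so $y=x_1^3x_2x_3^2$. The monomial $m=x_1x_2^3x_3^2$ satisfies $\omega(m)=\omega(y)=(2,2)$ and $\sigma(m)=(1,3,2)<(3,1,2)=\sigma(y)$, so it is a legitimate candidate $z_j$ in an inadmissibility relation for $y$; yet it does not lie in $P_3^-(\omega(y))$, it is not divisible by $x_1^{3}$, and it is not hit by Theorem \ref{dlww}, since the minimal spike of degree $6$ in $P_3$ is $x_1^3x_2^3$ with the \emph{same} weight vector $(2,2)$, so no strict partial-sum inequality is available. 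This refutes your claimed dichotomy that a monomial of weight $\omega(y)$ with $\nu_i\ne 2^d-1$ ``either drops to a strictly smaller weight vector or still factors through $x_i^{2^d-1}$.'' Likewise, the assertion that the Cartan-formula terms of $h=\sum_u Sq^{2^u}(h_u)$ carry an $x_i$-exponent congruent to $2^d-1 \pmod{2^d}$ together with a $2^d$-th power companion has no basis: the $h_u$ are arbitrary polynomials supplied by the hypothesis that $y$ is hit modulo smaller monomials, and Theorem \ref{dlsil} requires an explicit factorization $fg^{2^m}$ with $\deg f<(2^m-1)\mu(\deg g)$ that these terms need not admit.

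Consequently the final step --- dividing by $x_i^{2^d-1}$ and applying $f_i^{-1}$ --- is not available. What it would require is precisely that
$\mathcal A^+P_k\cap\bigl(x_i^{2^d-1}\cdot f_i(P_{k-1})\bigr)$ be contained in $x_i^{2^d-1}\cdot f_i(\mathcal A^+P_{k-1})$ modulo genuinely negligible terms, and that is essentially the whole content of the theorem, not a bookkeeping exercise. The actual proof in \cite{mo} has to control how $Sq^a$ interacts with the factor $x_i^{2^d-1}$ (using, e.g., that $\binom{2^d-1}{j}$ is odd for all $0\leqslant j\leqslant 2^d-1$) and to account for exactly the kind of ``mixed'' monomials exhibited above; none of that machinery appears in your sketch. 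The parts of your argument that are correct --- the computation $\omega_s(y)=\omega_s(x)+1$ for $s\leqslant d$, and the compatibility of the order $<$ with $w\mapsto x_i^{2^d-1}f_i(w)$ --- are the easy bookkeeping; the theorem itself remains unproved.
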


Denote $\mathcal N_k =\{(i;I) : I=(i_1,i_2,\ldots,i_r),1 \leqslant  i < i_1 <  \ldots < i_r\leqslant  k,\ 0\leqslant r <k\}$. For any $(i;I) \in \mathcal N_k$, we define the homomorphism $p_{(i;I)}: P_k \to P_{k-1}$ of algebras by substituting
\begin{equation}\label{ct23}
p_{(i;I)}(x_j) =\begin{cases} x_j, &\text{ if } 1 \leqslant j < i,\\
\sum_{s\in I}x_{s-1}, &\text{ if }  j = i,\\  
x_{j-1},&\text{ if } i< j \leqslant k.
\end{cases}
\end{equation}
Then $p_{(i;I)}$ is a homomorphism of $\mathcal A$-modules. 
\begin{lem}[Ph\'uc-Sum \cite{sp}]\label{bdm} If $x$ is a monomial in $P_k$, then $$p_{(i;I)}(x) \in P_{k-1}(\omega(x)).$$ 
\end{lem}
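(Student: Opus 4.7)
The plan is to reduce to a monomial-by-monomial verification: since $P_{k-1}(\omega(x))$ is the $\mathbb F_2$-span of monomials $y$ with $\deg y = \deg \omega(x)$ and $\omega(y)\leqslant\omega(x)$, it suffices to show each monomial $y$ in the expansion of $p_{(i;I)}(x)$ satisfies both conditions. The degree condition is immediate because $p_{(i;I)}$ is a homomorphism of graded algebras. Writing $x = x_1^{a_1}\cdots x_k^{a_k}$ and using the Frobenius identity over $\mathbb F_2$, I would expand
\begin{align*}
p_{(i;I)}(x_i^{a_i}) = \Bigl(\sum_{s\in I} x_{s-1}\Bigr)^{a_i} = \prod_{t:\,\alpha_t(a_i)=1}\Bigl(\sum_{s\in I} x_{s-1}^{2^t}\Bigr),
\end{align*}
so a typical summand $y$ of $p_{(i;I)}(x)$ has the form
\begin{align*}
y = \Bigl(\prod_{j<i} x_j^{a_j}\Bigr)\Bigl(\prod_{t:\,\alpha_t(a_i)=1} x_{s_t-1}^{2^t}\Bigr)\Bigl(\prod_{j>i} x_{j-1}^{a_j}\Bigr),
\end{align*}
for some choice function $t\mapsto s_t\in I$.

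For each $m\in\{1,\ldots,k-1\}$, let $j_m$ denote the unique index in $\{1,\ldots,k\}\setminus\{i\}$ shifted to $m$, and put $b_m = \sum_{t:\,s_t-1=m} 2^t$; then $\nu_m(y) = a_{j_m}+b_m$. The sets $\{t:\alpha_t(a_i)=1,\ s_t-1 = m\}$ partition $\{t:\alpha_t(a_i)=1\}$, so the binary expansions of the $b_m$ have pairwise disjoint supports whose union equals the binary expansion of $a_i$; in particular $\sum_m (b_m\bmod 2^{l+1}) = a_i\bmod 2^{l+1}$ for every $l\geqslant 0$. The heart of the argument is then the prefix-sum inequality, obtained from the elementary fact $(a+b)\bmod 2^{l+1}\leqslant (a\bmod 2^{l+1})+(b\bmod 2^{l+1})$:
\begin{align*}
\sum_{l'\leqslant l} 2^{l'} \omega_{l'+1}(y) &= \sum_{m=1}^{k-1}\bigl(\nu_m(y)\bmod 2^{l+1}\bigr) \\
&\leqslant \sum_{m=1}^{k-1}\bigl(a_{j_m}\bmod 2^{l+1}\bigr) + \sum_{m=1}^{k-1}\bigl(b_m\bmod 2^{l+1}\bigr) \\
&= \sum_{j=1}^{k}\bigl(a_j\bmod 2^{l+1}\bigr) = \sum_{l'\leqslant l} 2^{l'} \omega_{l'+1}(x).
\end{align*}

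To upgrade this to lex comparison: if $\omega(y)\neq\omega(x)$, choose the minimal $l$ with $\omega_{l+1}(y)\neq\omega_{l+1}(x)$; equality of the prefix sums up to index $l-1$ combined with the inequality at $l$ forces $\omega_{l+1}(y)<\omega_{l+1}(x)$, so $\omega(y)<\omega(x)$ in left lex order. Thus every monomial $y$ in $p_{(i;I)}(x)$ lies in $P_{k-1}(\omega(x))$, proving the lemma. The main technical obstacle I anticipate is the binary-carry bookkeeping; recasting the weight comparison as a prefix-sum inequality, via the identification $\sum_{l'\leqslant l} 2^{l'}\omega_{l'+1}(\,\cdot\,) = \sum_m \bigl(\nu_m(\,\cdot\,)\bmod 2^{l+1}\bigr)$, is precisely what converts the elementary carry inequality into the lex comparison required by the definition of $P_{k-1}(\omega(x))$.
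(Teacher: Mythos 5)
Your proof is correct and complete: the degree claim is immediate from gradedness, the Frobenius expansion correctly identifies the monomials of $p_{(i;I)}(x)$ as distributions of the binary digits of $a_i$ over the variables $x_{s-1}$, $s\in I$, and the identity $\sum_{l'\leqslant l}2^{l'}\omega_{l'+1}(y)=\sum_{m}\bigl(\nu_m(y)\bmod 2^{l+1}\bigr)$ together with the carry inequality does upgrade cleanly to the left-lexicographic comparison $\omega(y)\leqslant\omega(x)$. The paper itself does not prove this lemma---it only cites Ph\'uc--Sum---so there is no in-paper argument to compare against; your digit-distribution and prefix-sum argument is the standard route and supplies exactly the details the citation leaves implicit.
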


This lemma implies that if $\omega$ is a weight vector and $x \in P_k(\omega)$, then $p_{(i;I)}(x) \in P_{k-1}(\omega)$. Moreover, $p_{(i;I)}$ passes to a homomorphism from $QP_k(\omega)$ to $QP_{k-1}(\omega)$. So, these homomorphisms can be used to prove certain subset of $QP_k(\omega)$ is linearly independent.

For $J= (j_1, j_2, \ldots, j_r) : 1 \leqslant j_1 <\ldots < j_r \leqslant k$, we define a monomorphism $\theta_J: P_r \to P_k$ of $\mathcal A$-algebras by substituting 
\begin{equation}\label{ctbs}
\theta_J(x_t) = x_{j_t} \ \mbox{ for } \ 1 \leqslant t \leqslant r.
\end{equation} 
It is easy to see that, for any weight vector $\omega$ of degree $n$, 
\[Q\theta_J(P_r^+)(\omega) \cong  QP_r^+(\omega)\mbox{ and } (Q\theta_J(P_r^+))_n \cong (QP_r^+)_n\] 
for $1 \leqslant r \leqslant k$, where $Q\theta_J(P_r^+) = \theta_J(P_r^+)/\mathcal A^+\theta_J(P_r^+)$. So, by a simple computation using Theorem~ \ref{dlmd1} and (\ref{ct2.1}), we get the following.
\begin{prop}[{See Walker and Wood~\cite{wa1}}]\label{mdbs} For a weight vector $\omega$ of degree $n$, we have direct summand decompositions of the $\mathbb F_2$-vector spaces
\begin{align*} QP_k(\omega)  &= \bigoplus_{\mu(n) \leqslant r\leqslant k}\bigoplus_{\ell(J) =r}Q\theta_J(P_r^+)(\omega), 
\end{align*}
where $\ell(J)$ is the length of $J$. Consequently 
\begin{align*}
\dim QP_k(\omega) &= \sum_{\mu(n) \leqslant r\leqslant k}{k\choose r}\dim QP_r^+(\omega),\\
\dim (QP_k)_n &= \sum_{\mu(n) \leqslant r\leqslant k}{k\choose r}\dim (QP_r^+)_n.
\end{align*}
\end{prop}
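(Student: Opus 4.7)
The plan is to decompose $P_k$ as an $\mathcal{A}$-module according to the set of variables appearing with positive exponent in each monomial, and then show that this decomposition respects both the weight filtration and the $\equiv_\omega$ relation. For each subset $J = \{j_1 < \cdots < j_r\} \subseteq \mathbb N_k$, set $P_k^J := \theta_J(P_r^+)$, namely the $\mathbb F_2$-span of monomials supported exactly on $\{x_{j_1},\ldots,x_{j_r}\}$ with all exponents positive, with the convention $P_k^{\emptyset} := \mathbb F_2$. Since every basis monomial of $P_k$ belongs to exactly one such piece, one gets $P_k = \bigoplus_{J \subseteq \mathbb N_k} P_k^J$ as $\mathbb F_2$-vector spaces.

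The key observation is that each $P_k^J$ is in fact an $\mathcal{A}$-submodule of $P_k$: by the Cartan formula together with $Sq^c(x^a) = \binom{a}{c} x^{a+c}$ on a one-variable polynomial, applying any $Sq^i$ to a monomial in $P_k^J$ produces only monomials whose variable support stays inside $J$ and whose individual exponents do not decrease, hence remain positive. Consequently
\[
\mathcal{A}^+ P_k \cap P_k^J \;=\; \mathcal{A}^+ P_k^J,
\]
which is the crucial identity that makes the decomposition compatible with passing to the quotient. Next, because $\theta_J$ is a degree-preserving $\mathcal{A}$-algebra monomorphism that preserves weight vectors (if $\nu_t(y)=a_t$ for $y\in P_r$, then $\nu_{j_t}(\theta_J(y))=a_t$ and all other $\nu_j$ vanish), its restriction gives $\mathbb F_2$-linear isomorphisms $P_r^+(\omega) \cong P_k^J(\omega)$ and $(P_r^+)^-(\omega) \cong (P_k^J)^-(\omega)$ that intertwine the $\mathcal A$-action. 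In particular $Q\theta_J(P_r^+)(\omega) \cong QP_r^+(\omega)$.

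Combining the two previous steps, the $\mathbb F_2$-decomposition $P_k(\omega)=\bigoplus_J P_k^J(\omega)$ together with the analogous one for $P_k^-(\omega)$ and the fact that $\mathcal{A}^+$ preserves each $P_k^J$ yields, upon quotienting,
\[
QP_k(\omega) \;=\; \bigoplus_{0\leqslant r \leqslant k}\bigoplus_{\ell(J)=r} Q\theta_J(P_r^+)(\omega).
\]
For $r < \mu(n)$, Theorem \ref{dlmd1} forces $(QP_r^+)_n \subseteq (QP_r)_n = 0$ in degree $n$, so those summands drop out and the range reduces to $\mu(n)\leqslant r\leqslant k$. Since there are $\binom{k}{r}$ subsets $J$ with $\ell(J)=r$, the dimension formula for $\dim QP_k(\omega)$ follows immediately, and summing over all weight vectors of degree $n$ via (\ref{ct2.1}) gives the formula for $\dim(QP_k)_n$.

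The main obstacle, more a bookkeeping subtlety than a genuine difficulty, is ensuring that the support decomposition descends cleanly from $P_k(\omega)$ to $QP_k(\omega)$ — i.e.\ that intersecting with $\mathcal{A}^+ P_k$ commutes with the direct sum over $J$. This hinges entirely on the Cartan-formula observation that $\mathcal{A}$ does not enlarge the variable support of a monomial nor shrink any positive exponent; once this is in place, everything else reduces to mechanical identifications via the monomorphisms $\theta_J$ and an appeal to Wood's vanishing theorem to truncate the range of $r$.
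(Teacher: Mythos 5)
Your proposal is correct and follows essentially the same route the paper indicates: decompose by monomial support via the monomorphisms $\theta_J$, note that each support piece is an $\mathcal A$-submodule preserving weight vectors so that the decomposition descends to $QP_k(\omega)$, and invoke Theorem \ref{dlmd1} to discard the summands with $r<\mu(n)$ and (\ref{ct2.1}) for the degreewise formula. The paper leaves these steps as a "simple computation"; you have merely supplied the details, including the key identity $\mathcal A^+P_k\cap \theta_J(P_r^+)=\mathcal A^+\theta_J(P_r^+)$, which is exactly what is needed.
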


\begin{nota} From now on, we denote by $B_{k}(n)$ the set of all admissible monomials of degree $n$  in $P_k$, 
$$B_{k}^0(n) = B_{k}(n)\cap P_k^0,\ B_{k}^+(n) = B_{k}(n)\cap P_k^+.$$ 
For a weight vector $\omega$ of degree $n$, we set $$B_k(\omega) = B_{k}(n)\cap P_k(\omega),\ B_k^+(\omega) = B_{k}^+(n)\cap P_k(\omega).$$
For a subset $S \subset P_k$, we denote $[S] = \{[f] : f \in S\}$. If $S \subset P_k(\omega)$, then we set $[S]_\omega = \{[f]_\omega : f \in S\}$. 
Then, $[B_k(\omega)]_\omega$ and $[B_k^+(\omega)]_\omega$, are respectively the basses of the $\mathbb F_2$-vector spaces $QP_k(\omega)$ and $QP_k^+(\omega) := QP_k(\omega)\cap QP_k^+$.
\end{nota}

\section{Proof of Theorem \ref{thm1}}\label{s3}
\setcounter{equation}{0}

Firstly, we determine the weight vectors of the admissible monomials of degree $2^{d+1}-2$ for $d \geqslant 5$.
\begin{lem}\label{bdd5} If $x$ be an admissible monomial of degree $2^{d+1}-2$ in $P_5$ for $d \geqslant 5$, then either $\omega(x) = (2)|^d$ or $\omega(x) = (4)|(3)|^{d-2}|(1)$.
\end{lem}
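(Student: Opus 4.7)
The plan is to pin down $\omega(x)$ via the minimal-spike / Walker--Wood criterion together with the structural constraints from Proposition \ref{mdcb3}, and then to carry out a case analysis on $\omega_1(x)$ in which the intermediate weight vectors are eliminated by propagating strict inadmissibility through Theorem \ref{dlcb1}(ii). I would first identify the minimal spike. For $n = 2^{d+1}-2$, one checks $\alpha(n+2)=1$ while $\alpha(n+1)=d+1>1$, so $\mu(n)=2$, and the minimal spike is $z=x_1^{2^d-1}x_2^{2^d-1}$ with $\omega(z)=(2)|^d$. Theorem \ref{dlww} then yields the cumulative inequality $\sum_{i=1}^{h} 2^{i-1}\omega_i(x) \geq 2^{h+1}-2$ for every $h\geq 1$, and the parity of $n$ forces $\omega_1(x)$ to be even; combined with $\omega_1(x)\geq 2$ from the $h=1$ inequality and $\omega_1(x)\leq 5$, this gives $\omega_1(x)\in\{2,4\}$. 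Since $\omega_1(x)<5$, Proposition \ref{mdcb3}(ii) implies $\omega_i(x)<5$ for all $i$, so $\omega_i(x)\in\{0,1,2,3,4\}$; by Proposition \ref{mdcb3}(i), once some $\omega_{i_0}(x)=0$, all subsequent entries vanish.

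In the case $\omega_1(x)=2$, I would prove by induction on $j$ that $\omega_j(x)=2$ for all $1\leq j\leq d$. Assuming $\omega_i(x)=2$ for $i<j$ produces equality in the Walker--Wood inequality at $h=j-1$, so the inequality at $h=j$ forces $\omega_j(x)\geq 2$. To exclude $\omega_j(x)\in\{3,4\}$ I would use the canonical factorization $x = u\cdot y^{2^{j}}$, where $u=\prod_{i\leq j}X_{\mathbb J_{i-1}(x)}^{2^{i-1}}$ has weight vector $(2,2,\ldots,2,\omega_j(x))$ of length $j$, and invoke Theorem \ref{dlcb1}(ii): it suffices to show that every such $u$ in $P_5$ is strictly inadmissible. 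This reduces to a finite list of base cases in small degree, each of which can be verified by producing an explicit Steenrod-square relation, drawing on the tabulations of admissible monomials for $P_5$ in \cite{Tin20, smo, sp, sp2}. Theorem \ref{dlcb1}(ii) then propagates strict inadmissibility to every larger $j$, yielding $\omega(x)=(2)|^d$.

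In the case $\omega_1(x)=4$, the same factorization strategy pins down $\omega_2(x)=3$: the value $\omega_2(x)=0$ is excluded by Walker--Wood at $h=2$, while $\omega_2(x)\in\{1,2,4\}$ are excluded by exhibiting strictly inadmissible length-two prefixes of weight vector $(4,\omega_2(x))$ and applying Theorem \ref{dlcb1}(ii). Iterating the argument at each subsequent stage forces $\omega_j(x)=3$ for $2\leq j\leq d-1$. A direct degree count,
\[4+3(2+4+\cdots+2^{d-2})+2^{d-1}\omega_d(x)+\sum_{i>d}2^{i-1}\omega_i(x) = 2^{d+1}-2,\]
then leaves $\omega_d(x)=1$ and $\omega_i(x)=0$ for $i>d$ as the only possibility (the alternatives $\omega_d(x)=0$ and $\omega_d(x)\geq 2$ being ruled out by Proposition \ref{mdcb3}(i) and a size estimate, respectively), giving $\omega(x)=(4)|(3)|^{d-2}|(1)$.

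The main obstacle is the exhaustive elimination of the intermediate weight vectors $(4,1,\ldots)$, $(4,2,\ldots)$, $(4,4,\ldots)$, $(2,\ldots,2,3,\ldots)$, and $(2,\ldots,2,4,\ldots)$: each requires exhibiting a strictly inadmissible ``base'' monomial in $P_5$ of some small degree, after which Theorem \ref{dlcb1}(ii) handles all larger $d$ uniformly. These base verifications are exactly the kind of hand and computer-assisted computations that, as the author notes in the introduction, permeate the rest of the paper; once the finitely many reductions are in hand, the induction on $j$ closes automatically for every $d\geq 5$.
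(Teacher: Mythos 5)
Your determination of $\omega_1(x)$ is fine (the paper gets $\omega_1(x)\in\{2,4\}$ from Singer's criterion and parity; your Walker--Wood version is equivalent), and the Walker--Wood lower bounds $\omega_j(x)\geqslant 2$, resp.\ $\omega_j(x)\geqslant 1$, together with the closing degree count, are correct. The gap is in the mechanism you propose for excluding the remaining values of $\omega_j(x)$. Theorem \ref{dlcb1}(ii) propagates strict inadmissibility from a monomial $w$ to $wy^{2^s}$ where $s=\ell(\omega(w))$; the monomials it produces therefore agree with $\omega(w)$ in positions $1,\dots,s$. The families you must kill, namely prefixes of weight vector $(2)|^{j-1}|(\epsilon)$ with $\epsilon\in\{3,4\}$ and $(4)|(3)|^{j-2}|(\epsilon)$ with $\epsilon\in\{1,2,4\}$, have their anomalous entry at position $j$, and $j$ ranges up to $d$. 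A strictly inadmissible seed of length $j_0$ can only generate, via Theorem \ref{dlcb1}(ii), monomials whose first $j_0$ weight entries already contain the anomaly; it cannot produce the length-$j$ member of the family for $j>j_0$, whose length-$j_0$ prefix has weight $(2)|^{j_0}$ (resp.\ $(4)|(3)|^{j_0-1}$) and in general contains admissible monomials. So the reduction to ``a finite list of base cases'' fails: each $j$ would require a fresh, unbounded-degree verification that \emph{every} monomial of that prefix weight vector is strictly inadmissible --- a claim that is itself unproved (strict inadmissibility of a prefix is sufficient but not necessary for inadmissibility of $x$, and nothing you cite guarantees it for these weight vectors).

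The paper closes exactly this gap by peeling from the bottom instead of scanning positions upward. Writing $x=X_{\mathbb J}y^2$, Theorem \ref{dlcb1}(i) forces the top factor $y$ to be admissible of degree $2^d-3$ (when $\omega_1(x)=4$) or $2^d-2$ (when $\omega_1(x)=2$); the weight vectors of such $y$ are then known --- from Ph\'uc \cite{ph} in the first case, and by induction on $d$ with base case $d=5$ (degree $30$, from \cite{smo}) in the second. This leaves only finitely many residual weight vectors for $x$, namely $(2,2,4,3,\dots)$ and $(2,4,3,3,\dots)$, whose anomalies sit at the \emph{bounded} positions $2$--$4$ of $\omega(x)$; these are dispatched by the finitely many strictly inadmissible prefixes of Lemma \ref{bda51} together with Theorem \ref{dlcb1}(ii). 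If you want to keep your framework, you must replace the induction on the position $j$ by an induction on $d$ of this kind (or otherwise bound the position of the anomaly); as written, the argument does not go through.
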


We need the following lemma.
\begin{lem}\label{bda51} Let $(i,j,t,u,v)$ be an arbitrary permutation of $(1,2,3,4,5)$. The following monomials are strictly inadmissible:

\smallskip
{\rm i)} $x_i^2x_jx_t^3,\, i<j$;\ $x_i^2x_jx_tx_u^2,\, i<j<t$; $x_ix_j^2x_t^2x_u, i<j<t<u$.

{\rm ii)} $x_i^2x_j^2x_t^3x_u^3$;\ $x_i^2x_jx_t^2x_u^2x_v^3,\, $i < j$;\ x_i^2x_jx_tx_u^3x_v^3,\, i< j<t$.

{\rm iii)} $x_ix_j^{2}x_t^{6}x_u^{6}x_v^{7}$; $x_i^{6}x_j^{3}x_tx_u^{6}x_v^{6},\, i < j$.

{\rm iv)} $x_i^3x_j^4x_t^4x_u^4x_v^7$; $x_i^3x_j^4x_t^4x_u^5x_v^6$.

{\rm v)} $x_ix_j^{3}x_t^{14}x_u^{14}x_v^{14}$,  $x_i^{2}x_j^{3}x_t^{13}x_u^{14}x_v^{14}$, $x_i^{3}x_j^{5}x_t^{10}x_u^{14}x_v^{14}$, $x_i^{3}x_j^{6}x_t^{10}x_u^{13}x_v^{14}$.
\end{lem}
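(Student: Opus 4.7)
The plan is to establish strict inadmissibility for each listed monomial $x$ by exhibiting an explicit decomposition
\[
x = \sum_{j} y_j + \sum_{u=1}^{2^s-1} Sq^u(h_u)
\]
in $P_5$, where $s = \max\{i : \omega_i(x) > 0\}$, each $h_u \in P_5$ has degree $\deg x - u$, and every $y_j$ is a monomial strictly smaller than $x$ in the order of Definition~\ref{defn3}. The verification in each case is a direct application of the Cartan formula together with the elementary identities $Sq^1(x_a^{2m+1}) = x_a^{2m+2}$, $Sq^1(x_a^{2m}) = 0$, and $Sq^{i}(x_a^i) = x_a^{2i}$.

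For the three patterns in (i) and the first pattern $x_i^2 x_j^2 x_t^3 x_u^3$ in (ii), only three or four variables appear. The strict inadmissibility of such monomials (up to permutation of variables) is already recorded in Kameko~\cite{ka} for $P_3$ and in the author's~\cite{su2} for $P_4$; the witnesses $h_u \in P_3$ or $P_4$ transport to $P_5$ through the $\mathcal A$-algebra monomorphism $\theta_J$ of (\ref{ctbs}), since $\theta_J$ commutes with every $Sq^u$. The remaining two monomials in (ii) involve all five variables, but their weight vectors have length $s = 3$, so $Sq^1, \ldots, Sq^7$ are available and we construct $h_u \in P_5$ whose Cartan expansions give $x$ plus strictly smaller monomials after a short calculation.

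For (iii), (iv), and (v), of degrees $22$, $22$, and $46$, the same strategy applies but the book-keeping grows considerably. Whenever a monomial $x$ factors as $w\, y^{2^s}$ with $w$ already known to be strictly inadmissible and $\omega_s(w) \neq 0$, Theorem~\ref{dlcb1}(ii) finishes the argument, which disposes of some sub-cases by reduction to (i) or to earlier computations in $QP_5$. For the monomials that do not factor in this way, one proposes direct witnesses $h_u$, expands $\sum_u Sq^u(h_u)$ by Cartan, and checks that each resulting monomial is either equal to $x$, cancels pairwise, or lies below $x$ in the $(\omega,\sigma)$-order. The principal obstacle is the combinatorial explosion in (v), where $s = 4$ and $u$ ranges over $1, \ldots, 15$: a single $Sq^u(h_u)$ already contributes many summands, and several witnesses must be combined. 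We verify each decomposition termwise with the aid of the simple computer routines alluded to in the introduction, which expand $Sq^u$ by Cartan and sort the resulting monomials by weight vector and exponent vector to confirm strict inadmissibility.
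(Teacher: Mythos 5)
Your proposal takes essentially the same route as the paper's proof: the low-variable and low-degree cases in parts i)--iv) are imported from earlier work (the paper cites Ly--T\'in \cite{Tin20} and Sum--Mong \cite{smo} rather than Kameko \cite{ka} and \cite{su2}, but the reduction is the same in spirit), and part v) is settled by exhibiting an explicit relation $x = \sum_j y_j + \sum_u Sq^u(h_u)$ modulo $P_5^-(2,4,3,3)$, exactly as the paper does for the representative $x_i^{3}x_j^{5}x_t^{10}x_u^{14}x_v^{14}$ using only $Sq^1$, $Sq^2$, $Sq^4$. The one shortfall is that you never exhibit a single witness $h_u$, whereas the paper records at least one complete decomposition; as a plan, however, it matches the paper's argument.
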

\begin{proof} Parts i) and ii) are proved in Ly-T\'in \cite{Tin20}. Parts ii) and iv) are prove in \cite{smo}. We prove Part v) for $x = x_i^{3}x_j^{5}x_t^{10}x_u^{14}x_v^{14}$. The others can be proved by the similar computations. By a direct computation, we have
\begin{align*}
x &= Sq^1\big(x_i^{3}x_j^{5}x_t^{10}x_u^{13}x_v^{14} + x_i^{3}x_j^{5}x_t^{12}x_u^{11}x_v^{14} + x_i^{5}x_j^{10}x_t^{10}x_u^{7}x_v^{13} + x_i^{5}x_j^{10}x_t^{12}x_u^{7}x_v^{11}\\ 
&\quad + x_i^{5}x_j^{12}x_t^{10}x_u^{7}x_v^{11} + x_i^{9}x_j^{6}x_t^{10}x_u^{7}x_v^{13} + x_i^{10}x_j^{5}x_t^{12}x_u^{7}x_v^{11} + x_i^{12}x_j^{5}x_t^{10}x_u^{7}x_v^{11}\big)\\
&\quad+  Sq^2\big(x_i^{3}x_j^{6}x_t^{10}x_u^{11}x_v^{14} + x_i^{6}x_j^{10}x_t^{10}x_u^{7}x_v^{11} + x_i^{10}x_j^{6}x_t^{10}x_u^{7}x_v^{11}\big)\\
&\hskip4.8cm +  Sq^4\big(x_i^{5}x_j^{6}x_t^{10}x_u^{7}x_v^{14}\big)\ 
\mbox{mod}(P_5^-(2,4,3,3)).
\end{align*}
The above equality shows that $x= x_i^{3}x_j^{5}x_t^{10}x_u^{14}x_v^{14}$ is strictly inadmissible.
\end{proof}

\begin{proof}[Proof of Lemma \ref{bdd5}] Observe  that $z=x_1^{2^d-1}x_2^{2^d-1}$ is the minimal spike of degree $2^{d+1}-2$ in $P_5$ and $ \omega (z) = (2)|^d$. Since $2^{d+1}-2$ is even, using Theorem \ref{dlsig}, we obtain $\omega_1(x)=2$ or $\omega_1(x)=4$. If $\omega_1(x) = 4$. Then $x = X_iy^2$ with $y$ an admissible monomial of degree $2^{d}-3$ in $P_5$ and $1 \leqslant i \leqslant 5$. Combining Proposition \ref{mdcb3} and a results in Ph\'uc \cite{ph} we get $\omega(y) = (3)|^{d-2}|(1)$, so $\omega(x) = (4)|(3)|^{d-2}|(1)$.
	
Suppose $\omega_1(x) = 2$. We prove $\omega(x) = (2)|^d$ by induction on $d \geqslant 5$. Since $\omega_1(x) = 2$, $x = x_ix_jy^2$ with $1 \leqslant i < j \leqslant 5$ and $y$ an admissible monomial of degree $2^d-1$. For $d=5$, in \cite{smo} we have proved that either $\omega(y)=(2)|^4$ or $\omega(y)=(2,4,3,1)$ or $\omega(y)=(4,3,3,1)$. By a direct computation we see that if $\omega(y)=(2,4,3,1)$, then there is a monomial $u$ as given in one of Parts i) and iii) of Lemma \ref{bda51} such that $x = uz^{2^r}$ with $r=2,3$ and $z$ a suitable monomial. If $\omega(y)=(4,3,3,1)$, then there is a monomial $v$ as given in one of Parts ii), iii) and v) of Lemma \ref{bda51} such that $x = vh^{2^r}$ with $r=2,3,4$ and $h$ a suitable monomial. By Theorem \ref{dlcb1}, $x$ is inadmissible. So, we get $\omega(y)=(2)|^4$ and $\omega(x)=(2)|^5$. Suppose $d > 5$, by the inductive hypothesis, we have $\omega(y)=(2)|^{d-1}$, hence $\omega(x) = (2)|^d$.	
 The lemma is proved.
\end{proof}

By Lemma \ref{bdd5}, we have
$$(QP_5)_{(2^{d+1}-2)} = (QP_5^0)_{(2^{d+1}-2)}\oplus QP_5^+((2)|^d)\oplus QP_5^+((4)|(3)|^{d-2}|(1)).$$
From Kameko \cite{ka} and our work \cite{su2}, we have 
\begin{align*}
&\dim (QP_2^+)_{(2^{d+1}-2)} = \dim QP_2^+((2)|^d) = 1,\\
&\dim (QP_3^+)_{(2^{d+1}-2)} = \dim QP_3^+((2)|^d) = 4,\\
&\dim QP_4^+((2)|^d) = 13,\ \dim QP_4^+((4)|(3)|^{d-2}|(1)) = 45
\end{align*} 
for any $d \geqslant 5$. Hence, we get 
\begin{align*}
&\dim QP_5^0((2)|^d) = {5\choose 2} + 4{5\choose 3} + 13{5\choose 4} = 115,\\
&\dim QP_5^0((4)|(3)|^{d-2}|(1)) = 45{5\choose 4} = 225.
\end{align*}
So, we need to determine the spaces $QP_5^+((2)|^d)$ and $QP_5^+((4)|(3)|^{d-2}|(1))$.

\subsection{Computation of $QP_5^+((2)|^d)$}\

\medskip
In this subsection we prove the following.
\begin{props}\label{mdd51}
For any $d \geqslant 5$, $B_5^+((2)|^d) =\{a_{d,t}:\, 1 \leqslant t \leqslant 40\}$, where the monomials $a_t = a_{d,t}$ are determined as in Subsection $\ref{s51}$. Consequently, 
$$\dim QP_5^+((2)|^d) = 40,\ \dim QP_5((2)|^d) = 155.$$
\end{props}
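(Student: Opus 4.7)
The proof plan has two main steps: (a) showing that $\{a_{d,t} : 1 \leqslant t \leqslant 40\}$ spans $QP_5^+((2)|^d)$ modulo $\equiv_\omega$, and (b) proving the $\equiv_\omega$-classes of the $a_{d,t}$'s are linearly independent. The case $d = 5$ will serve as the base (handled by direct computation along the lines of the $d=4$ analysis in \cite{smo}); I will assume $d \geqslant 6$ and induct on $d$.

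For step (a), the key decomposition is $(2)|^d = (2)|(2)|^{d-1}$, so any $x \in P_5^+$ with $\omega(x) = (2)|^d$ factors as $x = x_i x_j \cdot z^2$ with $1 \leqslant i < j \leqslant 5$ and $\omega(z) = (2)|^{d-1}$. Theorem \ref{dlcb1}(i) forces $x$ inadmissible whenever $z$ is, so I may restrict attention to admissible $z$. By Proposition \ref{mdbs} the admissible $z$'s are enumerated by $B_r^+((2)|^{d-1})$ for $r \in \{3,4,5\}$ (the cases $r \leqslant 2$ contribute nothing, as $\{i,j\}$ would then fail to cover the three or more variables missing from $z$). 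The sets $B_3^+((2)|^{d-1})$ and $B_4^+((2)|^{d-1})$ are known from \cite{ka,su2}, while $B_5^+((2)|^{d-1})$ is supplied by the inductive hypothesis. This yields an explicit finite candidate list. To cut it down to the 40 monomials $\{a_{d,t}\}$, I would compile an extended collection of strictly inadmissible configurations in weight $(2)|^d$ — patterned after Lemma \ref{bda51} — and verify each reduction by an explicit $Sq$-identity modulo $P_5^-((2)|^d)$, exactly as illustrated for $x_i^{3}x_j^{5}x_t^{10}x_u^{14}x_v^{14}$ in Lemma \ref{bda51}(v). Theorem \ref{dlcb1}(ii) then lifts strictly inadmissible configurations at low ``levels'' to all higher powers, so that a single identity can handle infinitely many $d$ simultaneously.

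For step (b), suppose $\sum_{t=1}^{40} \gamma_t [a_{d,t}]_\omega = 0$ with $\gamma_t \in \mathbb F_2$. I would apply the family of homomorphisms $p_{(i;I)} : P_5 \to P_4$ indexed by $(i;I) \in \mathcal{N}_5$. By Lemma \ref{bdm} each $p_{(i;I)}$ induces a map $QP_5((2)|^d) \to QP_4((2)|^d)$, and $p_{(i;I)}(a_{d,t})$ is an explicit polynomial whose $\equiv_\omega$-class is controlled by the known admissible basis of $QP_4((2)|^d)$ from \cite{su2}. Choosing the $(i;I)$'s judiciously — grouping the $a_{d,t}$'s according to their exponent-vector pattern so that each $p_{(i;I)}$ isolates a coherent cluster of coefficients — should yield enough independent constraints to force $\gamma_t = 0$ for every $t$. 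The final formula $\dim QP_5((2)|^d) = 155$ then follows by combining $\dim QP_5^+((2)|^d) = 40$ with $\dim QP_5^0((2)|^d) = 115$ computed in the discussion preceding the subsection.

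The principal obstacle will be the combinatorial bookkeeping in step (a). The candidate list before pruning has size $10|B_5^+((2)|^{d-1})| + 20|B_4^+((2)|^{d-1})| + 10|B_3^+((2)|^{d-1})|$, which already exceeds $700$ for $d = 6$; a complete table of strictly inadmissible monomials together with a case-by-case matching of each candidate against the table is required, and producing this table is where most of the computational effort (with the aid of computer checks) will go. A secondary difficulty is the design of the separating family $\{p_{(i;I)}\}$ in step (b), which must be arranged so as to distinguish all $40$ admissible classes — this is precisely what motivates displaying the $a_{d,t}$'s in the symmetric pattern of Subsection \ref{s51}.
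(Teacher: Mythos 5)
Your plan matches the paper's proof in both halves: the spanning step is carried out by exhibiting, for every monomial of weight vector $(2)|^d$ outside the list $\{a_{d,t}\}$, a strictly inadmissible factor $w$ (from Lemma \ref{bda51}(i) and Lemma \ref{bda61}) so that Theorem \ref{dlcb1} forces inadmissibility, and the independence step applies the homomorphisms $p_{(i;I)}$ and reads off the coefficients against the known basis of $QP_4((2)|^d)$, exactly as in the paper. Your additional framing of the spanning step as an induction on $d$ via the factorization $x = x_ix_jz^2$ is only a cosmetic reorganization (the paper uses that same factorization in Lemma \ref{bdd5}), so the proposal is essentially the paper's argument.
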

We need a lemma for the proof of the proposition.

\begin{lems}\label{bda61} Let $(i,j,t,u,v)$ be an arbitrary permutation of $(1,2,3,4,5)$. The following monomials are strictly inadmissible:
	
\smallskip
\ \! {\rm i)} $x_i^3x_j^4x_t^7;\ x_ix_j^6x_t^3x_u^4,\, x_i^3x_j^4x_tx_u^6,\, x_i^3x_j^4x_t^3x_u^4,\, i<j<t<u.$

\smallskip
\ {\rm ii)} $x_ix_j^7x_t^{10}x_u^{12},\, i<j<t<u;$ $x_i^7x_jx_t^{10}x_u^{12},\, i<j<t<u;$ $x_i^3x_j^3x_t^{12}x_u^{12}$,\, $x_i^3x_j^5x_t^8x_u^{14}$,\, $x_i^3x_j^5x_t^{14}x_u^{8}$,\, $x_i^7x_j^7x_t^8x_u^{8}$,\, $i < j < t < u$.
			
\smallskip
{\begin{tabular}{lllll}
{\rm iii)}&$x_1x_2^{6}x_3x_4^{10}x_5^{12}$ & $x_1x_2^{2}x_3^{12}x_4^{3}x_5^{12}$ & $x_1^{3}x_2^{12}x_3x_4^{2}x_5^{12}$ & $x_1x_2^{2}x_3^{4}x_4^{11}x_5^{12}$ \cr
&$x_1x_2^{2}x_3^{7}x_4^{8}x_5^{12}$ & $x_1x_2^{2}x_3^{7}x_4^{12}x_5^{8}$ &$x_1x_2^{7}x_3^{2}x_4^{8}x_5^{12}$ & $x_1x_2^{7}x_3^{2}x_4^{12}x_5^{8}$ \cr
&$x_1^{7}x_2x_3^{2}x_4^{8}x_5^{12}$ & $x_1^{7}x_2x_3^{2}x_4^{12}x_5^{8}$ & $x_1^{3}x_2^{4}x_3x_4^{10}x_5^{12}$ & $x_1x_2^{7}x_3^{10}x_4^{4}x_5^{8}$ \cr
&$x_1^{7}x_2x_3^{10}x_4^{4}x_5^{8}$ & $x_1x_2^{6}x_3^{7}x_4^{8}x_5^{8}$ & 
$x_1x_2^{7}x_3^{6}x_4^{8}x_5^{8}$ & $x_1^{7}x_2x_3^{6}x_4^{8}x_5^{8}$ \cr
&$x_1^{3}x_2^{4}x_3^{9}x_4^{2}x_5^{12}$ & $x_1^{3}x_2^{5}x_3^{8}x_4^{2}x_5^{12}$ & $x_1^{7}x_2^{9}x_3^{2}x_4^{4}x_5^{8}$ & $x_1^{3}x_2^{3}x_3^{4}x_4^{8}x_5^{12}$ \cr 
&$x_1^{3}x_2^{3}x_3^{4}x_4^{12}x_5^{8}$ & $x_1^{3}x_2^{3}x_3^{12}x_4^{4}x_5^{8}$ & $x_1^{3}x_2^{5}x_3^{6}x_4^{8}x_5^{8}$ & $x_1^{3}x_2^{5}x_3^{8}x_4^{6}x_5^{8}$. \cr
\end{tabular}}

\smallskip
\ \! {\rm iv)} $x_1^{3}x_2^{5}x_3^{10}x_4^{16}x_5^{28}$,  $x_1^{3}x_2^{5}x_3^{10}x_4^{28}x_5^{16}$.
\end{lems}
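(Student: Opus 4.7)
The plan is to establish, for each monomial $x$ in parts (i)–(iv), an explicit identity of the form
$$x \ =\ \sum_j y_j \ +\ \sum_{u=1}^{2^s-1} Sq^u(h_u) \pmod{P_5^-(\omega(x))},$$
where $s = \max\{i : \omega_i(x) > 0\}$, each $y_j$ is a monomial with $y_j < x$ in the order of Definition \ref{defn3}, and $h_u \in P_5$ are suitable polynomials. This is exactly the scheme used already in the proof of Lemma \ref{bda51}: the Cartan formula, combined with working modulo $P_5^-(\omega(x))$, restricts attention to terms of weight vector exactly $\omega(x)$, and the resulting identity witnesses strict inadmissibility of $x$.

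Before beginning the case work I would record the weight vector of every listed monomial. A direct check of the dyadic expansions shows that all monomials in part (i) have weight vector $(2,2,2)$, so $s=3$; all monomials in parts (ii) and (iii) have weight vector $(2)|^4$, so $s=4$; and the two monomials in part (iv) have weight vector $(2)|^5$, so $s=5$. Because parts (i), (ii) and (iv) are stated for arbitrary permutations $(i,j,t,u,v)$ of $(1,2,3,4,5)$, and the twenty-four monomials of part (iii) fall into a handful of orbits under permutation of $x_1,\ldots,x_5$, the number of essentially distinct cases to treat is much smaller than the written list suggests. For each representative $x$, the concrete task is to choose auxiliary polynomials $h_u$ such that the Cartan expansion of $\sum_u Sq^u(h_u)$ contains $x$ with coefficient $1$ and all remaining summands either equal smaller monomials $y_j < x$ or lie in $P_5^-(\omega(x))$. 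Following Lemma \ref{bda51}, I would display one such identity in full for a representative of each of (i)–(iv) — for instance $x_i^3x_j^4x_t^7$ in (i), $x_i^3x_j^3x_t^{12}x_u^{12}$ in (ii), $x_1x_2^6x_3x_4^{10}x_5^{12}$ in (iii) and $x_1^3x_2^5x_3^{10}x_4^{16}x_5^{28}$ in (iv) — and state that the remaining cases are handled by entirely analogous computations.

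The main obstacle is computational rather than conceptual. Once the polynomials $h_u$ are guessed, verification reduces to a mechanical application of the Cartan formula and a comparison of exponent and weight vectors. However, the search space for the $h_u$ grows rapidly with $s$, and in part (iv), where $s=5$ and Steenrod squares $Sq^1,\ldots,Sq^{31}$ are in principle all in play, the bookkeeping becomes substantial. As indicated in the introduction, the proofs of Theorems \ref{thm1} and \ref{thm2} are supported by simple computer programmes: the programme enumerates candidate $h_u$ of the allowed degrees and checks which combinations yield the required identity modulo $P_5^-(\omega(x))$, after which the identity itself can be transcribed and verified by hand.
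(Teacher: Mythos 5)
Your overall strategy --- exhibiting, for each listed monomial $x$, an identity $x=\sum_j y_j+\sum_{u=1}^{2^s-1}Sq^u(h_u)$ modulo $P_5^-(\omega(x))$ with $y_j<x$ --- is exactly the scheme the paper uses, and your weight-vector bookkeeping ($(2,2,2)$ for part (i), $(2)|^4$ for parts (ii)--(iii), $(2)|^5$ for part (iv), hence $s=3,4,5$) is correct. Note, however, that the paper does not recompute parts (i)--(iii): parts (i) and (ii) involve monomials in at most four variables and are quoted from Kameko \cite{ka} and from \cite{su2}, and part (iii) is quoted from \cite{smo}; only part (iv) is verified directly, by one displayed identity for $x_1^{3}x_2^{5}x_3^{10}x_4^{16}x_5^{28}$ using $Sq^1,Sq^2,Sq^4,Sq^8$. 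Since part (iv) is the genuinely new content, your proposal stands or falls on actually producing such an identity, and as written it contains none; the substance of a strict-inadmissibility lemma \emph{is} the explicit formula, so ``the programme will find the $h_u$'' is a plan rather than a proof.

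There is also a soundness problem in your case-reduction step. The order of Definition \ref{defn3} compares exponent vectors in the left lexicographic order relative to the fixed ordering of the variables, and this order is not preserved by permutations of $x_1,\dots,x_5$ (for instance $(1,10)<(2,3)$ but $(10,1)>(3,2)$). Hence an identity witnessing strict inadmissibility of $x$ transports under a permutation $\pi$ to one for $\pi(x)$ only when every non-$Sq$ term on the right-hand side lies in $P_5^-(\omega(x))$; if terms $y_j$ with $\omega(y_j)=\omega(x)$ and $\sigma(y_j)<\sigma(x)$ occur --- as they do in the paper's identity for part (iv) --- then the inequality $\pi(y_j)<\pi(x)$ must be re-checked for each ordering. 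So ``one representative per orbit'' does not suffice in general: either choose the $h_u$ so that all remaining terms have strictly smaller weight vector, or verify each permuted instance separately. (Also, part (iv) is stated only for the two specific monomials written, not for arbitrary permutations, so no symmetry reduction is available or needed there.)
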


\begin{proof} Parts i) and ii) follow from Kameko \cite{ka} and our work \cite{su2}, Part iii) is proved in \cite{smo}. We prove Part iv) for $x = x_1^{3}x_2^{5}x_3^{10}x_4^{16}x_5^{28}$. By a direct computation we have
\begin{align*}
x &= x_1^{2}x_2^{3}x_3^{9}x_4^{20}x_5^{28} + x_1^{2}x_2^{5}x_3^{9}x_4^{18}x_5^{28} + x_1^{3}x_2^{3}x_3^{8}x_4^{20}x_5^{28} + x_1^{3}x_2^{4}x_3^{9}x_4^{18}x_5^{28} \\
&\quad + x_1^{3}x_2^{5}x_3^{8}x_4^{18}x_5^{28} +  Sq^1\big(x_1^{3}x_2^{3}x_3^{9}x_4^{18}x_5^{28}\big) +  Sq^2\big(x_1^{2}x_2^{3}x_3^{9}x_4^{18}x_5^{28} + x_1^{5}x_2^{3}x_3^{6}x_4^{18}x_5^{28}\big)\\
&\quad +  Sq^4\big(x_1^{3}x_2^{3}x_3^{6}x_4^{18}x_5^{28} + x_1^{3}x_2^{9}x_3^{6}x_4^{12}x_5^{28}\big) + Sq^8\big(x_1^{3}x_2^{5}x_3^{6}x_4^{12}x_5^{28}\big) \ 
\mbox{mod}(P_5^-((2)|^5)).
\end{align*}
Hence, the monomial $x$ is strictly inadmissible. The other monomials are carried out by a similar computation.
\end{proof}

\begin{proof}[Proof of Proposition $\ref{mdd51}$] Let $x$ be a monomial of wight vector $(2)|^d$ in $P_5^+$ with $d \geqslant 5$. By a direct computation we see that if $x \ne a_{d,t}$ for all $t,\, 1 \leqslant t \leqslant 40$, then there is a monomial $w$ as given in either Lemma \ref{bda51}(i) or Lemma \ref{bda61} such that $x = yw^{2^r}z^{2^{r+u}}$ with $r,\, u$ nonnegative integers, $r\geqslant 0,\, 2\leqslant u \leqslant 5$, and $y,\, z$ suitable monomials. By Theorem \ref{dlcb1}, $x$ is inadmissible. Hence, $$B_5^+((2)|^d) \subset \{a_{d,t}: 1\leqslant t\leqslant 40 \}.$$
	
We now prove the set $\{[a_{d,t}]: 1\leqslant t\leqslant 40 \}$ is linearly independent in $QP_5((2)|^d) \subset QP_5$. Suppose there is a linear relation
\begin{equation}\label{ctd61}
\mathcal S:= \sum_{1\leqslant t \leqslant 40}\gamma_ta_{d,t} \equiv 0,
\end{equation}
where $\gamma_t \in \mathbb F_2$. We denote $\gamma_{\mathbb J} = \sum_{t \in \mathbb J}\gamma_t$ for any $\mathbb J \subset \{t\in \mathbb N:1\leqslant t \leqslant 40\}$.

Let $u_{d,u},\, 1\leqslant u \leqslant 13$, be as in Subsection \ref{s51} and the homomorphism $p_{(i;I)}:P_5\to P_4$ defined by \eqref{ct23} for $k=5$. By using Lemma \ref{bdm}, we see that $p_{(i;I)}$ passes to a homomorphism from $QP_5((2)|^d)$ to $QP_4((2)|^d)$. By applying $p_{(1;2)}$ to \eqref{ctd61}, we obtain 
\begin{align}\label{c41}
p_{(1;2)}(\mathcal S) &\equiv \gamma_{\{7,9,11,13,14\}}u_{d,1} + \gamma_{8}u_{d,2} + \gamma_{9}u_{d,3} + \gamma_{10}u_{d,4} + \gamma_{11}u_{d,5}\notag\\ 
&\quad + \gamma_{13}u_{d,6} + \gamma_{14}u_{d,7} + \gamma_{16}u_{d,8} + \gamma_{\{5,9,11,14\}}u_{d,9}\notag\\ 
&\quad + \gamma_{\{6,13\}}u_{d,10} + \gamma_{12}u_{d,11} + \gamma_{15}u_{d,12} + \gamma_{25}u_{d,13}  \equiv 0.
\end{align}
From \eqref{c41}, we get
\begin{equation}\label{c411}
\gamma_t = 0 \mbox{ for } t \in \{5,\, 6,\, 7,\, 8,\, 9,\, 10,\, 11,\, 12,\, 13,\, 14,\, 15,\, 16,\, 25\}.
\end{equation}
Applying $p_{(1;3)}$ to \eqref{ctd61} and using \eqref{c411} we obtain
\begin{align}\label{c42}
p_{(1;3)}(\mathcal S) &\equiv \gamma_{\{3,17,18,20,29\}}u_{d,1} + \gamma_{\{1,3,18,21,28,29\}}u_{d,5}\notag\\ 
&\quad + \gamma_{\{2,3,20,22,29,30\}}u_{d,6} + \gamma_{\{4,24,33\}}u_{d,8} + \gamma_{18}u_{d,9}\notag\\ 
&\quad + \gamma_{20}u_{d,10} + \gamma_{19}u_{d,11} + \gamma_{23}u_{d,12} + \gamma_{26}u_{d,13}  \equiv 0.
\end{align}
The relation \eqref{c42} implies
\begin{equation}\label{c421}
\gamma_t = 0 \mbox{ for } t \in \{18,\, 20,\, 19,\, 23,\, 26\}.
\end{equation}
By a direct computation using \eqref{c411} and \eqref{c421} we obtain
\begin{align}\label{c43}
p_{(4;5)}(\mathcal S) &\equiv \gamma_{4}u_{d,1} + \gamma_{24}u_{d,6} + \gamma_{33}u_{d,9} + \gamma_{34}u_{d,10}\notag\\
&\quad + \gamma_{37}u_{d,11} + \gamma_{38}u_{d,12} + \gamma_{39}u_{d,13}  \equiv 0.
\end{align}
From \eqref{c43} it implies
\begin{equation}\label{c431}
\gamma_t = 0 \mbox{ for } t \in \{4,\, 24,\, 33,\, 34,\, 37,\, 38,\, 39\}.
\end{equation}
Based on \eqref{c411}, \eqref{c421} and \eqref{c431}, we get
\begin{align}\label{c44}
p_{(2;3)}(\mathcal S) &\equiv \gamma_{\{3,27,28,30,40\}}u_{d,1} + \gamma_{\{1,3,28,31,35,40\}}u_{d,5}\notag\\
&\quad + \gamma_{\{2,3,30,32,36,40\}}u_{d,6} + \gamma_{28}u_{d,9} + \gamma_{30}u_{d,10} + \gamma_{29}u_{d,11} \equiv 0.
\end{align}
From \eqref{c43} we have 
\begin{equation}\label{c441}
\gamma_t = 0 \mbox{ for } t = 28,\, 29,\, 30.
\end{equation}

By a similar computation using \eqref{c411}, \eqref{c421}, \eqref{c431}, \eqref{c441}, we obtain
\begin{align}\label{c45}
p_{(1;4)}(\mathcal S) &\equiv \gamma_{\{3,22,31,40\}}u_{d,1} + \gamma_{\{1,3,31,35,40\}}u_{d,3} + \gamma_{\{2,27,32,36\}}u_{d,4}\notag\\
&\quad + \gamma_{\{3,22,31,40\}}u_{d,6} + \gamma_{\{17,22,35\}}u_{d,10} + \gamma_{21}u_{d,11}  \equiv 0.\\
p_{(1;5)}(\mathcal S) &\equiv \gamma_{\{2,21,22,36,40\}}u_{d,1} + \gamma_{\{1,27,31,35\}}u_{d,2} + \gamma_{\{2,3,32,36,40\}}u_{d,3}\notag\\
&\quad + \gamma_{\{3,21,32\}}u_{d,5} + \gamma_{\{17,21,36,40\}}u_{d,9} + \gamma_{22}u_{d,11}  \equiv 0.
\end{align}
From the above equalities we get 
\begin{equation}\label{c4511}
\gamma_t = 0,\ t = 21,\, 22.
\end{equation}
By a direct computation using \eqref{c411}, \eqref{c421}, \eqref{c431}, \eqref{c441} and \eqref{c4511}, we have
\begin{align*}
p_{(2;4)}(\mathcal S) &\equiv \gamma_{\{3,32\}}u_{d,1} + \gamma_{\{1,3\}}u_{d,3} + \gamma_{\{2,17\}}u_{d,4}\notag\\
&\quad + \gamma_{\{3,32\}}u_{d,6} + \gamma_{\{27,32,35\}}u_{d,10} + \gamma_{31}u_{d,11}  \equiv 0.\\
p_{(2;5)}(\mathcal S) &\equiv \gamma_{\{2,31,32,40\}}u_{d,1} + \gamma_{\{1,17\}}u_{d,2} + \gamma_{\{2,3\}}u_{d,3}\notag\\
&\quad + \gamma_{\{3,31,40\}}u_{d,5} + \gamma_{\{27,31,36,40\}}u_{d,9} + \gamma_{32}u_{d,11}  \equiv 0.
\end{align*}
From the above equalities we get $\gamma_t =0$ for all $t, \ 1 \leqslant t \leqslant 40$. The proof is completed.
\end{proof}

\subsection{Computation of $QP_5^+((4)|(3)|^{d-1})$}\label{s32}\

\medskip
Observe that if $x$ is an admissible monomial of weight vector $(4)|(3)|^{d-1}|(1)$, then $x = yx_r^{2^{d}}$ with $1 \leqslant r \leqslant 5$ and $y$ an admissible monomial of weight vector $(4)|(3)|^{d-1}$. So, we need to determine the set $B_5((4)|(3)|^{d-1})$ for any $d \geqslant 5$.

From our work \cite{su2}, we have $|B_4((4)|(3)|^{d-1})| = |B_4^+((4)|(3)|^{d-1})| = 15$, hence we get $|B_5^0((4)|(3)|^{d-1})| = 15{5\choose 4} = 75$.

\begin{props}\label{mdd52} For any $d \geqslant 5$, $B_5^+((4)|(3)|^{d-1})$ is the set of $235$ admissible monomials which are determined as in Subsection $\ref{s52}$. Consequently, 
$$\dim QP_5^+((4)|(3)|^{d-1}) = 235,\ \dim QP_5((4)|(3)|^{d-1}) = 310.$$
\end{props}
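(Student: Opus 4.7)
The plan is to mirror the two-step strategy used in the proof of Proposition \ref{mdd51}, namely to first establish the inclusion $B_5^+((4)|(3)|^{d-1}) \subset \{b_{d,t} : 1 \leqslant t \leqslant 235\}$ (where $b_{d,t}$ denotes the listed monomials of Subsection \ref{s52}), and then to show that the corresponding classes $[b_{d,t}]$ are linearly independent in $QP_5((4)|(3)|^{d-1})$. Since the stable part of the weight vector $(4)|(3)|^{d-1}$ is the block $(3)|^{d-2}$, the argument is essentially uniform in $d$ once $d \geqslant 5$, reducing the proposition to a finite (though large) check that can be anchored at $d = 5$ and propagated by Theorem \ref{dlcb1}.

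For the upper bound, I would first prove an auxiliary lemma, analogous to Lemma \ref{bda61}, listing the strictly inadmissible monomials in $P_5$ of weight vectors of the form $(4)|(3)|^{r}$ for small $r$ (together with the residual monomials of weight vectors $(4)|(3)$, $(4)|(3)|(3)$, and $(3)|^{r}$ that appear as blocks in the factorization argument). Most of these will follow from known lists in \cite{ph}, \cite{Tin20}, \cite{smo} and from Parts i)--iv) of Lemma \ref{bda51}; any new strictly inadmissible monomial would be exhibited by writing $x = \sum_{j} y_j + \sum_{u=1}^{2^s-1} Sq^u(h_u) \pmod{P_5^-((4)|(3)|^{r})}$, as done for $x_i^3 x_j^5 x_t^{10} x_u^{14} x_v^{14}$ in the proof of Lemma \ref{bda51}. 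Then, given any monomial $x$ with $\omega(x) = (4)|(3)|^{d-1}$ and $x \notin P_5^0$, I would write $x = y w^{2^r} z^{2^{r+u}}$, where $w$ is one of the strictly inadmissible blocks from the lemma, and invoke Theorem \ref{dlcb1} to conclude $x$ is inadmissible. Enumerating the possible spike positions for the $X_i$ factor of weight $4$ and classifying the admissible tails by the already-known description of $B_5((3)|^{d-1})$ (which comes out of Ph\'uc \cite{ph} and Theorem \ref{dlmd2}) produces exactly the 235 candidates listed.

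For linear independence, assume a relation $\mathcal S := \sum_{t=1}^{235} \gamma_t b_{d,t} \equiv 0$ with $\gamma_t \in \mathbb F_2$. Applying the homomorphisms $p_{(i;I)} : P_5 \to P_4$ of \eqref{ct23} yields, by Lemma \ref{bdm}, relations in $QP_4((4)|(3)|^{d-1})$, whose basis of $15$ admissible monomials is known from \cite{su2}. Running through the natural family of projections $p_{(1;2)}, p_{(1;3)}, p_{(1;4)}, p_{(1;5)}, p_{(2;3)}, \ldots, p_{(4;5)}$ together with the $p_{(i;I)}$ with $|I| \geqslant 2$, and comparing coefficients against this 15-element basis at each stage (exactly as in the cascade \eqref{c41}--\eqref{c45} of the proof of Proposition \ref{mdd51}), one obtains systems of $\mathbb F_2$-linear equations on the $\gamma_t$ that must be solved in order. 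A carefully chosen order of projections will pin down batches of coefficients to zero, and chaining these substitutions together should force $\gamma_t = 0$ for all $t$.

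The main obstacle is sheer bookkeeping: with 235 candidate monomials and $30$ maps $p_{(i;I)}$ to track, the coefficient cascade is much longer than in Proposition \ref{mdd51}, and it is easy to miss a strictly inadmissible configuration in step one or a cancellation in step two. In practice one has to choose the enumeration order of the $b_{d,t}$ so that each new projection isolates a block of previously unconstrained coefficients (this is what makes the chains \eqref{c41}--\eqref{c45} in the proof of Proposition \ref{mdd51} terminate cleanly), and one needs the auxiliary computer verification mentioned in the introduction to ensure that the 235 monomials genuinely exhaust the admissible candidates --- in particular, that no monomial factoring as $yw^{2^r}z^{2^{r+u}}$ with $w$ outside the listed strictly inadmissible families can slip through. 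Once these two ingredients are in place, the count $\dim QP_5^+((4)|(3)|^{d-1}) = 235$ follows, and combining with $\dim QP_5^0((4)|(3)|^{d-1}) = 15 \binom{5}{4} = 75$ gives $\dim QP_5((4)|(3)|^{d-1}) = 310$.
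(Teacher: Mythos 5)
Your step one (the inclusion $B_5^+((4)|(3)|^{d-1})\subset\{b_{d,t}\}$ via lists of strictly inadmissible blocks and Theorem \ref{dlcb1}) is exactly what the paper does, using its Lemmas \ref{bda62} and \ref{bda63}. Where you diverge is the linear-independence step. The paper does \emph{not} run the $p_{(i;I)}$ cascade on all $235$ coefficients: it splits the candidate set as $A(d)\cup C(d)$ with $|A(d)|=160$ and $|C(d)|=75$, where every $x\in A(d)$ has the form $x_i^{2^d-1}f_i(w)$ with $w\in B_4^+((3)|(2)|^{d-1})$. Proposition \ref{mdmo} (Mothebe--Uys) then gives admissibility of each such $x$ outright, so the $160$ classes $[A(d)]_\omega$ are automatically linearly independent; and since $\nu(x)=2^d-1$ on $A(d)$ while $\nu(x)<2^d-1$ on $C(d)$, one gets $\langle[A(d)]_\omega\rangle\cap\langle[C(d)]_\omega\rangle=\{0\}$, reducing the whole problem to a $75$-variable cascade. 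This buys two things you give up: a much shorter bookkeeping chain, and --- more importantly --- a guarantee of success for the $A(d)$ part that does not depend on the projections $p_{(i;I)}$ at all.

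That second point is where your plan has a real risk rather than just extra labor. The maps $p_{(i;I)}$ land in $QP_4((4)|(3)|^{d-1})$, which is only $15$-dimensional, and the roughly $26$ nontrivial projections give at most a few hundred $\mathbb F_2$-equations for your $235$ unknowns; there is no a priori reason this system has full rank, and in this literature the $p_{(i;I)}$ detectors are known to be insufficient on their own in some degrees (which is precisely why auxiliary devices such as Proposition \ref{mdmo} and the $\nu$-separation argument are brought in). So before committing to the all-$235$ cascade you would need to verify that it actually closes; the safer and intended route is to peel off the $160$ monomials of the form $x_i^{2^d-1}f_i(w)$ first. With that modification, the rest of your outline (including the count $75+160+75=310$ for $\dim QP_5((4)|(3)|^{d-1})$) matches the paper.
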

The proposition is proved by using Theorems \ref{dlcb1}, \ref{dlsig} and the following lemmas.
\begin{lems}\label{bda62} Let $(i,j,t,u,v)$ be an arbitrary permutation of $(1,2,3,4,5)$. The following monomials are strictly inadmissible:
	
\smallskip
\ \! {\rm i)} $x_i^2x_jx_tx_u^3x_v^3,\, i<j<t$.
	
\smallskip
\ {\rm ii)} $x_i^3x_j^5x_t^5x_u^2x_v^7, \, x_i^3x_j^5x_t^5x_u^3x_v^6,$ $x_i^3x_j^4x_t^5x_u^3x_v^7$, $x_i^3x_j^5x_t^4x_u^3x_v^7$, $i<j<t<u$.

\smallskip
\ {\rm iii)} $x_r^{15}f_r(w),\, 1\leqslant r\leqslant 5$, with $w$ one of the following monomials:
 
\smallskip
\centerline{\begin{tabular}{lllll}
$x_1^{3}x_2^{4}x_3^{9}x_4^{15}$& $x_1^{3}x_2^{4}x_3^{15}x_4^{9}$& $x_1^{3}x_2^{5}x_3^{8}x_4^{15}$& $x_1^{3}x_2^{5}x_3^{9}x_4^{14}$& $x_1^{3}x_2^{5}x_3^{14}x_4^{9} $\cr  $x_1^{3}x_2^{5}x_3^{15}x_4^{8}$& $x_1^{3}x_2^{7}x_3^{12}x_4^{9}$& $x_1^{3}x_2^{7}x_3^{13}x_4^{8}$& $x_1^{3}x_2^{12}x_3x_4^{15}$& $x_1^{3}x_2^{12}x_3^{3}x_4^{13} $\cr  $x_1^{3}x_2^{12}x_3^{15}x_4$& $x_1^{3}x_2^{15}x_3^{4}x_4^{9}$& $x_1^{3}x_2^{15}x_3^{5}x_4^{8}$& $x_1^{3}x_2^{15}x_3^{12}x_4$& $x_1^{7}x_2^{3}x_3^{12}x_4^{9} $\cr  $x_1^{7}x_2^{3}x_3^{13}x_4^{8}$& $x_1^{7}x_2^{11}x_3^{4}x_4^{9}$& $x_1^{7}x_2^{11}x_3^{5}x_4^{8}$& $x_1^{7}x_2^{11}x_3^{12}x_4$& $x_1^{15}x_2^{3}x_3^{4}x_4^{9} $\cr  $x_1^{15}x_2^{3}x_3^{5}x_4^{8}$& $x_1^{15}x_2^{3}x_3^{12}x_4$& &&\cr   
\end{tabular}}

\smallskip
{\begin{tabular}{lllll}
{\rm iv)}&$x_1^{3}x_2^{7}x_3^{13}x_4^{9}x_5^{14}$& $x_1^{3}x_2^{7}x_3^{13}x_4^{14}x_5^{9}$& $x_1^{3}x_2^{12}x_3^{7}x_4^{11}x_5^{13}$& $x_1^{7}x_2^{3}x_3^{13}x_4^{9}x_5^{14} $\cr  &$x_1^{7}x_2^{3}x_3^{13}x_4^{14}x_5^{9}$& $x_1^{7}x_2^{8}x_3^{7}x_4^{11}x_5^{13}$& $x_1^{7}x_2^{9}x_3^{6}x_4^{11}x_5^{13}$& $x_1^{7}x_2^{11}x_3^{5}x_4^{9}x_5^{14} $\cr  &$x_1^{7}x_2^{11}x_3^{5}x_4^{14}x_5^{9}$& $x_1^{7}x_2^{11}x_3^{12}x_4^{3}x_5^{13}$& $x_1^{7}x_2^{11}x_3^{12}x_4^{7}x_5^{9}$& $x_1^{7}x_2^{11}x_3^{13}x_4^{6}x_5^{9} $\cr  &$x_1^{7}x_2^{11}x_3^{13}x_4^{7}x_5^{8}$& & &\cr 
\end{tabular}}
	
\smallskip
\ \! {\rm v)} $x_r^{31}f_r(x_1^{3}x_2^{7}x_3^{24}x_4^{29}),\, x_r^{31}f_r(x_1^{7}x_2^{3}x_3^{24}x_4^{29}),\, 1\leqslant r\leqslant 5$, and 

\smallskip
\centerline{\begin{tabular}{llll}
$x_1^{3}x_2^{15}x_3^{20}x_4^{27}x_5^{29}$& $x_1^{3}x_2^{15}x_3^{23}x_4^{24}x_5^{29}$& $x_1^{7}x_2^{7}x_3^{27}x_4^{28}x_5^{25}$& $x_1^{7}x_2^{9}x_3^{23}x_4^{26}x_5^{29} $\cr  $x_1^{7}x_2^{9}x_3^{23}x_4^{27}x_5^{28}$& $x_1^{7}x_2^{11}x_3^{20}x_4^{27}x_5^{29}$& $x_1^{7}x_2^{11}x_3^{23}x_4^{24}x_5^{29}$& $x_1^{7}x_2^{11}x_3^{23}x_4^{28}x_5^{25} $\cr  $x_1^{7}x_2^{11}x_3^{23}x_4^{29}x_5^{24}$& $x_1^{7}x_2^{15}x_3^{16}x_4^{27}x_5^{29}$& $x_1^{7}x_2^{15}x_3^{19}x_4^{24}x_5^{29}$& $x_1^{7}x_2^{27}x_3^{7}x_4^{24}x_5^{29}$\cr  $x_1^{7}x_2^{27}x_3^{7}x_4^{28}x_5^{25}$& $x_1^{7}x_2^{27}x_3^{7}x_4^{29}x_5^{24}$& $x_1^{15}x_2^{3}x_3^{20}x_4^{27}x_5^{29}$& $x_1^{15}x_2^{3}x_3^{23}x_4^{24}x_5^{29}$\cr  $x_1^{15}x_2^{7}x_3^{16}x_4^{27}x_5^{29}$& $x_1^{15}x_2^{7}x_3^{19}x_4^{24}x_5^{29}$& $x_1^{15}x_2^{15}x_3^{16}x_4^{19}x_5^{29}$& $x_1^{15}x_2^{15}x_3^{19}x_4^{16}x_5^{29}$\cr  $x_1^{15}x_2^{23}x_3^{3}x_4^{24}x_5^{29}$& & & \cr  
\end{tabular}}
\end{lems}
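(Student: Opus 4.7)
The plan is to treat each listed monomial $x$ separately by exhibiting an explicit identity
\[
x = \sum_j y_j + \sum_{u=1}^{2^s-1} Sq^u(h_u) \pmod{P_5^-(\omega(x))},
\]
where $s = \ell(\omega(x))$ and each $y_j$ satisfies $y_j < x$ in the order of Definition \ref{defn3}. Strict inadmissibility of $x$ then follows immediately from the definition. Throughout, we work modulo $P_5^-(\omega(x))$, so any summand with a strictly smaller weight vector is automatically absorbed, and the only bookkeeping is to verify that the weight-preserving summands are smaller in the exponent-vector order.

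For Part (i), $\omega(x) = (4,3)$ gives $s = 2$, so only $Sq^1, Sq^2, Sq^3$ are available; a single direct Cartan-formula computation should suffice, and symmetry in $(i,j,t,u,v)$ reduces the check to one normalized representative. Part (ii) is analogous with $s = 3$. For Part (iv) and for the explicit monomials in Part (v), the weight vector has length $4$ or $5$, so far more Steenrod squares are available, but the identities still must be produced by direct case-by-case calculation in the style of the explicit display used to prove Lemma \ref{bda61}(iv).

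For the families $x_r^{2^m-1} f_r(w)$ appearing in Parts (iii) and (v), the natural strategy is to reduce to the corresponding strict-inadmissibility statements for $w$ in $P_4$ established in our earlier work \cite{su2}. Since $\binom{2^m-1}{v}$ is odd for all $0 \leqslant v \leqslant 2^m-1$ (Lucas's theorem), one has $Sq^v(x_r^{2^m-1}) = x_r^{2^m-1+v}$ throughout the available range, so the Cartan formula should allow an identity for $w$ in $P_4$ to be lifted to one for $x_r^{2^m-1} f_r(w)$ in $P_5$, with the extra terms either strictly smaller in the order of Definition \ref{defn3} or falling into $P_5^-(\omega(x))$. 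The main obstacle throughout is the volume and bookkeeping: dozens of distinct monomials each require an explicit Steenrod identity whose correctness depends on accurately tracking the weight and exponent orders of every summand, and the tricky cases are those where the lifted identity produces genuinely new summands of the same weight vector that must themselves be checked against the order of Definition \ref{defn3}. As the authors indicate in the introduction, short computer programs are essentially required to manage this verification reliably.
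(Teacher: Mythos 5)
Your overall plan for the individually listed monomials — exhibit, for each $x$, an explicit relation $x = \sum_j y_j + \sum_{u=1}^{2^s-1}Sq^u(h_u)$ modulo $P_5^-(\omega(x))$ with each weight-preserving summand smaller in the order of Definition \ref{defn3} — is exactly what the paper does (it cites Ph\'uc \cite{ph1} for part ii) and writes out such identities for representatives of parts iii)--v)). The genuine problem is your proposed treatment of the families $x_r^{2^m-1}f_r(w)$ in parts iii) and v). No result in the paper lets you transfer strict inadmissibility of $w$ from $P_4$ to $x_r^{2^m-1}f_r(w)$ in $P_5$: Theorem \ref{dlcb1} only applies to products of the form $xw^{2^r}$ or $wy^{2^s}$ (one factor raised to a power of $2$), and Proposition \ref{mdmo} goes in the opposite direction (it preserves admissibility, not inadmissibility). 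Your Cartan-formula lift also does not behave as claimed: rewriting $x_r^{2^m-1}Sq^u(h_u) = Sq^u\big(x_r^{2^m-1}h_u\big) + \sum_{a\geqslant 1}x_r^{2^m-1+a}Sq^{u-a}(h_u)$ (all binomial coefficients odd by Lucas) produces cross terms whose weight vectors can be \emph{strictly larger} than $\omega(x)$ — for instance, when $a$ is even, $\alpha_0(2^m-1+a)=1$ and a monomial of $Sq^{u-a}(h_u)$ with all exponents odd yields $\omega_1 = 5 > 4$. Such terms are neither absorbed into $P_5^-(\omega(x))$ nor smaller monomials; they must cancel among themselves, and verifying that requires essentially redoing the computation in $P_5$, which is what the paper does directly (see the explicit identity for $x_1^7x_2^{11}x_3^4x_4^9x_5^{15}$ in the paper's proof).

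Two smaller caveats. First, you have not checked that the relevant $w\in P_4$ are in fact proved strictly inadmissible in \cite{su2}; the paper does not rely on any such statement for this lemma. Second, reducing to "one normalized representative" by symmetry is not a strict logical reduction: the order of Definition \ref{defn3} compares exponent vectors $\sigma(x)$, which are not permutation-invariant, so an identity with $\sigma(y_j)<\sigma(x)$ need not remain order-decreasing after permuting variables (this only works automatically for summands of strictly smaller weight vector). The paper shares this looseness ("the others can be proved by similar computations"), so it is not a defect relative to the paper, but it should not be presented as a consequence of symmetry alone.
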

\begin{proof} Each monomial in the lemma is of weight vector $\omega_{(t)} = (4)|(3)|^t$ with $1 \leqslant t \leqslant 4$. Part i) is easy. Part ii) is proved in Ph\'uc \cite{ph1}. We prove the remain for some monomials. The other can be proved by a similar computation. Let $x = x_5^{15}f_5(x_1^{7}x_2^{11}x_3^{4}x_4^{9}) = x_1^{7}x_2^{11}x_3^{4}x_4^{9}x_5^{15}$ be a monomial in Part iii). By a direct computation we have
\begin{align*}
x &= x_1^{5}x_2^{11}x_3^{2}x_4^{13}x_5^{15} + x_1^{5}x_2^{11}x_3^{4}x_4^{11}x_5^{15} + x_1^{5}x_2^{11}x_3^{8}x_4^{7}x_5^{15} + x_1^{7}x_2^{9}x_3^{4}x_4^{11}x_5^{15} \\ &\quad + x_1^{7}x_2^{10}x_3x_4^{13}x_5^{15} + x_1^{7}x_2^{11}x_3x_4^{12}x_5^{15} +  Sq^1\big(x_1^{7}x_2^{7}x_3x_4^{7}x_5^{23} + x_1^{7}x_2^{7}x_3x_4^{11}x_5^{19}\\ &\quad + x_1^{7}x_2^{11}x_3x_4^{7}x_5^{19} + x_1^{7}x_2^{11}x_3x_4^{11}x_5^{15}\big) +  Sq^2\big(x_1^{7}x_2^{7}x_3^{2}x_4^{7}x_5^{21} + x_1^{7}x_2^{7}x_3^{2}x_4^{13}x_5^{15}\\ &\quad + x_1^{7}x_2^{10}x_3x_4^{11}x_5^{15} + x_1^{7}x_2^{13}x_3^{2}x_4^{7}x_5^{15} + x_1^{9}x_2^{7}x_3^{2}x_4^{11}x_5^{15}\big) +  Sq^4\big(x_1^{5}x_2^{7}x_3^{2}x_4^{13}x_5^{15}\\ &\quad + x_1^{5}x_2^{11}x_3^{4}x_4^{7}x_5^{15} + x_1^{7}x_2^{11}x_3^{2}x_4^{7}x_5^{15}\big) +  Sq^8\big(x_1^{7}x_2^{7}x_3^{2}x_4^{7}x_5^{15}\big) \ 
\mbox{mod}(P_5^-(\omega_{(3)})).
\end{align*}
Hence, the monomial $x$ is strictly inadmissible. Let $y = x_1^{7}x_2^{9}x_3^{6}x_4^{11}x_5^{13}$ be a monomial in Part iv). We have
\begin{align*}
y &= x_1^{5}x_2^{7}x_3^{10}x_4^{11}x_5^{13} + x_1^{5}x_2^{11}x_3^{3}x_4^{13}x_5^{14} + x_1^{5}x_2^{11}x_3^{3}x_4^{14}x_5^{13} + x_1^{5}x_2^{11}x_3^{6}x_4^{11}x_5^{13}\\ &\quad + x_1^{7}x_2^{7}x_3^{8}x_4^{11}x_5^{13} + x_1^{7}x_2^{9}x_3^{3}x_4^{13}x_5^{14} + x_1^{7}x_2^{9}x_3^{3}x_4^{14}x_5^{13} + Sq^1\big(x_1^{7}x_2^{7}x_3^{5}x_4^{13}x_5^{13}\big)\\ &\quad +  Sq^2\big(x_1^{7}x_2^{7}x_3^{3}x_4^{13}x_5^{14} + x_1^{7}x_2^{7}x_3^{3}x_4^{14}x_5^{13} + x_1^{7}x_2^{7}x_3^{6}x_4^{11}x_5^{13}\big)\\ &\quad + Sq^4\big(x_1^{5}x_2^{7}x_3^{3}x_4^{13}x_5^{14} + x_1^{5}x_2^{7}x_3^{3}x_4^{14}x_5^{13} + x_1^{5}x_2^{7}x_3^{6}x_4^{11}x_5^{13} \big) \ \mbox{mod}(P_5^-(\omega_{(3)})).
\end{align*}
This equality shows that the monomial $y$ is strictly inadmissible. 

Let $u= x_1^{7}x_2^{27}x_3^{7}x_4^{28}x_5^{25}$ and $v = x_1^{15}x_2^{15}x_3^{19}x_4^{16}x_5^{29}$ as listed in Part v). We have
\begin{align*}
u &= x_1^{5}x_2^{30}x_3^{3}x_4^{27}x_5^{29} + x_1^{5}x_2^{30}x_3^{7}x_4^{25}x_5^{27} + x_1^{5}x_2^{30}x_3^{11}x_4^{25}x_5^{23} + x_1^{7}x_2^{27}x_3^{3}x_4^{28}x_5^{29}\\ &\quad + x_1^{7}x_2^{27}x_3^{5}x_4^{25}x_5^{30} + x_1^{7}x_2^{27}x_3^{5}x_4^{28}x_5^{27} + x_1^{7}x_2^{27}x_3^{6}x_4^{25}x_5^{29} + x_1^{7}x_2^{27}x_3^{7}x_4^{25}x_5^{28}\\ &\quad + Sq^1\big(x_1^{7}x_2^{29}x_3^{3}x_4^{25}x_5^{29} + x_1^{7}x_2^{29}x_3^{5}x_4^{25}x_5^{27} + x_1^{7}x_2^{29}x_3^{7}x_4^{25}x_5^{25}\big)\\ &\quad +  Sq^2\big(x_1^{7}x_2^{27}x_3^{3}x_4^{25}x_5^{30} + x_1^{7}x_2^{27}x_3^{3}x_4^{26}x_5^{29} + x_1^{7}x_2^{27}x_3^{5}x_4^{26}x_5^{27} + x_1^{7}x_2^{27}x_3^{6}x_4^{25}x_5^{27}\\ &\quad + x_1^{7}x_2^{27}x_3^{7}x_4^{25}x_5^{26} + x_1^{7}x_2^{27}x_3^{7}x_4^{26}x_5^{25} + x_1^{7}x_2^{30}x_3^{3}x_4^{15}x_5^{37} + x_1^{7}x_2^{30}x_3^{3}x_4^{23}x_5^{29}\\ &\quad + x_1^{7}x_2^{30}x_3^{7}x_4^{25}x_5^{23} + x_1^{7}x_2^{30}x_3^{9}x_4^{19}x_5^{27} + x_1^{7}x_2^{38}x_3^{3}x_4^{15}x_5^{29} + x_1^{7}x_2^{38}x_3^{3}x_4^{21}x_5^{23}\big)\\ &\quad +  Sq^4\big(x_1^{5}x_2^{30}x_3^{3}x_4^{23}x_5^{29} + x_1^{5}x_2^{30}x_3^{7}x_4^{25}x_5^{23} + x_1^{11}x_2^{30}x_3^{5}x_4^{15}x_5^{29} + x_1^{11}x_2^{30}x_3^{5}x_4^{21}x_5^{23}\big)\\ &\quad +  Sq^8\big(x_1^{7}x_2^{30}x_3^{5}x_4^{15}x_5^{29} + x_1^{7}x_2^{30}x_3^{5}x_4^{21}x_5^{23}\big) \ \mbox{mod}(P_5^-(\omega_{(4)})).
\end{align*}
Hence, the monomial $u$ is strictly inadmissible.
\begin{align*}
v &= x_1^{9}x_2^{15}x_3^{19}x_4^{21}x_5^{30} + x_1^{9}x_2^{23}x_3^{11}x_4^{21}x_5^{30} + x_1^{11}x_2^{7}x_3^{19}x_4^{28}x_5^{29} + x_1^{11}x_2^{7}x_3^{21}x_4^{26}x_5^{29}\\ &\quad + x_1^{11}x_2^{13}x_3^{19}x_4^{22}x_5^{29} + x_1^{11}x_2^{14}x_3^{21}x_4^{19}x_5^{29} + x_1^{11}x_2^{21}x_3^{11}x_4^{22}x_5^{29} + x_1^{11}x_2^{22}x_3^{13}x_4^{19}x_5^{29}\\ &\quad + x_1^{11}x_2^{23}x_3^{11}x_4^{20}x_5^{29} + x_1^{11}x_2^{23}x_3^{19}x_4^{12}x_5^{29} + x_1^{12}x_2^{15}x_3^{19}x_4^{19}x_5^{29} + x_1^{12}x_2^{23}x_3^{11}x_4^{19}x_5^{29}\\ &\quad + x_1^{15}x_2^{7}x_3^{17}x_4^{26}x_5^{29} + x_1^{15}x_2^{11}x_3^{19}x_4^{20}x_5^{29} + x_1^{15}x_2^{14}x_3^{17}x_4^{19}x_5^{29} + x_1^{15}x_2^{15}x_3^{16}x_4^{19}x_5^{29}\\ &\quad +  Sq^1\big(x_1^{15}x_2^{15}x_3^{11}x_4^{19}x_5^{33} + x_1^{15}x_2^{17}x_3^{11}x_4^{21}x_5^{29} + x_1^{15}x_2^{19}x_3^{11}x_4^{19}x_5^{29}\\ &\quad + x_1^{19}x_2^{15}x_3^{11}x_4^{19}x_5^{29}\big) +  Sq^2\big(x_1^{15}x_2^{11}x_3^{11}x_4^{26}x_5^{29} + x_1^{15}x_2^{15}x_3^{13}x_4^{19}x_5^{30}\\ &\quad + x_1^{15}x_2^{18}x_3^{11}x_4^{19}x_5^{29}\big) +  Sq^4\big(x_1^{15}x_2^{7}x_3^{11}x_4^{28}x_5^{29} + x_1^{15}x_2^{7}x_3^{13}x_4^{26}x_5^{29}\\ &\quad + x_1^{15}x_2^{13}x_3^{11}x_4^{22}x_5^{29} + x_1^{15}x_2^{14}x_3^{13}x_4^{19}x_5^{29} + x_1^{15}x_2^{15}x_3^{11}x_4^{19}x_5^{30} + x_1^{15}x_2^{15}x_3^{12}x_4^{19}x_5^{29}\\ &\quad + x_1^{15}x_2^{15}x_3^{19}x_4^{12}x_5^{29}\big) +  Sq^8\big(x_1^{9}x_2^{15}x_3^{11}x_4^{21}x_5^{30} + x_1^{11}x_2^{7}x_3^{11}x_4^{28}x_5^{29}\\ &\quad + x_1^{11}x_2^{7}x_3^{13}x_4^{26}x_5^{29} + x_1^{11}x_2^{13}x_3^{11}x_4^{22}x_5^{29} + x_1^{11}x_2^{14}x_3^{13}x_4^{19}x_5^{29}\\ &\quad + x_1^{11}x_2^{15}x_3^{11}x_4^{20}x_5^{29} + x_1^{11}x_2^{15}x_3^{19}x_4^{12}x_5^{29} + x_1^{12}x_2^{15}x_3^{11}x_4^{19}x_5^{29}\\ &\quad + x_1^{23}x_2^{11}x_3^{11}x_4^{12}x_5^{29}\big) +  Sq^{16}\big(x_1^{15}x_2^{11}x_3^{11}x_4^{12}x_5^{29}\big) \ \mbox{mod}(P_5^-(\omega_{(4)})).
\end{align*}
Hence, the monomial $v$ is strictly inadmissible.
\end{proof}

\begin{lems}\label{bda63} Let $(i,j,t,u,v)$ be an arbitrary permutation of $(1,2,3,4,5)$. The following monomials are strictly inadmissible:
	
\smallskip
\ \! {\rm i)} $x_i^2x_jx_t^3x_u^3,\, i< j;$\, $x_i^2x_jx_tx_u^2x_v^3,\, i< j< t.$
	
\smallskip
\ {\rm ii)} $x_r^7f_r(w)$, $1 \leqslant r \leqslant 5$, with $w$ one of the following monomials in $P_4$:

\smallskip
\centerline{\begin{tabular}{lllll}
$x_i^3x_j^4x_t^7$& $x_1x_2^{2}x_3^{6}x_4^{5}$& $x_1x_2^{2}x_3^{6}x_4^{5}$& $x_1x_2^{6}x_3^{3}x_4^{4}$& $x_1x_2^{6}x_3^{6}x_4 $\cr  
$x_1^{3}x_2^{4}x_3x_4^{6}$& $x_1^{3}x_2^{4}x_3^{3}x_4^{4}$& $x_1^{3}x_2^{4}x_3^{4}x_4^{3}$& $x_1^{3}x_2^{4}x_3^{5}x_4^{2}$& $x_1^{3}x_2^{5}x_3^{4}x_4^{2} $\cr  
\end{tabular}}
		
\smallskip
	{\begin{tabular}{llllll}
{\rm iii)}&$x_1x_2^{6}x_3^{3}x_4^{6}x_5^{5}$& $x_1x_2^{6}x_3^{6}x_4^{3}x_5^{5}$& $x_1^{3}x_2^{5}x_3^{5}x_4^{2}x_5^{6}$& $x_1^{3}x_2^{5}x_3^{5}x_4^{6}x_5^{2}$& $x_1^{3}x_2^{5}x_3^{6}x_4^{5}x_5^{2} $\cr  &$x_1^{3}x_2^{4}x_3^{5}x_4^{3}x_5^{6}$& $x_1^{3}x_2^{4}x_3^{5}x_4^{6}x_5^{3}$& $x_1^{3}x_2^{5}x_3^{4}x_4^{3}x_5^{6}$& $x_1^{3}x_2^{5}x_3^{4}x_4^{6}x_5^{3}$& $x_1^{3}x_2^{5}x_3^{6}x_4^{4}x_5^{3} $\cr 
\end{tabular}}

\ {\rm iv)} $x_r^{15}f_r(x)$, $1 \leqslant r \leqslant 5$, with $x$ one of the following monomials:
$$x_1x_2^{7}x_3^{10}x_4^{12},\, x_1^{3}x_2^{3}x_3^{12}x_4^{12},\, x_1^{3}x_2^{5}x_3^{8}x_4^{14},\, x_1^{3}x_2^{5}x_3^{14}x_4^{8},\, x_1^{7}x_2x_3^{10}x_4^{12},\, x_1^{7}x_2^{7}x_3^{8}x_4^{8}$$ 
	
\smallskip
{\begin{tabular}{lllll}
{\rm v)}&$x_1^{3}x_2^{5}x_3^{9}x_4^{14}x_5^{14}$& $x_1^{3}x_2^{5}x_3^{14}x_4^{9}x_5^{14}$& $x_1^{3}x_2^{5}x_3^{14}x_4^{11}x_5^{12}$& $x_1^{3}x_2^{7}x_3^{11}x_4^{12}x_5^{12} $\cr  &$x_1^{3}x_2^{7}x_3^{13}x_4^{8}x_5^{14}$& $x_1^{3}x_2^{7}x_3^{13}x_4^{14}x_5^{8}$& $x_1^{3}x_2^{12}x_3^{3}x_4^{13}x_5^{14}$& $x_1^{3}x_2^{13}x_3^{14}x_4^{3}x_5^{12} $\cr  &$x_1^{7}x_2^{3}x_3^{11}x_4^{12}x_5^{12}$& $x_1^{7}x_2^{3}x_3^{13}x_4^{8}x_5^{14}$& $x_1^{7}x_2^{3}x_3^{13}x_4^{14}x_5^{8}$& $x_1^{7}x_2^{9}x_3^{7}x_4^{10}x_5^{12} $\cr  &$x_1^{7}x_2^{11}x_3^{3}x_4^{12}x_5^{12}$& $x_1^{7}x_2^{11}x_3^{5}x_4^{8}x_5^{14}$& $x_1^{7}x_2^{11}x_3^{5}x_4^{14}x_5^{8}$& $x_1^{7}x_2^{11}x_3^{13}x_4^{6}x_5^{8} $\cr 
\end{tabular}}
	
\smallskip
\ {\rm vi)} $x_1^{3}x_2^{7}x_3^{24}x_4^{29}x_5^{30}$,\, $x_1^{7}x_2^{3}x_3^{24}x_4^{29}x_5^{30}$,\, $x_1^{7}x_2^{7}x_3^{25}x_4^{26}x_5^{28}$,\, $x_1^{15}x_2^{15}x_3^{17}x_4^{18}x_5^{28}.$
\end{lems}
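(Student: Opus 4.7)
The plan is to verify strict inadmissibility directly: for each listed monomial $x$ with $s=\max\{i:\omega_i(x)>0\}$, I aim to exhibit an explicit identity
\[
x \;=\; \sum_{j} y_j \;+\; \sum_{u=1}^{2^s-1} Sq^u(h_u) \qquad \bmod\; P_5^-(\omega(x)),
\]
with each $y_j<x$ in the order of Definition~\ref{defn3}. Working modulo $P_5^-(\omega(x))$ is essential because many intermediate terms produced by the Cartan formula belong to smaller weight vectors and can be discarded from the outset. The heavy lifting is producing the polynomials $h_u$; once they are in hand, verification of each identity is mechanical via the Cartan formula.

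Part (i) consists of low-degree monomials of weight vectors $(2,1,2)$ and $(2,3,1)$. Each symmetry class collapses, under the $S_5$-action permuting variables, to a single representative; that representative is then handled exactly as in Lemma~\ref{bda51}(i), by applying $Sq^1$ and $Sq^2$ to suitably chosen polynomials of smaller degree and reading off the Cartan expansion. Parts (ii) and (iv) treat monomials of the form $x_r^{2^s-1}f_r(w)$ with $s=3$ and $s=4$ respectively, where $w\in P_4$ is drawn from a short list. By permutation symmetry I fix $r=5$ and, for each $w$, construct an explicit Steenrod decomposition modulo $P_5^-$. These computations mirror those in Lemma~\ref{bda62}(iii): one inherits from the strict inadmissibility of the corresponding $P_4$-monomial a set of relations, multiplies each by $x_5^{2^s-1}$, and manages the extra Cartan cross-terms, which automatically fall into $P_5^-$ of the appropriate weight.

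Parts (iii), (v), and (vi) are explicit lists of monomials of weight vectors $(3,3,3)$, $(3,3,3,3)$, and $(3,3,3,3,3)$ respectively that admit no convenient factorization through $f_r$. For each such monomial one must construct the $h_u$'s by hand, iterating through $u=1,2,4,\ldots,2^{s-1}$. The template is exactly that of the sample calculations in Lemma~\ref{bda51}(v) and of the proofs of strict inadmissibility of $u=x_1^{7}x_2^{27}x_3^{7}x_4^{28}x_5^{25}$ and $v=x_1^{15}x_2^{15}x_3^{19}x_4^{16}x_5^{29}$ in Lemma~\ref{bda62}: each $h_u$ is built by lowering a $2^i$-exponent to $2^{i-1}$ on one variable and adjusting a neighbouring exponent, after which one verifies that $Sq^u$ returns $x$ plus terms strictly smaller in the order of Definition~\ref{defn3}, plus terms lying in $P_5^-(\omega(x))$.

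The principal obstacle is the sheer volume of cases: roughly seventy distinct monomials (after accounting for symmetry) require their own decomposition, and the longer weight vectors in Parts (v)--(vi) produce Steenrod expressions with many summands inside each $Sq^{2^i}$. In practice one locates the $h_u$'s via a small computer search over polynomials of the correct degree with limited exponent variation from $x$; once a candidate set is found, a routine Cartan expansion confirms the identity. Throughout, I exploit the $S_5$-symmetry on variables to cut the number of independent verifications, and I group monomials sharing a common weight vector or a common high-exponent prefix so that the Steenrod decompositions in each group share their main building blocks and only minor bookkeeping distinguishes them.
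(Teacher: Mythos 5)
Your overall template --- exhibiting, for each listed monomial $x$, an identity $x=\sum_j y_j+\sum_{u=1}^{2^s-1}Sq^u(h_u)$ modulo $P_5^-(\omega(x))$ and invoking the definition of strict inadmissibility --- is the same as the paper's, which settles parts (i)--(iii) by citation to Ph\'uc and then writes out such an identity explicitly for one representative of each of parts (iv), (v) and (vi), leaving the rest to ``similar computations.'' However, your treatment of parts (ii) and (iv) contains a genuine gap. You propose to take a strict-inadmissibility relation for $w$ in $P_4$, multiply it by $x_5^{2^s-1}$, and absorb the Cartan cross-terms into $P_5^-$. Those cross-terms are $\binom{2^s-1}{a}x_5^{2^s-1+a}\,Sq^{u-a}(h_u)$ with $a\geqslant 1$, and their weight vectors are \emph{not} automatically smaller than $\omega(x)=(3)|^s$: for instance, when $2^s-1+a$ is odd and a monomial of $Sq^{u-a}(h_u)$ has all four exponents odd, the first weight entry is $5$, which is \emph{larger} than $3$ in the left lexicographic order, so the term lies neither in $P_5^-(\omega)$ nor among the smaller summands $y_j$. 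There is no general transfer of (strict) inadmissibility along $w\mapsto x_r^{2^s-1}f_r(w)$; Proposition \ref{mdmo} goes only in the opposite direction (admissible to admissible). This is precisely why the paper computes $x_5^{15}f_5(x_1^{7}x_2x_3^{10}x_4^{12})$ from scratch rather than lifting a $P_4$ relation. You would need either to verify case by case that the problematic cross-terms cancel or genuinely lie in $P_5^-(\omega)$, or to produce fresh decompositions as the paper does.

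Two smaller points. The weight vectors you assign in part (i) are wrong: both families $x_i^2x_jx_t^3x_u^3$ and $x_i^2x_jx_tx_u^2x_v^3$ have degree $9$ and weight vector $(3,3)$, not $(2,1,2)$ or $(2,3,1)$ (the latter is not even of degree $9$); this matters because $s=\max\{i:\omega_i(x)>0\}$ determines which $Sq^u$ are permitted in the certificate. Finally, since the lemma is a finite list of explicit facts, a complete proof must actually exhibit the polynomials $h_u$ for each item (or cite a source that does, as the paper does for (i)--(iii)); a description of how one would search for them does not yet prove any single entry of the list.
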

\begin{proof} Each monomial in the lemma is of weight vector $(3)|^t$ with $2 \leqslant t \leqslant 5$. Parts i), ii), iii) are due to Ph\'uc \cite{ph1}. Let $x = x_5^{15}f_5(x_1^{7}x_2x_3^{10}x_4^{12}) = x_1^{7}x_2x_3^{10}x_4^{12}x_5^{15}$ be a monomial in Part iv). We have
\begin{align*}
x &= x_1^{4}x_2^{4}x_3^{11}x_4^{11}x_5^{15} + x_1^{4}x_2^{8}x_3^{7}x_4^{11}x_5^{15} + x_1^{5}x_2^{2}x_3^{11}x_4^{12}x_5^{15} + x_1^{7}x_2x_3^{8}x_4^{14}x_5^{15}\\ &\quad +  Sq^1\big(x_1^{7}x_2x_3^{8}x_4^{13}x_5^{15} + x_1^{7}x_2x_3^{9}x_4^{12}x_5^{15} + x_1^{7}x_2^{4}x_3^{7}x_4^{11}x_5^{15}\big) +  Sq^2\big(x_1^{7}x_2^{2}x_3^{7}x_4^{12}x_5^{15}\\ &\quad + x_1^{7}x_2^{2}x_3^{8}x_4^{11}x_5^{15}\big) + Sq^4\big(x_1^{4}x_2^{4}x_3^{7}x_4^{11}x_5^{15} + x_1^{5}x_2^{2}x_3^{7}x_4^{12}x_5^{15}\big) \ \mbox{mod}(P_5^-((3)|^4)).
\end{align*}
Hence, the monomial $x$ is strictly inadmissible.

Let $y = x_1^{3}x_2^{7}x_3^{13}x_4^{8}x_5^{14}$ be the monomial in Part v). By a direct computation we have
\begin{align*}
y &= x_1^{2}x_2^{7}x_3^{13}x_4^{9}x_5^{14} + x_1^{3}x_2^{5}x_3^{11}x_4^{12}x_5^{14} + x_1^{3}x_2^{5}x_3^{13}x_4^{10}x_5^{14} + x_1^{3}x_2^{7}x_3^{9}x_4^{12}x_5^{14}\\ &\quad + x_1^{3}x_2^{7}x_3^{12}x_4^{9}x_5^{14} +  Sq^1\big(x_1^{3}x_2^{7}x_3^{7}x_4^{5}x_5^{22} + x_1^{3}x_2^{7}x_3^{7}x_4^{9}x_5^{18} + x_1^{3}x_2^{7}x_3^{11}x_4^{5}x_5^{18}\\ &\quad + x_1^{3}x_2^{7}x_3^{11}x_4^{9}x_5^{14}\big) +  Sq^2\big(x_1^{2}x_2^{7}x_3^{7}x_4^{5}x_5^{22} + x_1^{2}x_2^{7}x_3^{11}x_4^{9}x_5^{14} + x_1^{5}x_2^{7}x_3^{7}x_4^{10}x_5^{14}\\ &\quad + x_1^{5}x_2^{7}x_3^{11}x_4^{6}x_5^{14}\big) + Sq^4\big(x_1^{3}x_2^{5}x_3^{7}x_4^{12}x_5^{14} + x_1^{3}x_2^{5}x_3^{13}x_4^{6}x_5^{14} + x_1^{3}x_2^{11}x_3^{7}x_4^{6}x_5^{14}\big)\\ &\quad +  Sq^8\big(x_1^{3}x_2^{7}x_3^{7}x_4^{6}x_5^{14}\big) \ \mbox{mod}(P_5^-((3)|^4)).
\end{align*}
This equality implies the monomial $y$ is strictly inadmissible.

Let $z = x_1^{15}x_2^{15}x_3^{17}x_4^{18}x_5^{28}$ be the monomial in Part v). A direct computation shows
\begin{align*}
z &= x_1^{8}x_2^{15}x_3^{23}x_4^{19}x_5^{28} + x_1^{8}x_2^{23}x_3^{15}x_4^{19}x_5^{28} + x_1^{9}x_2^{15}x_3^{23}x_4^{18}x_5^{28} + x_1^{9}x_2^{23}x_3^{15}x_4^{18}x_5^{28}\\ &\quad + x_1^{11}x_2^{12}x_3^{23}x_4^{19}x_5^{28} + x_1^{11}x_2^{13}x_3^{23}x_4^{18}x_5^{28} + x_1^{11}x_2^{20}x_3^{15}x_4^{19}x_5^{28}\\ &\quad + x_1^{11}x_2^{21}x_3^{15}x_4^{18}x_5^{28} + x_1^{15}x_2^{12}x_3^{19}x_4^{19}x_5^{28} + x_1^{15}x_2^{13}x_3^{19}x_4^{18}x_5^{28}\\ &\quad + x_1^{15}x_2^{15}x_3^{16}x_4^{19}x_5^{28} + Sq^1\big(x_1^{15}x_2^{15}x_3^{15}x_4^{19}x_5^{28}\big) +  Sq^2\big(x_1^{15}x_2^{15}x_3^{15}x_4^{18}x_5^{28}\big)\\ &\quad + Sq^4\big(x_1^{15}x_2^{12}x_3^{15}x_4^{19}x_5^{28} + x_1^{15}x_2^{13}x_3^{15}x_4^{18}x_5^{28}\big) + Sq^8\big(x_1^{8}x_2^{15}x_3^{15}x_4^{19}x_5^{28}\\ &\quad + x_1^{9}x_2^{15}x_3^{15}x_4^{18}x_5^{28} + x_1^{11}x_2^{12}x_3^{15}x_4^{19}x_5^{28} + x_1^{11}x_2^{13}x_3^{15}x_4^{18}x_5^{28}\big) \ \mbox{mod}(P_5^-((3)|^5)).
\end{align*}
Hence, the monomial $z$ is strictly inadmissible. The others can be proved by a similar computation.
\end{proof}
\begin{proof}[Proof of Proposition $\ref{mdd52}$] Let $A(d)$ and $C(d)$ be as in Subsection \ref{s52} with $d \geqslant 5$ and let $x \in P_5^+(\omega)$ be an admissible monomial with $\omega := (4)|(3)|^{d-1}$. By a direct computation we see that if $x \notin A(d)\cup C(d)$, then there is a monomial $w$ as given in one of Lemmas \ref{bda62} or \ref{bda63} such that $x = yw^{2^r}z^{2^{r+u}}$ with $r,\, u$ nonnegative integers, $2\leqslant u \leqslant 5$, and $y,\, z$ suitable monomials. By Theorem \ref{dlcb1}, $x$ is inadmissible. Hence, $B_5^+(\omega) \subset A(d)\cup C(d)$.
	
Now we prove that the set $[A(d)\cup C(d)]_{\omega}$ is linearly independent in $QP_5(\omega)$.

Consider the subspaces $\langle [A(d)]_{\omega}\rangle \subset QP_5(\omega)$ and $\langle [C(d)]_{\omega}\rangle \subset QP_5(\omega)$. It is easy to see that for any $x\in A(d)$, we have $x = f_i(y)$ with $y$ an admissible monomial of weight vector $(3)|(2)|^{d-1}$ in $P_4$. By Proposition \ref{mdmo}, $x$ is admissible. This implies $\dim \langle [A(d)]_{\omega}\rangle = 160$. Since $\nu(x) = 2^d-1$ for all $x\in A(d)$ and $\nu(x) < 2^d-1$ for all $x\in C(d)$, we obtain $\langle [A(d)]_{\omega}\rangle \cap \langle [C(d)]_{\omega}\rangle = \{0\}$. Hence, we need only to prove the set $[C(d)]_{\omega}=\{[b_{d,t}]_{\omega}: 1 \leqslant t \leqslant 75 \}$ is linearly independent in $QP_5(\omega)$, where the monomials $b_{d,t}: 1 \leqslant t \leqslant 75$, are determined as in Subsection \ref{s52}. 

Suppose there is a linear relation
\begin{equation}\label{ctd612}
\mathcal S:= \sum_{1\leqslant t \leqslant 75}\gamma_tb_{d,t} \equiv_{\omega} 0,
\end{equation}
where $\gamma_t \in \mathbb F_2$. We denote $\gamma_{\mathbb J} = \sum_{t \in \mathbb J}\gamma_t$ for any $\mathbb J \subset \{t\in \mathbb N:1\leqslant t \leqslant 75\}$.
	
Let $v_{d,u},\, 1\leqslant u \leqslant 15$, be as in Subsection \ref{s52} and the homomorphism $p_{(i;I)}:P_5\to P_4$ which is defined by \eqref{ct23} for $k=5$. From Lemma \ref{bdm}, we see that $p_{(i;I)}$ passes to a homomorphism from $QP_5(\omega)$ to $QP_4(\omega)$. By applying $p_{(i;j)}$, $1\leqslant i < j \leqslant 5,$ to (\ref{ctd612}), we obtain
\begin{align*}
&p_{(1;2)}(S) \equiv_{\omega} \gamma_{4}v_{d,8} + \gamma_{8}v_{d,12}   \equiv_{\omega} 0,\\
&p_{(1;3)}(S) \equiv_{\omega} \gamma_{\{3,18,43\}}v_{d,7} + \gamma_{5}v_{d,8}  \equiv_{\omega} 0,\\
&p_{(1;4)}(S) \equiv_{\omega} \gamma_{\{2,16,46\}}v_{d,6} + \gamma_{6}v_{d,8}  \equiv_{\omega} 0,\\
&p_{(1;5)}(S) \equiv_{\omega}  \gamma_{\{1,17,20,23,25,26,31,48\}}v_{d,5} + \gamma_{7}v_{d,8}  \equiv_{\omega} 0,\\
&p_{(2;3)}(S) \equiv_{\omega} \gamma_{\{35,40,43,57\}}v_{d,7} + \gamma_{65}v_{d,8}  \equiv_{\omega} 0,\\
&p_{(2;4)}(S) \equiv_{\omega} \gamma_{\{34,38,46\}}v_{d,6} + \gamma_{66}v_{d,8}  \equiv_{\omega} 0,\\
&p_{(2;5)}(S) \equiv_{\omega} \gamma_{\{33,39,42,48,69,70\}}v_{d,5} + \gamma_{67}v_{d,8}  \equiv_{\omega} 0,\\
&p_{(3;4)}(S) \equiv_{\omega} \gamma_{\{54,55,57,58,62\}}v_{d,6} + \gamma_{72}v_{d,8}  \equiv_{\omega} 0,\\
&p_{(3;5)}(S) \equiv_{\omega} \gamma_{\{53,56,74\}}v_{d,5} + \gamma_{73}v_{d,8}  \equiv_{\omega} 0,\\
&p_{(4;5)}(S) \equiv_{\omega} \gamma_{\{61,62,63,64\}}v_{d,5} + \gamma_{75}v_{d,8}  \equiv_{\omega} 0.
\end{align*}
From the above equalities, we get
\begin{equation}\label{c61}
\gamma_t = 0 \mbox{ for } t \in \{4,\, 5,\, 6,\, 7,\, 8,\, 65,\, 66,\, 67,\, 72,\, 73,\, 75\}.
\end{equation}
By applying the homomorphism $p_{(1;(u,v))}$, $2\leqslant u < v \leqslant 4,$ to \eqref{ctd612} and using \eqref{c61}, we obtain
\begin{align*}
p_{(1;(2,3))}(S) &\equiv_{\omega} \gamma_{\{3,13,18,35,40,43\}}v_{d,7} + \gamma_{21}v_{d,8} + \gamma_{\{29,57\}}v_{d,12} + \gamma_{32}v_{d,14}  \equiv_{\omega} 0,\\
p_{(1;(2,4))}(S) &\equiv_{\omega} \gamma_{\{2,12,16,34,38,46\}}v_{d,6} + \gamma_{22}v_{d,8} + \gamma_{\{28,55\}}v_{d,11} + \gamma_{30}v_{d,12}  \equiv_{\omega} 0,\\
p_{(1;(3,4))}(S) &\equiv_{\omega} \gamma_{\{10,37,38,40,41,68\}}v_{d,4} + \gamma_{\{2,15,16,43,44,46,49,54,55,57,58\}}v_{d,6}\\ 
&\hskip3.8cm+ \gamma_{\{3,18,19,43,44,46,62\}}v_{d,7} + \gamma_{24}v_{d,8}  \equiv_{\omega} 0.
\end{align*}
These equalities imply
\begin{equation}\label{c62}
\gamma_t = 0 \mbox{ for } t \in \{21,\, 22,\, 24,\, 30,\, 32\},\ \gamma_{55} = \gamma_{28},\ \gamma_{57} = \gamma_{29} .
\end{equation}
Applying the homomorphism $p_{(1;(u,5))}$, $2\leqslant u  \leqslant 4,$ to \eqref{ctd612} and using \eqref{c61}, \eqref{c62} give
\begin{align*}
p_{(1;(2,5))}(S) &\equiv_{\omega} \gamma_{\{1,11,17,20,23,25,26,31,33,39,42,48,50,51,69,70\}}v_{d,5}\\ 
&\qquad + \gamma_{23}v_{d,8} + \gamma_{\{27,56,59,60,63\}}v_{d,10} + \gamma_{31}v_{d,12} \equiv_{\omega} 0,\\
p_{(1;(3,5))}(S) &\equiv_{\omega}  \gamma_{\{1,14,17,18,20,23,25,26,29,31,43,45,48,52,53,56,74\}}v_{d,5}\\
&\qquad + \gamma_{\{9,13,36,39,47,71\}}v_{d,3} + \gamma_{\{3,18,20,43,45,48\}}v_{d,7} + \gamma_{25}v_{d,8} \equiv_{\omega} 0,\\
p_{(1;(4,5))}(S) &\equiv_{\omega} \gamma_{\{9,10,11,12,14,15,27,28,41,42,44,45,49,50,58,59\}}v_{d,2}\\
&\qquad + \gamma_{\{1,16,17,19,20,23,25,26,31,46,47,48,51,52,60,61,62,63\}}v_{d,5}\\
&\qquad+ \gamma_{\{2,16,17,46,47,48,51,52,60,64\}}v_{d,6} + \gamma_{26}v_{d,8} +  \equiv_{\omega} 0.
\end{align*}
These equalities imply
\begin{equation}\label{c63}
\gamma_t = 0 \mbox{ for } t \in \{23,\, 25,\, 26,\, 31\}.
\end{equation}

By applying the homomorphism $p_{(i;(u,v))}$, $2\leqslant i< u < v \leqslant 5,$ to \eqref{ctd612} and using \eqref{c61}, \eqref{c62}, \eqref{c63}, we have
\begin{align*}
p_{(2;(3,4))}(S) &\equiv_{\omega} \gamma_{\{10,12,13,15,16,18,19,28,29\}}v_{d,4} + \gamma_{\{28,34,37,38,43,44,46,54\}}v_{d,6}\\ &\qquad+ \gamma_{\{29,35,40,41,43,44,46,49,58,62\}}v_{d,7} + \gamma_{68}v_{d,8}  \equiv_{\omega} 0,\\
p_{(2;(3,5))}(S) &\equiv_{\omega} \gamma_{\{9,11,14,17,27,47,51,52,60\}}v_{d,3}\\ &\qquad+ \gamma_{\{29,33,36,39,40,42,43,45,48,50,53,56,59,69,70,71,74\}}v_{d,5}\\ &\qquad+ \gamma_{\{29,35,40,42,43,45,48,50,59\}}v_{d,7} + \gamma_{69}v_{d,8} \equiv_{\omega} 0,\\
p_{(2;(4,5))}(S) &\equiv_{\omega} \gamma_{\{9,10,19,20,36,37,44,45\}}v_{d,2}\\ &\qquad+ \gamma_{\{33,38,39,41,42,46,47,48,61,62,63,68,69,70,71\}}v_{d,5}\\ &\qquad+ \gamma_{\{34,38,39,46,47,48,64,71\}}v_{d,6} + \gamma_{70}v_{d,8} \equiv_{\omega} 0,\\
p_{(3;(4,5))}(S) &\equiv_{\omega} \gamma_{\{27,28,29\}}v_{d,2} + \gamma_{\{28,53,56,60,61,74\}}v_{d,5}\\ &\qquad+ \gamma_{\{28,29,54,56,58,59,60,62,63,64\}}v_{d,6} + \gamma_{74}v_{d,8}   \equiv_{\omega} 0.
\end{align*}
These equalities imply
\begin{equation}\label{c64}
\gamma_t = 0 \mbox{ for } t \in \{68,\, 69,\, 70,\,	74\},\ \gamma_{56} = \gamma_{53}.
\end{equation}
By applying the homomorphisms $p_{(1;(2,3,4))}$, $p_{(1;(2,3,5))}$ to \eqref{ctd612} and using \eqref{c61}, \eqref{c62}, \eqref{c63}, \eqref{c64}, we obtain
\begin{align*}
p_{(1;(2,3,4))}(S) &\equiv_{\omega} \gamma_{\{2,12,15,16,28,29,34,37,38,43,44,46,49,54,58\}}v_{d,6}\\ &\qquad+ \gamma_{\{10,34,35,37,38,40,41\}}v_{d,4} +  \gamma_{\{3,13,18,19,35,40,41,43,44,46,62\}}v_{d,7}\\ &\qquad+ \gamma_{49}v_{d,8} + \gamma_{54}v_{d,11} + \gamma_{58}v_{d,12} + \gamma_{62}v_{d,14} + \gamma_{64}v_{d,15} +  \equiv_{\omega} 0,\\
p_{(1;(2,3,5))}(S) &\equiv_{\omega} \gamma_{\{9,13,33,35,36,39,47,71\}}v_{d,3}\\ &\qquad + \gamma_{\{1,11,14,17,18,20,29,33,36,39,40,42,43,45,48,50,51,52,71\}}v_{d,5}\\ &\qquad + \gamma_{\{3,13,18,20,35,40,42,43,45,48\}}v_{d,7} + \gamma_{50}v_{d,8}\\ &\qquad+ \gamma_{\{27,29,59,60,63\}}v_{d,10} + \gamma_{59}v_{d,12} + \gamma_{61}v_{d,13} + \gamma_{63}v_{d,14} \equiv_{\omega} 0.
\end{align*}
A direct computation from the above equalities shows
\begin{equation}\label{c65}
\begin{cases}
\gamma_t = 0,\ t \in \{27,\, 49,\, 50,\, 54,\, 58,\, 59,\, 61,\, 62,\, 63,\, 64\},\\ \gamma_{60} = \gamma_{53}=\gamma_{29} = \gamma_{28}.
\end{cases}
\end{equation}
Applying the homomorphisms $p_{(1;(2,4,5))}$, $p_{(1;(3,4,5))}$ to \eqref{ctd612} and using \eqref{c61}-\eqref{c65}, we have
\begin{align*}
p_{(1;(2,4,5))}(S) &\equiv_{\omega} \gamma_{\{9,10,11,12,14,15,28,33,34,36,37,41,42,44,45\}}v_{d,2}\\ &\qquad+ \gamma_{\{1,11,16,17,19,20,28,33,38,39,41,42,46,47,48,52,71\}}v_{d,5}\\ &\qquad + \gamma_{\{2,12,16,17,28,34,38,39,46,47,48,51,52,71\}}v_{d,6}\\ &\qquad + \gamma_{51}v_{d,8} + \gamma_{28}v_{d,9} + \gamma_{28}v_{d,10} + \gamma_{28}v_{d,11} \gamma_{28}v_{d,12} \equiv_{\omega} 0,\\
p_{(1;(3,4,5))}(S) &\equiv_{\omega} \gamma_{\{33,34,35\}}v_{d,1} + \gamma_{\{9,10,11,12,14,15,28,36,37,40,41,42,43,44,45\}}v_{d,2}\\ &\qquad + \gamma_{\{9,13,36,38,39,46,47,71\}}v_{d,3} + \gamma_{\{10,37,38,39,40,41,42,48\}}v_{d,4}\\ &\qquad + \gamma_{\{1,14,16,17,18,19,20,28,43,44,45,46,47,48,51,71\}}v_{d,5}\\ &\qquad + \gamma_{\{2,15,16,17,43,44,45,46,47,48,51,52,71\}}v_{d,6}\\ &\qquad + \gamma_{\{3,18,19,20,43,44,45,46,47,48,71\}}v_{d,7} + \gamma_{52}v_{d,8} \equiv_{\omega} 0.
\end{align*}
By a direct computation using the above equalities we get 
\begin{equation}\label{c66}
\gamma_t = 0 \mbox{ for } t \notin \mathbb J,\ \ 
\gamma_{t} = \gamma_{1} \mbox{ for } t \in \mathbb J,
\end{equation}
where $\mathbb J =  \{$2, 3, 11, 12, 13, 38, 39, 40, 43, 44, 45, 46, 47,  48, 71$\}$. Now, by applying the homomorphism $p_{(2;(3,4,5))}$ to \eqref{ctd612} and using \eqref{c66}, we obtain
\begin{align*}
p_{(2;(3,4,5))}(S) &\equiv_{\omega} \gamma_{1}(v_{d,1} + v_{d,2} +\ldots + v_{d,7} + v_{d,8})  \equiv_{\omega} 0.
\end{align*}
This equality implies $\gamma_1 = 0$, hence $\gamma_t =0$ for all $t, \ 1 \leqslant t \leqslant 75$. The proposition is proved.
\end{proof}

\subsection{Computation of $QP_5^+((4)|(3)|^{d-2}|(1))$}\

\medskip
For $1 \leqslant i < j \leqslant 5$, denote 
$f_{(i,j)} = f_if_{j-1}: P_{3} \stackrel{{\scriptstyle{f_{j-1}}}}{\longrightarrow} P_{4} \stackrel{{\scriptstyle{f_i}}}{\longrightarrow} P_5.$
Here $f_i$ and $f_{j-1}$ are defined by \eqref{ct22}.

Let $d \geqslant 6$. From Proposition \ref{mdd52} we see that for any $x \in B_5((4)|(3)|^{d-2})$ there exists uniquely a pair $(i_x,j_x)$ such that $1 \leqslant i_x < j_x \leqslant 5$ and $\nu_{i_x}(x)<16$, $\nu_{j_x}(x) <16$. The main result of this subsection is the following.

\begin{props}\label{mdd53} For any $d \geqslant 6$, we have
$$B_5((4)|(3)|^{d-2}|(1)) = \left\{xf_{(i_x,j_x)}\left(x_t^{2^{d-1}}\right): x \in B_5((4)|(3)|^{d-2}),\, t = 1,\, 2,\, 3\right\}.$$
 Consequently, 
$$\dim QP_5((4)|(3)|^{d-2}|(1)) = 930,\ \dim (QP_5)_{(2^{d+1}-2)} = 155 + 930 = 1085.$$
\end{props}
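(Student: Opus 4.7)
Set $\omega := (4)|(3)|^{d-2}|(1)$ and $\bar\omega := (4)|(3)|^{d-2}$. The plan is to establish a canonical bijection between $B_5(\omega)$ and the pairs $(x, s)$ with $x \in B_5(\bar\omega)$ and $s \in \{1,\ldots,5\} \setminus \{i_x, j_x\}$, under which $(x, s) \mapsto x \cdot x_s^{2^{d-1}}$; a dimension count then closes out the proposition. Since $\omega_d = 1$, every $z \in P_5(\omega)$ factors uniquely as $z = y \cdot x_s^{2^{d-1}}$ with $\omega(y) = \bar\omega$ (using $\ell(\bar\omega) = d-1$, hence $\nu_i(y) < 2^{d-1}$ for all $i$), so the question reduces to characterizing for which $(y, s)$ the monomial $z = y x_s^{2^{d-1}}$ is admissible.

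For the forward direction, I would first show $z$ admissible forces $y \in B_5(\bar\omega)$. Given a hypothetical inadmissibility $y - \sum_k y_k \in \mathcal{A}^+ P_5$ with $y_k < y$, multiply by $x_s^{2^{d-1}}$ and use the Cartan identity
\[
Sq^a(h) \cdot x_s^{2^{d-1}} = Sq^a(h \cdot x_s^{2^{d-1}}) + Sq^{a-2^{d-1}}(h) \cdot x_s^{2^d}
\]
(second term zero when $a < 2^{d-1}$, since $Sq^j(x_s^{2^{d-1}})$ vanishes for $j \notin \{0, 2^{d-1}\}$). The extra terms carrying $x_s^{2^d}$ lie in weight components with $\omega_{d+1} \geq 1$, which are disjoint from every $\omega' \leq \omega$; projecting the resulting equation to the weight-$\omega$ component via the decomposition~(\ref{ct2.1}) yields $z - \sum_k y_k x_s^{2^{d-1}} \in (\mathcal{A}^+ P_5 \cap P_5(\omega)) + P_5^-(\omega)$, with each $y_k x_s^{2^{d-1}} < z$ and each residual monomial in $P_5^-(\omega)$ also $< z$, contradicting admissibility of $z$. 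Next, to exclude $s \in \{i_y, j_y\}$, note that $\nu_s(y) < 16$ forces a dyadic gap in $\nu_s(z) = \nu_s(y) + 2^{d-1}$ between positions $4$ and $d-2$. Using the classification of $B_5(\bar\omega)$ from Subsection~\ref{s32}, I would exhibit, for each such $z$, an explicit Steenrod reduction witnessing strict inadmissibility; Theorem~\ref{dlcb1}(i) lets one factor out a high power of $x_s$ and reduce to finitely many base cases analogous to the strictly inadmissible patterns in Lemmas~\ref{bda62}--\ref{bda63}.

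For the converse, the admissibility of $z = y x_s^{2^{d-1}}$ for $y \in B_5(\bar\omega)$ and $s \notin \{i_y, j_y\}$ follows from the linear independence of $\{[y x_s^{2^{d-1}}]_\omega\}$ in $QP_5(\omega)$. I would establish this by the standard technique: suppose $\sum_{y, s} \gamma_{y,s}\, y x_s^{2^{d-1}} \equiv_\omega 0$ and apply the homomorphisms $p_{(i;I)} : QP_5(\omega) \to QP_4(\omega)$ from Lemma~\ref{bdm}, exploiting the known structure of $QP_4(\omega)$ (from~\cite{su2}) to force each coefficient $\gamma_{y,s} = 0$, mirroring the strategy already used in the proofs of Propositions~\ref{mdd51} and~\ref{mdd52}. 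With both directions established, the bijection yields $|B_5(\omega)| = 3 \cdot 310 = 930$, so $\dim QP_5(\omega) = 930$; combined with $\dim QP_5((2)|^d) = 155$ from Proposition~\ref{mdd51} and Lemma~\ref{bdd5}'s enumeration of admissible weight vectors, we obtain $\dim (QP_5)_{2^{d+1}-2} = 1085$, completing the proof of Theorem~\ref{thm1}.

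The main obstacle is the case analysis excluding $s \in \{i_y, j_y\}$: for each of the (up to) $620$ pairs $(y, s)$, an explicit Steenrod relation is required, and while the dyadic-gap structure should make these uniform in $d$, one must still traverse the $75$ monomials in $B_5^0(\bar\omega)$ and $235$ in $B_5^+(\bar\omega)$ — a task of the same computational flavor as the strictly-inadmissible verifications in Lemmas~\ref{bda62}--\ref{bda63}. A secondary burden is the linear-independence check in $QP_5(\omega)$, which tracks $930$ coefficients through many $p_{(i;I)}$-images.
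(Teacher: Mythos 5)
Your proposal follows essentially the same route as the paper: factor each admissible monomial of weight vector $(4)|(3)|^{d-2}|(1)$ as $y\,x_s^{2^{d-1}}$, reduce admissibility of the factor $y$ to Proposition \ref{mdd52}, rule out $s\in\{i_y,j_y\}$ by exhibiting strictly inadmissible monomials (the paper's Lemmas \ref{bda64} and \ref{bda65}, which are exactly your ``analogous patterns''), and prove linear independence by pushing a putative relation through the homomorphisms $p_{(i;I)}$ into $QP_4(\overline\omega)$. The one caveat is that the two computational blocks you defer --- the explicit list of strictly inadmissible monomials with their Steenrod decompositions, and the coefficient chase over the $45$ elements of $B_4(\overline\omega)$ --- are precisely the substantive content of the paper's proof, so what you have is a correct plan rather than a complete argument.
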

Thus, Theorem \ref{thm1} is proved. In the remaining part of the section, we prove Proposition \ref{mdd53}. We need the following. 
\begin{lems}[Ph\'uc \cite{ph}]\label{bda64} Let $(i,j,t,u,v)$ be an arbitrary permutation of $(1,2,3,4,5)$. The following monomials are strictly inadmissible:
	
\smallskip
 {\rm i)} \ $x_i^2x_jx_tx_u,\, i<j<t<u;$.
	
\smallskip
{\rm ii)} \ $x_1^3x_2^5x_3^2x_4^2x_5$,\ $x_i^3x_j^4x_t^3x_u^3$,\, $x_ix_j^2x_t^2x_ux_v^7$,\, $x_ix_j^2x_t^6x_ux_v^3$,\, $x_ix_j^6x_t^2x_ux_v^3$,\, $x_ix_j^2x_t^2x_u^5x_v^3$, $i<j<t<u$.
	
\smallskip
{\rm iii)} $x_i^7x_j^7x_t^8x_u^7,\, t<u$, and 
	
\smallskip
\centerline{\begin{tabular}{lllll} 
$x_1x_2^{6}x_3^{7}x_4^{8}x_5^{7}$& $x_1x_2^{7}x_3^{6}x_4^{8}x_5^{7}$& $x_1x_2^{7}x_3^{10}x_4^{4}x_5^{7}$& $x_1x_2^{7}x_3^{10}x_4^{7}x_5^{4}$& $x_1^{3}x_2^{3}x_3^{4}x_4^{12}x_5^{7} $\cr  $x_1^{3}x_2^{3}x_3^{12}x_4^{4}x_5^{7}$& $x_1^{3}x_2^{3}x_3^{12}x_4^{7}x_5^{4}$& $x_1^{3}x_2^{5}x_3^{6}x_4^{3}x_5^{12}$& $x_1^{3}x_2^{5}x_3^{6}x_4^{11}x_5^{4}$& $x_1^{3}x_2^{5}x_3^{8}x_4^{6}x_5^{7} $\cr  $x_1^{3}x_2^{5}x_3^{8}x_4^{7}x_5^{6}$& $x_1^{3}x_2^{5}x_3^{9}x_4^{6}x_5^{6}$& $x_1^{3}x_2^{5}x_3^{14}x_4^{3}x_5^{4}$& $x_1^{3}x_2^{12}x_3^{3}x_4^{5}x_5^{6}$& $x_1^{3}x_2^{13}x_3^{6}x_4^{3}x_5^{4} $\cr  $x_1^{7}x_2x_3^{6}x_4^{8}x_5^{7}$& $x_1^{7}x_2x_3^{10}x_4^{4}x_5^{7}$& $x_1^{7}x_2x_3^{10}x_4^{7}x_5^{4}$& $x_1^{7}x_2^{8}x_3^{3}x_4^{5}x_5^{6}$& $x_1^{7}x_2^{9}x_3^{2}x_4^{4}x_5^{7} $\cr  $x_1^{7}x_2^{9}x_3^{2}x_4^{7}x_5^{4}$& $x_1^{7}x_2^{9}x_3^{7}x_4^{2}x_5^{4}$& & &\cr  
\end{tabular}}
	
\smallskip
{\rm iii)} $x_i^{15}x_j^{15}x_t^{15}x_u^{16}$,\ $x_i^{}x_j^{14}x_t^{15}x_u^{15}x_v^{16}$ and 

\smallskip
\centerline{\begin{tabular}{llll} 
$x_1^{3}x_2^{7}x_3^{8}x_4^{13}x_5^{30}$& $x_1^{3}x_2^{7}x_3^{8}x_4^{29}x_5^{14}$& $x_1^{3}x_2^{7}x_3^{24}x_4^{13}x_5^{14}$& $x_1^{3}x_2^{13}x_3^{14}x_4^{15}x_5^{16} $\cr  $x_1^{3}x_2^{13}x_3^{15}x_4^{14}x_5^{16}$& $x_1^{3}x_2^{13}x_3^{15}x_4^{18}x_5^{12}$& $x_1^{3}x_2^{15}x_3^{13}x_4^{14}x_5^{16}$& $x_1^{3}x_2^{15}x_3^{13}x_4^{18}x_5^{12} $\cr  $x_1^{3}x_2^{15}x_3^{21}x_4^{10}x_5^{12}$& $x_1^{7}x_2^{3}x_3^{8}x_4^{13}x_5^{30}$& $x_1^{7}x_2^{3}x_3^{8}x_4^{29}x_5^{14}$& $x_1^{7}x_2^{3}x_3^{24}x_4^{13}x_5^{14} $\cr  $x_1^{7}x_2^{7}x_3^{8}x_4^{9}x_5^{30}$& $x_1^{7}x_2^{7}x_3^{9}x_4^{8}x_5^{30}$& $x_1^{7}x_2^{7}x_3^{9}x_4^{14}x_5^{24}$& $x_1^{7}x_2^{7}x_3^{9}x_4^{26}x_5^{12} $\cr  $x_1^{7}x_2^{7}x_3^{9}x_4^{30}x_5^{8}$& $x_1^{7}x_2^{7}x_3^{25}x_4^{10}x_5^{12}$& $x_1^{7}x_2^{11}x_3^{13}x_4^{16}x_5^{14}$& $x_1^{15}x_2^{3}x_3^{13}x_4^{14}x_5^{16} $\cr  $x_1^{15}x_2^{3}x_3^{13}x_4^{18}x_5^{12}$& $x_1^{15}x_2^{3}x_3^{21}x_4^{10}x_5^{12}$& $x_1^{15}x_2^{15}x_3^{17}x_4^{2}x_5^{12}$& $x_1^{15}x_2^{19}x_3^{5}x_4^{10}x_5^{12} $\cr    
\end{tabular}}
\end{lems}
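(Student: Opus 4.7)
The lemma is attributed to Ph\'uc \cite{ph}; I would verify it by the same Cartan-expansion technique already used in Lemmas \ref{bda61}, \ref{bda62}, and \ref{bda63}. Recall that to show a monomial $x \in P_5$ is strictly inadmissible, one must exhibit polynomials $h_u \in P_5$ and monomials $y_1, \ldots, y_t$ with $y_j < x$ in the order of Definition \ref{defn3} such that
\[
x \;=\; \sum_{j=1}^t y_j \;+\; \sum_{u=1}^{2^s-1} Sq^u(h_u),
\]
where $s = \max\{i : \omega_i(x) > 0\} = \ell(\omega(x))$. By Theorem \ref{dlww} it is permissible to work modulo $P_5^-(\omega(x))$, so any term of strictly smaller weight vector may be discarded; this reduces the bookkeeping substantially.

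The first step is to read off $s$ for each group. In Part (i), the degree-$5$ monomials $x_i^2 x_j x_t x_u$ have weight vector $(3,1)$, giving $s = 2$ and allowed operations $Sq^1, Sq^2, Sq^3$. In Part (ii), the degree-$7$ monomials carry weight vectors of the form $(3,2)$ or $(5,1)$, so $s = 2$; these are the simplest. In Part (iii) the degree-$29$ monomials have weight vectors of length $4$ (of shape $(4)|(3)|^{?}$), hence $s = 4$ and one is free to use $Sq^1, \ldots, Sq^{15}$. The Part (iv) monomials have degree $61$, length-$5$ weight vectors, and $s = 5$, so all of $Sq^1, \ldots, Sq^{31}$ may appear. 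In each case the target decomposition is driven by identifying the largest power of the form $x_r^{2^m}$ occurring in $x$ and by choosing $h_u$ obtained from $x$ by decrementing an exponent $2^m - 1$ to a nearby admissible value, so that the Cartan expansion of $Sq^u(h_u)$ returns $x$ plus monomials either strictly smaller in the order or of smaller weight vector.

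The operational plan is therefore: for each listed $x$, build a trial list of $h_u$ by the heuristic above; expand $\sum_u Sq^u(h_u)$ using the Cartan formula; collect the monomial terms; and identify those equal to $x$ versus those that are $<x$ versus those lying in $P_5^-(\omega(x))$. The explicit one-equation proofs already exhibited for $x_5^{15} f_5(x_1^7 x_2 x_3^{10} x_4^{12})$ and $x_1^{15} x_2^{15} x_3^{17} x_4^{18} x_5^{28}$ in Lemma \ref{bda63}, and for $x_1^{15} x_2^{15} x_3^{19} x_4^{16} x_5^{29}$ in Lemma \ref{bda62}, are the structural templates: each entry in the present list is handled by a completely analogous single Cartan-style equation, typically involving only three or four distinct $Sq^u$ operators (commonly $Sq^1, Sq^2, Sq^4, Sq^8, Sq^{16}$).

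The main obstacle is combinatorial bulk rather than conceptual difficulty. Part (iii) already contains more than twenty monomials and Part (iv) adds over twenty more of substantially higher degree, each needing its own tailored $h_u$. Finding the correct $h_u$ is in practice not done by hand: one runs a small computer program that enumerates candidate polynomials of the correct bidegree, applies Steenrod squares via the Cartan formula, and tests whether the output matches $x$ modulo smaller monomials and $P_5^-(\omega(x))$. This is exactly the type of semi-automated computation referenced in the paper's introduction. Once a successful $h_u$ is identified for each case, the remaining verification is a mechanical expansion, and the strict inadmissibility of the listed monomials follows.
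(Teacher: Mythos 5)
The paper does not actually prove this lemma: it is quoted from Ph\'uc \cite{ph} (hence the attribution in the statement), and the only accompanying text is the remark that each listed monomial has weight vector of the form $(3)|^t|(1)$. So there is no in-paper argument to compare yours against; what you propose is to re-derive the result by the Cartan-expansion routine used for Lemmas \ref{bda61}--\ref{bda63}, which is indeed the right technique. As written, though, your text is a plan rather than a proof: you exhibit no decomposition for a single monomial on the list, and ``run a program to find the $h_u$'' is a search strategy, not a verification.

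More importantly, the preliminary data your plan rests on is wrong in two of the four parts, and this data is not cosmetic, because strict inadmissibility only permits $Sq^u$ with $1 \leqslant u \leqslant 2^s-1$ where $s = \max\{i : \omega_i(x) > 0\}$. The monomials of Part (ii) (e.g.\ $x_i^3x_j^4x_t^3x_u^3$ and $x_ix_j^2x_t^2x_ux_v^7$) have degree $13$, not $7$, and weight vector $(3,3,1)$, so $s=3$ and the allowed operations are $Sq^1,\ldots,Sq^7$ --- not $s=2$ with only $Sq^1,Sq^2,Sq^3$ as you assert. Restricting to the smaller set of operations can only make the search fail, so your procedure might wrongly conclude that no decomposition exists. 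Similarly, the degree-$29$ monomials of Part (iii) have weight vector $(3)|^3|(1)=(3,3,3,1)$, not a vector of shape $(4)|(3)|^{\cdot}$ (their first entry is $3$, consistent with the paper's own remark). Finally, the licence to discard terms in $P_5^-(\omega(x))$ comes from Definition \ref{defn3}(i) --- a monomial of strictly smaller weight vector is $<x$ and may be absorbed into the $y_j$ --- not from Theorem \ref{dlww}, which is a hit criterion and plays no role in establishing strict inadmissibility.
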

Each monomial in this lemma is of weight vector $(3)|^t|(1)$ with $2 \leqslant t \leqslant 4$.
\begin{lems}\label{bda65} The monomials $x = x_1^{15}x_2^{23}x_3^{27}x_4^{29}x_5^{32}$ is strictly inadmissible.
\end{lems}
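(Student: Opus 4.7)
The goal is to exhibit an explicit identity, modulo the subspace $P_5^-((4)|(3)|^4|(1))$, of the form
\[x = \sum_{j} y_j + \sum_{u=1}^{63} Sq^u(h_u),\]
where each $y_j$ is a monomial strictly less than $x$ in the ordering of Definition \ref{defn3}, and the polynomials $h_u\in P_5$. Since $\omega(x) = (4,3,3,3,3,1)$ is easily verified from the binary expansions $15=1+2+4+8,\, 23=1+2+4+16,\, 27=1+2+8+16,\, 29=1+4+8+16,\, 32=32$, we have $s := \max\{i:\omega_i(x)>0\} = 6$, so the admissible Steenrod operators are $Sq^u$ for $1\leqslant u \leqslant 2^6-1 = 63$. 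In practice, following the pattern of the explicit computations in Lemmas \ref{bda62} and \ref{bda63}, only the powers $Sq^{2^i}$ for $0\leqslant i\leqslant 5$ should need to appear.

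The plan is to search for polynomials $h_{2^i}$ whose Cartan-expansions collectively produce $x$ with coefficient one, with every other resulting summand either equal to a smaller monomial $y_j < x$ of the same weight or lying in $P_5^-((4)|(3)|^4|(1))$. A natural starting point is to isolate the factor $x_5^{32} = Sq^{16}(x_5^{16})$ and couple it with a suitable Cartan-distribution of the lower powers $x_1^{15}x_2^{23}x_3^{27}x_4^{29}$ across $Sq^{1}, Sq^{2}, Sq^{4}, Sq^{8}, Sq^{16}$; an analogous template is available for the closely related weight vector $(4)|(3)|^4$ in the treatment of the monomial $x_1^{15}x_2^{15}x_3^{19}x_4^{16}x_5^{29}$ in Lemma \ref{bda62}(v), and this can be adapted by tensoring with the degree $32$ piece in $x_5$.

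Once an explicit identity is written down, the verification is mechanical: apply the Cartan formula to each $Sq^{2^i}(h_{2^i})$, cancel duplicate monomials modulo 2, and check that every surviving monomial is either strictly smaller than $x$ (same weight $\omega(x)$, smaller exponent vector $\sigma$) or has weight vector strictly less than $(4,3,3,3,3,1)$ and so lies in $P_5^-(\omega(x))$.

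The main obstacle is purely computational: locating the correct combination of $h_{2^i}$ requires systematic trial, with computer assistance playing the same role it does in the other strict-inadmissibility proofs in this paper. Once found, however, the resulting identity — although long, comparable in length to the expression displayed for $v = x_1^{15}x_2^{15}x_3^{19}x_4^{16}x_5^{29}$ in Lemma \ref{bda62}(v) — gives the required decomposition and proves that $x_1^{15}x_2^{23}x_3^{27}x_4^{29}x_5^{32}$ is strictly inadmissible.
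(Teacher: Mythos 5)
Your setup is correct: you compute $\omega(x)=(4,3,3,3,3,1)=(4)|(3)|^4|(1)$ correctly, you identify that strict inadmissibility requires an identity $x=\sum_j y_j+\sum_{u=1}^{63}Sq^u(h_u)$ with each $y_j<x$, taken modulo $P_5^-((4)|(3)|^4|(1))$, and you correctly anticipate that only the operations $Sq^1,Sq^2,Sq^4,Sq^8,Sq^{16}$ should be needed (the paper's identity indeed stops at $Sq^{16}$). But what you have written is a plan to find a proof, not a proof. The entire mathematical content of this lemma is the explicit hit decomposition itself; asserting that ``systematic trial, with computer assistance'' will locate suitable $h_{2^i}$ begs the question, since the lemma is precisely the claim that such a decomposition exists. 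The paper's proof consists of displaying the identity in full (some sixty monomials $y_j$, together with explicit arguments of $Sq^1$ through $Sq^{16}$) so that the reader can verify it by the Cartan formula. Without the identity, nothing has been proved.

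A secondary concern is that your proposed starting point is unlikely to lead anywhere as stated. Writing $x=g\,x_5^{32}$ with $g=x_1^{15}x_2^{23}x_3^{27}x_4^{29}$ and using $Sq^{16}(g\,x_5^{16})=Sq^{16}(g)\,x_5^{16}+g\,x_5^{32}$ leaves you with the correction term $Sq^{16}(g)\,x_5^{16}$, whose monomials are by no means automatically smaller than $x$ or of lower weight; controlling them is exactly the hard part. Note also that in the paper's identity the $Sq^{16}$ argument is $x_1^{15}x_2^{15}x_3^{23}x_4^{28}x_5^{29}+x_1^{15}x_2^{15}x_3^{24}x_4^{27}x_5^{29}$, which is not of the form $h\cdot x_5^{16}$ at all, so the actual decomposition does not arise from the template you describe. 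Likewise, the analogy with $v=x_1^{15}x_2^{15}x_3^{19}x_4^{16}x_5^{29}$ from the earlier lemma is loose: $x$ is not $v'\cdot x_5^{32}$ for that $v'$, and Theorem \ref{dlcb1} (which is the tool for promoting inadmissibility of a factor to inadmissibility of a product) does not apply here because $x$ does not factor as $wy^{2^s}$ with $w$ a known strictly inadmissible monomial. To complete the proof you must produce and verify the explicit relation.
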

\begin{proof} By a direction computation we have
\begin{align*}
x &=x_1^{9}x_2^{15}x_3^{23}x_4^{26}x_5^{53} + x_1^{9}x_2^{15}x_3^{23}x_4^{50}x_5^{29} + x_1^{9}x_2^{23}x_3^{23}x_4^{26}x_5^{45} + x_1^{9}x_2^{23}x_3^{23}x_4^{42}x_5^{29}\\ &\quad + x_1^{9}x_2^{23}x_3^{27}x_4^{29}x_5^{38} + x_1^{9}x_2^{23}x_3^{27}x_4^{30}x_5^{37} + x_1^{9}x_2^{23}x_3^{27}x_4^{37}x_5^{30} + x_1^{9}x_2^{23}x_3^{27}x_4^{38}x_5^{29}\\ &\quad + x_1^{9}x_2^{23}x_3^{30}x_4^{27}x_5^{37} + x_1^{9}x_2^{23}x_3^{30}x_4^{35}x_5^{29} + x_1^{9}x_2^{23}x_3^{35}x_4^{29}x_5^{30} + x_1^{9}x_2^{23}x_3^{35}x_4^{30}x_5^{29}\\ &\quad + x_1^{9}x_2^{23}x_3^{38}x_4^{27}x_5^{29} + x_1^{9}x_2^{23}x_3^{39}x_4^{26}x_5^{29} + x_1^{11}x_2^{15}x_3^{20}x_4^{27}x_5^{53} + x_1^{11}x_2^{15}x_3^{20}x_4^{51}x_5^{29}\\ &\quad + x_1^{11}x_2^{15}x_3^{21}x_4^{26}x_5^{53} + x_1^{11}x_2^{15}x_3^{21}x_4^{50}x_5^{29} + x_1^{11}x_2^{15}x_3^{23}x_4^{28}x_5^{49} + x_1^{11}x_2^{15}x_3^{23}x_4^{48}x_5^{29}\\ &\quad + x_1^{11}x_2^{21}x_3^{27}x_4^{29}x_5^{38} + x_1^{11}x_2^{21}x_3^{27}x_4^{30}x_5^{37} + x_1^{11}x_2^{21}x_3^{27}x_4^{37}x_5^{30} + x_1^{11}x_2^{21}x_3^{27}x_4^{38}x_5^{29}\\ &\quad + x_1^{11}x_2^{21}x_3^{30}x_4^{27}x_5^{37} + x_1^{11}x_2^{21}x_3^{30}x_4^{35}x_5^{29} + x_1^{11}x_2^{21}x_3^{35}x_4^{29}x_5^{30} + x_1^{11}x_2^{21}x_3^{35}x_4^{30}x_5^{29}\\ &\quad + x_1^{11}x_2^{21}x_3^{38}x_4^{27}x_5^{29} + x_1^{11}x_2^{23}x_3^{20}x_4^{27}x_5^{45} + x_1^{11}x_2^{23}x_3^{20}x_4^{43}x_5^{29} + x_1^{11}x_2^{23}x_3^{21}x_4^{26}x_5^{45}\\ &\quad + x_1^{11}x_2^{23}x_3^{21}x_4^{42}x_5^{29} + x_1^{11}x_2^{23}x_3^{23}x_4^{28}x_5^{41} + x_1^{11}x_2^{23}x_3^{23}x_4^{40}x_5^{29} + x_1^{11}x_2^{23}x_3^{36}x_4^{27}x_5^{29}\\ &\quad + x_1^{11}x_2^{23}x_3^{37}x_4^{26}x_5^{29} + x_1^{15}x_2^{15}x_3^{20}x_4^{27}x_5^{49} + x_1^{15}x_2^{15}x_3^{21}x_4^{26}x_5^{49} + x_1^{15}x_2^{17}x_3^{23}x_4^{26}x_5^{45}\\ &\quad + x_1^{15}x_2^{17}x_3^{23}x_4^{42}x_5^{29} + x_1^{15}x_2^{17}x_3^{39}x_4^{26}x_5^{29} + x_1^{15}x_2^{19}x_3^{20}x_4^{27}x_5^{45} + x_1^{15}x_2^{19}x_3^{20}x_4^{43}x_5^{29}\\ &\quad + x_1^{15}x_2^{19}x_3^{21}x_4^{26}x_5^{45} + x_1^{15}x_2^{19}x_3^{21}x_4^{42}x_5^{29} + x_1^{15}x_2^{19}x_3^{27}x_4^{28}x_5^{37} + x_1^{15}x_2^{19}x_3^{27}x_4^{36}x_5^{29}\\ &\quad + x_1^{15}x_2^{19}x_3^{36}x_4^{27}x_5^{29} + x_1^{15}x_2^{19}x_3^{37}x_4^{26}x_5^{29} + x_1^{15}x_2^{21}x_3^{27}x_4^{28}x_5^{35} + x_1^{15}x_2^{21}x_3^{27}x_4^{29}x_5^{34}\\ &\quad + x_1^{15}x_2^{21}x_3^{27}x_4^{30}x_5^{33} + x_1^{15}x_2^{21}x_3^{27}x_4^{33}x_5^{30} + x_1^{15}x_2^{21}x_3^{30}x_4^{27}x_5^{33} + x_1^{15}x_2^{21}x_3^{34}x_4^{27}x_5^{29}\\ &\quad + x_1^{15}x_2^{23}x_3^{24}x_4^{27}x_5^{37} + x_1^{15}x_2^{23}x_3^{24}x_4^{35}x_5^{29} + x_1^{15}x_2^{23}x_3^{25}x_4^{28}x_5^{35} + x_1^{15}x_2^{23}x_3^{25}x_4^{34}x_5^{29}\\ &\quad + x_1^{15}x_2^{23}x_3^{27}x_4^{28}x_5^{33} + Sq^1\big(x_1^{15}x_2^{23}x_3^{29}x_4^{29}x_5^{29}\big) +  Sq^2\big(x_1^{15}x_2^{15}x_3^{23}x_4^{26}x_5^{45}\\ &\quad + x_1^{15}x_2^{15}x_3^{23}x_4^{42}x_5^{29} + x_1^{15}x_2^{15}x_3^{39}x_4^{26}x_5^{29} + x_1^{15}x_2^{23}x_3^{23}x_4^{28}x_5^{35} + x_1^{15}x_2^{23}x_3^{23}x_4^{34}x_5^{29}\\ &\quad + x_1^{15}x_2^{23}x_3^{27}x_4^{29}x_5^{30} + x_1^{15}x_2^{23}x_3^{27}x_4^{30}x_5^{29} + x_1^{15}x_2^{23}x_3^{30}x_4^{27}x_5^{29}\big)\\ &\quad +
Sq^4\big(x_1^{15}x_2^{15}x_3^{20}x_4^{27}x_5^{45} + x_1^{15}x_2^{15}x_3^{20}x_4^{43}x_5^{29} + x_1^{15}x_2^{15}x_3^{21}x_4^{26}x_5^{45}\\ &\quad + x_1^{15}x_2^{15}x_3^{21}x_4^{42}x_5^{29} + x_1^{15}x_2^{15}x_3^{23}x_4^{28}x_5^{41} + x_1^{15}x_2^{15}x_3^{23}x_4^{40}x_5^{29} + x_1^{15}x_2^{15}x_3^{36}x_4^{27}x_5^{29}\\ &\quad + x_1^{15}x_2^{15}x_3^{37}x_4^{26}x_5^{29} + x_1^{15}x_2^{21}x_3^{23}x_4^{28}x_5^{35} + x_1^{15}x_2^{21}x_3^{23}x_4^{34}x_5^{29} + x_1^{15}x_2^{21}x_3^{27}x_4^{29}x_5^{30}\\ &\quad + x_1^{15}x_2^{21}x_3^{27}x_4^{30}x_5^{29} + x_1^{15}x_2^{21}x_3^{30}x_4^{27}x_5^{29} + x_1^{15}x_2^{27}x_3^{23}x_4^{28}x_5^{29}\big)\\ &\quad +  Sq^8\big(x_1^{9}x_2^{15}x_3^{23}x_4^{26}x_5^{45} + x_1^{9}x_2^{15}x_3^{23}x_4^{42}x_5^{29} + x_1^{9}x_2^{15}x_3^{39}x_4^{26}x_5^{29} + x_1^{9}x_2^{23}x_3^{27}x_4^{29}x_5^{30}\\ &\quad + x_1^{9}x_2^{23}x_3^{27}x_4^{30}x_5^{29} + x_1^{9}x_2^{23}x_3^{30}x_4^{27}x_5^{29} + x_1^{11}x_2^{15}x_3^{20}x_4^{27}x_5^{45} + x_1^{11}x_2^{15}x_3^{20}x_4^{43}x_5^{29}\\ &\quad + x_1^{11}x_2^{15}x_3^{21}x_4^{26}x_5^{45} + x_1^{11}x_2^{15}x_3^{21}x_4^{42}x_5^{29} + x_1^{11}x_2^{15}x_3^{23}x_4^{28}x_5^{41} + x_1^{11}x_2^{15}x_3^{23}x_4^{40}x_5^{29}\\ &\quad + x_1^{11}x_2^{15}x_3^{36}x_4^{27}x_5^{29} + x_1^{11}x_2^{15}x_3^{37}x_4^{26}x_5^{29} + x_1^{11}x_2^{21}x_3^{27}x_4^{29}x_5^{30} + x_1^{11}x_2^{21}x_3^{27}x_4^{30}x_5^{29}\\ &\quad + x_1^{11}x_2^{21}x_3^{30}x_4^{27}x_5^{29} + x_1^{23}x_2^{15}x_3^{23}x_4^{28}x_5^{29} + x_1^{23}x_2^{15}x_3^{24}x_4^{27}x_5^{29}\big)\\ &\quad + Sq^{16}\big(x_1^{15}x_2^{15}x_3^{23}x_4^{28}x_5^{29} + x_1^{15}x_2^{15}x_3^{24}x_4^{27}x_5^{29}\big) \ \mbox{mod}(P_5^-((4)|(3)|^4|(1)).
\end{align*}	
Thus, the monomial $x$ is strictly inadmissible.
\end{proof}
\begin{proof}[Proof of Proposition $\ref{mdd53}$] Let $d \geqslant 6$ and let $\tilde x \in P_5((4)|(3)|^{d-2}|(1))$ be an admissible monomial. Then $\tilde x $ is of the form $\tilde x = x(x_r^{2^{d-1}})$ with $1 \leqslant r \leqslant 5$ and $x$ a monomial of weight vector $(4)|(3)|^{d-2}$. From the proof of Proposition \ref{mdd52} we see that if $x \notin B_5\big((4)|(3)|^{d-2}\big)$, then $x$ is strictly inadmissible, hence by Theorem \ref{dlcb1}, $\tilde x $ is strictly inadmissible. This implies a contradiction. 
	
By a direct computation we see that if either $r = \nu_{i_x}(x)$ or $r = \nu_{j_x}(x)$, then there is a monomial $w$ as given in one of Lemmas \ref{bda64} or \ref{bda65} such that $x = zw^{2^{d-u}}$ with $u$ a nonnegative integer, $2\leqslant u \leqslant 6$, and $z$ a monomial of weight vector $(4)|(3)|^{d-u}$. By Theorem \ref{dlcb1}, $x$ is inadmissible. So, $r \ne \nu_{i_x}(x)$, $r \ne \nu_{j_x}(x)$ and $x_r^{2^{d-1}} = f_{(i_x,j_x)}\left(x_t^{2^{d-1}}\right)$ with $t = 1,\, 2,\, 3$. Hence, we obtain
 $$B_5((4)|(3)|^{d-2}|(1))\subset  \left\{xf_{(i_x,j_x)}\big(x_t^{2^{d-1}}\big): x \in B_5\big((4)|(3)|^{d-2}\big),\, t = 1,\, 2,\, 3\right\}.$$
	
Now we prove that the set in the right hand side of the last relation is a minimal set of $\mathcal A$-generators for $P_5(\overline \omega)$ with $\overline\omega =(4)|(3)|^{d-2}|(1)$.

Suppose there is a linear relation
\begin{equation*}
S:= \sum_{x\in B_5^+(\overline\omega);\ t=1,2,3}\gamma_{x,t}\left(xf_{(i_x,j_x)}\big(x_t^{2^{d-1}}\big)\right) \equiv_{\overline\omega} 0,
\end{equation*}
where $\gamma_{x,t} \in \mathbb F_2$. 

Let $B_4(\overline\omega)$ be as in our work \cite{su2} and the homomorphism $p_{(i;I)}:P_5\to P_4$ which is defined by \eqref{ct23} for $k=5$. Note that $|B_4(\overline\omega)| = 45$. From Lemma \ref{bdm}, we see that $p_{(i;I)}$ passes to a homomorphism from $QP_5(\overline\omega)$ to $QP_4(\overline\omega)$. By a direct computation similar to the one in the proof of Proposition \ref{mdd52}, we compute $p_{(i;I)}(S))$ in terms of the monomials in $B_4(\overline\omega)$ (mod($\mathcal A^+P_4 + P_5^-(\overline\omega)$). Based on the relations $p_{(i;I)}(S))\equiv_{\overline\omega} 0$ with all $(i;I) \in \mathcal N_5$ and $I\ne (2,3,4,5)$, we obtain $\gamma_{x,t} = 0$ for all $x\in B_5(\overline\omega)$ and $t=1,\, 2,\, 3.$ The proof is completed.
\end{proof}

\section{Proof of Theorem \ref{thm2}}\label{s4}
\setcounter{equation}{0}

First of all, we determine the weight vectors of the admissible monomials of degree $2^{d+1}-1$ for $d \geqslant 6$.
\begin{lem}\label{bdd62} Let $x$ be an admissible monomial of degree $2^{d+1}-1$ in $P_5$ for $d \geqslant 6$. If $[x] \in \mbox{\rm Ker}\big((\widetilde{Sq}^0_*)_{(5,2^{d}-3)}\big)$, then either $\omega(x) = (1)|^{d+1}$ or $\omega(x) = (3)|(2)|^{d-1}$.
\end{lem}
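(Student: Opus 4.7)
The plan is to mimic the strategy of Lemma \ref{bdd5}: pin down $\omega_1(x)$ using the minimum spike plus the Kameko kernel hypothesis, and then recursively split each admissible case via earlier structure results, ruling out the extraneous sub-cases by strict inadmissibility of short weight-prefixes.

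First I would observe that the minimum spike of degree $2^{d+1}-1$ in $P_5$ is $z=x_1^{2^{d+1}-1}$ with $\omega(z)=(1)|^{d+1}$, so Theorem \ref{dlsig} forces $\omega(x)\geqslant (1)|^{d+1}$ in left lex order. Parity of the degree gives $\omega_1(x)\in\{1,3,5\}$. The case $\omega_1(x)=5$ means $x=x_1x_2x_3x_4x_5\,y^2$ with $y$ admissible of degree $2^d-3$, so $\widetilde{Sq}^0_\ast[x]=[y]$; since Ph\'uc's computation shows $[y]\neq 0$ for such admissibles, the hypothesis $[x]\in\mbox{Ker}\bigl((\widetilde{Sq}^0_\ast)_{(5,2^d-3)}\bigr)$ excludes this, leaving $\omega_1(x)\in\{1,3\}$.

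For the case $\omega_1(x)=3$, write $x=x_ax_bx_c\,y^2$ with $y$ admissible of degree $2^d-2=2^{(d-1)+1}-2$. Since $d-1\geqslant 5$, Lemma \ref{bdd5} applied to $y$ gives $\omega(y)\in\{(2)|^{d-1},\,(4)|(3)|^{d-3}|(1)\}$. The first subcase yields $\omega(x)=(3)|(2)|^{d-1}$, a target. In the second subcase write $y=X_i z^2$; then $x=x_ax_bx_c X_i^2 z^4$, and I would verify through a finite case check (essentially the $\binom{5}{3}\cdot 5=50$ possible heads $x_ax_bx_cX_i^2$ of weight $(3,4)$) that each such prefix is strictly inadmissible via short $Sq^1,Sq^2,Sq^3$ reductions (for example, $x_1^3x_2^3x_3^3x_4^2=Sq^1(x_1^3x_2^3x_3^3x_4)+\text{(smaller weight)}$). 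Then Theorem \ref{dlcb1}(ii) turns strict inadmissibility of the head into strict inadmissibility of all of $x$, contradicting admissibility.

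For the case $\omega_1(x)=1$, write $x=x_i y^2$ with $y$ admissible of degree $2^d-1$. The goal is to force $\omega_1(y)=1$ at every successive level, yielding $\omega(x)=(1)|^{d+1}$. Splitting on $\omega_1(y)\in\{1,3,5\}$: the value $5$ is ruled out because the weight-$(1,5)$ head $x_i\,X_\emptyset^2$ is strictly inadmissible (explicitly, $x_1^3x_2^2x_3^2x_4^2x_5^2=Sq^1(x_1^3x_2x_3^2x_4^2x_5^2)+x_1^4x_2x_3^2x_4^2x_5^2$, whose surviving term has weight $(1,3,1)<(1,5)$); Theorem \ref{dlcb1}(ii) then gives inadmissibility of $x$. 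The value $3$ must also be excluded, so we may iterate and conclude by induction on $d$ that $\omega(x)=(1)|^{d+1}$.

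The main obstacle is precisely ruling out the pattern $\omega_1(x)=1$, $\omega_1(y)=3$ (equivalently $\omega(x)=(1)|(3)|(2)|^{d-2}$). The weight-$(1,3)$ prefix $x_ix_a^2x_b^2x_c^2$ is \emph{not} in general strictly inadmissible on its own (for instance, reducing $x_1x_2^2x_3^2x_4^2$ by $Sq^2$ produces only monomials of equal or larger order), so one is forced to consider longer heads of weight $(1,3,2)$ such as $x_ix_a^2x_b^2x_c^2x_p^4x_q^4$ and establish their strict inadmissibility by a substantial case analysis of $Sq^1,Sq^2,Sq^4$ reductions, analogous in spirit to the lemmas \ref{bda51}--\ref{bda63} already used for degree $2^{d+1}-2$. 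Once that list of strictly inadmissible heads is compiled, Theorem \ref{dlcb1}(ii) closes the argument and forces $\omega(x)\in\{(1)|^{d+1},\,(3)|(2)|^{d-1}\}$.
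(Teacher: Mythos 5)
Your skeleton coincides with the paper's: parity plus the kernel hypothesis reduce to $\omega_1(x)\in\{1,3\}$, the case $\omega_1(x)=3$ is split by applying Lemma \ref{bdd5} to the square factor, and the bad subcases are to be killed by strict inadmissibility of prefixes via Theorem \ref{dlcb1}(ii). The genuine gap is in the $\omega_1(x)=3$ branch: your claim that every weight-$(3,4)$ head $x_ax_bx_cX_i^2$ is strictly inadmissible by short $Sq^1,Sq^2,Sq^3$ reductions is not tenable. The heads in which the exponent-one variable precedes both exponent-two variables (for instance $x_1^3x_2^3x_3x_4^2x_5^2=x_1x_2x_3\cdot X_3^2$) are exactly the ones excluded by the ordering condition $i<j$ in Lemma \ref{bda71}(i), and they are the length-two prefixes of the candidate admissible monomials of weight $(3,4,3,3,1)$ listed in Subsection \ref{s55} (e.g.\ $x_1^3x_2^7x_3^{13}x_4^{14}x_5^{26}$); if those prefixes were strictly inadmissible, Theorem \ref{dlcb1}(ii) would force $QP_5(3,4,3,3,1)=0$ outright and the unresolved degree-$63$ case of the paper would not exist. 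This is precisely why the paper cannot stop at length-two prefixes and instead proves strict inadmissibility of the much longer weight-$(3,4,3,3)$ and $(3,4,3,3,3)$ monomials of Lemma \ref{bda71}(ii),(iii), by computations involving $Sq^4$ and $Sq^8$ (and why the argument only closes for $d\geqslant 6$). Your outline omits exactly this, which is where the substance of the lemma lies.

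In the $\omega_1(x)=1$ branch you correctly isolate the exclusion of $\omega(x)=(1)|(3)|(2)|^{d-2}$ as the crux, but you do not resolve it, and your proposed fix (strict inadmissibility of all weight-$(1,3,2)$ heads) is an unverified assertion of the same kind as the one that fails above; there is no a priori reason the obstruction is visible at prefix length three. The paper takes a different route here: writing $x=x_iy_1^2$ with $y_1$ admissible of degree $2^d-1$, it settles the base case by appeal to Ph\'uc's determination \cite{ph21} of the relevant admissible monomials and then inducts on $d$. Finally, a small point: in the case $\omega_1(x)=5$ no computation of Ph\'uc is needed to see $[y]\neq 0$ — since $x=X_\emptyset y^2$ is admissible, $y$ is admissible by Theorem \ref{dlcb1}(i), and an admissible monomial represents a nonzero class by definition.
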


To prove the lemma, we need the following.
\begin{lem}\label{bda71} Let $(i,j,t,u,v)$ be an arbitrary permutation of $(1,2,3,4,5)$. The following monomials are strictly inadmissible:
	
\smallskip
\ \ {\rm i)} $x_i^2x_j^3x_t^3x_u^3$; $x_i^2x_jx_t^2x_u^3x_v^3,\, i < j$.

\smallskip
\ \ {\rm ii)} $x_ix_j^{3}x_t^{14}x_u^{14}x_v^{15}$; $x_i^{3}x_j^{13}x_t^{2}x_u^{14}x_v^{15}$; $x_i^{3}x_j^{5}x_t^{10}x_u^{14}x_v^{15}$; $x_i^{3}x_j^{13}x_t^{6}x_u^{10}x_v^{15}$; $x_i^{3}x_j^{13}x_t^{14}x_u^{3}x_v^{14}$;  $x_i^{3}x_j^{5}x_t^{14}x_u^{11}x_v^{14},\, t< u$;  $x_i^{3}x_j^{13}x_t^{6}x_u^{11}x_v^{14}\, j< u$; $x_i^{3}x_j^{13}x_t^{7}x_u^{10}x_v^{14}\, j< t$.

\smallskip
{\begin{tabular}{lllll}
{\rm iii)}&$x_ix_j^{7}x_t^{27}x_u^{30}x_v^{30}$& $x_i^{3}x_j^{5}x_t^{27}x_u^{30}x_v^{30}$& $x_i^{3}x_j^{7}x_t^{25}x_u^{30}x_v^{30}$& $x_i^{3}x_j^{7}x_t^{29}x_u^{26}x_v^{30}$\cr  &$x_i^{7}x_j^{11}x_t^{21}x_u^{26}x_v^{30}$& $x_i^{7}x_j^{11}x_t^{29}x_u^{22}x_v^{26}$& $x_i^{7}x_j^{27}x_t^{5}x_u^{26}x_v^{30}$& $x_i^{7}x_j^{27}x_t^{13}x_u^{22}x_v^{26}$\cr  &$x_i^{7}x_j^{27}x_t^{29}x_u^{2}x_v^{30}$& $x_i^{7}x_j^{27}x_t^{29}x_u^{6}x_v^{26}$.& &\cr  
\end{tabular}}
\end{lem}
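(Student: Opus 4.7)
The plan is to verify strict inadmissibility by exhibiting, for each listed monomial $x$, an explicit identity of the form required by the definition:
\[
x \equiv_{\omega(x)} \sum_j y_j + \sum_{u=1}^{2^s-1} Sq^u(h_u)
\]
with each $y_j<x$ in the order of Definition \ref{defn3}, $s=\max\{i:\omega_i(x)>0\}$, and the congruence taken modulo $P_5^-(\omega(x))$. By Theorem \ref{dlcb1}(ii), this is exactly what is needed to rule out, in the proof of Lemma \ref{bdd62}, the candidate weight vectors for admissible monomials of degree $2^{d+1}-1$ other than $(1)|^{d+1}$ and $(3)|(2)|^{d-1}$.

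I would first dispose of part i). The two families $x_i^2x_j^3x_t^3x_u^3$ and $x_i^2x_jx_t^2x_u^3x_v^3$ both have weight vector $(3,4)$ and degree $11$, so $s=2$ and the required identity involves only $Sq^1, Sq^2, Sq^3$. These monomials fall within the range of the small-degree hit-problem computations already completed in Kameko \cite{ka} and the first-named author \cite{su2} for $P_4$, and extended to $P_5$ by Ly--T\'in \cite{Tin20}; the corresponding entries can be quoted directly from those sources.

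For parts ii) and iii) I would proceed by direct computation, following the template established by Lemma \ref{bda51}(v) and Lemma \ref{bda63}(v). A straightforward weight-vector calculation shows that every listed monomial in part ii) has $\omega(x)=(3,4,3,3)$ (degree $47$, hence $s=4$), and every listed monomial in part iii) has $\omega(x)=(3,4,3,3,3)$ (degree $95$, hence $s=5$). For a representative monomial $x$ in each equivalence class under permutation of $(i,j,t,u,v)$, say $(i,j,t,u,v)=(1,2,3,4,5)$, I would construct an ansatz for the polynomials $h_{2^a}$ by splitting one of the larger exponents according to its dyadic expansion, expand $Sq^{2^a}(h_{2^a})$ by the Cartan formula, and collect terms; everything except $x$ itself should either carry a strictly smaller weight vector, so as to lie in $P_5^-(\omega(x))$, or be comparable to some $y_j<x$ in the $\sigma$-order of Definition \ref{defn3}. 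Because the listed monomials in each part share one common weight vector, a single worked instance dictates the shape of the ansatz for all the others, and permutation symmetry across $(i,j,t,u,v)$ is propagated automatically by the $\mathcal{A}$-algebra monomorphism $\theta_J$ of \eqref{ctbs}.

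The principal obstacle is purely computational rather than conceptual: in part iii), the summand $Sq^{16}(h_{16})$ alone can produce dozens of monomials, and ensuring that none of them carries a weight vector equal to or larger than $\omega(x)$ with exponent vector $\geqslant \sigma(x)$ requires careful bookkeeping of the Cartan expansion and of $P_5^-(\omega(x))$-equivalences. The pattern of the explicit identities for $x_1^{15}x_2^{23}x_3^{27}x_4^{29}x_5^{32}$ proved in Lemma \ref{bda65}, and for the degree-$46$ monomial in Lemma \ref{bda51}(v), is the closest analogue to guide the ansatz; in particular one expects each $h_{2^a}$ to be obtained from $x$ by lowering one of its largest exponents by $2^a$ and increasing another by $2^a$, so that the Cartan expansion reproduces $x$ together with a controllable collection of strictly smaller or weight-strictly-lower monomials.
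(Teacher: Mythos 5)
Your overall strategy coincides with the paper's: exhibit, for a representative of each listed family, an explicit relation expressing the monomial as a sum of Steenrod images plus terms of strictly lower order, working modulo $P_5^-(\omega(x))$. The paper in fact discharges part ii) by citing Ph\'uc \cite{ph1} and dismisses part i) as immediate, reserving the hand computation for a single representative of part iii), namely $x_i^{3}x_j^{5}x_t^{27}x_u^{30}x_v^{30}$, for which it produces a relation involving only $Sq^1,Sq^2,Sq^4,Sq^8$ and no residual terms $y_j$ at all: everything other than the Steenrod images lies in $P_5^-(3,4,3,3,3)$. Your proposal to recompute part ii) directly is legitimate but redundant, and your citation for part i) is off target (Ly--T\'in \cite{Tin20} treats degree $14$, whereas these monomials have degree $11$; the four-variable family is covered by \cite{su2} and the five-variable one is a short direct check).

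Two points in your plan are genuine soft spots. First, the proposal never actually produces any of the identities; for a lemma of this type the explicit relation \emph{is} the proof, and the heuristic of lowering one large exponent by $2^a$ while raising another does not match what is actually required (the paper's $Sq^8$-term for the part iii) representative is $x_i^{5}x_j^{10}x_t^{15}x_u^{27}x_v^{30}$, which is not of that form). Second, and more seriously, your mechanism for propagating the result over all permutations of $(i,j,t,u,v)$ does not work as stated: the monomorphism $\theta_J$ of \eqref{ctbs} is defined only for order-preserving injections, and the order $<$ of Definition \ref{defn3} is not permutation-equivariant, since $\sigma$ is compared lexicographically in the fixed variable order. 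If your relation contains residual terms $y_j$ with $\omega(y_j)=\omega(x)$ and merely $\sigma(y_j)<\sigma(x)$ --- which your ansatz explicitly allows --- then permuting the variables can destroy the inequality $y_j<x$. The uniform statement over all permutations holds in the paper precisely because every residual term in its identity has strictly smaller \emph{weight vector}, a permutation-invariant condition, so the relation written in generic indices $i,j,t,u,v$ is valid for every assignment. You should either impose this stronger requirement on your ansatz or verify each permutation separately.
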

\begin{proof} Part i) is easy. Part ii) is due to Ph\'uc \cite{ph1}. We prove Part iii) for the monomial $y = x_i^{3}x_j^{5}x_t^{27}x_u^{30}x_v^{30}$. The others can be proved by the similar computations. By a direct computation, we have
\begin{align*}
y &=  Sq^1\big(x_i^{3}x_j^{9}x_t^{23}x_u^{29}x_v^{30} + x_i^{3}x_j^{12}x_t^{23}x_u^{27}x_v^{29} + x_i^{9}x_j^{18}x_t^{15}x_u^{23}x_v^{29}\\ &\quad + x_i^{9}x_j^{20}x_t^{15}x_u^{23}x_v^{27} + x_i^{12}x_j^{17}x_t^{15}x_u^{23}x_v^{27}\big) + Sq^2\big(x_i^{3}x_j^{10}x_t^{23}x_u^{27}x_v^{30}\\ &\quad + x_i^{10}x_j^{18}x_t^{15}x_u^{23}x_v^{27}\big) + Sq^4\big(x_i^{3}x_j^{5}x_t^{23}x_u^{30}x_v^{30} + x_i^{5}x_j^{18}x_t^{15}x_u^{23}x_v^{30}\big)\\ &\quad +  Sq^8\big(x_i^{5}x_j^{10}x_t^{15}x_u^{27}x_v^{30}\big)
\ \mbox{mod}(P_5^-(3,4,3,3,3)).
\end{align*}
Hence, the monomial $y$ is strictly inadmissible.
\end{proof}
\begin{proof}[Proof of Lemma \ref{bdd62}] Let $x$ be an admissible monomial of degree $2^{d+1}-1$ such that $[x] \in \mbox{\rm Ker}\big((\widetilde{Sq}^0_*)_{(5,2^{d}-3)}\big)$. Since $2^{d+1}-1$ is odd, we obtain either $\omega_1(x)=1$ or $\omega_1(x)=3$ or $\omega_1(x)=5$. 
	
If $\omega_1(x) = 5$, then $x = X_{\emptyset} y^2$ with $y$ an admissible monomial of degree $2^{d}-3$ in $P_5$ and $\widetilde{Sq}^0_*)_{(5,2^{d}-3)}([x]) = [y] \ne 0$. This contradicts the the hypothesis that $[x] \in \mbox{\rm Ker}\big((\widetilde{Sq}^0_*)_{(5,2^{d}-3)}\big)$. 

If $\omega_1(x) = 1$, then $x = x_{i} y_1^2$ with $y_1$ an admissible monomial of degree $2^{d}-1$ in $P_5$ and $1 \leqslant i \leqslant 5$. By Ph\'uc \cite{ph21}, we have $\omega(y_1) = (1)|^5$, hence $\omega(x) = (1)|^6$ for $d = 5$. By induction on $d$ we easily obtain $\omega(x) = (1)|^{d+1}$ for any $d \geqslant 5$.

If $\omega_1(x) = 3$, then we get $x = X_{i,j}y_2^2$ with $1 \leqslant i < j \leqslant 5$ and $y_2$ an admissible monomial of degree $2^{d}-2$. From Lemma \ref{bdd5} we have $\omega(y_2) = (4)|(3)|^{d-3}|(1)$ for any $d \geqslant 6$. By a direct computation we can see that there is a monomial $w$ as given in Lemma \ref{bda71} such that $X_{i,j}y_2^2 = wh^{2^r}$ with $2 \leqslant r \leqslant 5$ and $h$ a suitable monomial. By Theorem \ref{dlcb1}, the monomial $X_{i,j}y_2^2$ is inadmissible. Thus, we get $\omega(y_2) = (2)|^{d-1}$ and $\omega(x) = (3)|(2)|^{d-1}$. The lemma is proved.
\end{proof}

By Lemma \ref{bdd62}, we have
\begin{align*}\mbox{\rm Ker}\big((\widetilde{Sq}^0_*)_{(5,2^{d}-3)}\big) &\cong QP_5\big((1)|^{d+1}\big) \oplus QP_5\big((3)|(2)|^{d-1}\big).
\end{align*}
The space $QP_5\big((1)|^{d+1}\big)$ has been determined in \cite{su2}. We have
\begin{prop}[See {\cite[Proposition 4.1]{su2}}]\label{mdd70} For any positive integer $s$, we have
$$B_k((1)|^s) = \big\{x_{i_1}x_{i_2}^2\ldots x_{i_{m-1}}^{2^{m-2}}x_{i_m}^{2^s-2^{m-1}}:\ 1 \leqslant i_1< \ldots <i_m \leqslant k;\,  m \leqslant \min\{s,k\}\big\}.
$$
\end{prop}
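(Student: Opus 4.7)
I would prove the proposition by induction on $s$. The base case $s = 1$ is immediate: the weight-$(1)$ monomials in $P_k$ are the variables $x_1, \ldots, x_k$, all admissible (being of degree one), and they match the claimed set at $m = 1$.

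For the inductive step, every monomial of weight $(1)|^s$ factors uniquely as $y = x_j z^2$ with $z$ of weight $(1)|^{s-1}$. Theorem~\ref{dlcb1}(i) forces $z$ to be admissible whenever $y$ is, so by the inductive hypothesis $z$ takes the canonical form
\[ z = x_{i'_1} x_{i'_2}^2 \cdots x_{i'_{m'}}^{2^{s-1} - 2^{m'-1}}, \qquad 1 \leq i'_1 < \cdots < i'_{m'} \leq k. \]
A direct inspection of $y = x_j z^2$ shows $y$ is itself canonical in precisely two situations: (a) $j < i'_1$, giving the canonical form with $m = m' + 1$ and $(i_1, \ldots, i_m) = (j, i'_1, \ldots, i'_{m'})$; or (b) $j = i'_1$ together with $m' = 1$, giving $y = x_{i'_1}^{2^s - 1}$ with $m = 1$. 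In every other configuration ($j = i'_1$ with $m' \geq 2$, $j = i'_t$ for some $t \geq 2$, or $j > i'_1$ with $j$ outside $\{i'_2, \ldots, i'_{m'}\}$), the monomial $y$ is non-canonical, and I must show it is inadmissible.

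The inadmissibility is established by iterated application of the Cartan-derived swap identity
\[ Sq^{2^{r-1}}\bigl(x_a^{2^{r-1}} x_b^{2^{r-1}} w\bigr) = x_a^{2^r} x_b^{2^{r-1}} w + x_a^{2^{r-1}} x_b^{2^r} w + x_a^{2^{r-1}} x_b^{2^{r-1}} Sq^{2^{r-1}}(w), \]
whose error term contains the factor $x_a^{2^{r-1}} x_b^{2^{r-1}}$ and therefore has $\omega_r \geq 2$, placing it in $P_k^-((1)|^s)$. In the style of the explicit reductions already present in Lemmas~\ref{bda61} and~\ref{bda63}, this identity is applied to suitable adjacent substrings of the bit-position encoding of a non-canonical $y$ to replace it by a strictly smaller monomial modulo $\mathcal{A}^+P_k + P_k^-((1)|^s)$. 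Iterating sorts the encoding to canonical form, yielding $B_k((1)|^s) \subseteq \{\text{canonical monomials}\}$.

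For the reverse containment — linear independence of the canonical monomials in $QP_k((1)|^s)$ — I would use Proposition~\ref{mdbs} to reduce to proving $\dim QP_r^+((1)|^s) = 1$ for each $r$ with $1 \leq r \leq \min(s,k)$, with generator $x_1 x_2^2 \cdots x_{r-1}^{2^{r-2}} x_r^{2^s - 2^{r-1}}$. Non-vanishing follows because the minimal spike of degree $2^s - 1$ in $P_r$ is $x_1^{2^s - 1}$, of weight exactly $(1)|^s$, so Theorem~\ref{dlsig} gives no automatic hit relation; uniqueness is then the swap-reduction above applied inside $P_r^+$. The principal obstacle will be the bookkeeping in this swap-and-reduce procedure: each intermediate step must preserve weight $(1)|^s$ or produce terms in $P_k^-((1)|^s)$ rather than inflating to higher weight, which holds precisely because every swap introduces a repeated bit position and hence strictly lowers the weight vector.
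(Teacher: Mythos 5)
The paper itself offers no proof of this proposition---it is quoted verbatim from \cite[Proposition 4.1]{su2}---so your argument has to stand entirely on its own. Your induction set-up and the use of Theorem \ref{dlcb1}(i) to force $z$ admissible are fine, and the swap identity is correctly stated; in fact the error term $x_a^{2^{r-1}}x_b^{2^{r-1}}Sq^{2^{r-1}}(w)$ vanishes identically in this setting by Lucas' theorem, since $2^{r-1}$ cannot be written as a sum of distinct powers $2^p$ with $p\ne r-1$ drawn from the disjoint bit-supports of the exponents in $w$. (Note, though, that your stated justification for discarding it is backwards: a monomial with $\omega_j=1$ for $j<r$ and $\omega_r\geqslant 2$ has weight vector \emph{greater} than $(1)|^s$ in the left lexicographic order, so it lies neither in $P_k^-((1)|^s)$ nor below $y$ in the order of Definition \ref{defn3}; swaps do not ``strictly lower the weight vector''.) The genuine gap is the case you label ``$j=i'_1$ with $m'\geqslant 2$''. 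The swap relation exchanges ownership of two \emph{adjacent} bit positions held by two \emph{distinct} variables, and it yields a smaller monomial only at a descent of the ownership function. The monomial $y=x_{i'_1}^{3}x_{i'_2}^{4}\cdots$ arising in that case is already sorted: it has no adjacent descent, and every available swap sends it to a strictly \emph{larger} monomial (e.g.\ $x_1^3x_2^4\equiv x_1^5x_2^2$ with $\sigma(x_1^5x_2^2)=(5,2)>(3,4)=\sigma(x_1^3x_2^4)$). Concretely, for $k=2$, $s=3$ the swap relations partition the eight weight-$(1,1,1)$ monomials into four classes (by how many bits each variable owns), while the proposition asserts only three generators; so no amount of iterated swapping can complete the argument. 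You need a second family of relations, such as $x_1^3x_2^4=x_1^2x_2^5+Sq^1(x_1^3x_2^3)+Sq^2(x_1^2x_2^3)$, which exhibits $x_1^3x_2^4$ as strictly inadmissible, together with Theorem \ref{dlcb1} to propagate this to $x_{i'_1}^{3}x_{i'_2}^{4}(\cdots)^{2^3}$ and to the analogous configurations where a later variable owns a block of two or more consecutive bits without being the last variable.

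The independence half is also not established. That Theorem \ref{dlsig} ``gives no automatic hit relation'' is not an argument that a class is nonzero: the canonical monomials with $m\geqslant 2$ are not spikes, so a positive argument is required. The standard one is to apply $\mathcal A$-linear substitution homomorphisms (the $p_{(i;I)}$ of \eqref{ct23}, or the maps $P_k\to P_1$ sending a chosen subset of the variables to $x_1$ and the rest to $0$): for a putative linear relation among the canonical monomials, a suitable such map sends exactly one of them to the spike $x_1^{2^s-1}$, which is not hit, and this pins down the coefficients one by one. Without some version of this step you have only shown that the canonical monomials span $QP_k((1)|^s)$, not that they form a basis.
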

This proposition implies $\dim QP_5\big((1)|^{d+1}\big) = 31$ for any $d \geqslant 4$. So, we need only to determine the space $QP_5\big((3)|(2)|^{d-1}\big)$. 

From Kameko \cite{ka} and our work \cite{su2}, we have $$|B_3^+\big((3)|(2)|^{d-1}\big)|=7,\ |B_4^+\big((3)|(2)|^{d-1}\big)|=42.$$ Hence, we obtain $|B_5^0\big((3)|(2)|^{d-1}\big)|= 7\binom 53 + 42\binom 54 = 280$.

\begin{prop}\label{mdd71} Let $d$ be a positive integer. If $d \geqslant 5$, then $QP_5^+\big((3)|(2)|^{d-1}\big)$ is an $\mathbb F_2$-vector space of dimension $185$ with a basis consisting of the classes represented by the  admissible monomials which are determined as in Subsection $\ref{s53}$. 
Consequently, $\dim QP_5\big((3)|(2)|^{d-1}\big) = 465.$
\end{prop}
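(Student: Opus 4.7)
The plan is to prove Proposition \ref{mdd71} by the same two-step strategy already employed for Propositions \ref{mdd51} and \ref{mdd52}, now applied to the weight vector $\omega := (3)|(2)|^{d-1}$ of degree $2^{d+1}-1$. Let $C(d) = \{c_{d,t} : 1 \leqslant t \leqslant 185\}$ denote the explicit list of monomials proposed in Subsection~\ref{s53} as a basis of $QP_5^+(\omega)$. The goal is to prove the two containments $B_5^+(\omega) \subset C(d)$ and $\#[C(d)]_\omega = 185$.

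For the first containment, I would assemble a collection of strict inadmissibility lemmas analogous to Lemmas \ref{bda61}, \ref{bda62}, \ref{bda63}, listing the relevant forbidden factors of low degree together with their explicit rewrites modulo $\mathcal{A}^+P_5 + P_5^-(\omega)$ into sums of smaller monomials. These lemmas would be organized by the level at which the offending weight pattern first appears (the analogues of the cases $d=2,3,4,5$ treated before). Given any candidate $x \in P_5^+(\omega)$ not in $C(d)$, one then exhibits a factorization $x = y\, w^{2^r} z^{2^{r+u}}$ with $2 \leqslant u \leqslant 5$ and $w$ strictly inadmissible from one of these lemmas; Theorem \ref{dlcb1} then declares $x$ inadmissible. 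The work here is largely an extension of Lemmas \ref{bda62} and \ref{bda63} to include the odd-leading-weight contribution, since the $3$ at position $1$ (instead of a $2$) enlarges the combinatorial pool of monomials to be analyzed.

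For the second containment, I would set up a generic relation
\begin{equation*}
\mathcal{S} := \sum_{t=1}^{185} \gamma_t\, c_{d,t} \equiv_\omega 0, \qquad \gamma_t \in \mathbb{F}_2,
\end{equation*}
and systematically apply the homomorphisms $p_{(i;I)}: P_5 \to P_4$ defined by \eqref{ct23} for all $(i;I) \in \mathcal{N}_5$. By Lemma \ref{bdm}, each $p_{(i;I)}$ descends to $QP_5(\omega) \to QP_4(\omega)$, and the image is controlled by the known basis of $QP_4(\omega)$; in particular by the $42$ admissible monomials of $B_4^+(\omega)$ from \cite{su2}, together with the contributions from strict sub-algebras $Q\theta_J(P_r^+)(\omega)$ for $r<4$ via Proposition \ref{mdbs}. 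Expanding each $p_{(i;I)}(\mathcal{S})$ in this basis and requiring it to vanish produces a linear system in the $\gamma_t$. Processing the singletons $p_{(i;j)}$ first, then the pairs $p_{(i;(u,v))}$, then the triples $p_{(i;(u,v,w))}$, in the same cascading manner as in the proof of Proposition~\ref{mdd52}, one obtains successive vanishing relations that eventually force every $\gamma_t = 0$.

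The main obstacle is sheer bookkeeping. Because the leading weight is $3$ rather than $2$, the exponent vectors distribute weight-$1$ contributions across three distinct variables at level $1$, roughly quadrupling the number of candidate monomials relative to Proposition~\ref{mdd51} and producing denser couplings in the coefficient system from $p_{(i;I)}(\mathcal{S}) \equiv_\omega 0$. Each individual step — a Cartan-formula rewrite establishing strict inadmissibility, or the expansion of one $p_{(i;I)}(\mathcal{S})$ in the $B_4(\omega)$ basis — is finite and mechanical, but the aggregate requires computer assistance, as acknowledged for Theorems \ref{thm1} and \ref{thm2}. Once Proposition \ref{mdd71} is established, the consequence $\dim QP_5(\omega) = 465$ follows immediately from $|B_5^0(\omega)| = 7\binom{5}{3} + 42\binom{5}{4} = 280$ together with the direct sum decomposition in Proposition \ref{2.7}.
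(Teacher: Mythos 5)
Your proposal follows essentially the same two-step strategy as the paper's proof: first filter the candidate monomials using strict-inadmissibility lemmas together with Theorem \ref{dlcb1} (after writing each candidate as $X_{i,j}y^2$ with $y$ of weight vector $(2)|^{d-1}$), then establish linear independence by applying the homomorphisms $p_{(i;I)}$ and solving the resulting linear system over $\mathbb F_2$. The paper streamlines the second step by splitting $B_5^+\big((3)|(2)|^{d-1}\big)$ into $\widetilde A(d)$ (the $35$ monomials $x_i^{2^d-1}f_i(u)$, which are admissible by Proposition \ref{mdmo} and whose span meets that of the rest trivially because $\nu(x)=2^d-1$ exactly on $\widetilde A(d)$) and $\widetilde C(d)$ (the remaining $150$), so that the independence computation involves only $150$ unknowns and, as it turns out, only the ten singleton maps $p_{(i;j)}$ rather than all of $\mathcal N_5$.
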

By combining Propositions \ref{mdd70} and \ref{mdd71} one gets Theorem \ref{thm2}. In the remaining part of the section we prove Proposition \ref{mdd71}. We need some lemmas.

\begin{lem}\label{bda72} Let $(i,j,t,u,v)$ be an arbitrary permutation of $(1,2,3,4,5)$. The following monomials are strictly inadmissible: 
	
\smallskip  
\  \ \ {\rm i)} \ \! $x_i^2x_jx_tx_u^3,\, i<j<t$; $x_i^2x_jx_tx_ux_v^2,\, i<j<t<u$; $x_1x_2^2x_3^2x_4x_5$.

\smallskip  
\  \  {\rm ii)} \  $x_i^3x_j^{12}x_tx_u^{15}$,\, $x_i^3x_j^{4}x_t^{9}x_u^{15}$,\, $x_i^3x_j^{5}x_t^{8}x_u^{15}$,\, $i<j<t$; $x_i^3x_j^{5}x_t^{9}x_u^{14}$,\, $x_i^3x_j^{5}x_t^{14}x_u^{9}$,\, $x_i^3x_j^{12}x_t^{3}x_u^{13}$,\, $x_i^3x_j^{7}x_t^{13}x_u^{8}$,\, $x_i^7x_j^{3}x_t^{13}x_u^{8}$,\, $x_i^7x_j^{11}x_t^{12}x_u^{}$,\, $x_i^{3}x_j^{7}x_t^{12}x_u^{9}$,\, $x_i^{7}x_j^{3}x_t^{12}x_u^{9}$,\linebreak $x_i^{7}x_j^{11}x_t^{4}x_u^{9}$,\, $x_i^{7}x_j^{11}x_t^{5}x_u^{8}$,\, $i<j<t<u$.

\smallskip  
\  \  {\rm iii)} \  $x_i^3x_j^{4}x_tx_u^{8}x_v^{15}$,\, $x_i^3x_j^{4}x_t^8x_ux_v^{15}$,\, $i<j<t<u$, and

\smallskip
{\begin{tabular}{llllll} 
$x_1x_2^{6}x_3^{3}x_4^{12}x_5^{9}$& $x_1x_2^{6}x_3^{3}x_4^{13}x_5^{8}$& $x_1x_2^{6}x_3^{11}x_4^{4}x_5^{9}$& $x_1x_2^{6}x_3^{11}x_4^{5}x_5^{8}$& $x_1x_2^{6}x_3^{11}x_4^{12}x_5 $\cr  $x_1x_2^{7}x_3^{10}x_4^{12}x_5$& $x_1x_2^{14}x_3^{3}x_4^{4}x_5^{9}$& $x_1x_2^{14}x_3^{3}x_4^{5}x_5^{8}$& $x_1x_2^{14}x_3^{3}x_4^{12}x_5$& $x_1^{3}x_2^{3}x_3^{12}x_4^{12}x_5 $\cr  $x_1^{3}x_2^{4}x_3x_4^{9}x_5^{14}$& $x_1^{3}x_2^{4}x_3x_4^{14}x_5^{9}$& $x_1^{3}x_2^{4}x_3^{3}x_4^{8}x_5^{13}$& $x_1^{3}x_2^{4}x_3^{3}x_4^{12}x_5^{9}$& $x_1^{3}x_2^{4}x_3^{3}x_4^{13}x_5^{8} $\cr  $x_1^{3}x_2^{4}x_3^{7}x_4^{8}x_5^{9}$& $x_1^{3}x_2^{4}x_3^{7}x_4^{9}x_5^{8}$& $x_1^{3}x_2^{4}x_3^{8}x_4^{3}x_5^{13}$& $x_1^{3}x_2^{4}x_3^{8}x_4^{7}x_5^{9}$& $x_1^{3}x_2^{4}x_3^{9}x_4x_5^{14} $\cr  $x_1^{3}x_2^{4}x_3^{9}x_4^{2}x_5^{13}$& $x_1^{3}x_2^{4}x_3^{9}x_4^{3}x_5^{12}$& $x_1^{3}x_2^{4}x_3^{9}x_4^{6}x_5^{9}$& $x_1^{3}x_2^{4}x_3^{9}x_4^{7}x_5^{8}$& $x_1^{3}x_2^{4}x_3^{9}x_4^{14}x_5 $\cr  $x_1^{3}x_2^{4}x_3^{11}x_4^{4}x_5^{9}$& $x_1^{3}x_2^{4}x_3^{11}x_4^{5}x_5^{8}$& $x_1^{3}x_2^{4}x_3^{11}x_4^{12}x_5$& $x_1^{3}x_2^{5}x_3x_4^{8}x_5^{14}$& $x_1^{3}x_2^{5}x_3x_4^{14}x_5^{8} $\cr  $x_1^{3}x_2^{5}x_3^{8}x_4x_5^{14}$& $x_1^{3}x_2^{5}x_3^{8}x_4^{2}x_5^{13}$& $x_1^{3}x_2^{5}x_3^{8}x_4^{3}x_5^{12}$& $x_1^{3}x_2^{5}x_3^{8}x_4^{6}x_5^{9}$& $x_1^{3}x_2^{5}x_3^{8}x_4^{7}x_5^{8} $\cr  $x_1^{3}x_2^{5}x_3^{8}x_4^{14}x_5$& $x_1^{3}x_2^{5}x_3^{9}x_4^{2}x_5^{12}$& $x_1^{3}x_2^{5}x_3^{9}x_4^{6}x_5^{8}$& $x_1^{3}x_2^{5}x_3^{14}x_4x_5^{8}$& $x_1^{3}x_2^{5}x_3^{14}x_4^{8}x_5 $\cr  $x_1^{3}x_2^{7}x_3^{8}x_4^{4}x_5^{9}$& $x_1^{3}x_2^{7}x_3^{8}x_4^{5}x_5^{8}$& $x_1^{3}x_2^{7}x_3^{8}x_4^{12}x_5$& $x_1^{3}x_2^{7}x_3^{12}x_4x_5^{8}$& $x_1^{3}x_2^{7}x_3^{12}x_4^{8}x_5 $\cr  $x_1^{3}x_2^{12}x_3x_4x_5^{14}$& $x_1^{3}x_2^{12}x_3x_4^{2}x_5^{13}$& $x_1^{3}x_2^{12}x_3x_4^{3}x_5^{12}$& $x_1^{3}x_2^{12}x_3x_4^{14}x_5$& $x_1^{3}x_2^{12}x_3^{3}x_4x_5^{12} $\cr  $x_1^{3}x_2^{12}x_3^{3}x_4^{4}x_5^{9}$& $x_1^{3}x_2^{12}x_3^{3}x_4^{5}x_5^{8}$& $x_1^{3}x_2^{12}x_3^{3}x_4^{12}x_5$& $x_1^{7}x_2x_3^{10}x_4^{12}x_5$& $x_1^{7}x_2^{3}x_3^{8}x_4^{4}x_5^{9} $\cr  $x_1^{7}x_2^{3}x_3^{8}x_4^{5}x_5^{8}$& $x_1^{7}x_2^{3}x_3^{8}x_4^{12}x_5$& $x_1^{7}x_2^{3}x_3^{12}x_4x_5^{8}$& $x_1^{7}x_2^{3}x_3^{12}x_4^{8}x_5$& $x_1^{7}x_2^{7}x_3^{8}x_4^{8}x_5 $\cr  $x_1^{7}x_2^{8}x_3^{3}x_4^{4}x_5^{9}$& $x_1^{7}x_2^{8}x_3^{3}x_4^{5}x_5^{8}$& $x_1^{7}x_2^{11}x_3^{4}x_4x_5^{8}$& $x_1^{7}x_2^{11}x_3^{4}x_4^{8}x_5$& \cr     
\end{tabular}}
\end{lem}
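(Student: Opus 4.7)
\medskip

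\noindent\textbf{Proof plan.} My plan is to establish each listed monomial $w$ as strictly inadmissible by exhibiting an explicit identity
\[
w = \sum_j y_j + \sum_{u=1}^{2^s-1} Sq^u(h_u) \pmod{P_5^-(\omega(w))},
\]
with $y_j < w$ in the sense of Definition \ref{defn3} and $s = \max\{i : \omega_i(w) > 0\}$. This is exactly the template carried out in Lemmas \ref{bda51}, \ref{bda61}, \ref{bda62}, \ref{bda63}, \ref{bda64}, \ref{bda65}, and \ref{bda71}: one isolates certain high-bit factors of $w$, writes them as $Sq^{2^r}$-images of a reduced monomial (using basic Cartan identities such as $x_ax_b^2 \equiv Sq^1(x_ax_b)$ modulo lower terms at the appropriate bit), and then verifies by direct expansion that every resulting summand either descends strictly in the order of Definition \ref{defn3} or falls into $P_5^-(\omega(w))$.

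Part (i) I would dispatch by citation and one-line checks: the monomials $x_i^2x_jx_tx_u^3$ and $x_i^2x_jx_tx_ux_v^2$ appear already in Lemma \ref{bda64}(i) (for closely related forms) and in Kameko \cite{ka} and the present author \cite{su1,su2}, while the isolated case $x_1x_2^2x_3^2x_4x_5$ is a routine two-term $Sq^1$-$Sq^2$ identity. Part (ii) consists of monomials in four of the five variables with weight vector $(3)|(2)|^3$ (degree $15$); many are already strictly inadmissible by Sum \cite{su2} and Ph\'uc \cite{ph1}, and the remainder fit the same pattern and can be handled by short Cartan-formula decompositions in the style of Lemma \ref{bda61}(iii)--(iv). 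For the patterns like $x_i^3x_j^{12}x_tx_u^{15}$, I would also invoke Theorem \ref{dlcb1}(ii) to reduce to a strictly inadmissible $4$-variable factor established earlier.

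The main effort, and the real obstacle, is part (iii), where roughly sixty distinct monomials must each be given their own identity modulo $P_5^-((3)|(2)|^3)$, using only the Sq-operations $Sq^1,Sq^2,Sq^4,Sq^8$ (with an occasional $Sq^{16}$). Each identity is finite and mechanical via the Cartan formula, but producing the right $h_u$'s and checking that every term on the right either is $<w$ or lies in $P_5^-(\omega)$ is highly error-prone. To keep the argument organized, I would stratify the list by the index of the variable carrying the top-bit $8$ in its exponent, treat each stratum by one common Sq-template, and then verify the residual $y_j$'s in bulk against the order. As the authors acknowledge in the introduction, a small computer program is used to generate candidate decompositions, after which each identity is confirmed by hand through Cartan expansion; that is the route I would follow here.
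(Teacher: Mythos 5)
Your overall template (exhibit $w = \sum_j y_j + \sum_{u<2^s} Sq^u(h_u)$ modulo $P_5^-(\omega(w))$) is the right notion, and for Parts i) and ii) your plan essentially coincides with what the paper does: the paper disposes of the entire lemma in one sentence of citations, attributing Part i) to T\'in \cite{tin14}, Part ii) to \cite{su2}, and Part iii) to Ph\'uc \cite{ph21}. Two small misattributions in your write-up: the Part i) monomials $x_i^2x_jx_tx_u^3$ and $x_i^2x_jx_tx_ux_v^2$ are \emph{not} the monomials of Lemma \ref{bda64}(i) (those are $x_i^2x_jx_tx_u$, of a different degree and weight vector), and the source for Part iii) is \cite{ph21}, not \cite{ph1}.

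The genuine gap is Part iii). The paper discharges all sixty-odd degree-$15$ monomials of weight vector $(3)|(2)|^3$ by citing Ph\'uc \cite{ph21}; you instead propose to produce the explicit Cartan-formula decompositions yourself, stratified by the position of the top bit, with candidates generated by computer. But you do not exhibit a single such identity, nor do you verify for even one representative monomial that every residual term is either strictly smaller in the order of Definition \ref{defn3} or lies in $P_5^-((3)|(2)|^3)$ --- and you yourself flag this verification as the error-prone step. A description of how one \emph{would} organize the computation is not a proof that the computation succeeds; without either the worked identities or a citation to where they are carried out, the strict inadmissibility of the Part iii) list (which is the bulk of the lemma and is what Proposition \ref{mdd71} leans on) remains unestablished. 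To close the gap you should either import the result from \cite{ph21} explicitly or write out at least the identities for a complete set of representatives under the symmetries you intend to exploit.
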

Part i) of this lemma is due to T\'in \cite{tin14}, Part ii) follows from our work \cite{su2} and Part iii) is due to Ph\'uc \cite{ph21}.

\begin{lem}\label{bda73} Let $(i,j,t,u,v)$ be an arbitrary permutation of $(1,2,3,4,5)$. The following monomials are strictly inadmissible: 
	
\smallskip	
\  \  {\rm i)} \ \! $x_i^3x_j^{7}x_t^{24}x_u^{29}$,\, $x_i^7x_j^{3}x_t^{24}x_u^{29}$,\, $i<j<t<u$.
	
\smallskip	
\  \  {\rm ii)} $x_i^{3}x_j^{4}x_t^{11}x_u^{29}x_v^{16}$,\, $x_i^{3}x_j^{7}x_t^{4}x_u^{25}x_v^{24}$,\, $x_i^{3}x_j^{7}x_t^{8}x_u^{29}x_v^{16}$,\, $x_i^{3}x_j^{7}x_t^{9}x_u^{28}x_v^{16}$,\, $x_i^{7}x_j^{3}x_t^{4}x_u^{25}x_v^{24}$,\, $x_i^{7}x_j^{3}x_t^{8}x_u^{29}x_v^{16}$,\, $x_i^{7}x_j^{3}x_t^{9}x_u^{28}x_v^{16}$,\, $i<j<t<u$, and
	
\smallskip
\centerline{\begin{tabular}{llll}
$x_1x_2^{2}x_3^{7}x_4^{28}x_5^{25}$& $x_1x_2^{6}x_3^{3}x_4^{24}x_5^{29}$& $x_1x_2^{7}x_3^{2}x_4^{28}x_5^{25}$& $x_1x_2^{7}x_3^{10}x_4^{20}x_5^{25} $\cr  $x_1x_2^{7}x_3^{10}x_4^{21}x_5^{24}$& $x_1x_2^{7}x_3^{11}x_4^{20}x_5^{24}$& $x_1x_2^{7}x_3^{26}x_4^{4}x_5^{25}$& $x_1x_2^{7}x_3^{26}x_4^{5}x_5^{24} $\cr  $x_1^{3}x_2^{3}x_3^{4}x_4^{28}x_5^{25}$& $x_1^{3}x_2^{3}x_3^{12}x_4^{20}x_5^{25}$& $x_1^{3}x_2^{3}x_3^{12}x_4^{21}x_5^{24}$& $x_1^{3}x_2^{3}x_3^{13}x_4^{20}x_5^{24} $\cr  $x_1^{3}x_2^{3}x_3^{28}x_4^{4}x_5^{25}$& $x_1^{3}x_2^{3}x_3^{28}x_4^{5}x_5^{24}$& $x_1^{3}x_2^{4}x_3^{11}x_4^{17}x_5^{28}$& $x_1^{3}x_2^{5}x_3^{6}x_4^{24}x_5^{25} $\cr  $x_1^{3}x_2^{5}x_3^{6}x_4^{25}x_5^{24}$& $x_1^{3}x_2^{5}x_3^{7}x_4^{24}x_5^{24}$& $x_1^{3}x_2^{5}x_3^{10}x_4^{16}x_5^{29}$& $x_1^{3}x_2^{5}x_3^{10}x_4^{17}x_5^{28} $\cr  $x_1^{3}x_2^{5}x_3^{10}x_4^{28}x_5^{17}$& $x_1^{3}x_2^{5}x_3^{10}x_4^{29}x_5^{16}$& $x_1^{3}x_2^{5}x_3^{11}x_4^{16}x_5^{28}$& $x_1^{3}x_2^{5}x_3^{11}x_4^{28}x_5^{16} $\cr  $x_1^{3}x_2^{7}x_3^{5}x_4^{24}x_5^{24}$& $x_1^{3}x_2^{7}x_3^{8}x_4^{17}x_5^{28}$& $x_1^{3}x_2^{7}x_3^{24}x_4x_5^{28}$& $x_1^{7}x_2x_3^{2}x_4^{28}x_5^{25} $\cr  $x_1^{7}x_2x_3^{10}x_4^{20}x_5^{25}$& $x_1^{7}x_2x_3^{10}x_4^{21}x_5^{24}$& $x_1^{7}x_2x_3^{11}x_4^{20}x_5^{24}$& $x_1^{7}x_2x_3^{26}x_4^{4}x_5^{25} $\cr  $x_1^{7}x_2x_3^{26}x_4^{5}x_5^{24}$& $x_1^{7}x_2^{3}x_3^{5}x_4^{24}x_5^{24}$& $x_1^{7}x_2^{3}x_3^{8}x_4^{17}x_5^{28}$& $x_1^{7}x_2^{3}x_3^{24}x_4x_5^{28} $\cr  $x_1^{7}x_2^{11}x_3x_4^{20}x_5^{24}$& & & \cr 
\end{tabular}}
\end{lem}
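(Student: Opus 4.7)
The plan is to show that every monomial in the list has weight vector $\omega = (3)|(2)|^4$ of degree $2^6-1 = 63$, and then to exhibit, for each such $x$, an explicit representation
\[
x \;=\; \sum_{j} y_j \;+\; \sum_{u \in \{1,2,4,8,16\}} Sq^{u}(h_u) \pmod{P_5^-(\omega)},
\]
with $y_j < x$ in the order of Definition~\ref{defn3} and suitable $h_u \in P_5$. Since $s = \max\{i : \omega_i(x) > 0\} = 5$ throughout, only the Steenrod squares $Sq^{1}, Sq^{2}, Sq^{4}, Sq^{8}, Sq^{16}$ are permitted in such a decomposition, which is exactly the structure needed by the definition of strict inadmissibility. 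This is the same template executed in Lemmas~\ref{bda62}, \ref{bda63}, \ref{bda71}, and \ref{bda72} above.

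For Part (i), the two families $x_i^{3}x_j^{7}x_t^{24}x_u^{29}$ and $x_i^{7}x_j^{3}x_t^{24}x_u^{29}$ with $i<j<t<u$ involve only four of the five variables, so they pull back from $P_4$ along the embedding $\theta_J$ of~(\ref{ctbs}). They have weight $(3)|(2)|^{4}$ in $P_4$, and the classification of $B_4^+((3)|(2)|^{d-1})$ in~\cite{su2} (of size $42$) shows these two families are not among the admissible monomials. The explicit $Sq^{2^r}$-decompositions in $P_4$ extracted from~\cite{su2} can be transported to $P_5$ via $\theta_J$, giving the required strict inadmissibility. Hence Part (i) reduces to citing the $P_4$-result.

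For Part (ii), the work is genuinely in $P_5$. My approach would be a top-down cherry-picking: for each monomial $x$, read off the exponents' dyadic expansions, identify the largest $2^r$-block appearing in the exponents (typically $r=4$, so $Sq^{16}$), and look for a monomial $h_{16}$ whose leading Cartan-term after $Sq^{16}$ produces the $(1,4)$-part of the desired exponent vector while all other Cartan-terms either lie in $P_5^-(\omega)$ or in the sum $\sum_j y_j$ with $y_j < x$. I would then cascade downward through $Sq^{8}, Sq^{4}, Sq^{2}, Sq^{1}$ to correct the residue. A prototypical case such as $x_1^{3}x_2^{5}x_3^{10}x_4^{17}x_5^{28}$ (noting $17 = 16 + 1$, $28 = 16 + 8 + 4$, $10 = 8+2$) gives a template: once $Sq^{16}$ has been used to strip the $16$'s from positions $4,5$, the remaining degree-$31$ problem is of the same shape as those already handled in Lemma~\ref{bda72}, and one can recycle those formulas. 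Many monomials in the list differ only by permutations of indices or by swapping an exponent $28 = 16+8+4$ against $29 = 16+8+4+1$, so their decompositions differ only by a single $Sq^{1}$-term.

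The main obstacle is purely combinatorial: the list has on the order of $40$ monomials in Part (ii), and for each one a valid choice of the five polynomials $h_1,h_2,h_4,h_8,h_{16}$ must be guessed and then verified by expanding the Cartan formula. This is exactly where the paper's remark ``\emph{we prove by combining hand computation with the aids of some simple computer programmes}'' applies: in practice one would write a short script that, given a target monomial $x$ and a weight cap $\omega$, searches within a bounded range of candidate spike-like polynomials $h_u$ and checks whether $x + \sum_u Sq^u(h_u)$ reduces modulo $P_5^-(\omega)$ to a sum of monomials strictly below $x$. Presenting one or two fully worked-out decompositions (mirroring the displays in Lemma~\ref{bda72}) and then asserting ``the others follow by similar computations'' is the standard and, in this setting, essentially unavoidable way to write the proof.
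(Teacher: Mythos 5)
Your proposal follows essentially the same route as the paper: Part (i) is reduced to the four-variable case established in \cite{su2} (and, as you correctly note, one needs the explicit $Sq^{2^r}$-decompositions there, not merely non-membership in $B_4^+((3)|(2)|^{d-1})$, since strict inadmissibility is stronger than inadmissibility and is then transported along $\theta_J$), while Part (ii) is handled by exhibiting explicit decompositions $x \equiv \sum_j y_j + \sum_{u\leqslant 4} Sq^{2^u}(h_{2^u})$ modulo $P_5^-((3)|(2)|^4)$ with $y_j < x$, worked out for a few representative monomials and verified for the remainder by the same computer-assisted computation. This matches the paper's proof, which displays exactly such decompositions for $x_1^{3}x_2^{4}x_3^{11}x_4^{29}x_5^{16}$, $x_1x_2^{7}x_3^{10}x_4^{20}x_5^{25}$, $x_1x_2^{7}x_3^{26}x_4^{4}x_5^{25}$ and $x_1x_2^{7}x_3^{26}x_4^{5}x_5^{24}$.
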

\begin{proof} Part i) follows from \cite{su2}. Let $x = x_1^{3}x_2^{4}x_3^{11}x_4^{29}x_5^{16}$ be a monomial in Part ii). We have
\begin{align*}
x &= x_1^{2}x_2x_3^{11}x_4^{29}x_5^{20} + x_1^{2}x_2x_3^{13}x_4^{29}x_5^{18} + x_1^{2}x_2^{4}x_3^{17}x_4^{27}x_5^{13} + x_1^{3}x_2x_3^{9}x_4^{30}x_5^{20}\\ &\quad + x_1^{3}x_2x_3^{12}x_4^{29}x_5^{18} + x_1^{3}x_2^{4}x_3^{8}x_4^{27}x_5^{21} + Sq^1\big(x_1^{3}x_2x_3^{11}x_4^{29}x_5^{18} + x_1^{3}x_2^{4}x_3^{11}x_4^{27}x_5^{17}\\ &\quad + x_1^{3}x_2^{8}x_3^{11}x_4^{27}x_5^{13}\big) + Sq^2\big(x_1^{2}x_2x_3^{11}x_4^{29}x_5^{18} + x_1^{2}x_2^{4}x_3^{11}x_4^{27}x_5^{17} + x_1^{2}x_2^{8}x_3^{11}x_4^{27}x_5^{13}\\ &\quad + x_1^{3}x_2^{2}x_3^{11}x_4^{17}x_5^{28} + x_1^{3}x_2^{2}x_3^{13}x_4^{17}x_5^{26} + x_1^{3}x_2^{4}x_3^{11}x_4^{17}x_5^{26} + x_1^{5}x_2x_3^{7}x_4^{30}x_5^{18}\\ &\quad + x_1^{5}x_2^{2}x_3^{11}x_4^{17}x_5^{26} + x_1^{5}x_2^{2}x_3^{11}x_4^{27}x_5^{16} + x_1^{5}x_2^{2}x_3^{11}x_4^{29}x_5^{14} + x_1^{5}x_2^{4}x_3^{11}x_4^{27}x_5^{14}\big)\\ &\quad + Sq^4\big(x_1^{2}x_2^{4}x_3^{11}x_4^{29}x_5^{13} + x_1^{2}x_2^{4}x_3^{13}x_4^{27}x_5^{13} + x_1^{3}x_2x_3^{7}x_4^{30}x_5^{18} + x_1^{3}x_2^{2}x_3^{11}x_4^{17}x_5^{26}\\ &\quad + x_1^{3}x_2^{2}x_3^{11}x_4^{27}x_5^{16} + x_1^{3}x_2^{2}x_3^{11}x_4^{29}x_5^{14} + x_1^{3}x_2^{4}x_3^{11}x_4^{27}x_5^{14} + x_1^{3}x_2^{4}x_3^{11}x_4^{28}x_5^{13}\\ &\quad + x_1^{3}x_2^{4}x_3^{12}x_4^{27}x_5^{13}\big) +  Sq^8\big(x_1^{3}x_2x_3^{7}x_4^{34}x_5^{10} + x_1^{3}x_2^{2}x_3^{19}x_4^{17}x_5^{14} + x_1^{3}x_2^{4}x_3^{8}x_4^{27}x_5^{13}\big)\\ &\quad +  Sq^{16}\big(x_1^{3}x_2^{2}x_3^{11}x_4^{33}x_5^{14} \big) \ \mbox{mod}(P_5^-((3)(2)|^4).
\end{align*}
Hence, the monomial $x = x_1^{3}x_2^{4}x_3^{11}x_4^{29}x_5^{16}$ is strictly inadmissible. Let $y$ be the monomial $x_1x_2^{7}x_3^{10}x_4^{20}x_5^{25}$. A direct computation shows
\begin{align*}
 y &= x_1x_2^{4}x_3^{3}x_4^{25}x_5^{30} + x_1x_2^{4}x_3^{3}x_4^{28}x_5^{27} + x_1x_2^{4}x_3^{5}x_4^{26}x_5^{27} + x_1x_2^{4}x_3^{6}x_4^{25}x_5^{27}\\ &\quad + x_1x_2^{4}x_3^{9}x_4^{22}x_5^{27} + x_1x_2^{4}x_3^{10}x_4^{21}x_5^{27} + x_1x_2^{5}x_3^{2}x_4^{25}x_5^{30} + x_1x_2^{5}x_3^{3}x_4^{24}x_5^{30}\\ &\quad + x_1x_2^{5}x_3^{10}x_4^{18}x_5^{29} + x_1x_2^{5}x_3^{10}x_4^{19}x_5^{28} + x_1x_2^{5}x_3^{10}x_4^{20}x_5^{27} + x_1x_2^{5}x_3^{10}x_4^{24}x_5^{23}\\ &\quad + x_1x_2^{6}x_3^{3}x_4^{25}x_5^{28} + x_1x_2^{6}x_3^{3}x_4^{28}x_5^{25} + x_1x_2^{6}x_3^{4}x_4^{25}x_5^{27} + x_1x_2^{6}x_3^{10}x_4^{21}x_5^{27}\\ &\quad + x_1x_2^{7}x_3^{3}x_4^{24}x_5^{28} + x_1x_2^{7}x_3^{4}x_4^{24}x_5^{27} + x_1x_2^{7}x_3^{8}x_4^{19}x_5^{28} + x_1x_2^{7}x_3^{10}x_4^{16}x_5^{29}\\ &\quad + Sq^1\big(x_1x_2^{7}x_3^{3}x_4^{22}x_5^{29} + x_1x_2^{7}x_3^{3}x_4^{24}x_5^{27} + x_1x_2^{7}x_3^{4}x_4^{21}x_5^{29} + x_1x_2^{7}x_3^{5}x_4^{21}x_5^{28}\\ &\quad + x_1x_2^{7}x_3^{5}x_4^{22}x_5^{27} + x_1x_2^{8}x_3^{3}x_4^{21}x_5^{29} + x_1x_2^{8}x_3^{5}x_4^{21}x_5^{27} + x_1x_2^{9}x_3^{3}x_4^{21}x_5^{28}\\ &\quad + x_1x_2^{9}x_3^{3}x_4^{22}x_5^{27} + x_1x_2^{9}x_3^{4}x_4^{21}x_5^{27} + x_1^{4}x_2^{7}x_3^{3}x_4^{21}x_5^{27}\big) +  Sq^2\big(x_1x_2^{6}x_3^{5}x_4^{26}x_5^{23}\\ &\quad + x_1x_2^{7}x_3^{2}x_4^{21}x_5^{30} + x_1x_2^{7}x_3^{5}x_4^{14}x_5^{34} + x_1x_2^{7}x_3^{5}x_4^{18}x_5^{30} + x_1x_2^{7}x_3^{6}x_4^{19}x_5^{28}\\ &\quad + x_1x_2^{7}x_3^{12}x_4^{14}x_5^{27} + x_1x_2^{7}x_3^{12}x_4^{18}x_5^{23} + x_1^{2}x_2^{7}x_3^{3}x_4^{21}x_5^{28} + x_1^{2}x_2^{7}x_3^{3}x_4^{22}x_5^{27}\\ &\quad + x_1^{2}x_2^{7}x_3^{4}x_4^{21}x_5^{27} + x_1^{2}x_2^{8}x_3^{3}x_4^{21}x_5^{27}\big) + Sq^4\big(x_1x_2^{4}x_3^{3}x_4^{21}x_5^{30} + x_1x_2^{4}x_3^{3}x_4^{28}x_5^{23}\\ &\quad + x_1x_2^{4}x_3^{5}x_4^{22}x_5^{27} + x_1x_2^{4}x_3^{6}x_4^{21}x_5^{27} + x_1x_2^{5}x_3^{2}x_4^{21}x_5^{30} + x_1x_2^{5}x_3^{3}x_4^{20}x_5^{30}\\ &\quad + x_1x_2^{5}x_3^{6}x_4^{19}x_5^{28} + x_1x_2^{5}x_3^{10}x_4^{14}x_5^{29} + x_1x_2^{5}x_3^{10}x_4^{20}x_5^{23} + x_1x_2^{6}x_3^{3}x_4^{21}x_5^{28}\\ &\quad + x_1x_2^{6}x_3^{3}x_4^{22}x_5^{27} + x_1x_2^{6}x_3^{4}x_4^{21}x_5^{27} + x_1x_2^{10}x_3^{3}x_4^{22}x_5^{23} + x_1x_2^{11}x_3^{3}x_4^{14}x_5^{30}\\ &\quad + x_1x_2^{11}x_3^{6}x_4^{14}x_5^{27} + x_1x_2^{11}x_3^{6}x_4^{18}x_5^{23}\big) + Sq^8\big(x_1x_2^{6}x_3^{3}x_4^{22}x_5^{23} + x_1x_2^{7}x_3^{3}x_4^{10}x_5^{34}\\ &\quad + x_1x_2^{7}x_3^{3}x_4^{14}x_5^{30} + x_1x_2^{7}x_3^{3}x_4^{22}x_5^{22} + x_1x_2^{7}x_3^{6}x_4^{14}x_5^{27} + x_1x_2^{7}x_3^{6}x_4^{18}x_5^{23}\\ &\quad + x_1x_2^{11}x_3^{3}x_4^{18}x_5^{22}\big) +  Sq^{16}\big(x_1x_2^{7}x_3^{3}x_4^{14}x_5^{22}\big) \ \mbox{mod}(P_5^-((3)(2)|^4).
\end{align*}
This implies $y$ is strictly inadmissible. Let $z = x_1x_2^{7}x_3^{26}x_4^{4}x_5^{25}$. We have
\begin{align*}
z &= x_1x_2^{4}x_3^{23}x_4^{8}x_5^{27} + x_1x_2^{4}x_3^{27}x_4x_5^{30} + x_1x_2^{4}x_3^{27}x_4^{4}x_5^{27} + x_1x_2^{4}x_3^{29}x_4^{2}x_5^{27}\\ &\quad + x_1x_2^{5}x_3^{25}x_4^{2}x_5^{30} + x_1x_2^{5}x_3^{26}x_4^{2}x_5^{29} + x_1x_2^{5}x_3^{26}x_4^{4}x_5^{27} + x_1x_2^{5}x_3^{26}x_4^{8}x_5^{23}\\ &\quad + x_1x_2^{5}x_3^{27}x_4^{2}x_5^{28} + x_1x_2^{6}x_3^{27}x_4x_5^{28} + x_1x_2^{6}x_3^{29}x_4^{2}x_5^{25} + x_1x_2^{7}x_3^{24}x_4x_5^{30}\\ &\quad + x_1x_2^{7}x_3^{24}x_4^{2}x_5^{29} + x_1x_2^{7}x_3^{24}x_4^{4}x_5^{27} + x_1x_2^{7}x_3^{25}x_4^{2}x_5^{28} + x_1x_2^{7}x_3^{26}x_4x_5^{28}\\ &\quad + Sq^1\big(x_1x_2^{7}x_3^{23}x_4^{2}x_5^{29} + x_1x_2^{7}x_3^{23}x_4^{4}x_5^{27} + x_1x_2^{7}x_3^{24}x_4x_5^{29} + x_1x_2^{7}x_3^{25}x_4x_5^{28}\\ &\quad + x_1x_2^{7}x_3^{25}x_4^{2}x_5^{27} + x_1x_2^{8}x_3^{23}x_4x_5^{29} + x_1x_2^{9}x_3^{23}x_4x_5^{28} + x_1x_2^{9}x_3^{23}x_4^{2}x_5^{27}\\ &\quad + x_1^{4}x_2^{7}x_3^{23}x_4x_5^{27}\big) + Sq^2\big(x_1x_2^{6}x_3^{27}x_4^{4}x_5^{23} + x_1x_2^{7}x_3^{15}x_4^{4}x_5^{34} + x_1x_2^{7}x_3^{19}x_4^{4}x_5^{30}\\ &\quad + x_1x_2^{7}x_3^{22}x_4^{2}x_5^{29} + x_1x_2^{7}x_3^{23}x_4^{2}x_5^{28} + x_1x_2^{7}x_3^{24}x_4^{2}x_5^{27} + x_1x_2^{7}x_3^{28}x_4^{2}x_5^{23}\\ &\quad + x_1x_2^{9}x_3^{22}x_4^{2}x_5^{27} + x_1^{2}x_2^{7}x_3^{23}x_4x_5^{28} + x_1^{2}x_2^{7}x_3^{23}x_4^{2}x_5^{27} + x_1^{2}x_2^{7}x_3^{24}x_4x_5^{27}\\ &\quad + x_1^{2}x_2^{8}x_3^{23}x_4x_5^{27}\big) + Sq^4\big(x_1x_2^{4}x_3^{23}x_4x_5^{30} + x_1x_2^{4}x_3^{23}x_4^{4}x_5^{27} + x_1x_2^{4}x_3^{29}x_4^{2}x_5^{23}\\ &\quad + x_1x_2^{5}x_3^{21}x_4^{2}x_5^{30} + x_1x_2^{5}x_3^{22}x_4^{2}x_5^{29} + x_1x_2^{5}x_3^{23}x_4^{2}x_5^{28} + x_1x_2^{5}x_3^{26}x_4^{4}x_5^{23}\\ &\quad + x_1x_2^{6}x_3^{23}x_4x_5^{28} + x_1x_2^{6}x_3^{23}x_4^{2}x_5^{27} + x_1x_2^{10}x_3^{23}x_4^{2}x_5^{23} + x_1x_2^{11}x_3^{11}x_4^{2}x_5^{34}\\ &\quad + x_1x_2^{11}x_3^{15}x_4^{2}x_5^{30} + x_1x_2^{11}x_3^{22}x_4^{2}x_5^{23}\big) + Sq^8\big(x_1x_2^{6}x_3^{23}x_4^{2}x_5^{23} + x_1x_2^{7}x_3^{11}x_4^{2}x_5^{34}\\ &\quad + x_1x_2^{7}x_3^{15}x_4^{2}x_5^{30} + x_1x_2^{7}x_3^{22}x_4^{2}x_5^{23} + x_1x_2^{7}x_3^{23}x_4^{2}x_5^{22} + x_1x_2^{11}x_3^{19}x_4^{2}x_5^{22}\big)\\ &\quad +  Sq^{16}\big(x_1x_2^{7}x_3^{15}x_4^{2}x_5^{22}\big) \ \mbox{mod}(P_5^-((3)(2)|^4).
\end{align*}
This equality implies $z$ is strictly inadmissible. Now, let $u = x_1x_2^{7}x_3^{26}x_4^{5}x_5^{24}$. We have 
\begin{align*}
u &= x_1x_2^{4}x_3^{3}x_4^{27}x_5^{28}  + x_1x_2^{4}x_3^{3}x_4^{29}x_5^{26}  + x_1x_2^{4}x_3^{6}x_4^{27}x_5^{25}  + x_1x_2^{4}x_3^{7}x_4^{26}x_5^{25}\\ &\quad  + x_1x_2^{4}x_3^{10}x_4^{23}x_5^{25}  + x_1x_2^{4}x_3^{11}x_4^{19}x_5^{28}  + x_1x_2^{4}x_3^{11}x_4^{22}x_5^{25}  + x_1x_2^{4}x_3^{13}x_4^{19}x_5^{26}\\ &\quad  + x_1x_2^{4}x_3^{23}x_4^{10}x_5^{25}  + x_1x_2^{4}x_3^{27}x_4^{3}x_5^{28}  + x_1x_2^{4}x_3^{27}x_4^{6}x_5^{25}  + x_1x_2^{4}x_3^{29}x_4^{3}x_5^{26} \\ &\quad + x_1x_2^{5}x_3^{2}x_4^{27}x_5^{28}  + x_1x_2^{5}x_3^{3}x_4^{26}x_5^{28}  + x_1x_2^{5}x_3^{3}x_4^{28}x_5^{26}  + x_1x_2^{5}x_3^{11}x_4^{18}x_5^{28} \\ &\quad + x_1x_2^{5}x_3^{26}x_4^{3}x_5^{28}  + x_1x_2^{5}x_3^{26}x_4^{5}x_5^{26}  + x_1x_2^{5}x_3^{26}x_4^{9}x_5^{22}  + x_1x_2^{5}x_3^{27}x_4^{2}x_5^{28}\\ &\quad  + x_1x_2^{6}x_3^{3}x_4^{29}x_5^{24}  + x_1x_2^{6}x_3^{4}x_4^{27}x_5^{25}  + x_1x_2^{6}x_3^{7}x_4^{24}x_5^{25}  + x_1x_2^{6}x_3^{8}x_4^{23}x_5^{25} \\ &\quad + x_1x_2^{6}x_3^{11}x_4^{20}x_5^{25}  + x_1x_2^{6}x_3^{13}x_4^{19}x_5^{24}  + x_1x_2^{6}x_3^{23}x_4^{8}x_5^{25}  + x_1x_2^{6}x_3^{27}x_4^{4}x_5^{25}\\ &\quad  + x_1x_2^{6}x_3^{29}x_4^{3}x_5^{24}  + x_1x_2^{7}x_3^{2}x_4^{25}x_5^{28}  + x_1x_2^{7}x_3^{3}x_4^{28}x_5^{24}  + x_1x_2^{7}x_3^{4}x_4^{26}x_5^{25}\\ &\quad  + x_1x_2^{7}x_3^{6}x_4^{24}x_5^{25}  + x_1x_2^{7}x_3^{8}x_4^{19}x_5^{28}  + x_1x_2^{7}x_3^{8}x_4^{22}x_5^{25}  + x_1x_2^{7}x_3^{9}x_4^{18}x_5^{28}\\ &\quad  + x_1x_2^{7}x_3^{10}x_4^{17}x_5^{28}  + x_1x_2^{7}x_3^{10}x_4^{20}x_5^{25}  + x_1x_2^{7}x_3^{11}x_4^{16}x_5^{28}  + x_1x_2^{7}x_3^{17}x_4^{10}x_5^{28} \\ &\quad + x_1x_2^{7}x_3^{19}x_4^{8}x_5^{28}  + x_1x_2^{7}x_3^{24}x_4^{6}x_5^{25}  + x_1x_2^{7}x_3^{25}x_4^{2}x_5^{28}  + x_1x_2^{7}x_3^{26}x_4^{4}x_5^{25} \\ &\quad +  Sq^1\big(x_1x_2^{7}x_3^{3}x_4^{23}x_5^{28}  + x_1x_2^{7}x_3^{3}x_4^{25}x_5^{26}  + x_1x_2^{7}x_3^{4}x_4^{25}x_5^{25}  + x_1x_2^{7}x_3^{5}x_4^{23}x_5^{26} \\ &\quad + x_1x_2^{7}x_3^{5}x_4^{24}x_5^{25}  + x_1x_2^{7}x_3^{7}x_4^{19}x_5^{28}  + x_1x_2^{7}x_3^{7}x_4^{21}x_5^{26}  + x_1x_2^{7}x_3^{8}x_4^{21}x_5^{25} \\ &\quad + x_1x_2^{7}x_3^{9}x_4^{19}x_5^{26}  + x_1x_2^{7}x_3^{9}x_4^{20}x_5^{25}  + x_1x_2^{7}x_3^{23}x_4^{3}x_5^{28}  + x_1x_2^{7}x_3^{23}x_4^{5}x_5^{26} \\ &\quad + x_1x_2^{7}x_3^{24}x_4^{5}x_5^{25}  + x_1x_2^{7}x_3^{25}x_4^{3}x_5^{26}  + x_1x_2^{7}x_3^{25}x_4^{4}x_5^{25}  + x_1x_2^{8}x_3^{5}x_4^{23}x_5^{25}\\ &\quad  + x_1x_2^{8}x_3^{7}x_4^{21}x_5^{25}  + x_1x_2^{8}x_3^{23}x_4^{5}x_5^{25}  + x_1x_2^{9}x_3^{3}x_4^{23}x_5^{26}  + x_1x_2^{9}x_3^{4}x_4^{23}x_5^{25} \\ &\quad + x_1x_2^{9}x_3^{7}x_4^{19}x_5^{26}  + x_1x_2^{9}x_3^{7}x_4^{20}x_5^{25}  + x_1x_2^{9}x_3^{23}x_4^{3}x_5^{26}  + x_1x_2^{9}x_3^{23}x_4^{4}x_5^{25}\\ &\quad  + x_1^{4}x_2^{7}x_3^{3}x_4^{23}x_5^{25}  + x_1^{4}x_2^{7}x_3^{7}x_4^{19}x_5^{25}  + x_1^{4}x_2^{7}x_3^{23}x_4^{3}x_5^{25}\big)  + Sq^2\big(x_1x_2^{6}x_3^{5}x_4^{27}x_5^{22}\\ &\quad  + x_1x_2^{6}x_3^{11}x_4^{21}x_5^{22}  + x_1x_2^{6}x_3^{27}x_4^{5}x_5^{22}  + x_1x_2^{7}x_3^{2}x_4^{23}x_5^{28}  + x_1x_2^{7}x_3^{5}x_4^{22}x_5^{26}\\ &\quad  + x_1x_2^{7}x_3^{5}x_4^{26}x_5^{22}  + x_1x_2^{7}x_3^{6}x_4^{19}x_5^{28}  + x_1x_2^{7}x_3^{8}x_4^{19}x_5^{26}  + x_1x_2^{7}x_3^{9}x_4^{18}x_5^{26} \\ &\quad + x_1x_2^{7}x_3^{12}x_4^{15}x_5^{26}  + x_1x_2^{7}x_3^{13}x_4^{14}x_5^{26}  + x_1x_2^{7}x_3^{15}x_4^{12}x_5^{26}  + x_1x_2^{7}x_3^{19}x_4^{6}x_5^{28} \\ &\quad + x_1x_2^{7}x_3^{19}x_4^{8}x_5^{26}  + x_1x_2^{7}x_3^{22}x_4^{3}x_5^{28}  + x_1x_2^{7}x_3^{23}x_4^{2}x_5^{28}  + x_1x_2^{7}x_3^{24}x_4^{3}x_5^{26}\\ &\quad  + x_1x_2^{7}x_3^{28}x_4^{3}x_5^{22}  + x_1x_2^{9}x_3^{6}x_4^{19}x_5^{26}  + x_1x_2^{9}x_3^{7}x_4^{18}x_5^{26}  + x_1x_2^{9}x_3^{19}x_4^{6}x_5^{26}\\ &\quad + x_1x_2^{9}x_3^{22}x_4^{3}x_5^{26}  + x_1^{2}x_2^{7}x_3^{3}x_4^{23}x_5^{26}  + x_1^{2}x_2^{7}x_3^{3}x_4^{24}x_5^{25}  + x_1^{2}x_2^{7}x_3^{4}x_4^{23}x_5^{25}\\ &\quad  + x_1^{2}x_2^{7}x_3^{7}x_4^{19}x_5^{26}  + x_1^{2}x_2^{7}x_3^{7}x_4^{20}x_5^{25}  + x_1^{2}x_2^{7}x_3^{8}x_4^{19}x_5^{25}  + x_1^{2}x_2^{7}x_3^{23}x_4^{3}x_5^{26}\\ &\quad  + x_1^{2}x_2^{7}x_3^{23}x_4^{4}x_5^{25}  + x_1^{2}x_2^{7}x_3^{24}x_4^{3}x_5^{25}  + x_1^{2}x_2^{8}x_3^{3}x_4^{23}x_5^{25}  + x_1^{2}x_2^{8}x_3^{7}x_4^{19}x_5^{25}\\ &\quad  + x_1^{2}x_2^{8}x_3^{23}x_4^{3}x_5^{25}\big)  +  Sq^4\big(x_1x_2^{4}x_3^{3}x_4^{23}x_5^{28}  + x_1x_2^{4}x_3^{3}x_4^{29}x_5^{22}  + x_1x_2^{4}x_3^{6}x_4^{23}x_5^{25}\\ &\quad  + x_1x_2^{4}x_3^{7}x_4^{19}x_5^{28}  + x_1x_2^{4}x_3^{7}x_4^{22}x_5^{25}  + x_1x_2^{4}x_3^{13}x_4^{19}x_5^{22}  + x_1x_2^{4}x_3^{23}x_4^{3}x_5^{28} \\ &\quad + x_1x_2^{4}x_3^{23}x_4^{6}x_5^{25}  + x_1x_2^{4}x_3^{29}x_4^{3}x_5^{22}  + x_1x_2^{5}x_3^{2}x_4^{23}x_5^{28}  + x_1x_2^{5}x_3^{3}x_4^{22}x_5^{28}\\ &\quad  + x_1x_2^{5}x_3^{3}x_4^{28}x_5^{22}  + x_1x_2^{5}x_3^{6}x_4^{19}x_5^{28}  + x_1x_2^{5}x_3^{10}x_4^{15}x_5^{28}  + x_1x_2^{5}x_3^{11}x_4^{14}x_5^{28}\\ &\quad  + x_1x_2^{5}x_3^{15}x_4^{10}x_5^{28}  + x_1x_2^{5}x_3^{19}x_4^{6}x_5^{28}  + x_1x_2^{5}x_3^{22}x_4^{3}x_5^{28}  + x_1x_2^{5}x_3^{23}x_4^{2}x_5^{28}\\ &\quad  + x_1x_2^{5}x_3^{26}x_4^{5}x_5^{22}  + x_1x_2^{6}x_3^{3}x_4^{23}x_5^{26}  + x_1x_2^{6}x_3^{4}x_4^{23}x_5^{25}  + x_1x_2^{6}x_3^{7}x_4^{19}x_5^{26} \\ &\quad + x_1x_2^{6}x_3^{7}x_4^{20}x_5^{25}  + x_1x_2^{6}x_3^{23}x_4^{3}x_5^{26}  + x_1x_2^{6}x_3^{23}x_4^{4}x_5^{25}  + x_1x_2^{10}x_3^{3}x_4^{23}x_5^{22} \\ &\quad + x_1x_2^{10}x_3^{7}x_4^{19}x_5^{22}  + x_1x_2^{10}x_3^{23}x_4^{3}x_5^{22}  + x_1x_2^{11}x_3^{3}x_4^{22}x_5^{22}  + x_1x_2^{11}x_3^{6}x_4^{15}x_5^{26}\\ &\quad + x_1x_2^{11}x_3^{7}x_4^{14}x_5^{26}  + x_1x_2^{11}x_3^{11}x_4^{10}x_5^{26}  + x_1x_2^{11}x_3^{11}x_4^{18}x_5^{18}  + x_1x_2^{11}x_3^{15}x_4^{6}x_5^{26}\\ &\quad  + x_1x_2^{11}x_3^{22}x_4^{3}x_5^{22}\big)  +  Sq^8\big(x_1x_2^{6}x_3^{3}x_4^{23}x_5^{22}  + x_1x_2^{6}x_3^{7}x_4^{19}x_5^{22}  + x_1x_2^{6}x_3^{23}x_4^{3}x_5^{22}\\ &\quad  + x_1x_2^{7}x_3^{3}x_4^{22}x_5^{22}  + x_1x_2^{7}x_3^{6}x_4^{15}x_5^{26}  + x_1x_2^{7}x_3^{6}x_4^{23}x_5^{18}  + x_1x_2^{7}x_3^{7}x_4^{14}x_5^{26} \\ &\quad + x_1x_2^{7}x_3^{7}x_4^{22}x_5^{18}  + x_1x_2^{7}x_3^{11}x_4^{10}x_5^{26}  + x_1x_2^{7}x_3^{15}x_4^{6}x_5^{26}  + x_1x_2^{7}x_3^{19}x_4^{10}x_5^{18}\\ &\quad  + x_1x_2^{7}x_3^{22}x_4^{3}x_5^{22}  + x_1x_2^{7}x_3^{23}x_4^{6}x_5^{18}  + x_1x_2^{11}x_3^{6}x_4^{19}x_5^{18}  + x_1x_2^{11}x_3^{7}x_4^{18}x_5^{18}\\ &\quad  + x_1x_2^{11}x_3^{19}x_4^{6}x_5^{18}\big)  +  Sq^{16}\big(x_1x_2^{7}x_3^{6}x_4^{15}x_5^{18}  + x_1x_2^{7}x_3^{7}x_4^{14}x_5^{18} \\ &\quad + x_1x_2^{7}x_3^{11}x_4^{10}x_5^{18}  + x_1x_2^{7}x_3^{15}x_4^{6}x_5^{18}\big) \ \mbox{mod}(P_5^-((3)(2)|^4).
\end{align*}
Thus, the monomial $u$ is strictly inadmissible.
\end{proof}
\begin{proof}[Proof of Proposition $\ref{mdd71}$] Let $d \geqslant 5$ and let $x \in P_5^+((3)|(2)|^{d-1})$ be an admissible monomial. Then $x = X_{i,j}y^2$ with $1 \leqslant i<j \leqslant 5$, and $y$ a monomial of weight vector $(2)|^{d-1}$. By Theorem \ref{dlcb1}, we have $y \in B_5((2)^{d-1})$.
	
Let $z \in B_5((2)^{d-1})$ such that $X_{i,j}z^2 \in P_5^+$ and $\widetilde A(d)$, $\widetilde C(d)$ as defined in Subsection \ref{s53}. By a direct computation we see that if $X_{i,j}z^2 \notin \widetilde A(d)\cup \widetilde C(d)$, then there is a monomial $w$ as given in one of Lemmas \ref{bda72} and \ref{bda73} such that $X_{i,j}z^2 = wh^{2^{s}}$ with $s$ a nonnegative integer, $2\leqslant s \leqslant 5$, and $h$ a monomial of weight vector $(2)|^{d-s}$. By Theorem \ref{dlcb1}, $X_{i,j}z^2$ is inadmissible. Since $x = X_{i,j}y^2$ is admissible and $y \in B_5((2)^{d-1})$, we obtain $x\in \widetilde A(d)\cup \widetilde C(d)$. This implies that
$$B_5((3)|(2)|^{d-1})\subset \widetilde A(d)\cup \widetilde C(d).$$
	
We prove that $\widetilde A(d)\cup \widetilde C(d)$ is a minimal set of $\mathcal A$-generators for $P_5^+(\tilde\omega)$ with $\tilde\omega := (3)|(2)|^{d-1}$.

Consider the subspaces $\langle [\widetilde A(d)]_{\tilde\omega}\rangle \subset QP_5(\tilde\omega)$ and $\langle [\widetilde C(d)]_{\tilde\omega}\rangle \subset QP_5(\tilde\omega)$. Observe that for any $x\in \widetilde A(d)$, we have $x = x_i^{2^d-1}f_i(y)$ with $y$ an admissible monomial of weight vector $(2)|(1)|^{d-1}$ in $P_4$. By Proposition \ref{mdmo}, $x$ is admissible and we get $\dim \langle [\widetilde A(d)]_{\tilde\omega}\rangle = 35$. Since $\nu(x) = 2^d-1$ for all $x\in \widetilde A(d)$ and $\nu(x) < 2^d-1$ for all $x\in \widetilde C(d)$, we obtain $\langle [\widetilde A(d)]_{\tilde\omega}\rangle \cap \langle [\widetilde C(d)]_{\tilde\omega}\rangle = \{0\}$. So, we need only to prove the set $[\widetilde C(d)]_{\tilde\omega}=\{[c_{d,t}]_{\tilde\omega}: 1 \leqslant t \leqslant 150 \}$ is linearly independent in $QP_5(\tilde\omega)$, where the monomials $c_{d,t},\, 1 \leqslant t \leqslant 150$, are determined as in Subsection \ref{s53}. 

Suppose there is a linear relation
\begin{equation}\label{ctd712}
\mathcal S:= \sum_{1\leqslant t \leqslant 150}\gamma_tc_{d,t} \equiv_{\tilde\omega} 0,
\end{equation}
where $\gamma_t \in \mathbb F_2$. We denote $\gamma_{\mathbb J} = \sum_{t \in \mathbb J}\gamma_t$ for any $\mathbb J \subset \{t\in \mathbb N:1\leqslant t \leqslant 150\}= \mathbb N_{150}$.

Let $w_u=w_{d,u},\, 1\leqslant u \leqslant 42$, be as in Subsection \ref{s52} and the homomorphism $p_{(i;I)}:P_5\to P_4$ defined by \eqref{ct23} for $k=5$. By applying $p_{(1;2)}$ and $p_{(4;5)}$  to (\ref{ctd712}), we obtain  
\begin{align*}
p_{(1;2)}&\equiv_{\tilde\omega} \gamma_{\{21,24\}}w_{6} + \gamma_{\{18,22\}}w_{7} + \gamma_{\{18,21,25\}}w_{9} + \gamma_{22}w_{10} + \gamma_{24}w_{11} + \gamma_{25}w_{13}\\ &\qquad + \gamma_{\{13,21\}}w_{20} + \gamma_{\{14,21\}}w_{21} + \gamma_{\{15,18,22\}}w_{23} + \gamma_{\{16,21,24\}}w_{24}\\ &\qquad + \gamma_{\{17,18,21,25\}}w_{25} + \gamma_{19}w_{26} + \gamma_{20}w_{27} + \gamma_{23}w_{28} + \gamma_{26}w_{29} + \gamma_{27}w_{30}\\ &\qquad + \gamma_{28}w_{31} + \gamma_{29}w_{32} + \gamma_{\{57,109\}}w_{34} + \gamma_{\{58,110\}}w_{35} + \gamma_{\{59,111\}}w_{36}\\ &\qquad + \gamma_{\{60,112\}}w_{37} + \gamma_{69}w_{38} + \gamma_{70}w_{39} + \gamma_{71}w_{40} + \gamma_{72}w_{41} + \gamma_{73}w_{42} \equiv_{\tilde\omega} 0,\\ 
p_{(4;5)}&\equiv_{\tilde\omega} \gamma_{\{9,10,11,12\}}w_{1} + \gamma_{\{26,27,28,29\}}w_{3} + \gamma_{\{48,49,50,51\}}w_{5} + \gamma_{52}w_{6}\\ &\qquad  + \gamma_{\{53,54,55,56\}}w_{7} + \gamma_{60}w_{9} + \gamma_{67}w_{10} + \gamma_{68}w_{11} + \gamma_{\{69,70,71,72\}}w_{12}\\ &\qquad  + \gamma_{73}w_{13} + \gamma_{\{92,93,94,95\}}w_{19} + \gamma_{96}w_{20} + \gamma_{\{97,98,99,100\}}w_{21} + \gamma_{107}w_{23}\\ &\qquad  + \gamma_{108}w_{24} + \gamma_{112}w_{25} + \gamma_{\{123,124,125,126\}}w_{26} + \gamma_{127}w_{27} + \gamma_{131}w_{28}\\ &\qquad  + \gamma_{132}w_{29} + \gamma_{\{133,134,135,136\}}w_{30} + \gamma_{137}w_{31} + \gamma_{144}w_{34} + \gamma_{145}w_{35}\\ &\qquad  + \gamma_{149}w_{36} + \gamma_{150}w_{37} \equiv_{\tilde\omega} 0. 
\end{align*}
These equalities implies
\begin{equation}\label{ct41}
\gamma_{109} = \gamma_{57},\, \gamma_{110} = \gamma_{58},\ \gamma_{111} = \gamma_{59}\mbox{ and } \gamma_t = 0,\mbox{ for } t\in \mathbb J_1,
\end{equation}
where $\mathbb J_1 =\{$13, 14, 15, 16, 17, 18, 19, 20, 21, 22, 23, 24, 25, 26, 27, 28, 29, 52, 60, 67, 68, 69, 70, 71, 72, 73, 96, 107, 108, 112, 127, 131, 132, 137, 144, 145, 149, 150$\}$. 

By applying $p_{(1;3)}$, $p_{(2;3)}$  to \eqref{ctd712} and using \eqref{ct41}, we have 
\begin{align*}
p_{(1;3)}&\equiv_{\tilde\omega} \gamma_{\{2,38,44,45,79,104\}}w_{6} + \gamma_{\{3,35,42,44,82,102\}}w_{7} + \gamma_{\{35,38\}}w_{9}\\ &\qquad + \gamma_{\{6,42,80,102\}}w_{10} + \gamma_{\{7,45,81,104\}}w_{11} + \gamma_{\{9,54,55,56,92\}}w_{15}\\ &\qquad + \gamma_{\{10,54,93\}}w_{16} + \gamma_{\{11,55,94\}}w_{17} + \gamma_{\{12,56,95\}}w_{18} + \gamma_{\{38,64\}}w_{20}\\ &\qquad + \gamma_{\{38,62\}}w_{21} + \gamma_{\{32,35,42,62,102\}}w_{23} + \gamma_{\{33,38,45,64,104\}}w_{24}\\ &\qquad + \gamma_{\{35,38\}}w_{25} + \gamma_{36}w_{26} + \gamma_{37}w_{27} + \gamma_{\{43,44,103\}}w_{28} + \gamma_{48}w_{29}\\ &\qquad + \gamma_{49}w_{30} + \gamma_{50}w_{31} + \gamma_{51}w_{32} + \gamma_{\{53,54,55,56\}}w_{33} + \gamma_{62}w_{34}\\ &\qquad + \gamma_{64}w_{35} + \gamma_{63}w_{36} \equiv_{\tilde\omega} 0,\\ 
p_{(2;3)}&\equiv_{\tilde\omega} \gamma_{\{2,35,82,88,89,104,116,117,120,121\}}w_{6} + \gamma_{\{3,38,79,86,88,102,114,116\}}w_{7}\\ &\qquad + \gamma_{\{79,82\}}w_{9} + \gamma_{\{6,36,57,86,102,114,120\}}w_{10} + \gamma_{\{7,37,58,89,104,117,121\}}w_{11}\\ &\qquad + \gamma_{\{9,48,98,99,100,124,125,126,134,135,136\}}w_{15} + \gamma_{\{10,49,98,124,134\}}w_{16}\\ &\qquad + \gamma_{\{11,50,99,125,135\}}w_{17} + \gamma_{\{12,51,100,126,136\}}w_{18} + \gamma_{\{82,141\}}w_{20}\\ &\qquad + \gamma_{\{82,139\}}w_{21} + \gamma_{\{76,79,86,102,114,139\}}w_{23} + \gamma_{\{77,82,89,104,117,141\}}w_{24}\\ &\qquad + \gamma_{\{79,82\}}w_{25} + \gamma_{\{57,80\}}w_{26} + \gamma_{\{58,81\}}w_{27} + \gamma_{92}w_{29} + \gamma_{93}w_{30}\\ &\qquad + \gamma_{\{59,87,88,103,115,116,120,121\}}w_{28} + \gamma_{94}w_{31} + \gamma_{95}w_{32}\\ &\qquad + \gamma_{\{97,98,99,100,123,124,125,126,133,134,135,136\}}w_{33} + \gamma_{139}w_{34}\\ &\qquad + \gamma_{141}w_{35} + \gamma_{140}w_{36} \equiv_{\tilde\omega} 0. 
\end{align*} 
Computing from the above equalities gives
\begin{equation}\label{ct42}
\gamma_{54} = \gamma_{10},\, \gamma_{55} = \gamma_{11},\ \gamma_{56} = \gamma_{12},\, \gamma_{80} = \gamma_{57},\, \gamma_{81} = \gamma_{58},\,   \gamma_t = 0,\mbox{ for } t\in \mathbb J_2,
\end{equation}
where $\mathbb J_2 =\{$35, 36, 37, 38, 48, 49, 50, 51, 62, 63, 64, 79, 82, 92, 93, 94, 95, 139, 140, 141$\}$.

By applying $p_{(1;5)}$ and $p_{(2;5)}$  to (\ref{ctd712}) and using \eqref{ct41}, \eqref{ct42} we get  
\begin{align*}
p_{(1;5)}&\equiv_{\tilde\omega} \gamma_{\{1,39,58,75,103,105,115,118,123,133,148\}}w_{1} + \gamma_{\{2,59,77,87,103,115\}}w_{3}\\ &\qquad + \gamma_{\{4,39,59,78,84,90,105,118,143,148\}}w_{5} + \gamma_{\{11,41,45,47\}}w_{6}\\ &\qquad + \gamma_{\{5,11,43,46,47,59,85\}}w_{7} + \gamma_{\{7,41,43,45,59,89,125,135\}}w_{9}\\ &\qquad + \gamma_{\{8,46,91\}}w_{10} + \gamma_{\{9,97,123,133\}}w_{12} + \gamma_{\{11,99,125,135\}}w_{13}\\ &\qquad + \gamma_{\{30,39,101,103,105,113,115,118,123,128,133,147,148\}}w_{19}\\ &\qquad + \gamma_{\{33,41,45,66,104\}}w_{20} + \gamma_{\{31,33,41,45,58,65,66,104,130,135\}}w_{21}\\ &\qquad + \gamma_{\{11,34,43,46,65,106\}}w_{23} + \gamma_{\{41,43,45,58,59,104,117,121,125,135\}}w_{25}\\ &\qquad + \gamma_{41}w_{24} + \gamma_{\{40,43,59,106,119,122,130\}}w_{26} + \gamma_{45}w_{27} + \gamma_{47}w_{28}\\ &\qquad + \gamma_{53}w_{29} + \gamma_{\{11,53\}}w_{30} + \gamma_{11}w_{31} + \gamma_{\{61,65,129,130,135\}}w_{34}\\ &\qquad + \gamma_{66}w_{36} \equiv_{\tilde\omega} 0,\\ 
p_{(2;5)}&\equiv_{\tilde\omega} \gamma_{\{1,31,83,103,105,130\}}w_{1} + \gamma_{\{2,33,43,103\}}w_{3} + \gamma_{\{4,34,40,46,66,83,105,130\}}w_{5}\\ &\qquad + \gamma_{\{85,89,91,99\}}w_{6} + \gamma_{\{5,41,87,90,91,99,121,122,125\}}w_{7}\\ &\qquad + \gamma_{\{7,45,85,87,89,121,125\}}w_{9} + \gamma_{\{8,47,90,122\}}w_{10} + \gamma_{\{9,53\}}w_{12}\\ &\qquad + \gamma_{\{74,83,101,103,105,129,130,146\}}w_{19} + \gamma_{\{77,85,89,104,117,143\}}w_{20}\\ &\qquad + \gamma_{\{75,77,85,89,104,117,142,143,148\}}w_{21}  + \gamma_{\{58,85,87,89,104,121,125\}}w_{25}\\ &\qquad + \gamma_{\{78,87,90,99,106,119,121,122,125,142\}}w_{23} + \gamma_{85}w_{24}\\ &\qquad + \gamma_{\{84,87,106,121,125,148\}}w_{26} + \gamma_{89}w_{27} + \gamma_{91}w_{28} + \gamma_{97}w_{29}\\ &\qquad + \gamma_{\{97,99\}}w_{30} + \gamma_{99}w_{31} + \gamma_{\{138,142,147,148\}}w_{34} + \gamma_{143}w_{36} \equiv_{\tilde\omega} 0. 
\end{align*} 
By computing from the above equalities, we get
\begin{equation}\label{ct43}
\begin{cases}\gamma_{12} = \gamma_{10},\, \gamma_{46} = \gamma_{8},\, \gamma_{100} = \gamma_{98},\, \gamma_{104} = \gamma_{33},\\ \gamma_{133} = \gamma_{123},\, \gamma_{135} = \gamma_{125},\, 
\gamma_t = 0,\mbox{ for } t\in \mathbb J_3,
\end{cases}
\end{equation}
where $\mathbb J_3 =\{$9, 11, 41, 45, 47, 53, 55, 66, 85, 89, 91, 97, 99, 143$\}$.

By applying $p_{(1;4)}$ and $p_{(2;4)}$  to (\ref{ctd712}) and using \eqref{ct41}, \eqref{ct42}, \eqref{ct43} we get  
\begin{align*}
p_{(1;4)}&\equiv_{\tilde\omega} \gamma_{\{1,74,106,116,119,126,136\}}w_{2} + \gamma_{\{3,76,88,116\}}w_{4} + \gamma_{\{4,10,44,83,120,122,124\}}w_{6}\\ &\qquad + \gamma_{\{10,39,42\}}w_{7} + \gamma_{\{5,78,106,119,142\}}w_{8} + \gamma_{\{6,39,42,44,86,120,134\}}w_{9}\\ &\qquad + \gamma_{\{8,90,122\}}w_{11} + \gamma_{\{10,98,124,134\}}w_{13} + \gamma_{\{10,98,126,136\}}w_{14}\\ &\qquad + \gamma_{\{30,32,44,114\}}w_{20} + \gamma_{\{31,101,106,116,119,126,129,136,146\}}w_{22}\\ &\qquad + \gamma_{\{32,44,57,114,134\}}w_{21} + \gamma_{\{10,39,42\}}w_{23} + \gamma_{\{10,34,118\}}w_{24}\\ &\qquad + \gamma_{\{39,42,44,57,102,114,134\}}w_{25} + \gamma_{42}w_{26} + \gamma_{\{40,44,105,118,120,122,124,130\}}w_{27}\\ &\qquad + \gamma_{8}w_{28} + \gamma_{10}w_{30} + \gamma_{10}w_{32} + \gamma_{\{61,128,134\}}w_{35} + \gamma_{65}w_{36} \equiv_{\tilde\omega} 0,\\ 
p_{(2;4)}&\equiv_{\tilde\omega} \gamma_{\{1,30,106\}}w_{2} + \gamma_{\{3,32,44\}}w_{4} + \gamma_{\{4,39,88,98\}}w_{6} + \gamma_{\{83,86,98\}}w_{7}\\ &\qquad + \gamma_{\{5,34,59,65,106\}}w_{8} + \gamma_{\{6,42,83,86,88\}}w_{9} + \gamma_{\{74,76,88,114\}}w_{20}\\ &\qquad + \gamma_{\{57,76,88,114\}}w_{21} + \gamma_{\{58,75,101,106,113,128,147\}}w_{22} + \gamma_{\{83,86,98\}}w_{23}\\ &\qquad + \gamma_{\{59,78,98,118\}}w_{24} + \gamma_{\{57,83,86,88,102\}}w_{25} + \gamma_{86}w_{26} + \gamma_{\{84,88,105,148\}}w_{27}\\ &\qquad + \gamma_{90}w_{28} + \gamma_{98}w_{30} + \gamma_{98}w_{32} + \gamma_{\{138,146\}}w_{35} + \gamma_{142}w_{36} \equiv_{\tilde\omega} 0. 
\end{align*} 
Computing from the above equalities gives
\begin{equation}\label{ct44}
\begin{cases} 
\gamma_{6} = \gamma_{88} = \gamma_{4},\, \gamma_{102} = \gamma_{32},\, \gamma_{118} = \gamma_{34},\, \gamma_{134} = \gamma_{124},\\ \gamma_{136} = \gamma_{126},\, \gamma_{146} = \gamma_{138},\, 
\gamma_t = 0,\mbox{ for } t\in \mathbb J_4,
\end{cases}
\end{equation}
where $\mathbb J_4 =\{$8, 10, 12, 39, 42, 46, 54, 56, 65, 83, 86, 90, 98, 100, 122, 142$\}$.

By applying $p_{(3;4)}$ and $p_{(3;5)}$  to (\ref{ctd712}) and using \eqref{ct41}, \eqref{ct42}, \eqref{ct43}, \eqref{ct44}, we obtain  
\begin{align*}
p_{(3;4)}&\equiv_{\tilde\omega} \gamma_{\{1,2,4,117,119,121,125\}}w_{2} + \gamma_{\{30,116,120,125,129,130\}}w_{6} + \gamma_{\{57,116,120,124\}}w_{9}\\ &\qquad + \gamma_{\{32,124\}}w_{7} + \gamma_{\{31,33,34,40,43,117,119,121,125\}}w_{8}  + \gamma_{\{61,125,129,130\}}w_{11}\\ &\qquad + \gamma_{\{74,116,120,124,147,148\}}w_{20} + \gamma_{\{57,76,116,120,124\}}w_{21}\\ &\qquad + \gamma_{\{58,75,77,78,84,87,117,119,121,125\}}w_{22} + \gamma_{\{32,124\}}w_{23}\\ &\qquad + \gamma_{\{101,103,105,116,120,124,129,130,147,148\}}w_{24} + \gamma_{\{57,116,120,124\}}w_{25}\\ &\qquad + \gamma_{114}w_{26} + \gamma_{\{34,113,115,123\}}w_{27} + \gamma_{\{124,128\}}w_{28} + \gamma_{124}w_{30}\\ &\qquad + \gamma_{\{123,125\}}w_{31} + \gamma_{126}w_{32} + \gamma_{\{138,147,148\}}w_{35} + \gamma_{138}w_{36} \equiv_{\tilde\omega} 0,\\ 
p_{(3;5)}&\equiv_{\tilde\omega} \gamma_{\{1,3,5,34,59,114\}}w_{1} + \gamma_{\{30,32,44,114\}}w_{5} + \gamma_{\{33,129\}}w_{6}\\ &\qquad + \gamma_{\{31,115,124,128,129\}}w_{7} + \gamma_{\{58,115\}}w_{9} + \gamma_{\{61,124,128\}}w_{10}\\ &\qquad + \gamma_{\{4,34,59,74,76,78,114\}}w_{19} + \gamma_{\{77,147\}}w_{20} + \gamma_{\{75,138,147\}}w_{21}\\ &\qquad + \gamma_{\{101,106,115,116,119,124,128,138\}}w_{23} + \gamma_{\{33,117\}}w_{24} + \gamma_{\{58,115\}}w_{25}\\ &\qquad + \gamma_{\{113,116,119,126\}}w_{26} + \gamma_{117}w_{27} + \gamma_{\{125,129\}}w_{28} + \gamma_{123}w_{29}\\ &\qquad + \gamma_{\{124,126\}}w_{30} + \gamma_{125}w_{31} + \gamma_{147}w_{36} \equiv_{\tilde\omega} 0. 
\end{align*} 
Now, by a direct computation from the above equalities, we get
$\gamma_t = 0$ for all $t\in \mathbb N_{150}$.
The proposition is proved.
\end{proof}

\section{Appendix}\label{sect5}

In this section, we list all needed admissible monomials which are used in the proofs of the main results.

\subsection{The admissible monomials of weight vector $(2)|^d$}\label{s51}\

\medskip
For any $d \geqslant 1$,
$$B_2(2^{d+1}-2) = B_2^+(2^{d+1}-2) = B((2)|^d) = \{(x_1x_2)^{2^d-1}\}.$$ 

For any $d \geqslant 2$, 
$$B_2((2)|^d)= \{x_1x_2^{2^d-2}x_3^{2^d-1}, x_1x_2^{2^d-1}x_3^{2^d-2}, x_1^{2^d-1}x_2x_3^{2^d-2}, x_1^3x_2^{2^d-3}x_3^{2^d-2} \}.$$

For any $d \geqslant 4$,
$B_3^+((2)|^d) = \{u_{d,t}: 1 \leqslant t \leqslant 13 \}$, where

\medskip
\centerline{\begin{tabular}{lll}
$u_{1} = x_1x_2x_3^{2^d-2}x_4^{2^d-2}$ &$u_{2} = x_1x_2^{2}x_3^{2^d-4}x_4^{2^d-1}$ &$u_{3} = x_1x_2^{2}x_3^{2^d-3}x_4^{2^d-2}$\cr  
$u_{4} = x_1x_2^{2}x_3^{2^d-1}x_4^{2^d-4}$ &$u_{5} = x_1x_2^{3}x_3^{2^d-4}x_4^{2^d-2}$ &$u_{6} = x_1x_2^{3}x_3^{2^d-2}x_4^{2^d-4}$\cr  
$u_{7} = x_1x_2^{2^d-2}x_3x_4^{2^d-2}$ &$u_{8} = x_1x_2^{2^d-1}x_3^{2}x_4^{2^d-4}$ &$u_{9} = x_1^{3}x_2x_3^{2^d-4}x_4^{2^d-2}$\cr  
$u_{10} = x_1^{3}x_2x_3^{2^d-2}x_4^{2^d-4}$ &$u_{11} = x_1^{3}x_2^{5}x_3^{2^d-6}x_4^{2^d-4}$ &$u_{12} = x_1^{3}x_2^{2^d-3}x_3^{2}x_4^{2^d-4}$\cr  
$u_{13} = x_1^{2^d-1}x_2x_3^{2}x_4^{2^d-4}$ &
\end{tabular}}

\bigskip
For any $d \geqslant 5$, $B_5^+((2)|^d) = \{a_t=a_{d,t}: 1 \leqslant t \leqslant 40 \}$, where

\medskip
\centerline{\begin{tabular}{ll}
$a_{1} = x_1x_2x_3^{2}x_4^{2^d-4}x_5^{2^d-2}$ &$a_{2} = x_1x_2x_3^{2}x_4^{2^d-2}x_5^{2^d-4}$\cr 
$a_{3} = x_1x_2x_3^{6}x_4^{2^d-6}x_5^{2^d-4}$   &$a_{4} = x_1x_2x_3^{2^d-2}x_4^{2}x_5^{2^d-4}$\cr
$a_{5} = x_1x_2^{2}x_3x_4^{2^d-4}x_5^{2^d-2}$ &$a_{6} = x_1x_2^{2}x_3x_4^{2^d-2}x_5^{2^d-4}$\cr  
$a_{7} = x_1x_2^{2}x_3^{3}x_4^{2^d-4}x_5^{2^d-4}$ &$a_{8} = x_1x_2^{2}x_3^{4}x_4^{2^d-8}x_5^{2^d-1}$\cr 
$a_{9} = x_1x_2^{2}x_3^{4}x_4^{2^d-7}x_5^{2^d-2}$  &$a_{10} = x_1x_2^{2}x_3^{4}x_4^{2^d-1}x_5^{2^d-8}$\cr
$a_{11} = x_1x_2^{2}x_3^{5}x_4^{2^d-8}x_5^{2^d-2}$ &$a_{12} = x_1x_2^{2}x_3^{5}x_4^{2^d-6}x_5^{2^d-4}$\cr  
$a_{13} = x_1x_2^{2}x_3^{5}x_4^{2^d-2}x_5^{2^d-8}$ &$a_{14} = x_1x_2^{2}x_3^{2^d-4}x_4x_5^{2^d-2}$\cr
$a_{15} = x_1x_2^{2}x_3^{2^d-3}x_4^{2}x_5^{2^d-4}$ &  $a_{16} = x_1x_2^{2}x_3^{2^d-1}x_4^{4}x_5^{2^d-8}$\cr 
$a_{17} = x_1x_2^{3}x_3^{2}x_4^{2^d-4}x_5^{2^d-4}$ &$a_{18} = x_1x_2^{3}x_3^{4}x_4^{2^d-8}x_5^{2^d-2}$\cr 
$a_{19} = x_1x_2^{3}x_3^{4}x_4^{2^d-6}x_5^{2^d-4}$ &$a_{20} = x_1x_2^{3}x_3^{4}x_4^{2^d-2}x_5^{2^d-8}$\cr 
$a_{21} = x_1x_2^{3}x_3^{6}x_4^{2^d-8}x_5^{2^d-4}$&  $a_{22} = x_1x_2^{3}x_3^{6}x_4^{2^d-4}x_5^{2^d-8}$\cr \end{tabular}}
\centerline{\begin{tabular}{ll} 
$a_{23} = x_1x_2^{3}x_3^{2^d-4}x_4^{2}x_5^{2^d-4}$ &$a_{24} = x_1x_2^{3}x_3^{2^d-2}x_4^{4}x_5^{2^d-8}$\cr  
$a_{25} = x_1x_2^{2^d-2}x_3x_4^{2}x_5^{2^d-4}$ &$a_{26} = x_1x_2^{2^d-1}x_3^{2}x_4^{4}x_5^{2^d-8}$\cr 
$a_{27} = x_1^{3}x_2x_3^{2}x_4^{2^d-4}x_5^{2^d-4}$&  $a_{28} = x_1^{3}x_2x_3^{4}x_4^{2^d-8}x_5^{2^d-2}$\cr 
$a_{29} = x_1^{3}x_2x_3^{4}x_4^{2^d-6}x_5^{2^d-4}$ &$a_{30} = x_1^{3}x_2x_3^{4}x_4^{2^d-2}x_5^{2^d-8}$\cr  
$a_{31} = x_1^{3}x_2x_3^{6}x_4^{2^d-8}x_5^{2^d-4}$ &$a_{32} = x_1^{3}x_2x_3^{6}x_4^{2^d-4}x_5^{2^d-8}$\cr 
$a_{33} = x_1^{3}x_2x_3^{2^d-4}x_4^{2}x_5^{2^d-4}$&  $a_{34} = x_1^{3}x_2x_3^{2^d-2}x_4^{4}x_5^{2^d-8}$\cr 
$a_{35} = x_1^{3}x_2^{5}x_3^{2}x_4^{2^d-8}x_5^{2^d-4}$ &$a_{36} = x_1^{3}x_2^{5}x_3^{2}x_4^{2^d-4}x_5^{2^d-8}$\cr  
$a_{37} = x_1^{3}x_2^{5}x_3^{2^d-6}x_4^{4}x_5^{2^d-8}$ &$a_{38} = x_1^{3}x_2^{2^d-3}x_3^{2}x_4^{4}x_5^{2^d-8}$\cr 
$a_{39} = x_1^{2^d-1}x_2x_3^{2}x_4^{4}x_5^{2^d-8}$&  
$a_{40} = x_1^{3}x_2^{5}x_3^{10}x_4^{2^d-12}x_5^{2^d-8}$\cr
\end{tabular}}

\medskip
In particular,
$B_5^+((2)|^4) = \{a_{4,t}: 1 \leqslant t \leqslant 39\}.$

\subsection{The admissible monomials of weight vector $(4)|(3)|^{d-1}$}\label{s52}\

\medskip
For any $d \geqslant 5$, $B_4((4)|(3)|^{d-1}) = \{v_t= v_{d,t} : 1\leqslant t \leqslant 15\}$, where

\medskip
\centerline{\begin{tabular}{lll}
$v_{1} = x_1x_2^{2^{d}-1}x_3^{2^{d}-1}x_4^{2^{d}-1}$ &$v_{2} = x_1^{3}x_2^{2^{d}-3}x_3^{2^{d}-1}x_4^{2^{d}-1}$ &$v_{3} = x_1^{3}x_2^{2^{d}-1}x_3^{2^{d}-3}x_4^{2^{d}-1}$\cr  $v_{4} = x_1^{3}x_2^{2^{d}-1}x_3^{2^{d}-1}x_4^{2^{d}-3}$ &$v_{5} = x_1^{7}x_2^{2^{d}-5}x_3^{2^{d}-3}x_4^{2^{d}-1}$ &$v_{6} = x_1^{7}x_2^{2^{d}-5}x_3^{2^{d}-1}x_4^{2^{d}-3}$\cr  $v_{7} = x_1^{7}x_2^{2^{d}-1}x_3^{2^{d}-5}x_4^{2^{d}-3}$ &$v_{8} = x_1^{15}x_2^{2^{d}-9}x_3^{2^{d}-5}x_4^{2^{d}-3}$ &$v_{9} = x_1^{2^{d}-1}x_2x_3^{2^{d}-1}x_4^{2^{d}-1}$\cr  $v_{10} = x_1^{2^{d}-1}x_2^{3}x_3^{2^{d}-3}x_4^{2^{d}-1}$ &$v_{11} = x_1^{2^{d}-1}x_2^{3}x_3^{2^{d}-1}x_4^{2^{d}-3}$ &$v_{12} = x_1^{2^{d}-1}x_2^{7}x_3^{2^{d}-5}x_4^{2^{d}-3}$\cr  $v_{13} = x_1^{2^{d}-1}x_2^{2^{d}-1}x_3x_4^{2^{d}-1}$ &$v_{14} = x_1^{2^{d}-1}x_2^{2^{d}-1}x_3^{3}x_4^{2^{d}-3}$ &$v_{15} = x_1^{2^{d}-1}x_2^{2^{d}-1}x_3^{2^{d}-1}x_4$\cr  
\end{tabular}}

\medskip
$B_4^+((3)|(2)|^{d-1}) =\{w_{t}=w_{d,t}: 1 \leqslant t \leqslant 44\}$, where

\medskip
\centerline{\begin{tabular}{lll}
$w_{1} = x_1x_2x_3^{2^d-2}x_4^{2^d-1}$ &$w_{2} = x_1x_2x_3^{2^d-1}x_4^{2^d-2}$ &$w_{3} = x_1x_2^{2}x_3^{2^d-3}x_4^{2^d-1}$\cr  $w_{4} = x_1x_2^{2}x_3^{2^d-1}x_4^{2^d-3}$ &$w_{5} = x_1x_2^{3}x_3^{2^d-4}x_4^{2^d-1}$ &$w_{6} = x_1x_2^{3}x_3^{2^d-3}x_4^{2^d-2}$\cr  $w_{7} = x_1x_2^{3}x_3^{2^d-2}x_4^{2^d-3}$ &$w_{8} = x_1x_2^{3}x_3^{2^d-1}x_4^{2^d-4}$ &$w_{9} = x_1x_2^{6}x_3^{2^d-5}x_4^{2^d-3}$\cr 
\end{tabular}}
\centerline{\begin{tabular}{lll}  
$w_{10} = x_1x_2^{7}x_3^{2^d-6}x_4^{2^d-3}$ &$w_{11} = x_1x_2^{7}x_3^{2^d-5}x_4^{2^d-4}$ &$w_{12} = x_1x_2^{2^d-2}x_3x_4^{2^d-1}$\cr   $w_{13} = x_1x_2^{2^d-2}x_3^{3}x_4^{2^d-3}$ &$w_{14} = x_1x_2^{2^d-2}x_3^{2^d-1}x_4$ &$w_{15} = x_1x_2^{2^d-1}x_3x_4^{2^d-2}$\cr  $w_{16} = x_1x_2^{2^d-1}x_3^{2}x_4^{2^d-3}$ &$w_{17} = x_1x_2^{2^d-1}x_3^{3}x_4^{2^d-4}$ &$w_{18} = x_1x_2^{2^d-1}x_3^{2^d-2}x_4$\cr $w_{19} = x_1^{3}x_2x_3^{2^d-4}x_4^{2^d-1}$ &$w_{20} = x_1^{3}x_2x_3^{2^d-3}x_4^{2^d-2}$ &$w_{21} = x_1^{3}x_2x_3^{2^d-2}x_4^{2^d-3}$\cr  $w_{22} = x_1^{3}x_2x_3^{2^d-1}x_4^{2^d-4}$ &$w_{23} = x_1^{3}x_2^{3}x_3^{2^d-4}x_4^{2^d-3}$ &$w_{24} = x_1^{3}x_2^{3}x_3^{2^d-3}x_4^{2^d-4}$\cr $w_{25} = x_1^{3}x_2^{4}x_3^{2^d-5}x_4^{2^d-3}$ &$w_{26} = x_1^{3}x_2^{5}x_3^{2^d-6}x_4^{2^d-3}$ &$w_{27} = x_1^{3}x_2^{5}x_3^{2^d-5}x_4^{2^d-4}$\cr $w_{28} = x_1^{3}x_2^{7}x_3^{2^d-7}x_4^{2^d-4}$ &$w_{29} = x_1^{3}x_2^{2^d-3}x_3x_4^{2^d-2}$ &$w_{30} = x_1^{3}x_2^{2^d-3}x_3^{2}x_4^{2^d-3}$\cr $w_{31} = x_1^{3}x_2^{2^d-3}x_3^{3}x_4^{2^d-4}$ &$w_{32} = x_1^{3}x_2^{2^d-3}x_3^{2^d-2}x_4$ &$w_{33} = x_1^{3}x_2^{2^d-1}x_3x_4^{2^d-4}$\cr $w_{34} = x_1^{7}x_2x_3^{2^d-6}x_4^{2^d-3}$ &$w_{35} = x_1^{7}x_2x_3^{2^d-5}x_4^{2^d-4}$ &$w_{36} = x_1^{7}x_2^{3}x_3^{2^d-7}x_4^{2^d-4}$\cr 
\end{tabular}}
\centerline{\begin{tabular}{lll} 
$w_{37} = x_1^{7}x_2^{2^d-5}x_3x_4^{2^d-4}$ &$w_{38} = x_1^{2^d-1}x_2x_3x_4^{2^d-2}$ &$w_{39} = x_1^{2^d-1}x_2x_3^{2}x_4^{2^d-3}$\cr  $w_{40} = x_1^{2^d-1}x_2x_3^{3}x_4^{2^d-4}$ &$w_{41} = x_1^{2^d-1}x_2x_3^{2^d-2}x_4$ &$w_{42} = x_1^{2^d-1}x_2^{3}x_3x_4^{2^d-4}$\cr  
\end{tabular}}

\medskip
$B_5^0((4)|(3)|^{d-1}) = \{f_i(x): x\in B_4((4)|(3)|^{d-1}), 1\leqslant i \leqslant 5\}$, where $f_i:P_4\to P_5$ is defined by \eqref{ct22}. Then, we have $|B_5^0((4)|(3)|^{d-1})| = 75$.

For any $d \geqslant 6$, we set 
$$A(d) = \{x_i^{2^d-1}f_i(w): w \in B_4^+((3)|(2)|^{d-1}), 1\leqslant i \leqslant 5\}.$$
We have $|A(d)| = 160$. 

Then, $B_5^+((4)|(3)|^{d-1}) = A(d)\cup C(d)$, where $C(d) = \{b_{d,t} : 1 \leqslant t \leqslant 75\}$ with the monomials $b_t=b_{d,t},\, 1 \leqslant t \leqslant 75$, determined as follows: 

\medskip
\centerline{\begin{tabular}{lll}
$b_{1} = x_1x_2^{7}x_3^{2^{d}-5}x_4^{2^{d}-3}x_5^{2^{d}-2}$ & &$b_{2} = x_1x_2^{7}x_3^{2^{d}-5}x_4^{2^{d}-2}x_5^{2^{d}-3}$\cr  $b_{3} = x_1x_2^{7}x_3^{2^{d}-2}x_4^{2^{d}-5}x_5^{2^{d}-3}$ & &$b_{4} = x_1x_2^{14}x_3^{2^{d}-9}x_4^{2^{d}-5}x_5^{2^{d}-3}$\cr  $b_{5} = x_1x_2^{15}x_3^{2^{d}-10}x_4^{2^{d}-5}x_5^{2^{d}-3}$ & &$b_{6} = x_1x_2^{15}x_3^{2^{d}-9}x_4^{2^{d}-6}x_5^{2^{d}-3}$\cr  $b_{7} = x_1x_2^{15}x_3^{2^{d}-9}x_4^{2^{d}-5}x_5^{2^{d}-4}$ & &$b_{8} = x_1x_2^{2^{d}-2}x_3^{7}x_4^{2^{d}-5}x_5^{2^{d}-3}$\cr  $b_{9} = x_1^{3}x_2^{3}x_3^{2^{d}-3}x_4^{2^{d}-3}x_5^{2^{d}-2}$ & &$b_{10} = x_1^{3}x_2^{3}x_3^{2^{d}-3}x_4^{2^{d}-2}x_5^{2^{d}-3}$\cr  $b_{11} = x_1^{3}x_2^{5}x_3^{2^{d}-5}x_4^{2^{d}-3}x_5^{2^{d}-2}$ & &$b_{12} = x_1^{3}x_2^{5}x_3^{2^{d}-5}x_4^{2^{d}-2}x_5^{2^{d}-3}$\cr  $b_{13} = x_1^{3}x_2^{5}x_3^{2^{d}-2}x_4^{2^{d}-5}x_5^{2^{d}-3}$ & &$b_{14} = x_1^{3}x_2^{7}x_3^{2^{d}-7}x_4^{2^{d}-3}x_5^{2^{d}-2}$\cr  $b_{15} = x_1^{3}x_2^{7}x_3^{2^{d}-7}x_4^{2^{d}-2}x_5^{2^{d}-3}$ & &$b_{16} = x_1^{3}x_2^{7}x_3^{2^{d}-5}x_4^{2^{d}-4}x_5^{2^{d}-3}$\cr  $b_{17} = x_1^{3}x_2^{7}x_3^{2^{d}-5}x_4^{2^{d}-3}x_5^{2^{d}-4}$ & &$b_{18} = x_1^{3}x_2^{7}x_3^{2^{d}-4}x_4^{2^{d}-5}x_5^{2^{d}-3}$\cr  $b_{19} = x_1^{3}x_2^{7}x_3^{2^{d}-3}x_4^{2^{d}-6}x_5^{2^{d}-3}$ & &$b_{20} = x_1^{3}x_2^{7}x_3^{2^{d}-3}x_4^{2^{d}-5}x_5^{2^{d}-4}$\cr  $b_{21} = x_1^{3}x_2^{13}x_3^{2^{d}-10}x_4^{2^{d}-5}x_5^{2^{d}-3}$ & &$b_{22} = x_1^{3}x_2^{13}x_3^{2^{d}-9}x_4^{2^{d}-6}x_5^{2^{d}-3}$\cr  $b_{23} = x_1^{3}x_2^{13}x_3^{2^{d}-9}x_4^{2^{d}-5}x_5^{2^{d}-4}$ & &$b_{24} = x_1^{3}x_2^{15}x_3^{2^{d}-11}x_4^{2^{d}-6}x_5^{2^{d}-3}$\cr  $b_{25} = x_1^{3}x_2^{15}x_3^{2^{d}-11}x_4^{2^{d}-5}x_5^{2^{d}-4}$ & &$b_{26} = x_1^{3}x_2^{15}x_3^{2^{d}-9}x_4^{2^{d}-7}x_5^{2^{d}-4}$\cr  $b_{27} = x_1^{3}x_2^{2^{d}-3}x_3^{3}x_4^{2^{d}-3}x_5^{2^{d}-2}$ & &$b_{28} = x_1^{3}x_2^{2^{d}-3}x_3^{3}x_4^{2^{d}-2}x_5^{2^{d}-3}$\cr  $b_{29} = x_1^{3}x_2^{2^{d}-3}x_3^{6}x_4^{2^{d}-5}x_5^{2^{d}-3}$ & &$b_{30} = x_1^{3}x_2^{2^{d}-3}x_3^{7}x_4^{2^{d}-6}x_5^{2^{d}-3}$\cr  $b_{31} = x_1^{3}x_2^{2^{d}-3}x_3^{7}x_4^{2^{d}-5}x_5^{2^{d}-4}$ & &$b_{32} = x_1^{3}x_2^{2^{d}-3}x_3^{2^{d}-2}x_4^{3}x_5^{2^{d}-3}$\cr  $b_{33} = x_1^{7}x_2x_3^{2^{d}-5}x_4^{2^{d}-3}x_5^{2^{d}-2}$ & &$b_{34} = x_1^{7}x_2x_3^{2^{d}-5}x_4^{2^{d}-2}x_5^{2^{d}-3}$\cr  $b_{35} = x_1^{7}x_2x_3^{2^{d}-2}x_4^{2^{d}-5}x_5^{2^{d}-3}$ & &$b_{36} = x_1^{7}x_2^{3}x_3^{2^{d}-7}x_4^{2^{d}-3}x_5^{2^{d}-2}$\cr  $b_{37} = x_1^{7}x_2^{3}x_3^{2^{d}-7}x_4^{2^{d}-2}x_5^{2^{d}-3}$ & &$b_{38} = x_1^{7}x_2^{3}x_3^{2^{d}-5}x_4^{2^{d}-4}x_5^{2^{d}-3}$\cr  $b_{39} = x_1^{7}x_2^{3}x_3^{2^{d}-5}x_4^{2^{d}-3}x_5^{2^{d}-4}$ & &$b_{40} = x_1^{7}x_2^{3}x_3^{2^{d}-4}x_4^{2^{d}-5}x_5^{2^{d}-3}$\cr  $b_{41} = x_1^{7}x_2^{3}x_3^{2^{d}-3}x_4^{2^{d}-6}x_5^{2^{d}-3}$ & &$b_{42} = x_1^{7}x_2^{3}x_3^{2^{d}-3}x_4^{2^{d}-5}x_5^{2^{d}-4}$\cr 
\end{tabular}}
\centerline{\begin{tabular}{lll}  
$b_{43} = x_1^{7}x_2^{7}x_3^{2^{d}-8}x_4^{2^{d}-5}x_5^{2^{d}-3}$ & &$b_{44} = x_1^{7}x_2^{7}x_3^{2^{d}-7}x_4^{2^{d}-6}x_5^{2^{d}-3}$\cr  $b_{45} = x_1^{7}x_2^{7}x_3^{2^{d}-7}x_4^{2^{d}-5}x_5^{2^{d}-4}$ & &$b_{46} = x_1^{7}x_2^{7}x_3^{2^{d}-5}x_4^{2^{d}-8}x_5^{2^{d}-3}$\cr  $b_{47} = x_1^{7}x_2^{7}x_3^{2^{d}-5}x_4^{2^{d}-7}x_5^{2^{d}-4}$ & &$b_{48} = x_1^{7}x_2^{7}x_3^{2^{d}-5}x_4^{2^{d}-3}x_5^{2^{d}-8}$\cr  $b_{49} = x_1^{7}x_2^{11}x_3^{2^{d}-11}x_4^{2^{d}-6}x_5^{2^{d}-3}$ & &$b_{50} = x_1^{7}x_2^{11}x_3^{2^{d}-11}x_4^{2^{d}-5}x_5^{2^{d}-4}$\cr  $b_{51} = x_1^{7}x_2^{11}x_3^{2^{d}-9}x_4^{2^{d}-7}x_5^{2^{d}-4}$ & &$b_{52} = x_1^{7}x_2^{15}x_3^{2^{d}-13}x_4^{2^{d}-7}x_5^{2^{d}-4}$\cr  $b_{53} = x_1^{7}x_2^{2^{d}-5}x_3x_4^{2^{d}-3}x_5^{2^{d}-2}$ & &$b_{54} = x_1^{7}x_2^{2^{d}-5}x_3x_4^{2^{d}-2}x_5^{2^{d}-3}$\cr  $b_{55} = x_1^{7}x_2^{2^{d}-5}x_3^{3}x_4^{2^{d}-4}x_5^{2^{d}-3}$ & &$b_{56} = x_1^{7}x_2^{2^{d}-5}x_3^{3}x_4^{2^{d}-3}x_5^{2^{d}-4}$\cr  $b_{57} = x_1^{7}x_2^{2^{d}-5}x_3^{4}x_4^{2^{d}-5}x_5^{2^{d}-3}$ & &$b_{58} = x_1^{7}x_2^{2^{d}-5}x_3^{5}x_4^{2^{d}-6}x_5^{2^{d}-3}$\cr  $b_{59} = x_1^{7}x_2^{2^{d}-5}x_3^{5}x_4^{2^{d}-5}x_5^{2^{d}-4}$ & &$b_{60} = x_1^{7}x_2^{2^{d}-5}x_3^{7}x_4^{2^{d}-7}x_5^{2^{d}-4}$\cr  $b_{61} = x_1^{7}x_2^{2^{d}-5}x_3^{2^{d}-3}x_4x_5^{2^{d}-2}$ & &$b_{62} = x_1^{7}x_2^{2^{d}-5}x_3^{2^{d}-3}x_4^{2}x_5^{2^{d}-3}$\cr $b_{63} = x_1^{7}x_2^{2^{d}-5}x_3^{2^{d}-3}x_4^{3}x_5^{2^{d}-4}$ & &$b_{64} = x_1^{7}x_2^{2^{d}-5}x_3^{2^{d}-3}x_4^{2^{d}-2}x_5$\cr  $b_{65} = x_1^{15}x_2x_3^{2^{d}-10}x_4^{2^{d}-5}x_5^{2^{d}-3}$ & &$b_{66} = x_1^{15}x_2x_3^{2^{d}-9}x_4^{2^{d}-6}x_5^{2^{d}-3}$\cr  $b_{67} = x_1^{15}x_2x_3^{2^{d}-9}x_4^{2^{d}-5}x_5^{2^{d}-4}$ & &$b_{68} = x_1^{15}x_2^{3}x_3^{2^{d}-11}x_4^{2^{d}-6}x_5^{2^{d}-3}$\cr  $b_{69} = x_1^{15}x_2^{3}x_3^{2^{d}-11}x_4^{2^{d}-5}x_5^{2^{d}-4}$ & &$b_{70} = x_1^{15}x_2^{3}x_3^{2^{d}-9}x_4^{2^{d}-7}x_5^{2^{d}-4}$\cr  $b_{71} = x_1^{15}x_2^{7}x_3^{2^{d}-13}x_4^{2^{d}-7}x_5^{2^{d}-4}$ & &$b_{72} = x_1^{15}x_2^{2^{d}-9}x_3x_4^{2^{d}-6}x_5^{2^{d}-3}$\cr  $b_{73} = x_1^{15}x_2^{2^{d}-9}x_3x_4^{2^{d}-5}x_5^{2^{d}-4}$ & &$b_{74} = x_1^{15}x_2^{2^{d}-9}x_3^{3}x_4^{2^{d}-7}x_5^{2^{d}-4}$\cr  $b_{75} = x_1^{15}x_2^{2^{d}-9}x_3^{2^{d}-5}x_4x_5^{2^{d}-4}$ & &\cr
\end{tabular}}

\medskip
Thus, we obtain $|B_5((4)|(3)|^{d-1})| = 75 + 160 + 75 = 310$.

\subsection{The admissible monomials of weight vector $(3)|(2)|^{d-1}$}\label{s53}\

\medskip
For any $d \geqslant 4$, $B_3^+((3)|(2)|^{d-1})$ is the set of 7 monomials:

\medskip
\centerline{\begin{tabular}{llll}
$x_1x_2^{2^d-1}x_3^{2^d-1}$ &$x_1^{3}x_2^{2^d-3}x_3^{2^d-1}$ &$x_1^{3}x_2^{2^d-1}x_3^{2^d-3}$ &$x_1^{7}x_2^{2^d-5}x_3^{2^d-3}$\cr  $x_1^{2^d-1}x_2x_3^{2^d-1}$ &$x_1^{2^d-1}x_2^{3}x_3^{2^d-3}$ &$x_1^{2^d-1}x_2^{2^d-1}x_3$ &\cr 
\end{tabular}}

\medskip
Hence, $|B_5^0\big((3)|(2)|^{d-1}\big)| = |B_3^+\big((3)|(2)|^{d-1}\big)|\binom 53 + |B_4^+\big((3)|(2)|^{d-1}\big)|\binom 54 = 280$.

For any $d \geqslant 6$, we set 
$$\widetilde A(d) = \{x_i^{2^d-1}f_i(u): u \in B_4^+((2)|(1)|^{d-1}), 1\leqslant i \leqslant 5\},$$
where $B_4^+((2)|(1)|^{d-1})$ is the set of 7 monomials:

\medskip
\centerline{\begin{tabular}{llll}
$x_1x_2x_3^{2}x_4^{2^d-4}$ &$x_1x_2^{2}x_3x_4^{2^d-4}$ &$x_1x_2^{2}x_3^{4}x_4^{2^d-7}$ &$x_1x_2^{2}x_3^{5}x_4^{2^d-8}$\cr  $x_1x_2^{2}x_3^{2^d-4}x_4$ &$x_1x_2^{3}x_3^{4}x_4^{2^d-8}$ &$x_1^{3}x_2x_3^{4}x_4^{2^d-8}$ &\cr  
\end{tabular}}

\medskip
We have $|\widetilde A(d)| = 35$. 

Then, $B_5^+((3)|(2)|^{d-1}) = \widetilde A(d)\cup \widetilde C(d)$, where $\widetilde C(d) = \{c_t=c_{d,t}: 1 \leqslant t \leqslant 150\}$ is determined as follows: 

\medskip
\centerline{\begin{tabular}{lll}
$c_{1} = x_1x_2x_3x_4^{2^d-2}x_5^{2^d-2}$ & &$c_{2} = x_1x_2x_3^{2}x_4^{2^d-3}x_5^{2^d-2}$\cr  $c_{3} = x_1x_2x_3^{2}x_4^{2^d-2}x_5^{2^d-3}$ & &$c_{4} = x_1x_2x_3^{3}x_4^{2^d-4}x_5^{2^d-2}$\cr  $c_{5} = x_1x_2x_3^{3}x_4^{2^d-2}x_5^{2^d-4}$ & &$c_{6} = x_1x_2x_3^{6}x_4^{2^d-6}x_5^{2^d-3}$\cr  $c_{7} = x_1x_2x_3^{6}x_4^{2^d-5}x_5^{2^d-4}$ & &$c_{8} = x_1x_2x_3^{7}x_4^{2^d-6}x_5^{2^d-4}$\cr  $c_{9} = x_1x_2x_3^{2^d-2}x_4x_5^{2^d-2}$ & &$c_{10} = x_1x_2x_3^{2^d-2}x_4^{2}x_5^{2^d-3}$\cr  $c_{11} = x_1x_2x_3^{2^d-2}x_4^{3}x_5^{2^d-4}$ & &$c_{12} = x_1x_2x_3^{2^d-2}x_4^{2^d-2}x_5$\cr  $c_{13} = x_1x_2^{2}x_3x_4^{2^d-3}x_5^{2^d-2}$ & &$c_{14} = x_1x_2^{2}x_3x_4^{2^d-2}x_5^{2^d-3}$\cr  $c_{15} = x_1x_2^{2}x_3^{3}x_4^{2^d-4}x_5^{2^d-3}$ & &$c_{16} = x_1x_2^{2}x_3^{3}x_4^{2^d-3}x_5^{2^d-4}$\cr   $c_{17} = x_1x_2^{2}x_3^{4}x_4^{2^d-5}x_5^{2^d-3}$ & &$c_{18} = x_1x_2^{2}x_3^{5}x_4^{2^d-7}x_5^{2^d-2}$\cr  $c_{19} = x_1x_2^{2}x_3^{5}x_4^{2^d-6}x_5^{2^d-3}$ & &$c_{20} = x_1x_2^{2}x_3^{5}x_4^{2^d-5}x_5^{2^d-4}$\cr  $c_{21} = x_1x_2^{2}x_3^{5}x_4^{2^d-2}x_5^{2^d-7}$ & &$c_{22} = x_1x_2^{2}x_3^{7}x_4^{2^d-8}x_5^{2^d-3}$\cr  $c_{23} = x_1x_2^{2}x_3^{7}x_4^{2^d-7}x_5^{2^d-4}$ & &$c_{24} = x_1x_2^{2}x_3^{7}x_4^{2^d-3}x_5^{2^d-8}$\cr  $c_{25} = x_1x_2^{2}x_3^{2^d-4}x_4^{3}x_5^{2^d-3}$ & &$c_{26} = x_1x_2^{2}x_3^{2^d-3}x_4x_5^{2^d-2}$\cr  $c_{27} = x_1x_2^{2}x_3^{2^d-3}x_4^{2}x_5^{2^d-3}$ & &$c_{28} = x_1x_2^{2}x_3^{2^d-3}x_4^{3}x_5^{2^d-4}$\cr  $c_{29} = x_1x_2^{2}x_3^{2^d-3}x_4^{2^d-2}x_5$ & &$c_{30} = x_1x_2^{3}x_3x_4^{2^d-4}x_5^{2^d-2}$\cr  $c_{31} = x_1x_2^{3}x_3x_4^{2^d-2}x_5^{2^d-4}$ & &$c_{32} = x_1x_2^{3}x_3^{2}x_4^{2^d-4}x_5^{2^d-3}$\cr  $c_{33} = x_1x_2^{3}x_3^{2}x_4^{2^d-3}x_5^{2^d-4}$ & &$c_{34} = x_1x_2^{3}x_3^{3}x_4^{2^d-4}x_5^{2^d-4}$\cr $c_{35} = x_1x_2^{3}x_3^{4}x_4^{2^d-7}x_5^{2^d-2}$ & &$c_{36} = x_1x_2^{3}x_3^{4}x_4^{2^d-6}x_5^{2^d-3}$\cr $c_{37} = x_1x_2^{3}x_3^{4}x_4^{2^d-5}x_5^{2^d-4}$ & &$c_{38} = x_1x_2^{3}x_3^{4}x_4^{2^d-2}x_5^{2^d-7}$\cr  $c_{39} = x_1x_2^{3}x_3^{5}x_4^{2^d-8}x_5^{2^d-2}$ & &$c_{40} = x_1x_2^{3}x_3^{5}x_4^{2^d-6}x_5^{2^d-4}$\cr  $c_{41} = x_1x_2^{3}x_3^{5}x_4^{2^d-2}x_5^{2^d-8}$ & &$c_{42} = x_1x_2^{3}x_3^{6}x_4^{2^d-8}x_5^{2^d-3}$\cr $c_{43} = x_1x_2^{3}x_3^{6}x_4^{2^d-7}x_5^{2^d-4}$ & &$c_{44} = x_1x_2^{3}x_3^{6}x_4^{2^d-4}x_5^{2^d-7}$\cr  $c_{45} = x_1x_2^{3}x_3^{6}x_4^{2^d-3}x_5^{2^d-8}$ & &$c_{46} = x_1x_2^{3}x_3^{7}x_4^{2^d-8}x_5^{2^d-4}$\cr  $c_{47} = x_1x_2^{3}x_3^{7}x_4^{2^d-4}x_5^{2^d-8}$ & &$c_{48} = x_1x_2^{3}x_3^{2^d-4}x_4x_5^{2^d-2}$\cr  $c_{49} = x_1x_2^{3}x_3^{2^d-4}x_4^{2}x_5^{2^d-3}$ & &$c_{50} = x_1x_2^{3}x_3^{2^d-4}x_4^{3}x_5^{2^d-4}$\cr  $c_{51} = x_1x_2^{3}x_3^{2^d-4}x_4^{2^d-2}x_5$ & &$c_{52} = x_1x_2^{3}x_3^{2^d-3}x_4^{2}x_5^{2^d-4}$\cr  $c_{53} = x_1x_2^{3}x_3^{2^d-2}x_4x_5^{2^d-4}$ & &$c_{54} = x_1x_2^{3}x_3^{2^d-2}x_4^{4}x_5^{2^d-7}$\cr \end{tabular}}
\centerline{\begin{tabular}{lll} $c_{55} = x_1x_2^{3}x_3^{2^d-2}x_4^{5}x_5^{2^d-8}$ & &$c_{56} = x_1x_2^{3}x_3^{2^d-2}x_4^{2^d-4}x_5$\cr $c_{57} = x_1x_2^{6}x_3x_4^{2^d-6}x_5^{2^d-3}$ & &$c_{58} = x_1x_2^{6}x_3x_4^{2^d-5}x_5^{2^d-4}$\cr  $c_{59} = x_1x_2^{6}x_3^{3}x_4^{2^d-7}x_5^{2^d-4}$ & &$c_{60} = x_1x_2^{6}x_3^{2^d-5}x_4x_5^{2^d-4}$\cr  $c_{61} = x_1x_2^{7}x_3x_4^{2^d-6}x_5^{2^d-4}$ & &$c_{62} = x_1x_2^{7}x_3^{2}x_4^{2^d-8}x_5^{2^d-3}$\cr  $c_{63} = x_1x_2^{7}x_3^{2}x_4^{2^d-7}x_5^{2^d-4}$ & &$c_{64} = x_1x_2^{7}x_3^{2}x_4^{2^d-3}x_5^{2^d-8}$\cr  $c_{65} = x_1x_2^{7}x_3^{3}x_4^{2^d-8}x_5^{2^d-4}$ & &$c_{66} = x_1x_2^{7}x_3^{3}x_4^{2^d-4}x_5^{2^d-8}$\cr  $c_{67} = x_1x_2^{7}x_3^{2^d-6}x_4x_5^{2^d-4}$ & &$c_{68} = x_1x_2^{7}x_3^{2^d-5}x_4^{4}x_5^{2^d-8}$\cr  $c_{69} = x_1x_2^{2^d-2}x_3x_4x_5^{2^d-2}$ & &$c_{70} = x_1x_2^{2^d-2}x_3x_4^{2}x_5^{2^d-3}$\cr  $c_{71} = x_1x_2^{2^d-2}x_3x_4^{3}x_5^{2^d-4}$ & &$c_{72} = x_1x_2^{2^d-2}x_3x_4^{2^d-2}x_5$\cr  $c_{73} = x_1x_2^{2^d-2}x_3^{3}x_4x_5^{2^d-4}$ & &$c_{74} = x_1^{3}x_2x_3x_4^{2^d-4}x_5^{2^d-2}$\cr  $c_{75} = x_1^{3}x_2x_3x_4^{2^d-2}x_5^{2^d-4}$ & &$c_{76} = x_1^{3}x_2x_3^{2}x_4^{2^d-4}x_5^{2^d-3}$\cr  $c_{77} = x_1^{3}x_2x_3^{2}x_4^{2^d-3}x_5^{2^d-4}$ & &$c_{78} = x_1^{3}x_2x_3^{3}x_4^{2^d-4}x_5^{2^d-4}$\cr  $c_{79} = x_1^{3}x_2x_3^{4}x_4^{2^d-7}x_5^{2^d-2}$ & &$c_{80} = x_1^{3}x_2x_3^{4}x_4^{2^d-6}x_5^{2^d-3}$\cr  $c_{81} = x_1^{3}x_2x_3^{4}x_4^{2^d-5}x_5^{2^d-4}$ & &$c_{82} = x_1^{3}x_2x_3^{4}x_4^{2^d-2}x_5^{2^d-7}$\cr  $c_{83} = x_1^{3}x_2x_3^{5}x_4^{2^d-8}x_5^{2^d-2}$ & &$c_{84} = x_1^{3}x_2x_3^{5}x_4^{2^d-6}x_5^{2^d-4}$\cr  $c_{85} = x_1^{3}x_2x_3^{5}x_4^{2^d-2}x_5^{2^d-8}$ & &$c_{86} = x_1^{3}x_2x_3^{6}x_4^{2^d-8}x_5^{2^d-3}$\cr  $c_{87} = x_1^{3}x_2x_3^{6}x_4^{2^d-7}x_5^{2^d-4}$ & &$c_{88} = x_1^{3}x_2x_3^{6}x_4^{2^d-4}x_5^{2^d-7}$\cr  $c_{89} = x_1^{3}x_2x_3^{6}x_4^{2^d-3}x_5^{2^d-8}$ & &$c_{90} = x_1^{3}x_2x_3^{7}x_4^{2^d-8}x_5^{2^d-4}$\cr  $c_{91} = x_1^{3}x_2x_3^{7}x_4^{2^d-4}x_5^{2^d-8}$ & &$c_{92} = x_1^{3}x_2x_3^{2^d-4}x_4x_5^{2^d-2}$\cr  $c_{93} = x_1^{3}x_2x_3^{2^d-4}x_4^{2}x_5^{2^d-3}$ & &$c_{94} = x_1^{3}x_2x_3^{2^d-4}x_4^{3}x_5^{2^d-4}$\cr $c_{95} = x_1^{3}x_2x_3^{2^d-4}x_4^{2^d-2}x_5$ & &$c_{96} = x_1^{3}x_2x_3^{2^d-3}x_4^{2}x_5^{2^d-4}$\cr  $c_{97} = x_1^{3}x_2x_3^{2^d-2}x_4x_5^{2^d-4}$ & &$c_{98} = x_1^{3}x_2x_3^{2^d-2}x_4^{4}x_5^{2^d-7}$\cr  $c_{99} = x_1^{3}x_2x_3^{2^d-2}x_4^{5}x_5^{2^d-8}$ & &$c_{100} = x_1^{3}x_2x_3^{2^d-2}x_4^{2^d-4}x_5$\cr  $c_{101} = x_1^{3}x_2^{3}x_3x_4^{2^d-4}x_5^{2^d-4}$ & &$c_{102} = x_1^{3}x_2^{3}x_3^{4}x_4^{2^d-8}x_5^{2^d-3}$\cr  $c_{103} = x_1^{3}x_2^{3}x_3^{4}x_4^{2^d-7}x_5^{2^d-4}$ & &$c_{104} = x_1^{3}x_2^{3}x_3^{4}x_4^{2^d-3}x_5^{2^d-8}$\cr  $c_{105} = x_1^{3}x_2^{3}x_3^{5}x_4^{2^d-8}x_5^{2^d-4}$ & &$c_{106} = x_1^{3}x_2^{3}x_3^{5}x_4^{2^d-4}x_5^{2^d-8}$\cr  $c_{107} = x_1^{3}x_2^{3}x_3^{2^d-4}x_4x_5^{2^d-4}$ & &$c_{108} = x_1^{3}x_2^{3}x_3^{2^d-3}x_4^{4}x_5^{2^d-8}$\cr  $c_{109} = x_1^{3}x_2^{4}x_3x_4^{2^d-6}x_5^{2^d-3}$ & &$c_{110} = x_1^{3}x_2^{4}x_3x_4^{2^d-5}x_5^{2^d-4}$\cr $c_{111} = x_1^{3}x_2^{4}x_3^{3}x_4^{2^d-7}x_5^{2^d-4}$ & &$c_{112} = x_1^{3}x_2^{4}x_3^{2^d-5}x_4x_5^{2^d-4}$\cr  $c_{113} = x_1^{3}x_2^{5}x_3x_4^{2^d-6}x_5^{2^d-4}$ & &$c_{114} = x_1^{3}x_2^{5}x_3^{2}x_4^{2^d-8}x_5^{2^d-3}$\cr  $c_{115} = x_1^{3}x_2^{5}x_3^{2}x_4^{2^d-7}x_5^{2^d-4}$ & &$c_{116} = x_1^{3}x_2^{5}x_3^{2}x_4^{2^d-4}x_5^{2^d-7}$\cr $c_{117} = x_1^{3}x_2^{5}x_3^{2}x_4^{2^d-3}x_5^{2^d-8}$ & &$c_{118} = x_1^{3}x_2^{5}x_3^{3}x_4^{2^d-8}x_5^{2^d-4}$\cr $c_{119} = x_1^{3}x_2^{5}x_3^{3}x_4^{2^d-4}x_5^{2^d-8}$ & &$c_{120} = x_1^{3}x_2^{5}x_3^{10}x_4^{2^d-12}x_5^{2^d-7}$\cr  $c_{121} = x_1^{3}x_2^{5}x_3^{10}x_4^{2^d-11}x_5^{2^d-8}$ & &$c_{122} = x_1^{3}x_2^{5}x_3^{11}x_4^{2^d-12}x_5^{2^d-8}$\cr  $c_{123} = x_1^{3}x_2^{5}x_3^{2^d-6}x_4x_5^{2^d-4}$ & &$c_{124} = x_1^{3}x_2^{5}x_3^{2^d-6}x_4^{4}x_5^{2^d-7}$\cr  $c_{125} = x_1^{3}x_2^{5}x_3^{2^d-6}x_4^{5}x_5^{2^d-8}$ & &$c_{126} = x_1^{3}x_2^{5}x_3^{2^d-6}x_4^{2^d-4}x_5$\cr  $c_{127} = x_1^{3}x_2^{5}x_3^{2^d-5}x_4^{4}x_5^{2^d-8}$ & &$c_{128} = x_1^{3}x_2^{7}x_3x_4^{2^d-8}x_5^{2^d-4}$\cr  $c_{129} = x_1^{3}x_2^{7}x_3x_4^{2^d-4}x_5^{2^d-8}$ & &$c_{130} = x_1^{3}x_2^{7}x_3^{9}x_4^{2^d-12}x_5^{2^d-8}$\cr $c_{131} = x_1^{3}x_2^{7}x_3^{2^d-7}x_4^{4}x_5^{2^d-8}$ & &$c_{132} = x_1^{3}x_2^{2^d-3}x_3x_4^{2}x_5^{2^d-4}$\cr  $c_{133} = x_1^{3}x_2^{2^d-3}x_3^{2}x_4x_5^{2^d-4}$ & &$c_{134} = x_1^{3}x_2^{2^d-3}x_3^{2}x_4^{4}x_5^{2^d-7}$\cr
\end{tabular}}
\centerline{\begin{tabular}{lll} 
$c_{135} = x_1^{3}x_2^{2^d-3}x_3^{2}x_4^{5}x_5^{2^d-8}$ & &$c_{136} = x_1^{3}x_2^{2^d-3}x_3^{2}x_4^{2^d-4}x_5$\cr $c_{137} = x_1^{3}x_2^{2^d-3}x_3^{3}x_4^{4}x_5^{2^d-8}$ & &$c_{138} = x_1^{7}x_2x_3x_4^{2^d-6}x_5^{2^d-4}$\cr $c_{139} = x_1^{7}x_2x_3^{2}x_4^{2^d-8}x_5^{2^d-3}$ & &$c_{140} = x_1^{7}x_2x_3^{2}x_4^{2^d-7}x_5^{2^d-4}$\cr  $c_{141} = x_1^{7}x_2x_3^{2}x_4^{2^d-3}x_5^{2^d-8}$ & &$c_{142} = x_1^{7}x_2x_3^{3}x_4^{2^d-8}x_5^{2^d-4}$\cr  $c_{143} = x_1^{7}x_2x_3^{3}x_4^{2^d-4}x_5^{2^d-8}$ & &$c_{144} = x_1^{7}x_2x_3^{2^d-6}x_4x_5^{2^d-4}$\cr  $c_{145} = x_1^{7}x_2x_3^{2^d-5}x_4^{4}x_5^{2^d-8}$ & &$c_{146} = x_1^{7}x_2^{3}x_3x_4^{2^d-8}x_5^{2^d-4}$\cr  $c_{147} = x_1^{7}x_2^{3}x_3x_4^{2^d-4}x_5^{2^d-8}$ & &$c_{148} = x_1^{7}x_2^{3}x_3^{9}x_4^{2^d-12}x_5^{2^d-8}$\cr  $c_{149} = x_1^{7}x_2^{3}x_3^{2^d-7}x_4^{4}x_5^{2^d-8}$ & &$c_{150} = x_1^{7}x_2^{2^d-5}x_3x_4^{4}x_5^{2^d-8}$\cr  
\end{tabular}}

\medskip
Thus, we obtain $|B_5((3)|(2)|^{d-1})| = 280 + 35 + 150 = 465$.

\subsection{A generating set for $(QP_5)_{62}$}\label{s54}\

\medskip
By Lemma \ref{bdd5} we have
$$(QP_5)_{62}\cong QP_5((2)|^5) \oplus QP_5((4)|(3)|^3|(1).$$
By Proposition \ref{mdd51}, $\dim QP_5((2)|^5) = 155$. The admissible monomials of weight vector $(2)|^5$ are given in Subsection \ref{s51}. 

We now describe a generating set for $QP_5((4)|(3)|^3|(1))$. We have $$B_5((4)|(3)|^3|(1)) = B_5^0((4)|(3)|^3|(1)\cup B_5^+((4)|(3)|^3|(1).$$
From our work \cite{su2}, $B_4((4)|(3)|^3|(1) = B_4^+((4)|(3)|^3|(1)$ is the set of 45 monomial:

\medskip
\centerline{\begin{tabular}{lllll}
$x_1x_2^{15}x_3^{15}x_4^{31}$& $x_1x_2^{15}x_3^{31}x_4^{15}$& $x_1x_2^{31}x_3^{15}x_4^{15}$& $x_1^{3}x_2^{13}x_3^{15}x_4^{31}$& $x_1^{3}x_2^{13}x_3^{31}x_4^{15} $\cr  $x_1^{3}x_2^{15}x_3^{13}x_4^{31}$& $x_1^{3}x_2^{15}x_3^{15}x_4^{29}$& $x_1^{3}x_2^{15}x_3^{29}x_4^{15}$& $x_1^{3}x_2^{15}x_3^{31}x_4^{13}$& $x_1^{3}x_2^{29}x_3^{15}x_4^{15} $\cr  $x_1^{3}x_2^{31}x_3^{13}x_4^{15}$& $x_1^{3}x_2^{31}x_3^{15}x_4^{13}$& $x_1^{7}x_2^{11}x_3^{13}x_4^{31}$& $x_1^{7}x_2^{11}x_3^{15}x_4^{29}$& $x_1^{7}x_2^{11}x_3^{29}x_4^{15} $\cr  $x_1^{7}x_2^{11}x_3^{31}x_4^{13}$& $x_1^{7}x_2^{15}x_3^{11}x_4^{29}$& $x_1^{7}x_2^{15}x_3^{27}x_4^{13}$& $x_1^{7}x_2^{27}x_3^{13}x_4^{15}$& $x_1^{7}x_2^{27}x_3^{15}x_4^{13} $\cr  $x_1^{7}x_2^{31}x_3^{11}x_4^{13}$& $x_1^{15}x_2x_3^{15}x_4^{31}$& $x_1^{15}x_2x_3^{31}x_4^{15}$& $x_1^{15}x_2^{3}x_3^{13}x_4^{31}$& $x_1^{15}x_2^{3}x_3^{15}x_4^{29} $\cr   $x_1^{15}x_2^{3}x_3^{29}x_4^{15}$& $x_1^{15}x_2^{3}x_3^{31}x_4^{13}$& $x_1^{15}x_2^{7}x_3^{11}x_4^{29}$& $x_1^{15}x_2^{7}x_3^{27}x_4^{13}$& $x_1^{15}x_2^{15}x_3x_4^{31} $\cr  $x_1^{15}x_2^{15}x_3^{3}x_4^{29}$& $x_1^{15}x_2^{15}x_3^{15}x_4^{17}$& $x_1^{15}x_2^{15}x_3^{19}x_4^{13}$& $x_1^{15}x_2^{15}x_3^{31}x_4$& $x_1^{15}x_2^{23}x_3^{11}x_4^{13} $\cr  $x_1^{15}x_2^{31}x_3x_4^{15}$& $x_1^{15}x_2^{31}x_3^{3}x_4^{13}$& $x_1^{15}x_2^{31}x_3^{15}x_4$& $x_1^{31}x_2x_3^{15}x_4^{15}$& $x_1^{31}x_2^{3}x_3^{13}x_4^{15} $\cr  $x_1^{31}x_2^{3}x_3^{15}x_4^{13}$& $x_1^{31}x_2^{7}x_3^{11}x_4^{13}$& $x_1^{31}x_2^{15}x_3x_4^{15}$& $x_1^{31}x_2^{15}x_3^{3}x_4^{13}$& $x_1^{31}x_2^{15}x_3^{15}x_4 $\cr    
\end{tabular}}

\medskip
Hence, $B_5^0((4)|(3)|^3|(1) = \bigcup_{i=1}^5f_i(B_4^+((4)|(3)|^3|(1))$ is the set of 225 monomials.

We set 
$$A(5) = \{x_i^{31}f_i(w): w \in B_4^+((3)|(2)|^{3}), 1\leqslant i \leqslant 5\},$$
where $B_4^+((3)|(2)|^{3})$ is the set of 46 monomials:

\medskip
\centerline{\begin{tabular}{llllll}
$x_1x_2x_3^{14}x_4^{15}$& $x_1x_2x_3^{15}x_4^{14}$& $x_1x_2^{2}x_3^{13}x_4^{15}$& $x_1x_2^{2}x_3^{15}x_4^{13}$& $x_1x_2^{3}x_3^{12}x_4^{15}$& $x_1x_2^{3}x_3^{13}x_4^{14} $\cr  $x_1x_2^{3}x_3^{14}x_4^{13}$& $x_1x_2^{3}x_3^{15}x_4^{12}$& $x_1x_2^{6}x_3^{11}x_4^{13}$& $x_1x_2^{7}x_3^{10}x_4^{13}$& $x_1x_2^{7}x_3^{11}x_4^{12}$& $x_1x_2^{14}x_3x_4^{15} $\cr  $x_1x_2^{14}x_3^{3}x_4^{13}$& $x_1x_2^{14}x_3^{15}x_4$& $x_1x_2^{15}x_3x_4^{14}$& $x_1x_2^{15}x_3^{2}x_4^{13}$& $x_1x_2^{15}x_3^{3}x_4^{12}$& $x_1x_2^{15}x_3^{14}x_4 $\cr  $x_1^{3}x_2x_3^{12}x_4^{15}$& $x_1^{3}x_2x_3^{13}x_4^{14}$& $x_1^{3}x_2x_3^{14}x_4^{13}$& $x_1^{3}x_2x_3^{15}x_4^{12}$& $x_1^{3}x_2^{3}x_3^{12}x_4^{13}$& $x_1^{3}x_2^{3}x_3^{13}x_4^{12} $\cr  $x_1^{3}x_2^{4}x_3^{11}x_4^{13}$& $x_1^{3}x_2^{5}x_3^{10}x_4^{13}$& $x_1^{3}x_2^{5}x_3^{11}x_4^{12}$& $x_1^{3}x_2^{7}x_3^{8}x_4^{13}$& $x_1^{3}x_2^{7}x_3^{9}x_4^{12}$& $x_1^{3}x_2^{13}x_3x_4^{14} $\cr  $x_1^{3}x_2^{13}x_3^{2}x_4^{13}$& $x_1^{3}x_2^{13}x_3^{3}x_4^{12}$& $x_1^{3}x_2^{13}x_3^{14}x_4$& $x_1^{3}x_2^{15}x_3x_4^{12}$& $x_1^{7}x_2x_3^{10}x_4^{13}$& $x_1^{7}x_2x_3^{11}x_4^{12} $\cr  $x_1^{7}x_2^{3}x_3^{8}x_4^{13}$& $x_1^{7}x_2^{3}x_3^{9}x_4^{12}$& $x_1^{7}x_2^{7}x_3^{8}x_4^{9}$& $x_1^{7}x_2^{7}x_3^{9}x_4^{8}$& $x_1^{7}x_2^{11}x_3x_4^{12}$& $x_1^{15}x_2x_3x_4^{14} $\cr  $x_1^{15}x_2x_3^{2}x_4^{13}$& $x_1^{15}x_2x_3^{3}x_4^{12}$& $x_1^{15}x_2x_3^{14}x_4$& $x_1^{15}x_2^{3}x_3x_4^{12}$ && \cr 
\end{tabular}}

\medskip
We have $|A(5)| = 230$. Denote
$$B(5) = \{x_i^{15}f_i(u): u \in B\subset B_4^+((3)|(2)|^{3}|(1)), 1\leqslant i \leqslant 5\},$$
where $B$ is the set of 64 monomials:

\medskip
\centerline{\begin{tabular}{llllll}
$x_1x_2x_3^{15}x_4^{30}$& $x_1x_2x_3^{30}x_4^{15}$& $x_1x_2^{2}x_3^{15}x_4^{29}$& $x_1x_2^{2}x_3^{29}x_4^{15}$& $x_1x_2^{3}x_3^{13}x_4^{30}$& $x_1x_2^{3}x_3^{14}x_4^{29} $\cr  $x_1x_2^{3}x_3^{15}x_4^{28}$& $x_1x_2^{3}x_3^{28}x_4^{15}$& $x_1x_2^{3}x_3^{29}x_4^{14}$& $x_1x_2^{3}x_3^{30}x_4^{13}$& $x_1x_2^{6}x_3^{11}x_4^{29}$& $x_1x_2^{6}x_3^{27}x_4^{13} $\cr  $x_1x_2^{7}x_3^{10}x_4^{29}$& $x_1x_2^{7}x_3^{11}x_4^{28}$& $x_1x_2^{7}x_3^{26}x_4^{13}$& $x_1x_2^{7}x_3^{27}x_4^{12}$& $x_1x_2^{14}x_3^{3}x_4^{29}$& $x_1x_2^{15}x_3x_4^{30} $\cr  $x_1x_2^{15}x_3^{2}x_4^{29}$& $x_1x_2^{15}x_3^{3}x_4^{28}$& $x_1x_2^{15}x_3^{30}x_4$& $x_1x_2^{30}x_3x_4^{15}$& $x_1x_2^{30}x_3^{3}x_4^{13}$& $x_1x_2^{30}x_3^{15}x_4 $\cr  $x_1^{3}x_2x_3^{13}x_4^{30}$& $x_1^{3}x_2x_3^{14}x_4^{29}$& $x_1^{3}x_2x_3^{15}x_4^{28}$& $x_1^{3}x_2x_3^{28}x_4^{15}$& $x_1^{3}x_2x_3^{29}x_4^{14}$& $x_1^{3}x_2x_3^{30}x_4^{13} $\cr  $x_1^{3}x_2^{3}x_3^{12}x_4^{29}$& $x_1^{3}x_2^{3}x_3^{13}x_4^{28}$& $x_1^{3}x_2^{3}x_3^{28}x_4^{13}$& $x_1^{3}x_2^{3}x_3^{29}x_4^{12}$& $x_1^{3}x_2^{4}x_3^{11}x_4^{29}$& $x_1^{3}x_2^{4}x_3^{27}x_4^{13} $\cr  $x_1^{3}x_2^{5}x_3^{10}x_4^{29}$& $x_1^{3}x_2^{5}x_3^{11}x_4^{28}$& $x_1^{3}x_2^{5}x_3^{26}x_4^{13}$& $x_1^{3}x_2^{5}x_3^{27}x_4^{12}$& $x_1^{3}x_2^{7}x_3^{9}x_4^{28}$& $x_1^{3}x_2^{7}x_3^{25}x_4^{12} $\cr  $x_1^{3}x_2^{13}x_3x_4^{30}$& $x_1^{3}x_2^{13}x_3^{2}x_4^{29}$& $x_1^{3}x_2^{13}x_3^{3}x_4^{28}$& $x_1^{3}x_2^{13}x_3^{30}x_4$& $x_1^{3}x_2^{15}x_3x_4^{28}$& $x_1^{3}x_2^{29}x_3x_4^{14} $\cr  $x_1^{3}x_2^{29}x_3^{2}x_4^{13}$& $x_1^{3}x_2^{29}x_3^{3}x_4^{12}$& $x_1^{3}x_2^{29}x_3^{14}x_4$& $x_1^{7}x_2x_3^{10}x_4^{29}$& $x_1^{7}x_2x_3^{11}x_4^{28}$& $x_1^{7}x_2x_3^{26}x_4^{13} $\cr  $x_1^{7}x_2x_3^{27}x_4^{12}$& $x_1^{7}x_2^{3}x_3^{9}x_4^{28}$& $x_1^{7}x_2^{3}x_3^{25}x_4^{12}$& $x_1^{7}x_2^{11}x_3x_4^{28}$& $x_1^{7}x_2^{27}x_3x_4^{12}$& $x_1^{15}x_2x_3x_4^{30} $\cr  $x_1^{15}x_2x_3^{2}x_4^{29}$& $x_1^{15}x_2x_3^{3}x_4^{28}$& $x_1^{15}x_2x_3^{30}x_4$& $x_1^{15}x_2^{3}x_3x_4^{28}$& & \cr  
\end{tabular}}

\medskip
We have $|B(5)| = 270$. Using Proposition \ref{mdmo} we obtain 
$$A(5) \cup B(5) \subset B_5^+\big((3)|(3)|^3|(1)\big).$$ 

Denote by $C(5)$ the set of 225 monomials:

\medskip
\centerline{\begin{tabular}{lll}  
$1.\ \  x_1x_2^{7}x_3^{11}x_4^{13}x_5^{30}$& $2.\ \  x_1x_2^{7}x_3^{11}x_4^{14}x_5^{29}$& $3.\ \  x_1x_2^{7}x_3^{11}x_4^{29}x_5^{14} $\cr  $4.\ \  x_1x_2^{7}x_3^{11}x_4^{30}x_5^{13}$& $5.\ \  x_1x_2^{7}x_3^{14}x_4^{11}x_5^{29}$& $6.\ \  x_1x_2^{7}x_3^{14}x_4^{27}x_5^{13} $\cr  $7.\ \  x_1x_2^{7}x_3^{27}x_4^{13}x_5^{14}$& $8.\ \  x_1x_2^{7}x_3^{27}x_4^{14}x_5^{13}$& $9.\ \  x_1x_2^{7}x_3^{30}x_4^{11}x_5^{13} $\cr  $10.\ \  x_1x_2^{14}x_3^{7}x_4^{11}x_5^{29}$& $11.\ \  x_1x_2^{14}x_3^{7}x_4^{27}x_5^{13}$& $12.\ \  x_1x_2^{14}x_3^{23}x_4^{11}x_5^{13} $\cr  $13.\ \  x_1x_2^{15}x_3^{22}x_4^{11}x_5^{13}$& $14.\ \  x_1x_2^{15}x_3^{23}x_4^{11}x_5^{12}$& $15.\ \  x_1x_2^{30}x_3^{7}x_4^{11}x_5^{13} $\cr  $16.\ \  x_1^{3}x_2^{3}x_3^{13}x_4^{13}x_5^{30}$& $17.\ \  x_1^{3}x_2^{3}x_3^{13}x_4^{14}x_5^{29}$& $18.\ \  x_1^{3}x_2^{3}x_3^{13}x_4^{29}x_5^{14} $\cr  $19.\ \  x_1^{3}x_2^{3}x_3^{13}x_4^{30}x_5^{13}$& $20.\ \  x_1^{3}x_2^{3}x_3^{29}x_4^{13}x_5^{14}$& $21.\ \  x_1^{3}x_2^{3}x_3^{29}x_4^{14}x_5^{13} $\cr  $22.\ \  x_1^{3}x_2^{5}x_3^{11}x_4^{13}x_5^{30}$& $23.\ \  x_1^{3}x_2^{5}x_3^{11}x_4^{14}x_5^{29}$& $24.\ \  x_1^{3}x_2^{5}x_3^{11}x_4^{29}x_5^{14} $\cr  $25.\ \  x_1^{3}x_2^{5}x_3^{11}x_4^{30}x_5^{13}$& $26.\ \  x_1^{3}x_2^{5}x_3^{14}x_4^{11}x_5^{29}$& $27.\ \  x_1^{3}x_2^{5}x_3^{14}x_4^{27}x_5^{13} $\cr  $28.\ \  x_1^{3}x_2^{5}x_3^{27}x_4^{13}x_5^{14}$& $29.\ \  x_1^{3}x_2^{5}x_3^{27}x_4^{14}x_5^{13}$& $30.\ \  x_1^{3}x_2^{5}x_3^{30}x_4^{11}x_5^{13} $\cr  $31.\ \  x_1^{3}x_2^{7}x_3^{9}x_4^{13}x_5^{30}$& $32.\ \  x_1^{3}x_2^{7}x_3^{9}x_4^{14}x_5^{29}$& $33.\ \  x_1^{3}x_2^{7}x_3^{9}x_4^{29}x_5^{14} $\cr  $34.\ \  x_1^{3}x_2^{7}x_3^{9}x_4^{30}x_5^{13}$& $35.\ \  x_1^{3}x_2^{7}x_3^{11}x_4^{12}x_5^{29}$& $36.\ \  x_1^{3}x_2^{7}x_3^{11}x_4^{13}x_5^{28} $\cr  $37.\ \  x_1^{3}x_2^{7}x_3^{11}x_4^{28}x_5^{13}$& $38.\ \  x_1^{3}x_2^{7}x_3^{11}x_4^{29}x_5^{12}$& $39.\ \  x_1^{3}x_2^{7}x_3^{12}x_4^{11}x_5^{29} $\cr  $40.\ \  x_1^{3}x_2^{7}x_3^{12}x_4^{27}x_5^{13}$& $41.\ \  x_1^{3}x_2^{7}x_3^{13}x_4^{10}x_5^{29}$& $42.\ \  x_1^{3}x_2^{7}x_3^{13}x_4^{11}x_5^{28} $\cr  $43.\ \  x_1^{3}x_2^{7}x_3^{13}x_4^{26}x_5^{13}$& $44.\ \  x_1^{3}x_2^{7}x_3^{13}x_4^{27}x_5^{12}$& $45.\ \  x_1^{3}x_2^{7}x_3^{25}x_4^{13}x_5^{14} $\cr  $46.\ \  x_1^{3}x_2^{7}x_3^{25}x_4^{14}x_5^{13}$& $47.\ \  x_1^{3}x_2^{7}x_3^{27}x_4^{12}x_5^{13}$& $48.\ \  x_1^{3}x_2^{7}x_3^{27}x_4^{13}x_5^{12} $\cr  $49.\ \  x_1^{3}x_2^{7}x_3^{28}x_4^{11}x_5^{13}$& $50.\ \  x_1^{3}x_2^{7}x_3^{29}x_4^{10}x_5^{13}$& $51.\ \  x_1^{3}x_2^{7}x_3^{29}x_4^{11}x_5^{12} $\cr  $52.\ \  x_1^{3}x_2^{13}x_3^{3}x_4^{13}x_5^{30}$& $53.\ \  x_1^{3}x_2^{13}x_3^{3}x_4^{14}x_5^{29}$& $54.\ \  x_1^{3}x_2^{13}x_3^{3}x_4^{29}x_5^{14} $\cr  $55.\ \  x_1^{3}x_2^{13}x_3^{3}x_4^{30}x_5^{13}$& $56.\ \  x_1^{3}x_2^{13}x_3^{6}x_4^{11}x_5^{29}$& $57.\ \  x_1^{3}x_2^{13}x_3^{6}x_4^{27}x_5^{13} $\cr  $58.\ \  x_1^{3}x_2^{13}x_3^{7}x_4^{10}x_5^{29}$& $59.\ \  x_1^{3}x_2^{13}x_3^{7}x_4^{11}x_5^{28}$& $60.\ \  x_1^{3}x_2^{13}x_3^{7}x_4^{26}x_5^{13} $\cr  $61.\ \  x_1^{3}x_2^{13}x_3^{7}x_4^{27}x_5^{12}$& $62.\ \  x_1^{3}x_2^{13}x_3^{14}x_4^{3}x_5^{29}$& $63.\ \  x_1^{3}x_2^{13}x_3^{22}x_4^{11}x_5^{13} $\cr  $64.\ \  x_1^{3}x_2^{13}x_3^{23}x_4^{10}x_5^{13}$& $65.\ \  x_1^{3}x_2^{13}x_3^{23}x_4^{11}x_5^{12}$& $66.\ \  x_1^{3}x_2^{13}x_3^{30}x_4^{3}x_5^{13} $\cr  $67.\ \  x_1^{3}x_2^{15}x_3^{21}x_4^{10}x_5^{13}$& $68.\ \  x_1^{3}x_2^{15}x_3^{21}x_4^{11}x_5^{12}$& $69.\ \  x_1^{3}x_2^{15}x_3^{23}x_4^{9}x_5^{12} $\cr  $70.\ \  x_1^{3}x_2^{29}x_3^{3}x_4^{13}x_5^{14}$& $71.\ \  x_1^{3}x_2^{29}x_3^{3}x_4^{14}x_5^{13}$& $72.\ \  x_1^{3}x_2^{29}x_3^{6}x_4^{11}x_5^{13} $\cr  $73.\ \  x_1^{3}x_2^{29}x_3^{7}x_4^{10}x_5^{13}$& $74.\ \  x_1^{3}x_2^{29}x_3^{7}x_4^{11}x_5^{12}$& $75.\ \  x_1^{3}x_2^{29}x_3^{14}x_4^{3}x_5^{13} $\cr $76.\ \  x_1^{7}x_2x_3^{11}x_4^{13}x_5^{30}$& $77.\ \  x_1^{7}x_2x_3^{11}x_4^{14}x_5^{29}$& $78.\ \  x_1^{7}x_2x_3^{11}x_4^{29}x_5^{14} $\cr  $79.\ \  x_1^{7}x_2x_3^{11}x_4^{30}x_5^{13}$& $80.\ \  x_1^{7}x_2x_3^{14}x_4^{11}x_5^{29}$& $81.\ \  x_1^{7}x_2x_3^{14}x_4^{27}x_5^{13} $\cr  $82.\ \  x_1^{7}x_2x_3^{27}x_4^{13}x_5^{14}$& $83.\ \  x_1^{7}x_2x_3^{27}x_4^{14}x_5^{13}$& $84.\ \  x_1^{7}x_2x_3^{30}x_4^{11}x_5^{13} $\cr  $85.\ \  x_1^{7}x_2^{3}x_3^{9}x_4^{13}x_5^{30}$& $86.\ \  x_1^{7}x_2^{3}x_3^{9}x_4^{14}x_5^{29}$& $87.\ \  x_1^{7}x_2^{3}x_3^{9}x_4^{29}x_5^{14} $\cr  $88.\ \  x_1^{7}x_2^{3}x_3^{9}x_4^{30}x_5^{13}$& $89.\ \  x_1^{7}x_2^{3}x_3^{11}x_4^{12}x_5^{29}$& $90.\ \  x_1^{7}x_2^{3}x_3^{11}x_4^{13}x_5^{28} $\cr  $91.\ \  x_1^{7}x_2^{3}x_3^{11}x_4^{28}x_5^{13}$& $92.\ \  x_1^{7}x_2^{3}x_3^{11}x_4^{29}x_5^{12}$& $93.\ \  x_1^{7}x_2^{3}x_3^{12}x_4^{11}x_5^{29} $\cr  $94.\ \  x_1^{7}x_2^{3}x_3^{12}x_4^{27}x_5^{13}$& $95.\ \  x_1^{7}x_2^{3}x_3^{13}x_4^{10}x_5^{29}$& $96.\ \  x_1^{7}x_2^{3}x_3^{13}x_4^{11}x_5^{28} $\cr  $97.\ \  x_1^{7}x_2^{3}x_3^{13}x_4^{26}x_5^{13}$& $98.\ \  x_1^{7}x_2^{3}x_3^{13}x_4^{27}x_5^{12}$& $99.\ \  x_1^{7}x_2^{3}x_3^{25}x_4^{13}x_5^{14} $\cr \end{tabular}}
\centerline{\begin{tabular}{lll}  $100.\ \  x_1^{7}x_2^{3}x_3^{25}x_4^{14}x_5^{13}$& $101.\ \  x_1^{7}x_2^{3}x_3^{27}x_4^{12}x_5^{13}$& $102.\ \  x_1^{7}x_2^{3}x_3^{27}x_4^{13}x_5^{12} $\cr  $103.\ \  x_1^{7}x_2^{3}x_3^{28}x_4^{11}x_5^{13}$& $104.\ \  x_1^{7}x_2^{3}x_3^{29}x_4^{10}x_5^{13}$& $105.\ \  x_1^{7}x_2^{3}x_3^{29}x_4^{11}x_5^{12} $\cr  $106.\ \  x_1^{7}x_2^{7}x_3^{8}x_4^{11}x_5^{29}$& $107.\ \  x_1^{7}x_2^{7}x_3^{8}x_4^{27}x_5^{13}$& $108.\ \  x_1^{7}x_2^{7}x_3^{9}x_4^{10}x_5^{29} $\cr  $109.\ \  x_1^{7}x_2^{7}x_3^{9}x_4^{11}x_5^{28}$& $110.\ \  x_1^{7}x_2^{7}x_3^{9}x_4^{26}x_5^{13}$& $111.\ \  x_1^{7}x_2^{7}x_3^{9}x_4^{27}x_5^{12} $\cr  $112.\ \  x_1^{7}x_2^{7}x_3^{11}x_4^{8}x_5^{29}$& $113.\ \  x_1^{7}x_2^{7}x_3^{11}x_4^{9}x_5^{28}$& $114.\ \  x_1^{7}x_2^{7}x_3^{11}x_4^{13}x_5^{24} $\cr  $115.\ \  x_1^{7}x_2^{7}x_3^{11}x_4^{24}x_5^{13}$& $116.\ \  x_1^{7}x_2^{7}x_3^{11}x_4^{25}x_5^{12}$& $117.\ \  x_1^{7}x_2^{7}x_3^{11}x_4^{29}x_5^{8} $\cr  $118.\ \  x_1^{7}x_2^{7}x_3^{24}x_4^{11}x_5^{13}$& $119.\ \  x_1^{7}x_2^{7}x_3^{25}x_4^{10}x_5^{13}$& $120.\ \  x_1^{7}x_2^{7}x_3^{25}x_4^{11}x_5^{12} $\cr  $121.\ \  x_1^{7}x_2^{7}x_3^{27}x_4^{8}x_5^{13}$& $122.\ \  x_1^{7}x_2^{7}x_3^{27}x_4^{9}x_5^{12}$& $123.\ \  x_1^{7}x_2^{7}x_3^{27}x_4^{13}x_5^{8} $\cr  $124.\ \  x_1^{7}x_2^{11}x_3x_4^{13}x_5^{30}$& $125.\ \  x_1^{7}x_2^{11}x_3x_4^{14}x_5^{29}$& $126.\ \  x_1^{7}x_2^{11}x_3x_4^{29}x_5^{14} $\cr  $127.\ \  x_1^{7}x_2^{11}x_3x_4^{30}x_5^{13}$& $128.\ \  x_1^{7}x_2^{11}x_3^{3}x_4^{12}x_5^{29}$& $129.\ \  x_1^{7}x_2^{11}x_3^{3}x_4^{13}x_5^{28} $\cr  $130.\ \  x_1^{7}x_2^{11}x_3^{3}x_4^{28}x_5^{13}$& $131.\ \  x_1^{7}x_2^{11}x_3^{3}x_4^{29}x_5^{12}$& $132.\ \  x_1^{7}x_2^{11}x_3^{4}x_4^{11}x_5^{29} $\cr  $133.\ \  x_1^{7}x_2^{11}x_3^{4}x_4^{27}x_5^{13}$& $134.\ \  x_1^{7}x_2^{11}x_3^{5}x_4^{10}x_5^{29}$& $135.\ \  x_1^{7}x_2^{11}x_3^{5}x_4^{11}x_5^{28} $\cr  $136.\ \  x_1^{7}x_2^{11}x_3^{5}x_4^{26}x_5^{13}$& $137.\ \  x_1^{7}x_2^{11}x_3^{5}x_4^{27}x_5^{12}$& $138.\ \  x_1^{7}x_2^{11}x_3^{7}x_4^{9}x_5^{28} $\cr  $139.\ \  x_1^{7}x_2^{11}x_3^{7}x_4^{25}x_5^{12}$& $140.\ \  x_1^{7}x_2^{11}x_3^{13}x_4x_5^{30}$& $141.\ \  x_1^{7}x_2^{11}x_3^{13}x_4^{2}x_5^{29} $\cr  $142.\ \  x_1^{7}x_2^{11}x_3^{13}x_4^{3}x_5^{28}$& $143.\ \  x_1^{7}x_2^{11}x_3^{13}x_4^{30}x_5$& $144.\ \  x_1^{7}x_2^{11}x_3^{21}x_4^{10}x_5^{13} $\cr  $145.\ \  x_1^{7}x_2^{11}x_3^{21}x_4^{11}x_5^{12}$& $146.\ \  x_1^{7}x_2^{11}x_3^{23}x_4^{9}x_5^{12}$& $147.\ \  x_1^{7}x_2^{11}x_3^{29}x_4x_5^{14} $\cr  $148.\ \  x_1^{7}x_2^{11}x_3^{29}x_4^{2}x_5^{13}$& $149.\ \  x_1^{7}x_2^{11}x_3^{29}x_4^{3}x_5^{12}$& $150.\ \  x_1^{7}x_2^{11}x_3^{29}x_4^{14}x_5 $\cr   
$151.\ \  x_1^{7}x_2^{15}x_3^{19}x_4^{9}x_5^{12}$& $152.\ \  x_1^{7}x_2^{27}x_3x_4^{13}x_5^{14}$& $153.\ \  x_1^{7}x_2^{27}x_3x_4^{14}x_5^{13} $\cr  $154.\ \  x_1^{7}x_2^{27}x_3^{3}x_4^{12}x_5^{13}$& $155.\ \  x_1^{7}x_2^{27}x_3^{3}x_4^{13}x_5^{12}$& $156.\ \  x_1^{7}x_2^{27}x_3^{4}x_4^{11}x_5^{13} $\cr  $157.\ \  x_1^{7}x_2^{27}x_3^{5}x_4^{10}x_5^{13}$& $158.\ \  x_1^{7}x_2^{27}x_3^{5}x_4^{11}x_5^{12}$& $159.\ \  x_1^{7}x_2^{27}x_3^{7}x_4^{9}x_5^{12} $\cr  $160.\ \  x_1^{7}x_2^{27}x_3^{13}x_4x_5^{14}$& $161.\ \  x_1^{7}x_2^{27}x_3^{13}x_4^{2}x_5^{13}$& $162.\ \  x_1^{7}x_2^{27}x_3^{13}x_4^{3}x_5^{12} $\cr  $163.\ \  x_1^{7}x_2^{27}x_3^{13}x_4^{14}x_5$& $164.\ \  x_1^{15}x_2x_3^{22}x_4^{11}x_5^{13}$& $165.\ \  x_1^{15}x_2x_3^{23}x_4^{11}x_5^{12} $\cr  $166.\ \  x_1^{15}x_2^{3}x_3^{21}x_4^{10}x_5^{13}$& $167.\ \  x_1^{15}x_2^{3}x_3^{21}x_4^{11}x_5^{12}$& $168.\ \  x_1^{15}x_2^{3}x_3^{23}x_4^{9}x_5^{12} $\cr  $169.\ \  x_1^{15}x_2^{7}x_3^{19}x_4^{9}x_5^{12}$& $170.\ \  x_1^{15}x_2^{23}x_3x_4^{11}x_5^{12}$& $171.\ \  x_1^{15}x_2^{23}x_3^{3}x_4^{9}x_5^{12} $\cr  $172.\ \  x_1^{15}x_2^{23}x_3^{11}x_4x_5^{12}$& $173.\ \  x_1x_2^{14}x_3^{15}x_4^{15}x_5^{17}$& $174.\ \  x_1x_2^{14}x_3^{15}x_4^{19}x_5^{13} $\cr  $175.\ \  x_1x_2^{15}x_3^{14}x_4^{15}x_5^{17}$& $176.\ \  x_1x_2^{15}x_3^{14}x_4^{19}x_5^{13}$& $177.\ \  x_1x_2^{15}x_3^{15}x_4^{14}x_5^{17} $\cr  $178.\ \  x_1x_2^{15}x_3^{15}x_4^{15}x_5^{16}$& $179.\ \  x_1x_2^{15}x_3^{15}x_4^{18}x_5^{13}$& $180.\ \  x_1x_2^{15}x_3^{15}x_4^{19}x_5^{12} $\cr  $181.\ \  x_1x_2^{15}x_3^{23}x_4^{10}x_5^{13}$& $182.\ \  x_1^{3}x_2^{13}x_3^{14}x_4^{15}x_5^{17}$& $183.\ \  x_1^{3}x_2^{13}x_3^{14}x_4^{19}x_5^{13} $\cr  $184.\ \  x_1^{3}x_2^{13}x_3^{15}x_4^{14}x_5^{17}$& $185.\ \  x_1^{3}x_2^{13}x_3^{15}x_4^{15}x_5^{16}$& $186.\ \  x_1^{3}x_2^{13}x_3^{15}x_4^{18}x_5^{13} $\cr  $187.\ \  x_1^{3}x_2^{13}x_3^{15}x_4^{19}x_5^{12}$& $188.\ \  x_1^{3}x_2^{15}x_3^{13}x_4^{14}x_5^{17}$& $189.\ \  x_1^{3}x_2^{15}x_3^{13}x_4^{15}x_5^{16} $\cr  $190.\ \  x_1^{3}x_2^{15}x_3^{13}x_4^{18}x_5^{13}$& $191.\ \  x_1^{3}x_2^{15}x_3^{13}x_4^{19}x_5^{12}$& $192.\ \  x_1^{3}x_2^{15}x_3^{15}x_4^{13}x_5^{16} $\cr  $193.\ \  x_1^{3}x_2^{15}x_3^{15}x_4^{17}x_5^{12}$& $194.\ \  x_1^{7}x_2^{11}x_3^{13}x_4^{14}x_5^{17}$& $195.\ \  x_1^{7}x_2^{11}x_3^{13}x_4^{15}x_5^{16} $\cr  $196.\ \  x_1^{7}x_2^{11}x_3^{13}x_4^{18}x_5^{13}$& $197.\ \  x_1^{7}x_2^{11}x_3^{13}x_4^{19}x_5^{12}$& $198.\ \  x_1^{7}x_2^{11}x_3^{15}x_4^{13}x_5^{16} $\cr  $199.\ \  x_1^{7}x_2^{11}x_3^{15}x_4^{17}x_5^{12}$& $200.\ \  x_1^{7}x_2^{15}x_3^{11}x_4^{17}x_5^{12}$& $201.\ \  x_1^{15}x_2x_3^{14}x_4^{15}x_5^{17} $\cr  $202.\ \  x_1^{15}x_2x_3^{14}x_4^{19}x_5^{13}$& $203.\ \  x_1^{15}x_2x_3^{15}x_4^{14}x_5^{17}$& $204.\ \  x_1^{15}x_2x_3^{15}x_4^{15}x_5^{16} $\cr  $205.\ \  x_1^{15}x_2x_3^{15}x_4^{18}x_5^{13}$& $206.\ \  x_1^{15}x_2x_3^{15}x_4^{19}x_5^{12}$& $207.\ \  x_1^{15}x_2x_3^{23}x_4^{10}x_5^{13} $\cr  $208.\ \  x_1^{15}x_2^{3}x_3^{13}x_4^{14}x_5^{17}$& $209.\ \  x_1^{15}x_2^{3}x_3^{13}x_4^{15}x_5^{16}$& $210.\ \  x_1^{15}x_2^{3}x_3^{13}x_4^{18}x_5^{13} $\cr  $211.\ \  x_1^{15}x_2^{3}x_3^{13}x_4^{19}x_5^{12}$& $212.\ \  x_1^{15}x_2^{3}x_3^{15}x_4^{13}x_5^{16}$& $213.\ \  x_1^{15}x_2^{3}x_3^{15}x_4^{17}x_5^{12} $\cr  $214.\ \  x_1^{15}x_2^{7}x_3^{11}x_4^{17}x_5^{12}$& $215.\ \  x_1^{15}x_2^{15}x_3x_4^{14}x_5^{17}$& $216.\ \  x_1^{15}x_2^{15}x_3x_4^{15}x_5^{16} $\cr  $217.\ \  x_1^{15}x_2^{15}x_3x_4^{18}x_5^{13}$& $218.\ \  x_1^{15}x_2^{15}x_3x_4^{19}x_5^{12}$& $219.\ \  x_1^{15}x_2^{15}x_3^{3}x_4^{13}x_5^{16} $\cr  $220.\ \  x_1^{15}x_2^{15}x_3^{3}x_4^{17}x_5^{12}$& $221.\ \  x_1^{15}x_2^{15}x_3^{15}x_4x_5^{16}$& $222.\ \  x_1^{15}x_2^{15}x_3^{19}x_4x_5^{12} $\cr  $223.\ \  x_1^{15}x_2^{19}x_3^{5}x_4^{10}x_5^{13}$& $224.\ \  x_1^{15}x_2^{19}x_3^{7}x_4^{9}x_5^{12}$& $225.\ \  x_1^{15}x_2^{23}x_3x_4^{10}x_5^{13} $\cr   
\end{tabular}}

\medskip
Denote by $D(5)$ the set of 70 monomials:

\medskip
\centerline{\begin{tabular}{lll} 
$226.\ \  x_1^{3}x_2^{7}x_3^{8}x_4^{15}x_5^{29}$& $227.\ \  x_1^{3}x_2^{7}x_3^{15}x_4^{8}x_5^{29}$& $228.\ \  x_1^{3}x_2^{7}x_3^{15}x_4^{24}x_5^{13} $\cr  $229.\ \  x_1^{3}x_2^{15}x_3^{7}x_4^{8}x_5^{29}$& $230.\ \  x_1^{3}x_2^{15}x_3^{7}x_4^{24}x_5^{13}$& $231.\ \  x_1^{3}x_2^{15}x_3^{15}x_4^{16}x_5^{13} $\cr  $232.\ \  x_1^{3}x_2^{15}x_3^{23}x_4^{8}x_5^{13}$& $233.\ \  x_1^{7}x_2^{3}x_3^{8}x_4^{15}x_5^{29}$& $234.\ \  x_1^{7}x_2^{3}x_3^{15}x_4^{8}x_5^{29} $\cr  $235.\ \  x_1^{7}x_2^{3}x_3^{15}x_4^{24}x_5^{13}$& $236.\ \  x_1^{7}x_2^{7}x_3^{8}x_4^{15}x_5^{25}$& $237.\ \  x_1^{7}x_2^{7}x_3^{9}x_4^{9}x_5^{30} $\cr  $238.\ \  x_1^{7}x_2^{7}x_3^{9}x_4^{15}x_5^{24}$& $239.\ \  x_1^{7}x_2^{7}x_3^{11}x_4^{12}x_5^{25}$& $240.\ \  x_1^{7}x_2^{7}x_3^{11}x_4^{28}x_5^{9} $\cr \end{tabular}}
\centerline{\begin{tabular}{lll}  $241.\ \  x_1^{7}x_2^{7}x_3^{15}x_4^{8}x_5^{25}$& $242.\ \  x_1^{7}x_2^{7}x_3^{15}x_4^{9}x_5^{24}$& $243.\ \  x_1^{7}x_2^{7}x_3^{15}x_4^{24}x_5^{9} $\cr  $244.\ \  x_1^{7}x_2^{7}x_3^{15}x_4^{25}x_5^{8}$& $245.\ \  x_1^{7}x_2^{7}x_3^{27}x_4^{12}x_5^{9}$& $246.\ \  x_1^{7}x_2^{9}x_3^{7}x_4^{10}x_5^{29} $\cr  $247.\ \  x_1^{7}x_2^{9}x_3^{7}x_4^{11}x_5^{28}$& $248.\ \  x_1^{7}x_2^{9}x_3^{7}x_4^{26}x_5^{13}$& $249.\ \  x_1^{7}x_2^{9}x_3^{7}x_4^{27}x_5^{12} $\cr  $250.\ \  x_1^{7}x_2^{9}x_3^{23}x_4^{10}x_5^{13}$& $251.\ \  x_1^{7}x_2^{9}x_3^{23}x_4^{11}x_5^{12}$& $252.\ \  x_1^{7}x_2^{11}x_3^{7}x_4^{8}x_5^{29} $\cr  $253.\ \  x_1^{7}x_2^{11}x_3^{7}x_4^{24}x_5^{13}$& $254.\ \  x_1^{7}x_2^{11}x_3^{15}x_4^{16}x_5^{13}$& $255.\ \  x_1^{7}x_2^{11}x_3^{23}x_4^{8}x_5^{13} $\cr  $256.\ \  x_1^{7}x_2^{15}x_3^{3}x_4^{8}x_5^{29}$& $257.\ \  x_1^{7}x_2^{15}x_3^{3}x_4^{24}x_5^{13}$& $258.\ \  x_1^{7}x_2^{15}x_3^{7}x_4^{8}x_5^{25} $\cr  $259.\ \  x_1^{7}x_2^{15}x_3^{7}x_4^{9}x_5^{24}$& $260.\ \  x_1^{7}x_2^{15}x_3^{7}x_4^{24}x_5^{9}$& $261.\ \  x_1^{7}x_2^{15}x_3^{7}x_4^{25}x_5^{8} $\cr  $262.\ \  x_1^{7}x_2^{15}x_3^{11}x_4^{13}x_5^{16}$& $263.\ \  x_1^{7}x_2^{15}x_3^{11}x_4^{16}x_5^{13}$& $264.\ \  x_1^{7}x_2^{15}x_3^{17}x_4^{10}x_5^{13} $\cr  $265.\ \  x_1^{7}x_2^{15}x_3^{17}x_4^{11}x_5^{12}$& $266.\ \  x_1^{7}x_2^{15}x_3^{19}x_4^{8}x_5^{13}$& $267.\ \  x_1^{7}x_2^{15}x_3^{23}x_4^{8}x_5^{9} $\cr  $268.\ \  x_1^{7}x_2^{15}x_3^{23}x_4^{9}x_5^{8}$& $269.\ \  x_1^{7}x_2^{27}x_3^{7}x_4^{8}x_5^{13}$& $270.\ \  x_1^{15}x_2^{3}x_3^{7}x_4^{8}x_5^{29} $\cr  $271.\ \  x_1^{15}x_2^{3}x_3^{7}x_4^{24}x_5^{13}$& $272.\ \  x_1^{15}x_2^{3}x_3^{15}x_4^{16}x_5^{13}$& $273.\ \  x_1^{15}x_2^{3}x_3^{23}x_4^{8}x_5^{13} $\cr  $274.\ \  x_1^{15}x_2^{7}x_3^{3}x_4^{8}x_5^{29}$& $275.\ \  x_1^{15}x_2^{7}x_3^{3}x_4^{24}x_5^{13}$& $276.\ \  x_1^{15}x_2^{7}x_3^{7}x_4^{8}x_5^{25} $\cr  $277.\ \  x_1^{15}x_2^{7}x_3^{7}x_4^{9}x_5^{24}$& $278.\ \  x_1^{15}x_2^{7}x_3^{7}x_4^{24}x_5^{9}$& $279.\ \  x_1^{15}x_2^{7}x_3^{7}x_4^{25}x_5^{8} $\cr  $280.\ \  x_1^{15}x_2^{7}x_3^{11}x_4^{13}x_5^{16}$& $281.\ \  x_1^{15}x_2^{7}x_3^{11}x_4^{16}x_5^{13}$& $282.\ \  x_1^{15}x_2^{7}x_3^{17}x_4^{10}x_5^{13} $\cr  $283.\ \  x_1^{15}x_2^{7}x_3^{17}x_4^{11}x_5^{12}$& $284.\ \  x_1^{15}x_2^{7}x_3^{19}x_4^{8}x_5^{13}$& $285.\ \  x_1^{15}x_2^{7}x_3^{23}x_4^{8}x_5^{9} $\cr  $286.\ \  x_1^{15}x_2^{7}x_3^{23}x_4^{9}x_5^{8}$& $287.\ \  x_1^{15}x_2^{15}x_3^{3}x_4^{16}x_5^{13}$& $288.\ \  x_1^{15}x_2^{15}x_3^{15}x_4^{16}x_5 $\cr  $289.\ \  x_1^{15}x_2^{15}x_3^{16}x_4^{3}x_5^{13}$& $290.\ \  x_1^{15}x_2^{15}x_3^{17}x_4^{2}x_5^{13}$& $291.\ \  x_1^{15}x_2^{15}x_3^{17}x_4^{3}x_5^{12} $\cr  $292.\ \  x_1^{15}x_2^{19}x_3^{5}x_4^{11}x_5^{12}$& $293.\ \  x_1^{15}x_2^{23}x_3^{3}x_4^{8}x_5^{13}$& $294.\ \  x_1^{15}x_2^{23}x_3^{7}x_4^{8}x_5^{9} $\cr  $295.\ \  x_1^{15}x_2^{23}x_3^{7}x_4^{9}x_5^{8}$& & \cr
\end{tabular}}
\medskip
By a direct computation one gets the following.
\begin{lems}\label{bda81} The following monomials are strictly inadmissible:

\medskip
\centerline{\begin{tabular}{lllll}  
$x_1^{3}x_2^{7}x_3^{8}x_4^{29}x_5^{15}$& $x_1^{3}x_2^{7}x_3^{24}x_4^{13}x_5^{15}$& $x_1^{3}x_2^{7}x_3^{24}x_4^{15}x_5^{13}$& $x_1^{3}x_2^{15}x_3^{20}x_4^{11}x_5^{13} $\cr  $x_1^{7}x_2^{3}x_3^{8}x_4^{29}x_5^{15}$& $x_1^{7}x_2^{3}x_3^{24}x_4^{13}x_5^{15}$& $x_1^{7}x_2^{3}x_3^{24}x_4^{15}x_5^{13}$& $x_1^{7}x_2^{11}x_3^{13}x_4^{16}x_5^{15} $\cr  $x_1^{7}x_2^{11}x_3^{13}x_4^{17}x_5^{14}$& $x_1^{7}x_2^{11}x_3^{20}x_4^{11}x_5^{13}$& $x_1^{7}x_2^{15}x_3^{16}x_4^{11}x_5^{13}$& $x_1^{15}x_2^{3}x_3^{20}x_4^{11}x_5^{13} $\cr  $x_1^{15}x_2^{7}x_3^{16}x_4^{11}x_5^{13}$& $x_1^{15}x_2^{19}x_3^{4}x_4^{11}x_5^{13}$& $x_1^{15}x_2^{19}x_3^{7}x_4^{8}x_5^{13}$& \cr   
\end{tabular}}
\end{lems}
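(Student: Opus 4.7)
The plan is to apply the same explicit Steenrod-square technique that proves Lemmas \ref{bda62}, \ref{bda63}, \ref{bda64}, \ref{bda65}. Each of the fifteen listed monomials $x$ has degree $62$ and weight vector $\omega(x)=(4)|(3)|^{3}|(1)$, so $s=\max\{i:\omega_i(x)>0\}=5$, and strict inadmissibility means exhibiting a congruence
\[
x \;=\; \sum_{j} y_j \;+\; \sum_{u=1}^{31} Sq^{u}(h_u) \pmod{P_5^{-}(\omega(x))},
\]
with $y_j<x$ in the left lexicographic order on $(\omega,\sigma)$. Since $\omega(x)$ is fixed, it suffices to work with $u\in\{1,2,4,8,16\}$ for most terms: the $P_5^{-}(\omega(x))$ quotient absorbs everything of strictly smaller weight vector, and Cartan plus $Sq^{2^i}$-generation reduces the list of operations actually needed.

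First I would sort the fifteen monomials into families by the shape of the exponent vector. The monomials $x_1^{3}x_2^{7}x_3^{8}x_4^{29}x_5^{15}$ and $x_1^{7}x_2^{3}x_3^{8}x_4^{29}x_5^{15}$ are obtained from the Kameko doubling of monomials treated in Lemma \ref{bda73} by multiplying by an appropriate $x_i^{2^{d-1}}$ spike factor, so I would first write $x=u\cdot v^{2}$ where $v$ carries the $(3)|^{3}|(1)$ part and $u$ is the $(4)$ part; if $v$ matches a strictly inadmissible monomial in Lemma \ref{bda73} or \ref{bda64}, Theorem \ref{dlcb1}(ii) finishes the job. That should dispatch the monomials of the form $x_i^{3}x_j^{7}x_t^{24}x_u^{13}x_v^{15}$, $x_i^{7}x_j^{3}x_t^{24}x_u^{15}x_v^{13}$, and similar companions, because the underlying $(3)|^{3}|(1)$-monomial already appears in Lemma \ref{bda64}.

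The genuinely new monomials are those whose weight-vector-$(4)|(3)|^{3}|(1)$ structure does not factor through a known strictly inadmissible submonomial; prototypes are $x_1^{7}x_2^{11}x_3^{13}x_4^{16}x_5^{15}$, $x_1^{7}x_2^{11}x_3^{13}x_4^{17}x_5^{14}$, and the three $x_1^{15}x_2^{19}x_3^{a}x_4^{b}x_5^{13}$ monomials. For each of these I would write the explicit Cartan-style expansion, following the templates used in Lemma \ref{bda65}: start from $Sq^{16}$ applied to a carefully chosen monomial whose relevant exponents are the ``half'' of those of $x$, correct by $Sq^{8},Sq^{4},Sq^{2},Sq^{1}$ in decreasing order, and collect the leftover sum $\sum_j y_j$ of monomials that are strictly smaller than $x$. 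Monomials of the form $x_r^{15}f_r(\,\cdot\,)$ with $r\in\{4,5\}$ can usually be handled by exporting the computation in Lemma \ref{bda65} to the appropriate variable set via the symmetry $\theta_J$ of \eqref{ctbs}.

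The main obstacle will be the last three monomials $x_1^{15}x_2^{19}x_3^{4}x_4^{11}x_5^{13}$, $x_1^{15}x_2^{19}x_3^{7}x_4^{8}x_5^{13}$ and $x_1^{15}x_2^{3}x_3^{20}x_4^{11}x_5^{13}$, whose leading block $x_1^{15}x_2^{19}$ or $x_1^{15}x_2^{3}x_3^{20}$ does not come directly from any already-treated shape, so the witnessing polynomials $h_u$ must be found by hand. My strategy would be to pick $h_{16}$ whose square has its large exponents roughly halved from those of $x$, observe that this gives a dominant term matching $x$ modulo lower-order corrections, then iterate: produce $h_8$ from the residue modulo $P_5^{-}$, then $h_4,h_2,h_1$. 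The computation is lengthy but entirely mechanical once the first step is right; it mirrors exactly the calculation written out in full for the monomial $v=x_1^{15}x_2^{15}x_3^{19}x_4^{16}x_5^{29}$ in the proof of Lemma \ref{bda62}. Once the expansion is verified, each remaining summand $y_j$ must be checked to satisfy $y_j<x$ in the order of Definition \ref{defn3}; this follows because every $y_j$ has either $\omega(y_j)<\omega(x)$ (absorbed modulo $P_5^{-}(\omega(x))$) or $\omega(y_j)=\omega(x)$ with $\sigma(y_j)<\sigma(x)$, which one verifies by direct inspection of the exponent vectors produced by the Cartan expansions.
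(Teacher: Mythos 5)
Your outline identifies the right general technique --- the paper itself disposes of this lemma with the single phrase ``by a direct computation'', meaning an explicit expansion $x=\sum_j y_j+\sum_u Sq^u(h_u)$ modulo $P_5^-((4)|(3)|^3|(1))$ of the kind written out for Lemmas \ref{bda62}, \ref{bda63} and \ref{bda65} --- and your degree and weight-vector bookkeeping is correct. But there are two genuine problems. First, your proposed shortcut misapplies Theorem \ref{dlcb1}: in the factorisation $x=u\cdot v^2$ the candidate strictly inadmissible monomial $v$ sits in the \emph{squared} position, whereas part (ii) of that theorem requires the strictly inadmissible factor to be the unsquared bottom factor $w$ in $wy^{2^s}$; with $v$ on top only part (i) applies, and it yields mere inadmissibility of $uv^2$, not the strict inadmissibility the lemma asserts. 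In addition, Lemma \ref{bda73} is the wrong source: its monomials have weight vector of type $(3)|(2)|^t$, not $(3)|^3|(1)$, so no factor of the listed degree-$62$ monomials can match it; the relevant family is Lemma \ref{bda64}, and even there the fifteen monomials of the present lemma are precisely the residual cases that the paper could \emph{not} reduce to earlier lemmas via Theorem \ref{dlcb1} (that reduction is already performed in the surrounding argument), so the shortcut should be expected to dispatch none of them.

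Second, and more fundamentally, no witnessing expansion is actually produced for any of the fifteen monomials. The entire content of the lemma is the existence of these explicit $\chi$-style decompositions with all residual terms $y_j$ strictly smaller in the order of Definition \ref{defn3}; describing the search heuristic (halve the exponents, apply $Sq^{16}$, correct downwards with $Sq^8,Sq^4,Sq^2,Sq^1$) does not certify that such decompositions exist, since for an admissible monomial the same procedure simply fails to terminate with smaller terms. Until at least one representative computation per orbit type is carried out and verified (as the paper does for its other strict-inadmissibility lemmas), the statement remains unproved.
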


Based on Lemmas \ref{bda62}, \ref{bda81} and Theorem \ref{dlcb1} we can show that if a monomial $y \in B_5((3)|^3|(1))$, such that $X_iy^2 \in P_5^+$ and $X_iy^2 \notin A(5)\cup B(5)\cup C(5)\cup D(5)$, then $X_iy^2$ is inadmissible, where  $1 \leqslant i \leqslant 5$. This implies  
$$B_5^+((4)|(3)|^3|(1)) \subset A(5)\cup B(5)\cup C(5)\cup D(5).$$ 
Hence, we obtain 
\begin{align*}
&\dim QP_5^+((4)|(3)|^3|(1)) \leqslant 230 + 270 + 295 = 795,\\
&\dim (QP_5)_{62} \leqslant 155+ 225 + 795 = 1175.
\end{align*}

Furthermore, we can give a lower bound for $\dim (QP_5)_{62}$ as follows. 

Set $\omega^* := (4)|(3)|^3|(1)$ and consider the subspaces 
$$\langle [A(5)]_{\omega^*}\rangle \subset QP_5(\omega^*),\ \langle [B(5)\cup C(d)]_{\omega^*}\rangle \subset QP_5(\omega^*).$$ Observe that for any $x\in A(5)$, we have $x = x_i^{31}f_i(y)$ with $y$ an admissible monomial of weight vector $(3)|(2)|^3$ in $P_4$. By Proposition \ref{mdmo}, $x$ is admissible and we get $\dim \langle [A(5)]_{\omega^*}\rangle = 230$. Since $\nu(x) = 31$ for all $x\in A(5)$ and $\nu(x) < 31$ for all $x\in B(5)\cup C(5)$, we obtain $\langle [A(5)]_{\omega^*}\rangle \cap \langle [B(5)\cup C(5)]_{\omega^*}\rangle = \{0\}$. By a direct computation we get the following.

\begin{props} 
The set $[B(5)\cup C(5)]_{\omega^*}$ is linearly independent in the vector space $QP_5(\omega^*)$.
\end{props}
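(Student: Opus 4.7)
The plan is to adapt the template used to prove Propositions \ref{mdd51}, \ref{mdd52}, \ref{mdd53} and \ref{mdd71}. Suppose there is a relation
\[
\mathcal{S} := \sum_{y \in B(5) \cup C(5)} \gamma_y\, y \equiv_{\omega^*} 0, \qquad \gamma_y \in \mathbb{F}_2,
\]
with $\omega^* = (4)|(3)|^3|(1)$. The strategy is to use the homomorphisms $p_{(i;I)}: P_5 \to P_4$ defined in \eqref{ct23} together with Lemma \ref{bdm}, which ensures that each $p_{(i;I)}$ descends to a map $QP_5(\omega^*) \to QP_4(\omega^*)$. The target $QP_4(\omega^*)$ has the explicit 45-element basis $[B_4((4)|(3)|^3|(1))]_{\omega^*}$ listed in Subsection \ref{s54}, so each identity $p_{(i;I)}(\mathcal{S}) \equiv_{\omega^*} 0$ becomes a block of $45$ linear relations in the $\gamma_y$'s over $\mathbb{F}_2$.

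The first step is to expand $p_{(i;I)}(y)$ for every $y \in B(5) \cup C(5)$ and every $(i;I) \in \mathcal{N}_5$, then reduce modulo $\mathcal{A}^+ P_4 + P_4^-(\omega^*)$ by the standard Steenrod-square straightening moves, using Theorem \ref{dlww} (which discards any terms of weight vector strictly less than that of the minimal spike at each head), Theorem \ref{dlcb1}, and the list of strictly inadmissible monomials collected in Lemmas \ref{bda62}, \ref{bda63}, \ref{bda64}, \ref{bda65} and \ref{bda81}. I would carry out the reductions in the order: first the ten homomorphisms $p_{(i;j)}$ ($1 \leq i < j \leq 5$); then the ten $p_{(i;(u,v))}$; then the five $p_{(i;(u,v,w))}$; finally $p_{(1;(2,3,4,5))}$. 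After each batch, the newly-derived vanishing conditions on $\gamma_y$ are substituted back into $\mathcal{S}$ before tackling the next batch, exactly as in the cascade \eqref{c41}--\eqref{c66} of Proposition~\ref{mdd51} and the cascade \eqref{ct41}--\eqref{ct44} of Proposition~\ref{mdd71}. The separation of $B(5)$ from $C(5)$ by the maximal-exponent invariant $\nu(\cdot)$ (as in the proofs of Propositions \ref{mdd52} and \ref{mdd71}, where $\widetilde A$ was separated from $\widetilde C$) simplifies the bookkeeping: every $y \in B(5)$ satisfies $\nu(y) = 2^4 - 1 = 15$ arising from a block $x_r^{15} f_r(u)$, so projections $p_{(i;I)}$ with $i = r$ or $r \in I$ interact differently with the two subsets and peel off one family of coefficients at a time.

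The main obstacle will be the combinatorial size of the computation: with $|B(5) \cup C(5)| = 495$ unknowns and thirty-one homomorphisms, each producing up to $45$ linear relations, the bookkeeping is not tractable by hand and must be performed by the same computer programmes mentioned after Corollary \ref{hqdl2}. A more subtle difficulty is weight-vector tracking: because $\omega^*$ ends in a weight-$1$ block $(1)$, a fair fraction of the monomials $p_{(i;I)}(y)$ will land in $P_4^-(\omega^*)$ after a single Cartan reduction and must be discarded; keeping the reductions inside $P_4(\omega^*)$ requires splitting each straightening step by weight vector, as was done in the lengthy hit identities of Lemmas \ref{bda62} through \ref{bda65}.

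Once the system is assembled, its matrix over $\mathbb{F}_2$ is (by analogy with the propositions above) of rank exactly $|B(5)| + |C(5)| = 495$, forcing $\gamma_y = 0$ for all $y$, so $[B(5) \cup C(5)]_{\omega^*}$ is linearly independent. Combined with the earlier observation that $\langle [A(5)]_{\omega^*}\rangle \cap \langle [B(5) \cup C(5)]_{\omega^*}\rangle = \{0\}$ and with $\dim \langle [A(5)]_{\omega^*}\rangle = 230$, this yields the upper bound $\dim QP_5^+(\omega^*) \leq 795$ as a lower bound as well, giving the claimed estimate $1105 \leq \dim (QP_5)_{62} \leq 1175$.
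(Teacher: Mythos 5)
Your approach is essentially the paper's own: for this proposition the paper offers no more detail than ``by a direct computation,'' and that computation is exactly the cascade of projections $p_{(i;I)}$ into $QP_4(\omega^*)$ (with its known $45$-element basis) that you describe, following the template of Propositions \ref{mdd51}, \ref{mdd52} and \ref{mdd71}, with the rank verification delegated to machine computation in both cases. One caveat on your closing paragraph: linear independence of $[A(5)\cup B(5)\cup C(5)]_{\omega^*}$ yields the lower bound $\dim QP_5^+(\omega^*)\geqslant 230+270+225=725$, hence $\dim (QP_5)_{62}\geqslant 1105$ --- it does not make $795$ ``a lower bound as well'' (that would force equality and contradict the stated range); the upper bound $795$ comes from the separate spanning inclusion $B_5^+(\omega^*)\subset A(5)\cup B(5)\cup C(5)\cup D(5)$, and the gap between $725$ and $795$ is precisely the $70$ monomials of $D(5)$ whose status is left open.
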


So, we obtain a lower bound 
$$\dim (QP_5)_{62} \geqslant 155+ 225 + 725 = 1105.$$

\subsection{A generating set for $(QP_5)_{63}$}\label{s55}\

\medskip
By a similar argument as in the proof of Lemma \ref{bdd62} we have
\begin{lems}\label{bdbs1}  Let $x$ be an admissible monomial of degree $63$ in $P_5$ such that $[x] \in \mbox{\rm Ker}\big((\widetilde{Sq}^0_*)_{(5,29)}\big)$. Then, either $\omega(x) = (1)|^{6}$ or $\omega(x) = (3)|(2)|^{4}$ or $\omega(x) = (3,4,3,3,1)$.
\end{lems}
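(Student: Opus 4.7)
The plan is to mimic the proof of Lemma~\ref{bdd62}, adapted to the boundary case $d = 5$. Since $\deg x = 63$ is odd, $\omega_1(x)$ must be odd, so it belongs to $\{1,3,5\}$, and I would make a case split on this value.

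In the case $\omega_1(x) = 5$, write $x = X_{\emptyset}\, y^2$ for some admissible $y$ of degree $29$ in $P_5$. Then $(\widetilde{Sq}^0_*)_{(5,29)}([x]) = [y]$, which is nonzero by admissibility of $y$; this contradicts the hypothesis $[x] \in \ker((\widetilde{Sq}^0_*)_{(5,29)})$, so this case does not occur. In the case $\omega_1(x) = 1$, write $x = x_i\, y_1^2$ with $y_1$ admissible of degree $31$. By the classification of $(QP_5)_{31}$ of Ph\'uc~\cite{ph21}, combined with the admissibility of $x$ and Theorem~\ref{dlcb1} used to rule out every admissible $y_1$ with $\omega(y_1) \neq (1)|^5$, one obtains $\omega(y_1) = (1)|^5$, hence $\omega(x) = (1)|^6$. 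This is the same base case already invoked in the proof of Lemma~\ref{bdd62}.

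The substantive case is $\omega_1(x) = 3$. Here $x = X_{i,j}\, y_2^2$ with $y_2$ admissible of degree $30 = 2^5 - 2$. Since $d = 4$ lies outside the range of Lemma~\ref{bdd5}, one instead appeals to Sum--Mong~\cite{smo}, where $(QP_5)_{30}$ is computed; the admissible weight vectors of degree $30$ in $P_5$ are exactly $(2)|^4$, $(2,4,3,1)$, and $(4,3,3,1)$. The first yields $\omega(x) = (3)|(2)|^4$ and the third yields $\omega(x) = (3,4,3,3,1)$, both of which appear in the conclusion. The middle possibility $\omega(y_2) = (2,4,3,1)$ would force $\omega(x) = (3,2,4,3,1)$ and must be eliminated by showing that every such $X_{i,j}\, y_2^2$ is inadmissible. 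The strategy, exactly as in the proof of Lemma~\ref{bdd62}, is to exhibit a strictly inadmissible monomial $w$ together with a factorisation $X_{i,j}\, y_2^2 = w\, h^{2^r}$ for a suitable monomial $h$ and integer $r$, whereupon Theorem~\ref{dlcb1}(i) forces $X_{i,j}\, y_2^2$ to be inadmissible.

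The main obstacle is precisely this last step: producing an exhaustive, permutation-invariant list of strictly inadmissible monomials of weight $(3,2,4,3,1)$ (or suitable initial weight) rich enough to cover every admissible $y_2$ with $\omega(y_2) = (2,4,3,1)$. Because the block length $d = 5$ is small, the exponent $r$ in Theorem~\ref{dlcb1}(i) cannot be chosen very large, so one cannot merely quote Lemma~\ref{bda71} as is done for $d \geq 6$ in Lemma~\ref{bdd62}; a bespoke direct $Sq^i$-computation, analogous in spirit to Lemmas~\ref{bda51}, \ref{bda61}, \ref{bda62}, and~\ref{bda63}, will be required. Modulo this finite hand-computation (verifiable with the computer aids already used throughout the paper), the three-case analysis above yields the stated trichotomy.
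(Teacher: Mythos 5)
Your proposal is correct and follows essentially the same route as the paper, whose entire proof of this lemma is the remark that it goes ``by a similar argument as in the proof of Lemma~\ref{bdd62}'': the case split on $\omega_1(x)\in\{1,3,5\}$, the replacement of Lemma~\ref{bdd5} by the Sum--Mong classification of the admissible weight vectors $(2)|^4$, $(2,4,3,1)$, $(4,3,3,1)$ in degree $30$, and the elimination of $(3)|(2,4,3,1)$ via strictly inadmissible monomials and Theorem~\ref{dlcb1} are exactly the intended steps. The finite computation you flag as the main obstacle --- an explicit stock of strictly inadmissible monomials whose weight vectors are initial segments of $(3,2,4,3,1)$, beyond the weight-$(3,2)$ ones in Lemma~\ref{bda72}(i) --- is likewise left unrecorded in the paper, so your account is no less complete than the original.
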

By Propositions \ref{mdd70} and \ref{mdd71}, we have $\dim QP_5((1)|^6) = 31$, $\dim QP_5((3)|(2)|^{4}) = 465.$ 
We need to determine $QP_5(3,4,3,3,1).$

\begin{lems}\label{bdbs2}  Let $(i,j,t,u,v)$ be an arbitrary permutation of $(1,2,3,4,5)$. The following monomials are strictly inadmissible:
	
\medskip
\centerline{\begin{tabular}{llll}  
$x_ix_j^{7}x_t^{11}x_u^{14}x_v^{30}$& $x_ix_j^{7}x_t^{27}x_u^{14}x_v^{14}$& $x_i^{3}x_j^{5}x_t^{11}x_u^{14}x_v^{30}$& $x_i^{3}x_j^{5}x_t^{27}x_u^{14}x_v^{14} $\cr  $x_i^{3}x_j^{7}x_t^{9}x_u^{14}x_v^{30}$& $x_i^{3}x_j^{7}x_t^{13}x_u^{10}x_v^{30}$& $x_i^{3}x_j^{7}x_t^{29}x_u^{10}x_v^{14}$& $x_i^{7}x_j^{11}x_t^{5}x_u^{10}x_v^{30} $\cr  $x_i^{7}x_j^{11}x_t^{13}x_u^{2}x_v^{30}$& $x_i^{7}x_j^{11}x_t^{29}x_u^{2}x_v^{14}$& $x_i^{7}x_j^{11}x_t^{29}x_u^{6}x_v^{10}$& $x_i^{7}x_j^{27}x_t^{5}x_u^{10}x_v^{14} $\cr  $x_i^{7}x_j^{27}x_t^{13}x_u^{2}x_v^{14}$& $x_i^{7}x_j^{27}x_t^{13}x_u^{6}x_v^{10}$& & \cr 
\end{tabular}}
\end{lems}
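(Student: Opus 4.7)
\medskip
\noindent\textbf{Proof proposal for Lemma \ref{bdbs2}.}

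The plan is to verify each of the $15$ listed patterns by exhibiting an explicit expression of the form
\[
x \;=\; \sum_{j} y_j \;+\; \sum_{u \in \{1,2,4,8,16\}} Sq^{u}(h_u) \pmod{P_5^-(3,4,3,3,1)}
\]
with each $y_j < x$ in the order of Definition \ref{defn3}, exactly mirroring the style of the earlier strict-inadmissibility lemmas in the paper (cf.\ Lemmas \ref{bda51}(v), \ref{bda71}(iii), \ref{bda73}). Since each monomial has weight vector $(3,4,3,3,1)$, we have $s = \max\{i : \omega_i(x) > 0\} = 5$, so the squares $Sq^{u}$ used need only satisfy $1 \leqslant u \leqslant 2^5-1 = 31$; in practice only the dyadic operators $Sq^{1},Sq^{2},Sq^{4},Sq^{8},Sq^{16}$ are required to produce the target monomial via the Cartan formula, together with a small collection of monomials in $P_5(\omega)$ of strictly smaller exponent vector.

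The first reduction is to observe that every claim in the lemma is symmetric in the labelling $(i,j,t,u,v)$, so it suffices to take the canonical permutation $(i,j,t,u,v) = (1,2,3,4,5)$ and treat one representative from each of the $15$ patterns. Next, I would process the patterns in groups according to the dyadic block pattern of their exponents, which suggests a natural choice of the polynomials $h_u$. For instance, monomials of the form $x_1^{7}x_2^{11}x_3^{a}x_4^{b}x_5^{c}$ invite one to peel off an $x_1^{7}x_2^{11}(\cdots)^{16}$ type factor and apply Silverman-style telescoping using $Sq^{16}$ on a base polynomial of weight vector $(3,4,3,3)$; similarly the monomials starting $x_1^{3}x_2^{7}$ admit a $Sq^{8}$-dominated expansion. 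In each case the $h_u$'s are guessed from the ``splitting'' of the exponents into dyadic sub-blocks $\{1,2,4,8,16\}$, then the Cartan formula is expanded and the cancellations in $\mathbb{F}_2$ are verified monomial-by-monomial.

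The main obstacle is purely computational: there is no conceptual shortcut here; one must, for each of the $15$ representatives, produce the correct $h_1,h_2,h_4,h_8,h_{16}$ and then check that the Cartan expansion yields $x$ plus a sum of monomials each of which is either in $P_5^-(3,4,3,3,1)$ or strictly less than $x$ under the order on weight and exponent vectors. This is the same practical difficulty as in Lemmas \ref{bda71}(iii) and \ref{bda73}(ii), which involve comparable five-variable identities, and the length of the individual identities is expected to be of the same order (from several lines to a page per monomial). Finally, I would present one representative worked identity in full (e.g.\ for $x_1x_2^{7}x_3^{11}x_4^{14}x_5^{30}$, which is closest in structure to $x_1x_2^{7}x_3^{27}x_4^{30}x_5^{30}$ treated in Lemma \ref{bda71}(iii)) and remark that the remaining $14$ monomials are handled by entirely analogous computations, as is standard throughout the paper.
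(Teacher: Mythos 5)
Your plan coincides with the paper's own proof of this lemma: each listed monomial (there are $14$ of them, not $15$) is shown to be strictly inadmissible by an explicit Cartan-formula identity $x=\sum_{u}Sq^{2^{u}}(h_{u})$ modulo $P_5^-(3,4,3,3,1)$, and the paper, exactly as you propose, writes out only one representative in full (it chooses $x_i^{7}x_j^{27}x_t^{13}x_u^{6}x_v^{10}$, using only $Sq^1,Sq^2,Sq^4,Sq^8$) and leaves the remaining monomials to analogous computations. One caution about your reduction to the canonical permutation $(i,j,t,u,v)=(1,2,3,4,5)$: this is automatic only when every lower term has strictly smaller weight vector (as happens in the paper's displayed identity, where all such terms are absorbed into $P_5^-(3,4,3,3,1)$), because the ordering of equal-weight monomials compares exponent vectors left-lexicographically and is therefore not permutation-invariant, so any same-weight terms $y_j$ in your decomposition would need the inequality $y_j<x$ rechecked for each permutation.
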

\begin{proof} We prove the lemma for $x = x_i^{7}x_j^{27}x_t^{13}x_u^{6}x_v^{10}$. We have
\begin{align*}
x &= Sq^1\big(x_i^{7}x_j^{15}x_t^{11}x_u^{9}x_v^{20} + x_i^{7}x_j^{15}x_t^{11}x_u^{12}x_v^{17} + x_i^{7}x_j^{15}x_t^{19}x_u^{9}x_v^{12} + x_i^{7}x_j^{15}x_t^{19}x_u^{12}x_v^{9}\\ &\quad + x_i^{7}x_j^{15}x_t^{23}x_u^{5}x_v^{12} + x_i^{7}x_j^{15}x_t^{23}x_u^{8}x_v^{9} + x_i^{7}x_j^{23}x_t^{11}x_u^{9}x_v^{12} + x_i^{7}x_j^{23}x_t^{11}x_u^{12}x_v^{9}\\ &\quad  + x_i^{9}x_j^{15}x_t^{23}x_u^{3}x_v^{12} + x_i^{9}x_j^{15}x_t^{23}x_u^{5}x_v^{10} + x_i^{12}x_j^{15}x_t^{23}x_u^{3}x_v^{9} + x_i^{19}x_j^{15}x_t^{7}x_u^{9}x_v^{12}\\ &\quad  + x_i^{19}x_j^{15}x_t^{7}x_u^{12}x_v^{9} + x_i^{19}x_j^{15}x_t^{11}x_u^{5}x_v^{12} + x_i^{23}x_j^{15}x_t^{7}x_u^{5}x_v^{12} + x_i^{23}x_j^{15}x_t^{7}x_u^{8}x_v^{9}\\ &\quad  + x_i^{23}x_j^{15}x_t^{9}x_u^{3}x_v^{12} + x_i^{23}x_j^{15}x_t^{9}x_u^{5}x_v^{10} + x_i^{23}x_j^{15}x_t^{12}x_u^{3}x_v^{9}\big) + Sq^2\big(x_i^{7}x_j^{15}x_t^{11}x_u^{10}x_v^{18}\\ &\quad  + x_i^{7}x_j^{15}x_t^{19}x_u^{10}x_v^{10} + x_1^{7}x_2^{15}x_3^{23}x_4^{6}x_5^{10} + x_1^{7}x_2^{23}x_3^{11}x_4^{10}x_5^{10} + x_1^{10}x_2^{15}x_3^{23}x_4^{3}x_5^{10}\\ &\quad  + x_1^{19}x_2^{15}x_3^{7}x_4^{10}x_5^{10} + x_1^{19}x_2^{15}x_3^{11}x_4^{6}x_5^{10} + x_1^{23}x_2^{15}x_3^{7}x_4^{6}x_5^{10} + x_1^{23}x_2^{15}x_3^{10}x_4^{3}x_5^{10}\big)\\ &\quad  +  Sq^4\big(x_1^{7}x_2^{23}x_3^{13}x_4^{6}x_5^{10} + x_1^{7}x_2^{15}x_3^{13}x_4^{6}x_5^{18} + x_1^{7}x_2^{15}x_3^{21}x_4^{6}x_5^{10} + x_1^{21}x_2^{15}x_3^{7}x_4^{6}x_5^{10}\big)\\ &\quad  + Sq^8\big(x_1^{11}x_2^{15}x_3^{13}x_4^{6}x_5^{10}\big) \ \mbox{mod}(P_5^-(3,4,3,3,1))
\end{align*}
Hence, the monomial $x$ is strictly inadmissible.	
\end{proof}
Based on Lemmas \ref{bda71} and \ref{bdbs2}, we have
\begin{props} We denote 
\begin{align*}p &= x_1x_2x_3^{3}x_4^{30}x_5^{28} + x_1x_2x_3^{7}x_4^{26}x_5^{28} + x_1x_2^{3}x_3^{6}x_4^{25}x_5^{28} + x_1x_2^{3}x_3^{7}x_4^{24}x_5^{28}\\ &\quad+ x_1x_2^{6}x_3^{3}x_4^{25}x_5^{28} + x_1^{3}x_2x_3x_4^{30}x_5^{28} + x_1^{3}x_2x_3^{7}x_4^{24}x_5^{28} + x_1^{3}x_2^{3}x_3x_4^{28}x_5^{28}\\ &\quad + x_1^{3}x_2^{3}x_3^{4}x_4^{25}x_5^{28} + x_1^{3}x_2^{3}x_3^{5}x_4^{24}x_5^{28} + x_1^{3}x_2^{4}x_3^{3}x_4^{25}x_5^{28} + x_1^{7}x_2x_3x_4^{26}x_5^{28}\\ &\quad + x_1^{7}x_2x_3^{3}x_4^{24}x_5^{28} + x_1^{7}x_2^{3}x_3x_4^{24}x_5^{28} + x_1^{7}x_2^{3}x_3^{9}x_4^{20}x_5^{24} + x_1^{7}x_2^{3}x_3^{13}x_4^{14}x_5^{26}. 
\end{align*}

If the polynomial $p$ is hit, then $QP_5(3,4,3,3,1) = 0$ while $QP_5(3,4,3,3,1)$ is an $\mathbb F_2$-vector space of dimension $14$ with a basis consisting of all the classes represented by the following admissible monomials:

\medskip
\centerline{\begin{tabular}{llll}  
$x_1^{3}x_2^{7}x_3^{13}x_4^{14}x_5^{26}$& $x_1^{3}x_2^{7}x_3^{13}x_4^{26}x_5^{14}$& $x_1^{3}x_2^{7}x_3^{25}x_4^{14}x_5^{14}$& $x_1^{7}x_2^{3}x_3^{13}x_4^{14}x_5^{26} $\cr  $x_1^{7}x_2^{3}x_3^{13}x_4^{26}x_5^{14}$& $x_1^{7}x_2^{3}x_3^{25}x_4^{14}x_5^{14}$& $x_1^{7}x_2^{11}x_3^{5}x_4^{14}x_5^{26}$& $x_1^{7}x_2^{11}x_3^{5}x_4^{26}x_5^{14} $\cr  $x_1^{7}x_2^{11}x_3^{13}x_4^{6}x_5^{26}$& $x_1^{7}x_2^{11}x_3^{13}x_4^{14}x_5^{18}$& $x_1^{7}x_2^{11}x_3^{13}x_4^{18}x_5^{14}$& $x_1^{7}x_2^{11}x_3^{13}x_4^{22}x_5^{10} $\cr  $x_1^{7}x_2^{11}x_3^{21}x_4^{10}x_5^{14}$& $x_1^{7}x_2^{11}x_3^{21}x_4^{14}x_5^{10}$.& & \cr 
\end{tabular}}
\end{props}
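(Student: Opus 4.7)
The plan is to follow the three-stage template employed for Propositions \ref{mdd51}, \ref{mdd52}, and \ref{mdd71}. Any admissible monomial $x$ of weight vector $\omega := (3,4,3,3,1)$ in $P_5^+$ factors uniquely as $x = X_{\{i,j,t\}}\, y^2$, where $\{i,j,t\} \subset \mathbb N_5$ and $y$ is a monomial of weight vector $(4,3,3,1)$ and degree $30$ in $P_5$. By Theorem \ref{dlcb1}, $y$ must itself be admissible, so the first step is to enumerate such $y$ using the analogue (with $d=4$) of the classification in Subsection \ref{s32}. This yields a preliminary candidate set for $B_5^+(\omega)$.

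Next, I would prune that list using Theorem \ref{dlcb1} in combination with Lemmas \ref{bda71} and \ref{bdbs2}: any $x$ that can be written in the form $w h^{2^r}$ with $w$ one of the strictly inadmissible monomials listed there is itself strictly inadmissible and is therefore removed. After this reduction, I expect to be left with the $14$ monomials in the statement, together with one ``borderline'' candidate $m$ (most likely $x_1^{7}x_2^{11}x_3^{13}x_4^{16}x_5^{14}$ or a relative) whose inadmissibility would follow from an explicit Cartan--Adem computation producing a relation of the form $m \equiv p + \sum \lambda_t c_t \pmod{\mathcal A^+P_5 + P_5^-(\omega)}$. This is precisely the source of the dichotomy in the statement: if $p$ is hit, then $m$ is reducible, and a cascade of further reductions (via the same congruence-plus-triming mechanism applied to the remaining candidates) collapses $QP_5(\omega)$ to zero; otherwise, the $14$ listed monomials remain the final candidates.

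For the linear independence of the $14$ listed classes (in the case that $p$ is not hit), I would replicate the homomorphism-descent argument of Proposition \ref{mdd71}. Starting from a putative relation $\sum_t \gamma_t c_t \equiv_\omega 0$ in $QP_5(\omega)$, I would apply each map $p_{(i;I)} : P_5 \to P_4$; by Lemma \ref{bdm} each of these descends to $QP_5(\omega) \to QP_4(\omega)$. Expressing $p_{(i;I)}(c_t)$ in terms of a precomputed basis of $QP_4(\omega)$ (available from the $k=4$ analogues of the monomial tables in Subsections \ref{s52} and \ref{s53}) produces a system of $\mathbb F_2$-linear equations in the $\gamma_t$; I expect enough choices of $(i;I)$ to force $\gamma_t = 0$ for every $t$.

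The main obstacle is the very last step of the pruning stage: deciding whether the explicit polynomial $p$ belongs to $\mathcal A^+P_5 + P_5^-(\omega)$. This is a single hit-problem instance, but $p$ is a sum of $16$ monomials of degree $63$, and resolving it requires producing (or ruling out) a suitable collection of Steenrod-square preimages, together with verifying that every residual term lands in the smaller subspace $P_5^-(\omega)$. This is the precise difficulty flagged in Section \ref{s1} as making the case $d=5$ ``extremely complicated,'' and it is exactly what forces the authors to state only the conditional upper/lower bounds $1141 \leqslant \dim(QP_5)_{63} \leqslant 1155$ rather than a single value.
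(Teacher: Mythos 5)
Your Stages 1--2 (factor $x$ as a product of three distinct variables times $y^2$ with $\omega(y)=(4,3,3,1)$, then prune via Theorem~\ref{dlcb1} together with Lemmas~\ref{bda71} and~\ref{bdbs2}) match what the paper actually does, which is essentially all the paper records for this proposition. (Small slip: with $\omega_1(x)=3$ the odd part is a product of \emph{three} variables, i.e.\ $X_{\{i,j\}}$ in the paper's notation, not $X_{\{i,j,t\}}$.) The problems are in how you handle the dichotomy. First, you have misread the role of $p$: it is not the obstruction to eliminating a fifteenth ``borderline'' candidate. Check the weight vectors of its $16$ terms: every term except the last has $\omega_2=2<4$ and hence lies in $P_5^-(3,4,3,3,1)$, while the last term $x_1^{7}x_2^{3}x_3^{13}x_4^{14}x_5^{26}$ is itself the fourth monomial in the proposed basis. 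Thus $p\equiv x_1^{7}x_2^{3}x_3^{13}x_4^{14}x_5^{26}$ modulo $P_5^-(3,4,3,3,1)$, and ``$p$ is hit'' is precisely the statement that this basis class vanishes in $QP_5(3,4,3,3,1)$; the collapse of the whole space to zero then has to come from further relations (e.g.\ permuted versions of $p$ and explicit congruences) tying the other thirteen classes to this one, not from a ``cascade'' of removals of extra candidates. Your guessed borderline monomial $x_1^{7}x_2^{11}x_3^{13}x_4^{16}x_5^{14}$ does not even have weight vector $(3,4,3,3,1)$ (its $\omega_2$ is $3$).

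Second, and more seriously, your Stage 3 proves too much. If the $p_{(i;I)}$-descent forced $\gamma_t=0$ for all $t$ unconditionally, as you ``expect,'' then $\dim QP_5(3,4,3,3,1)=14$ would hold unconditionally; in particular the class of $x_1^{7}x_2^{3}x_3^{13}x_4^{14}x_5^{26}$ would be nonzero and $p$ would be provably non-hit --- exactly the question the authors state they cannot resolve. So the descent argument must necessarily leave a residual undetermined relation, and the content of the proposition is that this residue is equivalent to the single hit-problem instance for $p$: either the fourteen classes are independent, or they all vanish. A correct proof has to exhibit this equivalence explicitly (show that modulo $\mathcal A^+P_5+P_5^-(3,4,3,3,1)$ every one of the fourteen classes is expressible through $[p]$ and its permutations, and that when $[p]\ne 0$ the $p_{(i;I)}$ maps separate them). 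As written, your plan either establishes the unconditional value $14$ (contradicting the conditional form of the statement) or leaves the implication ``$p$ hit $\Rightarrow QP_5(3,4,3,3,1)=0$'' unproved.
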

Thus, if $p$ is hit, then $\dim (QP_5)_{63} = 1141$ while $\dim (QP_5)_{63} = 1155$.
{}

\end{document}